\DeclareSymbolFont{bbold}{U}{bbold}{m}{n}
\DeclareSymbolFontAlphabet{\mathbbold}{bbold}
\newtheorem{Theorem}{Theorem}[section]
\newtheorem{Proposition}[Theorem]{Proposition}
\newtheorem{Lemma}[Theorem]{Lemma}
\newtheorem{Question}[Theorem]{Question}
\newtheorem{Corollary}[Theorem]{Corollary}
\newtheorem{Conjecture}[Theorem]{Conjecture}
\theoremstyle{definition}
\newtheorem{Definition}[Theorem]{Definition}
\newtheorem{Example}[Theorem]{Example}
\newtheorem{Remark}[Theorem]{Remark}
\numberwithin{equation}{section}
\renewcommand{\theequation}{\arabic{section}.\arabic{equation}}
\tikzset{wei/.style={draw=red,double=red!40!white,double distance=1.5pt,thin}}
\newcounter{subeqn}
\renewcommand{\thesubeqn}{\theequation\alph{subeqn}}
\newcommand{\subeqn}{%
  \refstepcounter{subeqn}
  \tag{\thesubeqn}
}
\newcommand{\newseq}{%
  \refstepcounter{equation}
}
\newcommand{\nc}{\newcommand}
\newcommand{\renc}{\renewcommand}
\newcommand{\arxiv}[1]{\href{http://arxiv.org/abs/#1}{\tt arXiv:\nolinkurl{#1}}}
\nc{\tFW}{\widetilde{\mathsf{FW}}}
\nc{\FW}{{\mathsf{FW}}}
\nc{\bla}{{bla}}
\nc{\Supp}{\operatorname{Supp}}
\nc{\mmod}{\operatorname{-mod}}
\nc{\dash}{\operatorname{-}}
\nc{\h}{\mathfrak h}
\nc{\g}{\mathfrak g}
\nc{\walg}{{\tilde{T}}}
\nc{\ft}{\mathfrak t}
\nc{\fM}{\mathfrak M}
\nc{\bM}{\mathbf M}
\nc{\bR}{\mathbf R}
\nc{\bm}{\mathbf m}
\nc{\bS}{\mathbf S}
\nc{\bT}{\mathbf T}
\nc{\bU}{\mathbf U}
\nc{\bV}{\mathbf V}
\nc{\bi}{\mathbf i}
\nc{\Bv}{\mathbf{v}}
\nc{\Bw}{\mathbf{w}}
\nc{\TL}{\tilde{\mathscr{T}}_{\mathcal{L}}}
\nc{\bs}{\mathbf s}
\nc{\Cx}{\C^\times}
\nc{\Sym}{\operatorname{Sym}}
\nc{\acham}{\eta}
\nc{\tU}{\mathcal{U}}
\nc{\longi}{\boldsymbol{\ell}}
\nc{\red}{{\operatorname{red}}}
\nc{\ind}{\operatorname{ind}}
\nc{\Res}{{\operatorname{Res}}}
\nc{\Ind}{\operatorname{Ind}}
\nc{\Coind}{\operatorname{Coind}}
\nc{\wgmod}{\operatorname{-wgmod}}
\nc{\coind}{\operatorname{coind}}
\nc{\ff}{\mathfrak{f}}
\nc{\Z}{{\mathbb Z}}
\renc{\C}{\mathbb C}
\nc{\R}{\mathbb R}
\nc{\N}{\mathbb N}
\nc{\B}{\mathcal B}
\nc{\M}{\mathcal M}
\nc{\cE}{\mathcal E}
\nc{\cF}{\mathcal F}
\nc{\fB}{\mathfrak B}
\nc{\fKLR}{\widetilde{\mathsf{f\! T}}}
\nc{\sfKLR}{\mathsf{f\!T}}
\nc{\con}{\sim}
\nc{\pgl}{\mathfrak{pgl}}
\nc{\Wei}{\EuScript{W}}
\nc{\ev}{\mathsf{ev}}
\nc{\Hom}{\operatorname{Hom}}
\nc{\End}{\operatorname{End}}
\nc{\res}{\operatorname{res}}
\nc{\al}{\alpha}
\nc{\vp}{\varphi}
\nc{\Cth}{S_h}
\nc{\cO}{\mathcal{O}}
\nc{\cK}{\mathcal{K}}
\nc{\fg}{\mathfrak{g}}
\nc{\one}{\mathbf{1}}
\nc{\bb}{\mathbf{b}}
\nc{\ext}{\operatorname{Ext}}
\nc{\out}{\operatorname{out}}
\nc{\FY}{FY}
\nc{\EP}{\mathcal{S}}
\nc{\ep}{\epsilon}
\nc{\bz}{{\mathbf z}}
\nc{\inn}{\operatorname{in}}
\nc{\BK}{\reflectbox{\rm R}}
\nc{\Bi}{\mathbf{i}}
\nc{\Bj}{\mathbf{j}}
\nc{\Ba}{\mathbf{a}}
\nc{\Bb}{\mathbf{b}}
\nc{\Bm}{\mathbf{m}}
\nc{\tGamma}{\tilde{\Gamma}}
\nc{\tGammabR}{\tGamma_{\bR}}
\nc{\GammabR}{\Gamma_{\bR}}
\nc{\diam}{\diamond}
\nc{\la}{\lambda}
\nc{\ul}{\underline{\la}}
\nc{\Yml}{Y_\mu^\lambda}
\nc{\bgam}{{\boldsymbol{\gamma}}}
\nc{\blam}{{\boldsymbol{\lambda}}}
\nc{\gr}{\operatorname{gr}}
\nc{\Spec}{\operatorname{Spec}}
\nc{\Stendhal}{Stendhal\xspace}
\nc{\MaxSpec}{\operatorname{MaxSpec}}
\nc{\Cartan}{\C[H_\bullet^{(\bullet)}]}
\nc{\tmetric}{\mathscr{\tilde T}}
\nc{\metric}{\mathscr{T}}
\nc{\pmmetric}{{}_{\pm}\mathscr{T}}
\nc{\pmetric}{{}_{+}\mathscr{T}}
\nc{\mmetric}{{}_{-}\mathscr{T}}
\nc{\Pol}{\mathsf{Pol}}
\nc{\hh}{h}
\nc{\wtmodY}{{Y^\la_\mu\text{-wt mod}}}
\nc{\wtmodFY}{{FY^\la_\mu\text{-wt mod}}}
\nc{\ftR}{\ft_\R}
\nc{\ftZ}{\ft_\Z}
\nc{\ftgen}{\ft^{\operatorname{gen}}}
\nc{\sfr}{\mathsf{r}}
\nc{\Wh}{\operatorname{Wh}}
\nc{\bWh}{\overline{\Wh}}
\nc{\cores}{\operatorname{cores}}
\nc{\op}{\operatorname{op}}
\nc{\vertex}{I}
\nc{\edge}{\mathsf{E}}
\nc{\GTc}{\mbox{\rm\foreignlanguage{russian}{ГЦ}}}
\nc{\Gplus}{G_+}
\nc{\Stein}{\mathbb{X}}
\nc{\pStein}{\mathbb{Y}}
\nc{\rif}{\mathscr{I}}
\nc{\corif}{co\mathscr{I}}
\nc{\MS}{\EuScript{M}}
\newcommand{\Sg}{\mathsf{G}}
\newcommand{\Sr}{\mathsf{R}}
\newcommand{\Sc}{\mathsf{C}}
\newcommand{\Sgr}{\mathsf{GR}}
\newcommand{\Sall}{\mathsf{CGR}}
\newcommand{\lx}{l}
\nc{\cA}{\mathcal A}
\nc{\cR}{\mathcal R}
\nc{\cU}{\mathcal U}
\nc{\fw}{\mathfrak w}
\nc{\anti}{\sigma}
\nc{\oW}{\overline{\mathcal W}}
\nc{\hooklongrightarrow}{\lhook\joinrel\longrightarrow}
\nc{\twoheadlongrightarrow}{\relbar\joinrel\twoheadrightarrow}
\nc{\hooklongleftarrow}{\longleftarrow\joinrel\rhook}
\nc{\twoheadlongleftarrow}{\twoheadleftarrow\joinrel\relbar}
\nc{\eps}{\varepsilon}
\nc{\Gr}{\mathsf{Gr}}
\nc{\CB}{\cA}
\nc{\CBs}[1]{\cA_{#1}}
\nc{\ICB}{\cA^B}		 
\nc{\ACB}{\cA^{\textsf{ab}}}	 
\nc{\PCB}{\cA^P}
\nc{\CBh}{\cA_{\hbar}}
\nc{\ICBh}{\cA^B_{\hbar}}		 
\nc{\ACBh}{\cA_{\hbar}^{\textsf{ab}}}	 
\nc{\PCBh}{\cA_{\hbar}^P}
\nc{\NH}{\mathcal{NH}_\fg}
\nc{\tG}{\tilde G}
\nc{\tL}{\tilde L}
\nc{\Vect}{\mathsf{Vect}}
\nc{\sslash}{\mathbin{/\mkern-6mu/}}
\nc{\Oof}[1]{{{#1}{\text{-}\cO}}}
\nc{\excise}[1]{}
\nc{\resBE}{\res_{\mathsf{BE}}}
\nc{\resCB}{\res_{\mathsf{CB}}}
\nc{\KZ}{\mathsf{KZ}}
\nc{\tP}{\tilde{P}}
\nc{\sph}{\operatorname{sph}}
\newcommand{\HBG}[1]{\mathbbold{\Lambda}_{#1}}
\nc{\GammaInt}{\widetilde{\Gamma}}
\nc{\vInt}{\widetilde{\Bv}}
\nc{\wInt}{\widetilde{\Bw}}
\nc{\varphiInt}{\widetilde{\varphi}}
\nc{\GammaO}{\Gamma_{\varphi}}
\nc{\wtAlg}{F}
\nc{\wtBim}{I}
\title[Lie algebra actions on module categories for truncated shifted Yangians]{Lie algebra actions on module categories for\\ truncated shifted Yangians}
\author[Kamnitzer]{Joel Kamnitzer}
\address{J.~Kamnitzer: Department of Mathematics, University of Toronto, Canada}
\email{jkamnitz@math.toronto.edu}
\author[Webster]{Ben Webster}
\address{B.~Webster: Department of Pure Mathematics, University of Waterloo \&
Perimeter Institute for Theoretical Physics, Canada}
\email{ben.webster@uwaterloo.ca}
\author[Weekes]{Alex Weekes}
\address{A.~Weekes: Department of Mathematics and Statistics, University of Saskatchewan, Canada}
\email{weekes@math.usask.ca}
\author[Yacobi]{Oded Yacobi}
\address{O.~Yacobi: School of Mathematics and Statistics, University of Sydney, Australia}
\email{oded.yacobi@sydney.edu.au}
\begin{document}
\begin{abstract}

We develop a theory of parabolic induction and
restriction functors relating modules over Coulomb branch algebras, in the sense of Braverman-Finkelberg-Nakajima.  Our functors generalize Bezrukavnikov-Etingof's
induction and restriction functors for Cherednik algebras, but their
definition uses different tools.

After this general definition, we focus on quiver gauge theories
attached to a quiver $\Gamma$.  The induction and restriction functors allow us to define a categorical action of the  corresponding
symmetric Kac-Moody algebra $\mathfrak{g}_{\Gamma}$
on category $ \mathcal O $ for these Coulomb branch algebras.   When $ \Gamma $ is of Dynkin type, the Coulomb
branch algebras are truncated shifted
Yangians and quantize generalized affine Grassmannian slices. Thus, we regard our action as a categorification of the geometric Satake correspondence.

To establish this categorical action, we define a new class of ``flavoured'' KLRW algebras, which are similar to the diagrammatic algebras originally constructed by the second author for the purpose of tensor product categorification. We prove an 
equivalence between the category
of Gelfand-Tsetlin modules over a Coulomb branch algebra and the modules over a flavoured KLRW algebra.  This equivalence relates
the categorical action by induction and restriction functors to the usual categorical action on modules over a KLRW
algebra.  
\end{abstract}
\maketitle

 \section{Introduction}

\subsection{Categorification and affine Grassmannian slices}
Let $ G_\Gamma$ be a semisimple complex group with Dynkin diagram $ \Gamma$.  In recent years, following ground-breaking ideas of Khovanov, there has been great interest in constructing categorifications of tensor product representations of $ G_\Gamma $.

Khovanov-Lauda \cite{KLI} and Rouquier \cite{Rou2KM} introduced a family of combinatorially-defined diagrammatic algebras for this purpose.  Their work was later extended by the second author, who defined KLRW algebras $   T^{\ul}_\mu $, for a list $\ul=(\la_1,\dots,\la_n)$ of dominant weights and a weight $ \mu$ of $ G_\Gamma $ \cite{Webmerged}.  The categories of modules over these algebras carry categorical $ \fg_\Gamma $ actions \cite[Th.~B.]{Webmerged}. 
Recall that a categorical $\fg_\Gamma $ action assigns a category to each weight space (in this case, $  T^{\ul}_\mu\mmod$) and a functor 
\begin{equation} \label{eq:EiKLR}
	\EuScript E_i :   T^{\ul}_\mu\mmod \rightarrow   T^{\ul}_{\mu + \alpha_i}\mmod \qquad \EuScript F_i :   T^{\ul}_\mu\mmod \rightarrow   T^{\ul}_{\mu - \alpha_i}\mmod
\end{equation}
for each
$ i \in \vertex $.   We require that $ \oplus_\mu K_{\C}( T^{\ul}_\mu \mmod) \cong V(\lambda_1) \otimes \cdots \otimes V(\lambda_n) $ as representations of $ \fg_\Gamma $, with $ \EuScript E_i, \EuScript F_i $ categorifying the Chevalley generators of $ \fg_\Gamma $.  Joint work of this author and Losev shows that these categories are the unique {\bf tensor product categorification} for the tensor product above \cite[Th.~A]{LoWe}.  Thus, other categorifications of tensor products appearing in the literature will typically be equivalent to $  T^{\ul}_\mu\mmod$.  The most notable of these is defined by translation functors on category $\mathcal{O}$, based on the original work of Bernstein-Frenkel-Khovanov \cite{BFK} in the case where $\fg_\Gamma=\mathfrak{sl}_2$.

In \cite{KWWY}, we began a project to construct categorifications using affine Grassmannian slices and their quantizations.  Affine Grassmannian slices $ \oW^\lambda_\mu $ are defined for pairs of dominant weights $\lambda,\mu$ where $\mu \leq \la$.  In \cite{BFNslices}, these were generalized to arbitrary weights $ \mu $.  These generalized affine Grassmannian slices are affine Poisson varieties with symplectic singularities \cite{KWWY, Weekes2,Yehao,Bellamy}, defined using the affine Grassmannian of the Langlands dual group $ G_\Gamma^\vee $.  The main feature of these varieties is that they contain the Mirkovi\'c-Vilonen cycles  as the attracting loci of their fixed points \cite{KWWY,Krylov}.

Generalized affine Grassmannian slices admit natural quantizations $ Y^\lambda_\mu $, called {\bf truncated shifted Yangians}, defined for dominant $ \mu $ in \cite{KWWY} and for general $\mu $ in \cite{BFNslices}.  Motivated by the geometric Satake correspondence of Mirkovi\'c-Vilonen \cite{MV} and the philosophy of Braden-Licata-Proudfoot-Webster \cite{BLPWgco}, we conjectured in \cite{KWWY} that category $\mathcal O $ for these algebras could be used to construct a categorification of tensor product representations.  More precisely, these algebras appear as a family over a space of quantization parameters, which in this paper are called {\bf flavours}; we let $\Oof{Y^\lambda_\mu}_\Z$ be the category $\mathcal{O}$ over this algebra for a generic integral flavour.

We expected the existence of exact functors
\begin{equation} \label{eq:EiY}
	\EuScript E_i : \Oof{Y^\lambda_\mu}_\Z \rightarrow \Oof{Y^\lambda_{\mu + \alpha_i}}_\Z  \qquad \EuScript F_i : \Oof{Y^\lambda_\mu}_\Z  \rightarrow \Oof{Y^\lambda_{\mu - \alpha_i}}_\Z
\end{equation}
along with isomorphisms $\oplus_\mu K_\C(\Oof{Y^\lambda_\mu}_{\Z} ) \cong V(\lambda_1) \otimes \cdots \otimes V(\lambda_n) $ of $ \fg_\Gamma $-representations, for $\la_k$ fundamental weights satisfying $\la=\la_1+\cdots +\la_n$.

In \cite{KTWWYO}, we made a decisive step in this direction by constructing equivalences of abelian categories $ \Oof{Y^\lambda_\mu}_{\Z} \cong   T^{\ul}_\mu \mmod $.  In this way, we are able to define the functors (\ref{eq:EiY}) using the functors (\ref{eq:EiKLR}).  However, this left the following question, which we will resolve in this paper:

\begin{Question} \label{qu:intro}
	Can we define the functors (\ref{eq:EiY}) directly using truncated shifted Yangians?
\end{Question}

\subsection{Restriction functors for Coulomb branches}

In this paper, we will construct these functors (\ref{eq:EiY}) by realizing them in the larger context of Coulomb branches.   For any reductive group $ \tilde{G} $, a representation $ N $ and a normal subgroup $G\subset \tilde{G}$ such that $F=\tilde{G}/G$ (the flavour group) is a torus, Braverman-Finkelberg-Nakajima \cite{BFN} constructed an affine Poisson variety $ M_C(G,N) $ and its deformation quantization $ \CB(G,N)$.  In the notation of \cite{BFN}, this second algebra would be denoted $ \CB_{\hbar}(G,N)/(\hbar-1)$, but since we will be interested primarily in this non-commutative deformation or specializations of it, we will drop the subscript. We note that $ \CB(G,N)$ carries a filtration whose associated graded $ \gr \CB(G,N) $ is isomorphic to the coordinate ring of $ \CB(G,N) $.

Following the physics literature:
\begin{itemize}
	\item  The variety $ M_C(G,N) $ is called the (flavour deformation of the) {\bf Coulomb branch} of the gauge theory defined by $ G, N$. 
	\item The algebra $ \CB(G,N) $ is called the {\bf Coulomb branch algebra.}
\end{itemize}
An important feature of the Coulomb branch construction is that it comes with a complete integrable system $ M_C(G,N) \rightarrow \tilde{\ft}/ W \cong \tilde{\fg}/\hspace{-1mm}/G$ where $\tilde{\ft}$ is a Cartan subalgebra of $\tilde{\fg}=\operatorname{Lie}(\tilde{G})$. The quantization of this integrable system yields a maximal commutative subalgebra 
$ (\Sym \tilde{\ft}^*)^W \subset \CB(G,N) $, which we call the {\bf Gelfand-Tsetlin subalgebra}, since it generalizes the usual Gelfand-Tsetlin subalgebra of $ U( \mathfrak{gl}_n )$.  

We consider modules over $\CB(G,N) $, which are locally finite as $(\Sym \tilde{\ft}^*)^W$-modules.  We call these {\bf Gelfand-Tsetlin modules}, and let $\CB(G,N)\dash{\GTc}$ be the category of Gelfand-Tsetlin modules (Definition \ref{def:Acat}).  Modules $M $ in this category have decompositions into generalized eigenspaces for the action of the Gelfand-Tsetlin algebra: \[ M = \bigoplus_{\gamma \in \tilde{\ft}/W}  \Wei_\gamma(M) .\]  Throughout this paper, we will only study modules on which the center of $\CB(G,N) $ acts semi-simply, though we include discussion of how our results can be modified to account for more general modules.

The category of Gelfand-Tsetlin modules contains category $\cO$ for $\CB(G,N)$-modules, denoted $\Oof{\CB(G,N)}$, consisting of modules satisfying a local-finiteness condition with respect to a $\C^\times$-action (cf. Definition \ref{def:catO}).  

When $ G, N $ are constructed using ADE quiver data, then by \cite{BFNslices,WeekesCoulomb}:
\begin{itemize}
	\item 
	The variety $ M_C(G,N) $ is a generalized affine Grassmannian slice.
	\item The algebra $ \CB(G,N) $ is a truncated shifted Yangian $Y^\lambda_\mu$ 
\end{itemize} See Section \ref{se:Ylamu} for the precise versions of these statements.  Our main insight in this paper is that the relationship between $ Y^\lambda_\mu$ and $ Y^\lambda_{\mu + \alpha_i} $ can be studied as a special case of a more general ``parabolic restriction'' of Coulomb branches.

Let $ \xi : \Cx \rightarrow T $ be a coweight of the group $ G$.  This determines a Levi subgroup $ L $ of $ G $ (its centralizer) and an $L$-subrepresentation $ N_0^\xi $ (the subspace of invariants for the cocharacter $\xi$).  When $ G, N $ are chosen so that $ \CB(G,N) \cong Y^\lambda_\mu $ (Section \ref{se:Ylamu}), there is a particular choice of $ \xi $ (see Theorem \ref{th:res-E_i}) yielding $ \CB(L/\Cx_\xi, N_0^\xi) = Y^\lambda_{\mu + \alpha_i}$.  This inspired us to examine the relation between $ \CB(G,N) $ and $ \CB(L / \Cx_\xi, N_0^\xi) $ in the more general setting.

Geometrically, $ \dim M_C(L/\Cx_\xi, N_0^\xi) = \dim M_C(G,N) - 2 $ and so one might expect to obtain $ M_C(L/\Cx_\xi, N_0^\xi) $ by Hamiltonian reduction of $ M_C(G,N) $ by the action of an additive group.  In the finite ADE quiver situation, it is possible to achieve this (see \cite{KPW} and Remark \ref{re:comult}), but not in a way compatible with the above mentioned integrable systems.  Thus, in this paper, we pursue a different approach.

Our first main result (Theorem \ref{thm:rel}) describes the relationship between the four algebras $ \CB(G,N), \CB(L,N), \CB(L, N_0^\xi)$ and  $\CB(L/\Cx_\xi, N_0^\xi) $.  The exact statement is a bit complicated, but most importantly for our purposes, the maps between these algebras are compatible with their Gelfand-Tsetlin subalgebras, and allow us to prove the following result (Theorem \ref{th:res-weight}).

\begin{Theorem} \label{th:IntroRes}
	There is a restriction functor $ \CB(G,N)\dash\GTc \xrightarrow{\res} \CB(L, N_0^\xi)\dash\GTc $ such that	$$
	\Wei_\nu(\res(M)) = \Wei_\nu(M)
	$$
	for all $ \nu \in \tilde{\ft}/W_L $ satisfying a condition called $ \xi$-negative (which can be achieved by subtracting a sufficiently large integral multiple of $ \xi $).
\end{Theorem}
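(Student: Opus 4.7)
The plan is to construct $\res$ by assembling $\xi$-negative generalized weight spaces of $M$ and equipping them with a $\CB(L, N_0^\xi)$-action via the compatible maps between the four algebras furnished by Theorem \ref{thm:rel}. The starting observation is that the Gelfand-Tsetlin subalgebras of $\CB(G,N)$ and $\CB(L, N_0^\xi)$ share a common refinement inside $(\Sym \tilde{\ft}^*)^{W_L}$, so that a generalized eigenspace $\Wei_\nu(M)$ taken for one algebra can be identified with the corresponding eigenspace for the other once weights are read modulo $W_L$ rather than $W$. This is what makes the weight-preservation conclusion plausible at all.

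Given $M \in \CB(G,N)\dash\GTc$, I would set
\[
\res(M) \;:=\; \bigoplus_{\nu \text{ $\xi$-negative}} \Wei_\nu(M),
\]
viewed as a subspace of $M$. The key technical step is to equip $\res(M)$ with a $\CB(L, N_0^\xi)$-action. For each generator of $\CB(L, N_0^\xi)$, I would use Theorem \ref{thm:rel} to produce a lift to an operator acting on $M$, and then verify that this lifted operator preserves the $\xi$-negative part. The verification should rest on the fact that Coulomb branch generators shift Gelfand-Tsetlin weights by bounded amounts determined by the BFN convolution data; together with $\xi$-negativity, which is stable under any fixed bounded shift, this forces the putative action to close on $\res(M)$. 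Independence from the choice of lifts can then be checked by showing that any two lifts differ by an operator vanishing on $\xi$-negative weight spaces.

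Functoriality is automatic from the naturality of the generalized eigenspace decomposition, and the identity $\Wei_\nu(\res(M)) = \Wei_\nu(M)$ for $\xi$-negative $\nu$ holds tautologically from the definition of $\res(M)$ together with the common refinement of Gelfand-Tsetlin subalgebras. I expect the main obstacle to be well-definedness of the $\CB(L, N_0^\xi)$-action: one must track precisely how the maps of Theorem \ref{thm:rel} behave on generators and verify that all ambiguities in lifting are supported away from the $\xi$-negative locus. This is also exactly the point where $\xi$-negativity is indispensable, since on other weights the same recipe would need to be replaced by a formal completion rather than a genuine direct sum, and the resulting action of $\CB(L, N_0^\xi)$ would not stay within the category of Gelfand-Tsetlin modules.
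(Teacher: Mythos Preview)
Your construction has two genuine gaps. First, for a $\CB(G,N)$-module $M$, the expression $\Wei_\nu(M)$ with $\nu \in \tilde{\ft}/W_L$ is not well-defined: the Gelfand--Tsetlin subalgebra of $\CB(G,N)$ is $(\Sym\,\tilde{\ft}^*)^W$, and there is no copy of $(\Sym\,\tilde{\ft}^*)^{W_L}$ inside $\CB(G,N)$ refining it. The paper resolves this by first applying the Morita equivalence to the parabolic algebra $\CB^P(G,N)$, whose Gelfand--Tsetlin subalgebra \emph{is} $(\Sym\,\tilde{\ft}^*)^{W_L}$; you cannot skip this step. Second, your assertion that $\xi$-negativity is ``stable under any fixed bounded shift'' is false: generators of $\CB(L,N_0^\xi)$ shift weights by arbitrary cocharacters of $L$, and a $\xi$-negative weight shifted this way need not remain $\xi$-negative. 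So even after passing to $M^P$, the $\xi$-negative locus is not closed under the proposed action and your ``lifted operators'' will not preserve the subspace you wrote down. There is also an over-counting problem: within each $\Z\xi$-coset the $\xi$-negative weights form an infinite half-line whose weight spaces are already mutually isomorphic via $r_\xi$, so your direct sum is far too large as a vector space.

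The paper avoids all of this by defining $\res$ as a composition of three functors on all modules, with no weight restriction built in: the Morita equivalence $M \mapsto M^P$ to $\CB^P(G,N)$-modules; restriction along the inclusion $\CB(L,N) \hookrightarrow \CB^P(G,N)$; and base change along the Ore localization $\CB(L,N) \to \CB(L,N)[r_\xi^{-1}] \cong \CB(L,N_0^\xi)$. The weight-space identity then becomes a \emph{theorem} rather than a definition: the localization step replaces each weight space by the direct limit along powers of $r_\xi$ (Lemma~\ref{lem:weight-direct-limit}), and $\xi$-negativity is precisely the condition ensuring that this direct limit has already stabilized at the original weight space (Lemma~\ref{le:xineg1}). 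Condition~(3) in Definition~\ref{def:xineg} is what makes the Morita step invisible on weight spaces (Lemma~\ref{lem:change-Weyl}).
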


Finally, the algebra  $\CB(L/\Cx_\xi, N_0^\xi) $ is the  quantum Hamiltonian reduction of $ \CB(L, N_0^\xi)$  by the ``monopole'' operator $ r_\xi $.  The Hamiltonian reduction functor also preserves the category of Gelfand-Tsetlin modules.  The effect of this functor on weight spaces is described in Proposition \ref{prop: qhr and gt}.

\subsection{Relating the functors}

In this paper we will consider any quiver $ \Gamma$ with vertex set $\vertex$ and edge set $\edge$. If $\Gamma$ has no edge loops, then it has an associated symmetric Kac-Moody Lie algebra $\fg_\Gamma$.  For the time being, we include the case where $\Gamma$ has edge loops, but of course any statement that uses $\fg_\Gamma$ can only apply in the case of no edge loops.  Note that this is more general than our previous work \cite{KTWWYO}, where only simply-laced types with bipartite Dynkin diagrams are studied.  We will write $M_C(\Bv,\Bw) $ for the Coulomb branch, and $ \CB(\Bv,\Bw) $ for the Coulomb branch algebra, defined using $\Gamma$ and dimension vectors $ \Bv,\Bw \in \Z^{\vertex} $.   This means that we are using the gauge group and representation defined by
$$
G=\prod_i GL(v_i) \qquad N=\bigoplus_{i\to j\in \edge }\Hom(\C^{v_i},\C^{v_j})\oplus
\bigoplus_{i}\Hom(\C^{v_i},\C^{w_i}).
$$
We can consider the extended group $\tilde{G}=G\times (\Cx)^{\edge}\times \prod_i (\Cx)^{w_i}$ where the second and third factors act by scaling on the summands of $N$.  
When $ \Gamma $ is of ADE type, then $ \CB(\Bv,\Bw) $ is isomorphic to $ Y^\lambda_\mu $ where we define $\lambda=\sum w_i \varpi_i$ and $\mu=\lambda-\sum v_i \alpha_i$ (see Section \ref{se:Ylamu} for details).

We introduce a family of diagrammatic algebras which we call \textbf{flavoured KLRW algebras} $ \fKLR_{\Bv}^{\varphi} $, generalizing the metric KLRW algebras studied in \cite{KTWWYO}, and closely related to the weighted KLRW algebras introduced in \cite{WebwKLR}. These depend on a choice of dimension vectors $\Bv,\Bw$, and also a choice of flavour $\varphi $: a choice of complex number for each edge of the Crawley-Boevey quiver $\Gamma^\Bw $.  We write $ \fKLR_{\Bv}^{\varphi}\wgmod$ for the category of weakly gradable modules.

By the Coulomb branch construction, we can also think of $ \varphi $ as a central character of $ \CB(\Bv,\Bw) $.  We generalize the main result from \cite{KTWWY} to give a precise description of the category $\CB_\varphi(\Bv,\Bw)\dash\GTc_{\Z}$ of Gelfand-Tsetlin modules over $ \CB(\Bv,\Bw) $ with integral weights and integral central character $ \varphi $.  (In the main body of the paper, we do not restrict to integral weights, but we do so in the introduction to make the statements simpler.)

\begin{Theorem}[Corollary \ref{co:wKLR}] \label{th:CBfKLR}
	There is an equivalence of categories 
	$$  
	\CB_\varphi(\Bv,\Bw)\dash{\GTc}_{\Z}\  \cong  \fKLR_{\Bv}^{\varphi}\wgmod.
	$$ 
\end{Theorem}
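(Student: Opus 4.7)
The plan is to generalize the argument of \cite{KTWWY}, where an analogous equivalence was established between Gelfand-Tsetlin modules over truncated shifted Yangians (in the simply-laced bipartite case) and modules over a metric KLRW algebra. The core input is the BFN presentation of $\CB(\Bv,\Bw)$: as a filtered algebra, it is generated by the polynomials $(\Sym \tilde{\ft}^*)^W$ together with ``monopole'' operators $r_\xi$ indexed by cocharacters $\xi$ of $G$, with explicit formulas governing their multiplication. The idempotent decomposition of any Gelfand-Tsetlin module into generalized weight spaces $\Wei_\gamma(M)$ is what lets one pass between the two descriptions.

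First I would construct a functor $\Phi : \CB_\varphi(\Bv,\Bw)\dash\GTc_\Z \to \fKLR_\Bv^\varphi\wgmod$ by taking $M$ to the direct sum $\bigoplus_\gamma \Wei_\gamma(M)$ over the set of integral weights $\gamma$ that correspond to KLRW ``loadings'' on the real line. The polynomial (``dot'') part of $\fKLR_\Bv^\varphi$ acts by the residual action of $(\Sym \tilde{\ft}^*)^W$ on each weight space, while the crossing/strand generators act by the matrix coefficients of the simple monopole operators $r_\xi$ between adjacent weight spaces. The key computation is to verify, using the BFN multiplication formulas, that these operators satisfy the defining relations of $\fKLR_\Bv^\varphi$ on $\Phi(M)$; this is the part of the argument most directly parallel to \cite{KTWWY}, now carried out for a general quiver $\Gamma$ and with the flavour $\varphi$ built in.

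To upgrade $\Phi$ to an equivalence, I would exhibit a pro-projective generator $P = \bigoplus_\gamma P_\gamma$ in an appropriate completion of $\CB_\varphi(\Bv,\Bw)\dash\GTc_\Z$, with $P_\gamma$ a projective cover of a simple module concentrated at weight $\gamma$, and identify $\End(P)^{\op}$ with the completion of $\fKLR_\Bv^\varphi$ at integral loadings. The BFN formulas for composition of monopoles should match the diagrammatic multiplication in $\fKLR_\Bv^\varphi$ (crossings, dots, and the ``ghost''/Crawley-Boevey contributions) term by term; the isomorphism is then extracted from this by standard Morita-type arguments, with the weakly gradable condition on $\fKLR_\Bv^\varphi\wgmod$ corresponding to semisimplicity of the center on the Coulomb branch side.

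The main obstacle I expect is pinning down the precise dependence on the flavour $\varphi$. In \cite{KTWWY} the parameters are tied to a metric and enter the KLRW relations through fixed shifts; here $\varphi$ is a complex vector attached to the edges of the Crawley-Boevey quiver $\Gamma^\Bw$, and it simultaneously controls (i) a central character of $\CB(\Bv,\Bw)$ via the flavour torus and (ii) the coefficients in the relations of $\fKLR_\Bv^\varphi$. The hard part is showing that these two occurrences of $\varphi$ match under $\Phi$, which requires carefully tracking how monopole operators deform under central specialization and how the ``ghost strand'' crossings in $\fKLR_\Bv^\varphi$ encode exactly that deformation. Once this bookkeeping is set up correctly, the rest of the argument should be a direct adaptation of \cite{KTWWY}.
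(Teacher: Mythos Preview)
Your outline starts in the right place—decompose into weight spaces and identify the endomorphisms of a pro-generator—but the paper does not prove the identification by verifying KLRW relations from BFN monopole formulas. It instead passes through a geometric intermediary, the Steinberg category $\widehat{\mathscr X}(G,N)$, whose Hom spaces are equivariant Borel--Moore homologies $\widehat{H}^G({}_\gamma\pStein_{\gamma'})$ of fibre products of partial flag varieties over $N$ (Section~\ref{se:recollect}). The external input Corollary~\ref{co:XA} (from \cite{Webdual}) identifies this with the category $\widehat{\mathscr A}_\Z(G,N)$ of weight functors; on the other side, \cite[Thm.~4.5]{WebwKLR} already identifies weighted KLRW algebras with the same homology, and Theorem~\ref{thm:flavour-weight} together with Lemma~\ref{le:XeqX} transports this to the flavoured setting. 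Theorem~\ref{thm:wKLR} is this composite after reducing to integral $\varphi$ (Lemmas~\ref{lem:real-reduction} and~\ref{lem:A-integral}), and the corollary then follows from Proposition~\ref{thm:GTc-F}. The dependence on $\varphi$ that you flag as the main obstacle is in fact handled automatically: it enters geometrically through the compatibility condition $n_e(F_k)\subset F_{k+\varphi_e}$ in Lemma~\ref{le:gamma-flag}, which is exactly what the flavoured longitudes record.

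There is also an imprecision in your sketch worth flagging. You propose that ``crossing/strand generators act by matrix coefficients of simple monopole operators $r_\xi$,'' but monopole operators shift GT weights by cocharacters and correspond to the abelian-theory classes $\mathbbm{w}(\lambda,\lambda')$ of Theorem~\ref{th:XAT} (i.e.\ moving strands to new longitudes). Crossings of same-label strands, by contrast, come from the nilHecke algebra embedded in $\CB^B(G,N)$ via \eqref{eq:nilHecke}, not from monopole operators. A direct algebraic verification along your lines would need to separate these two sources and then check the ghost-bigon and triple-point relations (\ref{ghost-bigon2}--\ref{eq:triple-point2}) by hand, which is precisely what the geometric route is designed to bypass.
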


Our answer to Question \ref{qu:intro} is given by the following theorem.
\begin{Theorem}[Theorem \ref{th:KLR-restriction} \& Theorem \ref{thm:divpowers}]\label{th:res-E_i}
Assume that $\Gamma$ has no edge loops.  
	\begin{enumerate}
		\item 
		If we choose $ \xi $ to be the first fundamental coweight of $GL(v_i) $, the restriction functor from Theorem \ref{th:IntroRes}, combined with a Hamiltonian reduction, gives a functor
		$$
		\CB_{\vp}(\Bv,\Bw)\operatorname{-}{\GTc}_\Z \xrightarrow{\res_i} \CB_{\vp}(\Bv - e_i,\Bw)\dash{\GTc}_\Z $$
		\item We have a commutative diagram:
		\begin{equation}\label{eq:intro-square}
			\begin{tikzcd}
				\CB_\varphi(\Bv,\Bw)\dash{\GTc}_\Z \arrow{r} {\res_i} \arrow{d} &   \CB_{\varphi}(\Bv - e_i,\Bw)\dash{\GTc}_\Z  \arrow{d}  \\
				\fKLR_{\Bv}^{\varphi}\wgmod \arrow{r}[below]{\EuScript E_i}   & \fKLR_{\Bv-e_i}^{\varphi}\wgmod
			\end{tikzcd}
		\end{equation}
		where the vertical equivalences come from Theorem \ref{th:CBfKLR}, and $ \EuScript E_i $ is a version of the functor (\ref{eq:EiKLR})
		\item The functors $\res_i$ preserve $\Oof{\CB_{\vp}(\Bv,\Bw)}_\Z$, and with their adjoints $\ind_i $ give functors as in (\ref{eq:EiY}) which define a categorical $\fg_\Gamma$-action on $\bigoplus_{\Bv}\Oof{\CB_{\vp}(\Bv,\Bw)}_\Z$.  
\end{enumerate} \end{Theorem}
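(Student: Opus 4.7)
For part (1), my plan is to begin by identifying the Levi $L$ and the subrepresentation $N_0^\xi$ explicitly. With $\xi$ the first fundamental coweight of the $GL(v_i)$ factor of $G = \prod_j GL(v_j)$, the centralizer is $L = \Cx_\xi \times GL(v_i-1) \times \prod_{j \neq i} GL(v_j)$, where $\Cx_\xi$ acts on the first coordinate line of $\C^{v_i}$. The invariant subspace $N_0^\xi \subset N$ consists exactly of those morphisms in the quiver representation that do not touch that line, so after passing to $L/\Cx_\xi$ one recovers the quiver gauge theory datum for dimension vector $\Bv - e_i$ with the same framing $\Bw$. This identifies $\CB(L/\Cx_\xi, N_0^\xi) \cong \CB(\Bv - e_i, \Bw)$. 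The functor $\res_i$ is then the composition of the general restriction from Theorem \ref{th:IntroRes} with the quantum Hamiltonian reduction by the monopole operator $r_\xi$, which by Proposition \ref{prop: qhr and gt} carries Gelfand--Tsetlin modules to Gelfand--Tsetlin modules over $\CB(\Bv - e_i,\Bw)$.

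To prove the commutativity of \eqref{eq:intro-square} in part (2), I would translate everything to the diagrammatic side using Theorem \ref{th:CBfKLR} and compare $\res_i$ with $\EuScript E_i$ weight space by weight space. For any $M$ in $\CB_\varphi(\Bv,\Bw)\dash\GTc_\Z$, Theorem \ref{th:IntroRes} says $\Wei_\nu(\res_i(M)) = \Wei_\nu(M)$ whenever $\nu$ is sufficiently $\xi$-negative. On the KLRW side, the image of $\Wei_\nu(M)$ under the equivalence of Theorem \ref{th:CBfKLR} corresponds to an idempotent truncation associated with a loading whose $i$-strands occupy a prescribed position; after translating by a large multiple of $\xi$, this loading is the one obtained by appending a fresh $i$-strand to a loading for $\Bv - e_i$. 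The functor $\EuScript E_i$ on flavoured KLRW modules is defined precisely as restriction along such an idempotent embedding, so matching these descriptions yields the commutativity. The combinatorial check needed is that ``sufficiently $\xi$-negative'' in the sense of Theorem \ref{th:IntroRes} is compatible with the position in a loading at which $\EuScript E_i$ operates.

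For part (3), preservation of category $\mathcal{O}$ requires tracking the $\Cx$-finiteness condition across both the restriction and the Hamiltonian reduction step; both operations are compatible with the weight grading and hence preserve generation by an appropriate bounded subspace. For the adjoints $\ind_i$, rather than reconstructing them directly from the Coulomb branch, I would transport $\EuScript F_i$ through Theorem \ref{th:CBfKLR} using the commutative square from (2); by uniqueness of adjoints the square automatically produces a commutative square involving $\ind_i$ and $\EuScript F_i$. The categorical $\fg_\Gamma$-action structure on $\bigoplus_\Bv \fKLR_\Bv^\varphi\wgmod$ established in the style of \cite{Webmerged} --- namely, the natural transformations between products of $\EuScript E_i, \EuScript F_i$ satisfying the KLR 2-category relations --- then transports through the equivalences to the desired action on $\bigoplus_\Bv \Oof{\CB_\varphi(\Bv,\Bw)}_\Z$.

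I expect the hardest step to be the identification in part (2) of the Coulomb-branch restriction with the diagrammatic $\EuScript E_i$. The two functors are constructed in drastically different languages --- one by embeddings of Coulomb branch algebras obtained by inverting and then reducing by a monopole operator, the other by a combinatorial operation on strand diagrams --- so bridging them depends on an explicit description of the equivalence in Theorem \ref{th:CBfKLR} at the level of generalized eigenspaces, together with verification that the stable behaviour under translation by $\xi$ on the Coulomb side matches exactly the position where a strand is appended on the KLRW side.
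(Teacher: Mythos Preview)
Your outline for parts (1) and (3) is essentially what the paper does. For (1) the identification of $(L/\Cx_\xi, N_0^\xi)$ with the quiver data for $\Bv - e_i$ is exactly Section~\ref{se:Ylamu}, and for (3) the paper also transports the categorical action from the KLRW side (Proposition~\ref{prop:flavoured-cat-action}) after checking directly that $\res_i^{(k)}$ preserves category $\mathcal O$ by a support argument (Lemma~\ref{lem:res-O}).

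The gap is in part (2). A weight-space-by-weight-space comparison is not enough: Theorem~\ref{th:IntroRes} gives you an identification $\Wei_\nu(\res(M)) \cong \Wei_\gamma(M)$ of \emph{vector spaces}, but to identify the functors $\res_i$ and $\EuScript E_i$ through the equivalence you must also match the \emph{morphisms} between weight functors. Concretely, both functors are governed by bimodules over the algebras $F(\mathsf S)\cong \widehat{\fKLR}^\varphi_{\mathsf S}$, and the commutative square amounts to showing that the bimodule $I(\mathsf S,\mathsf S^L)$ built from the natural transformations $\Hom(\Wei_{\gamma'},\Wei_\nu^L\circ\res)$ (see Theorem~\ref{thm:Splus}) is isomorphic to the KLRW induction/restriction bimodule $\widehat{\corif}$. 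Checking that two idempotents correspond---which is what ``$\xi$-negativity matches the strand position'' gives you---does not by itself identify the bimodule structure.

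The paper's mechanism for this is Lemma~\ref{lem:bim-iso}, which rests on the geometric compatibility statement Theorem~\ref{thm:saturation}: the map $\Hom(\Wei_\nu^L,\Wei_{\nu'}^L)\to \Hom(\Wei_\gamma,\Wei_{\gamma'})$ induced by $\res$ (Corollary~\ref{co:mapWei}) agrees, under the Steinberg-variety description of these Hom spaces, with the $G$-saturation map on equivariant Borel--Moore homology---but only after twisting by explicit rational functions $\kappa_\la$ (equation~\eqref{eq: def of kappa}). This twist is invisible from the loading combinatorics and comes from the monopole-operator formulas \eqref{eq: monopole relations}--\eqref{eq: inverse monopole relations}; it is precisely what your ``combinatorial check'' is missing. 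Your final paragraph correctly flags this step as the hardest, but the ingredient you would need to supply is not a matching of strand positions but this twisted saturation compatibility.
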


\subsection{Cherednik algebras}
\label{sec:intro-cherednik}

Similar induction and restriction functors were defined by Bezru\-kav\-nikov and Etingof \cite[\S 3.5]{BEind}  in the context of category $\cO$ for rational Cherednik algebras.  It's natural to compare these with the restriction and induction functors we define, since in the case where  $G=GL(n)$ and
$N=\Hom(\C^n,\C^n)\oplus \Hom(\C^n, \C^\ell)$, Kodera and Nakajima \cite{KoNa} (see
also \cite{BEF, Webalt}) have shown that the Coulomb branch algebra $\CB(n,\ell)=\CB(G,N)$ is isomorphic to
\begin{itemize}
	\item the spherical trigonometric Cherednik algebra $\mathsf{H}({n,0})$ of $S_n$ if $\ell=0$, and
	\item the spherical rational Cherednik algebra $\mathsf{H}({n,\ell})$ of
	$G(\ell, 1, n)=S_n\wr \Z/\ell\Z$ otherwise.
\end{itemize}

In this paper, we will only consider the case $\ell>0$.  
Let $ \xi $ denote the first fundamental coweight of $GL(n)$ as in Theorem \ref{th:res-E_i}. Then Theorem \ref{thm:rel} yields a restriction functor $ \res : \Oof{\CB(n,\ell)}\to \Oof{\CB(n-1,\ell)}$, while Bezrukavnikov and Etingof have defined a similar functor between category $\cO$'s for the full Cherednik algebra $\mathsf{\tilde{H}}({n,\ell})$.  As shown in \cite[\S 3.5.2]{GoLo}, these preserve the subcategory of aspherical modules and thus induce functors $\mathsf{res}_{\mathsf{BE}}\colon \Oof{\mathsf{H}({n,\ell})}\to \Oof{\mathsf{H}({n-1,\ell})}$, thought of as quotient categories of category $\cO$ for $\mathsf{\tilde{H}}({n,\ell})$.

Note that unlike in Theorem \ref{th:CBfKLR}, we are not assuming any integrality of parameters, since the most interesting cases for the Cherednik algebra involve rational, but non-integral parameters.

\begin{Theorem}\label{th:Cherednik1}
	There is an equivalence $\Oof{\mathsf{H}({n,\ell})}\to \Oof{\CB(n,\ell)}$ for each $n$ which intertwines the functors $\res$ and $\res_{\mathsf{BE}}$.  
\end{Theorem}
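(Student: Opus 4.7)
The plan is to prove this in stages: produce the underlying equivalence from the Kodera-Nakajima isomorphism, and then verify the intertwining of restriction functors by matching their descriptions in terms of a common commutative subalgebra.

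First, use the isomorphism $\mathsf{H}(n,\ell) \cong \CB(n,\ell)$ of \cite{KoNa} to obtain the underlying equivalence $\Oof{\mathsf{H}(n,\ell)} \cong \Oof{\CB(n,\ell)}$.  The main verification is that the two defining conditions for category $\cO$ correspond: the locally finite $\C^\times$-action from Definition \ref{def:catO} on the Coulomb branch side should match the Euler grading on the Cherednik side.  Since both gradings are induced by the canonical filtrations preserved by the Kodera-Nakajima isomorphism, this is a compatibility check rather than a real obstacle.

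Second, identify the commutative structures governing both restriction constructions.  The Gelfand-Tsetlin subalgebra $(\Sym \tilde{\ft}^*)^W \subset \CB(n,\ell)$ should correspond to the (spherical) Dunkl-Opdam subalgebra of $\mathsf{H}(n,\ell)$, generated by symmetric functions of the Dunkl-Opdam elements.  Simultaneously, match the parabolic data: the first fundamental coweight $\xi$ of $GL(n)$ cuts out the Levi $L = \Cx \times GL(n-1)$, which mirrors the parabolic $G(\ell,1,n-1) \subset G(\ell,1,n)$, namely the stabilizer of a point $b=(*,0,\ldots,0) \in \C^n$ in Bezrukavnikov-Etingof's construction.

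Third, with these matchings in hand, describe both restrictions uniformly in terms of weight spaces.  On the Coulomb side, Theorem \ref{th:IntroRes} characterizes $\res(M)$ via its Gelfand-Tsetlin weight spaces $\Wei_\nu(M)$ on the $\xi$-negative locus.  On the Cherednik side, the reformulation of $\resBE$ underlying its construction in \cite[\S 3.5.2]{GoLo} realizes it via completion at $b$, which translates to selecting generalized eigenspaces for Dunkl-Opdam elements at characters lying on the $W\cdot b$-orbit; the $\xi$-negativity condition corresponds to exactly this locus.  A natural transformation between the two functors is then produced from the weight-space identification and checked to be an isomorphism by evaluation on standard (Verma) modules.

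The main obstacle is Step 2: the Kodera-Nakajima isomorphism is not explicit on generators (it factors through affine Yangians, cf.\ \cite{BEF}), so identifying Gelfand-Tsetlin generators with symmetric functions in Dunkl-Opdam elements requires a separate argument, most likely via comparison of the two quantized integrable systems.  A secondary technical subtlety is that $\resBE$ is originally defined on category $\cO$ for $\mathsf{\tilde H}(n,\ell)$ and passed to its spherical quotient by \cite[\S 3.5.2]{GoLo}; we must confirm this descent is compatible with the weight-space matching above.
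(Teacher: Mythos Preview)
Your proposal has a genuine gap at exactly the point you yourself flag as ``the main obstacle.'' The identification of the Gelfand--Tsetlin subalgebra of $\CB(n,\ell)$ with the symmetric Dunkl--Opdam subalgebra of $\mathsf{H}(n,\ell)$ under the Kodera--Nakajima isomorphism is not established anywhere in the literature, and the paper says explicitly that ``at the moment, we do not know how to prove the compatibility of our induction and restriction functors with Bezrukavnikov and Etingof's under the equivalence of categories induced by the Kodera--Nakajima isomorphism.'' Your Step~3 cannot proceed without Step~2, so the weight-space matching never gets off the ground. Saying this ``requires a separate argument'' is not a proof; it is precisely the missing ingredient.

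The paper's route is entirely different and deliberately avoids this obstruction. Rather than transport $\res_{\mathsf{BE}}$ across the Kodera--Nakajima isomorphism, the paper passes both sides through weighted/flavoured KLRW algebras. On one side, the second author's equivalence $\mathfrak{W}\colon \Oof{\mathsf{\tilde H}_\varphi(n,\ell)} \to T^\vartheta_n\mmod$ from \cite{WebRou} was constructed (via uniqueness of $1$-faithful covers) specifically so as to intertwine $\res_{\mathsf{BE}}$ with KLRW restriction. On the other side, the quotient functor $\Wei_!\colon T^\vartheta_n\mmod \to \Oof{\CB_\varphi(n,\ell)}$ intertwines KLRW restriction with the Coulomb branch $\res$ by the main theorems of the paper. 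The remaining work is to show that the composite $\Wei_! \circ \mathfrak{W}$ kills exactly the aspherical simples, so that it descends to an equivalence from $\Oof{\mathsf{H}_\varphi(n,\ell)}$. This last step does invoke Kodera--Nakajima, but only indirectly: a separate equivalence $\mathsf{W}$ from \cite{Webalt} is known to satisfy $\Wei_! \cong \mathsf{KN}\circ (e\,\mathsf{W}^{-1})$ via polynomial representations, and $\mathfrak{W}$ and $\mathsf{W}$ agree on the level of simple labels, which is enough. Note in particular that the resulting equivalence of the theorem is \emph{not} asserted to be the one induced by Kodera--Nakajima; the paper leaves open whether they coincide.
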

The proof of this will be given in Section \ref{sec:cherednik-algebras}.
Given the discussion of the isomorphism $\mathsf{H}({n,\ell})\cong \CB(n,\ell)$ above, the reader might imagine that this is how the equivalence above is constructed.  That is not, in fact, the case.  Rather, the equivalence of Theorem \ref{th:Cherednik1} is constructed by comparing both categories to a flavoured KLRW algebra.  This equivalence is far from unique; it depends on several choices, and at the moment it is not clear how to tweak all the choices involved to assure that we obtain the equivalence induced by the isomorphism of \cite{KoNa}.  This is a question we will return to in future work.

\subsection{Generalized geometric Satake}

In \cite[Conj.~3.25]{BFNslices}, Braverman, Finkelberg and Nakajima propose a generalization of the geometric Satake theorem to all symmetric Kac-Moody types, which is further developed in the affine type A case in \cite{NakSatake}.  For each character $G\to \Cx$, there is an induced Hamiltonian $\Cx$ action on $M_C(\Bv,\Bw)$.  Choose this character to be given the product of the determinants, and let $\mathfrak{A}(\Bv,\Bw)\subset M_C(\Bv,\Bw)$ be the attracting locus for this $ \Cx$ action.  If $ \Gamma $ is of finite ADE type, then $ \mathfrak A(\Bv,\Bw) $ is a Mirkovi\'c-Vilonen locus in the affine Grassmannian of $ G_\Gamma^\vee $ by \cite[Lemma 4.4]{Krylov}.  
\begin{Conjecture}[\mbox{\cite[Conj. 3.25(3)]{BFNslices}}] \label{conj:nakajima}
	The sum of the top Borel-Moore homologies 
	$$\bigoplus_{\Bv} H^{BM}_{\operatorname{top}}(\mathfrak{A}(\Bv,\Bw))$$
	carries an action of $\mathfrak{g}_{\Gamma}$, making it isomorphic to the irreducible representation with highest weight $ \sum w_i \varpi_i$.  
\end{Conjecture}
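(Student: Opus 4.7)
The plan is to transport the categorical $\fg_\Gamma$-action of Theorem \ref{th:res-E_i}(3) to the top Borel-Moore homology via a characteristic-cycle map, thereby both defining the $\fg_\Gamma$-action on $\bigoplus_\Bv H^{BM}_{\operatorname{top}}(\mathfrak{A}(\Bv,\Bw))$ and identifying the resulting module with $V(\lambda)$, where $\lambda = \sum w_i \varpi_i$.

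By Theorem \ref{th:res-E_i}(3), the functors $\EuScript E_i, \EuScript F_i$ endow the complexified Grothendieck group
\[
K_\varphi \;:=\; \bigoplus_\Bv K_\C\bigl(\Oof{\CB_\varphi(\Bv,\Bw)}_\Z\bigr)
\]
with a $\fg_\Gamma$-module structure. Combining Theorem \ref{th:CBfKLR} with the tensor product categorification \cite[Th.~B]{Webmerged}, $K_\varphi$ is isomorphic to $V(\varpi_{i_1}) \otimes \cdots \otimes V(\varpi_{i_n})$, with the fundamental weights determined by $\Bw$.  The class $v_\lambda \in K_\varphi$ of the trivial module in $\Oof{\CB_\varphi(0,\Bw)}_\Z$ is a highest-weight vector of weight $\lambda$ and generates a distinguished copy of $V(\lambda)$ inside this tensor product.

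To reach top BM homology, define a linear map
\[
\pi \colon K_\varphi \;\longrightarrow\; \bigoplus_\Bv H^{BM}_{\operatorname{top}}\bigl(\mathfrak{A}(\Bv,\Bw)\bigr)
\]
by sending the class $[L]$ of a simple $L \in \Oof{\CB_\varphi(\Bv,\Bw)}_\Z$ to the fundamental class of $\overline{\operatorname{Supp}(L)}$ when this support is a top-dimensional subvariety of $\mathfrak{A}(\Bv,\Bw)$, and to zero otherwise. The crucial property is $\fg_\Gamma$-equivariance of $\pi$: this should follow from the fact that $\res_i, \ind_i$ arise from parabolic restriction and Hamiltonian reduction on the Coulomb branch (Theorem \ref{th:IntroRes}, Theorem \ref{thm:rel}), whose geometric shadow on characteristic cycles is the convolution by the corresponding attracting correspondence proposed in the BFN program. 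Once equivariance is in hand, $\pi(v_\lambda)$ is nonzero (it is the fundamental class of the one-point variety $\mathfrak{A}(0,\Bw)$), so $\operatorname{im}(\pi)$ contains a copy of $V(\lambda)$. One then argues that $\pi$ vanishes on the complementary isotypic components $V(\lambda')$ with $\lambda' < \lambda$ appearing in the tensor product: the simple realizing a highest-weight vector of such a summand has, under the combinatorial description coming from Theorem \ref{th:CBfKLR}, a support which lies in a lower-dimensional stratum of $\mathfrak{A}(\Bv,\Bw)$ and is therefore killed by $\pi$. Surjectivity of $\pi$ in finite ADE type follows from the dimension equality $\dim V(\lambda) = \sum_\Bv \#\{\text{MV cycles of weight } \lambda - \sum v_i \alpha_i \text{ in } \overline{\Gr^\lambda}\}$ via \cite{Krylov,MV}; for general symmetric Kac-Moody $\fg_\Gamma$, independent geometric input is needed.

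The main obstacle is establishing $\fg_\Gamma$-equivariance of the characteristic-cycle map $\pi$. The functors $\res_i, \ind_i$ act on modules rather than sheaves, so one must show that their effect on supports of simples precisely matches a convolution operator on $H^{BM}_{\operatorname{top}}(\mathfrak{A})$. Concretely, this requires taking the associated graded of the parabolic restriction of $\CB_\varphi(\Bv,\Bw)$ constructed in Theorem \ref{thm:rel} and identifying it with a geometric correspondence on the attracting locus of the kind used in \cite{BFNslices,NakSatake}. A secondary obstacle, in non-ADE Kac-Moody types, is that $\dim H^{BM}_{\operatorname{top}}(\mathfrak{A}(\Bv,\Bw))$ is not a priori known to equal $\dim V(\lambda)_\mu$; this dimension bound is essentially equivalent to the conjecture itself and would need to be extracted from the geometry of generalized affine Grassmannian slices rather than from the categorical framework developed here.
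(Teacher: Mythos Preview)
The statement is a conjecture, and the paper does not prove it; it reduces it to a further open conjecture (the one immediately following in the text). Your proposal is not a proof either, as you yourself acknowledge in the final paragraph, so the honest comparison is between your outline and the paper's reduction.

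Your strategy and the paper's are closely aligned: both use the categorical $\fg_\Gamma$-action of Theorem \ref{th:res-E_i}(3) and a characteristic-cycle map to Borel--Moore homology. The substantive difference is where you place the hard step. You work with the full Grothendieck group $K_\varphi$ (a tensor product) and seek \emph{geometric equivariance} of $\pi$ with respect to a convolution action on the codomain. The paper instead first passes to the Serre quotient $\cO_{\operatorname{top}}$ (modules of maximal GK dimension); Corollary \ref{co:irrep} already shows $\bigoplus_\Bv K_\C(\cO_{\operatorname{top}}(\Bv,\Bw)) \cong V(\lambda)$ as $\fg_\Gamma$-modules, with no geometric input. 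The characteristic cycle map factors through this quotient by Lemma \ref{lem:GKdim}, so what remains is purely a question of whether the induced map $K_\C(\cO_{\operatorname{top}}) \to H^{BM}_{2d}(\mathfrak{A})$ is a vector-space isomorphism, together with the dimension equality $\dim \mathfrak{A}(\Bv,\Bw) = d$. No equivariance is needed: the action on the target is \emph{defined} by transport once the map is a bijection. The paper explicitly notes that even in finite ADE and affine type A, where both sides independently carry $\fg_\Gamma$-actions, it is ``not clear that this map intertwines them''---so your proposed route through equivariance is strictly harder than what is required, and is open even in the cases where the conjecture is known.

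Your second obstacle (the dimension count outside finite type) is exactly half of the paper's residual conjecture, and you are right that it lies outside the categorical framework developed here. Your first obstacle, however, can be bypassed entirely by the $\cO_{\operatorname{top}}$ reduction: you do not need to match $\res_i, \ind_i$ with a geometric correspondence on $\mathfrak{A}$.
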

In the finite-type case, this conjecture follows from the geometric Satake correspondence of Mirkovi\'c-Vilonen \cite{MV}. It also holds in affine type A by the results of \cite{NakSatake}.  In general, we expect that $\dim \mathfrak{A}(\Bv,\Bw)=d$ where we define $d= \tfrac{1}{2} \dim M_C(\Bv,\Bw)$; this is again known to hold in finite type by \cite[Thm.~3.2]{MV}, and affine type $A$ by \cite[Prop.~7.33]{bowvarieties}.

This conjecture was an important motivation for us, since there is a close relationship between the category $\cO$ of $\CB(\Bv,\Bw)$ and the top homology appearing here.  For any module $M$ in category $\cO$, there is an associated characteristic cycle class $\operatorname{CC}(M)\in H^{BM}_{2d}(\mathfrak{A}(\Bv,\Bw))$.   As in \cite[Prop. 6.13]{BLPWquant}, we can define this cycle by taking any good filtration of $M$, (compatible with the above mentioned filtration on $ \CB(G,N)$) and then summing the Borel-Moore classes of the $d$-dimensional components of $\gr M$, weighted by the generic rank of $\gr M$ on the component.  This is well-defined by a standard argument of Bernstein \cite[Lec. 2.8]{BernD}; see also \cite[Th. 1.1.13]{Ginzburg-D-modules}.  
Since we have fixed the degree of the Borel-Moore class here, this class will be insensitive to the multiplicity of our characteristic cycle on components of complex dimension $<d$. 

This induces a map $K_\C(\Oof{\CB_\varphi(\Bv,\Bw)})\to H^{BM}_{2d}(\mathfrak{A}(\Bv,\Bw))$.  This map is  not an isomorphism in most cases, since if the Gelfand-Kirillov dimension of $M$ is less than $d$, then $\operatorname{CC}(M)=0$ by Lemma \ref{lem:GKdim}.  The kernel of this map also depends in a sensitive way on the choice of $ \varphi $.

Assume that $ \varphi $ is integral and let $\cO_{\operatorname{top}}(\Bv,\Bw)$ be the quotient of $\Oof{\CB_\varphi(\Bv,\Bw)}$ by the subcategory of objects with GK dimension  $<d$. By Lemma \ref{lem:GKdim}, the characteristic cycle map descends to a map $ K_\C(\cO_{\operatorname{top}}(\Bv,\Bw)) \rightarrow  H^{BM}_{2d}(\mathfrak{A}(\Bv,\Bw))$.

On the other hand, by Corollary \ref{co:irrep}, the sum $\bigoplus_{\Bv}K_\C(\cO_{\operatorname{top}}(\Bv,\Bw))$ is an irreducible representation of $\mathfrak{g}_{\Gamma}$, with the action induced by the induction and restriction functors $\EuScript E_i, \EuScript F_i$. Note, there is no dependence on $\vp$ in this result, beyond requiring it to be integral.  This shows that unlike the rest of category $\mathcal{O}$, this quotient is not sensitive to $\vp$.  
Thus, Conjecture \ref{conj:nakajima} reduces to a proof that:
\begin{Conjecture}
	We have an equality $\dim \mathfrak{A}(\Bv,\Bw)=d$ and for any integral $\varphi$, the characteristic cycle map $\bigoplus_{\Bv}K_\C(\cO_{\operatorname{top}}(\Bv,\Bw))\to \bigoplus_{\Bv} H^{BM}_{2d}(\mathfrak{A}(\Bv,\Bw))$ is an isomorphism of vector spaces.
\end{Conjecture}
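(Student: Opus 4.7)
The plan is to exploit Corollary \ref{co:irrep}, which identifies $\bigoplus_\Bv K_\C(\cO_{\operatorname{top}}(\Bv,\Bw))$ with the irreducible $\mathfrak{g}_\Gamma$-module $V(\lambda)$ of highest weight $\lambda = \sum_i w_i \varpi_i$, with the Chevalley generators realized by $\EuScript E_i, \EuScript F_i$. I would show $\operatorname{CC}$ intertwines this action with a geometric $\mathfrak{g}_\Gamma$-action on $\bigoplus_\Bv H^{BM}_{2d}(\mathfrak{A}(\Bv,\Bw))$, and that it is nonzero on the highest weight line; irreducibility of $V(\lambda)$ then forces injectivity, and a weight-by-weight dimension count, once $\dim \mathfrak{A}(\Bv,\Bw) = d$ is in hand, yields surjectivity.

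First I would establish compatibility of $\operatorname{CC}$ with the $\mathfrak{g}_\Gamma$-action. The functor $\res_i$ of Theorem \ref{th:res-E_i} is built from the explicit algebra map arising from parabolic restriction along the coweight $\xi$; filtering compatibly with the Coulomb branch filtration, one sees that on the associated graded this collapses to pull-push through the correspondence given by the attracting locus of $\xi$ inside $M_C(\Bv,\Bw)$. This defines operators $e_i \colon H^{BM}_{2d}(\mathfrak{A}(\Bv,\Bw)) \to H^{BM}_{2d}(\mathfrak{A}(\Bv-e_i,\Bw))$, and dually $f_i$, intertwined with $\EuScript E_i, \EuScript F_i$ by $\operatorname{CC}$. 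The Serre relations on the geometric side can then be transported from those on the categorical side.

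Next I would verify non-vanishing of $\operatorname{CC}$ on the highest weight vector. When $\Bv = 0$ the gauge group is trivial, $\cO_{\operatorname{top}}(0,\Bw)$ is the category of finite-dimensional vector spaces and $\mathfrak{A}(0,\Bw)$ is a point, so $\operatorname{CC}$ is tautologically an isomorphism of one-dimensional spaces. By the intertwining property the image of $\operatorname{CC}$ is a nonzero $\mathfrak{g}_\Gamma$-submodule of the right-hand side containing a vector of weight $\lambda$; since the left-hand side is irreducible of highest weight $\lambda$, this forces $\operatorname{CC}$ to be injective. For surjectivity, once $\dim \mathfrak{A}(\Bv,\Bw) = d$ is assumed, the target is a finite-dimensional graded vector space, and the image — being generated by a highest weight vector of weight $\lambda$ under the geometric operators $e_i, f_i$ — is isomorphic to $V(\lambda)$; the only remaining point is to rule out extra top-dimensional components not in the image, which can be done by exhibiting, for each $\Bv$, enough simple objects in $\cO_{\operatorname{top}}(\Bv,\Bw)$ to account for all top Borel-Moore classes, using Theorem \ref{th:CBfKLR} to parametrize simples combinatorially.

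The main obstacle, by a wide margin, is the dimension equality $\dim \mathfrak{A}(\Bv,\Bw) = d$ itself. This is known in finite type from \cite{MV} and in affine type A from \cite{bowvarieties}, but in general symmetric Kac-Moody type it requires essentially new geometric input, since the transversality arguments on the affine Grassmannian underlying the finite-type proof do not transfer to general Coulomb branches. A plausible route is to establish the upper bound $\dim \mathfrak{A}(\Bv,\Bw) \leq d$ by an isotropy/Lagrangian argument using that $\mathfrak{A}$ is the attracting locus for a Hamiltonian $\Cx$-action on the symplectic variety $M_C(\Bv,\Bw)$, and the lower bound by producing, via Theorem \ref{th:CBfKLR} and Gabber's integrability theorem on characteristic varieties, at least one simple module in $\cO_{\operatorname{top}}(\Bv,\Bw)$ of full GK dimension $d$ for every weight $\mu$ in $V(\lambda)$. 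Once these two bounds are secured, the argument above closes the proof.
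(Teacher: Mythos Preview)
This statement is a \emph{Conjecture} in the paper; there is no proof to compare against. The authors explicitly leave it open and remark, immediately after stating it, that even in finite ADE and affine type~A --- where both sides are already known to carry $\mathfrak{g}_\Gamma$-actions --- ``it is not clear that this map intertwines them.'' Outside those cases there is no pre-existing action on the Borel--Moore side at all; the paper's stated intent is to \emph{use} the conjecture to define one.

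Your proposal has real gaps beyond the dimension equality you flag. The step ``filtering compatibly \dots one sees that on the associated graded this collapses to pull-push'' is precisely the unresolved point: $\res_i$ is a composite of a Morita equivalence, restriction along $\CB(L,N)\hookrightarrow \CB^P(G,N)$, the localization inverting $r_\xi$, and a Hamiltonian reduction, and there is no argument that these steps jointly respect a good filtration so that $\gr$ becomes a clean geometric correspondence on attracting loci. Your transport of Serre relations is circular: intertwining only lets you push relations from the domain onto the \emph{image} of $\operatorname{CC}$, not onto all of $H^{BM}_{2d}(\mathfrak{A}(\Bv,\Bw))$, so you do not get a $\mathfrak{g}_\Gamma$-module structure on the target without already knowing surjectivity. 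And your surjectivity argument presupposes that you can count the top-dimensional components of $\mathfrak{A}(\Bv,\Bw)$ and then produce ``enough'' simples to hit each one; but that count is exactly what the conjecture would give, and nothing in Theorem~\ref{th:CBfKLR} or the combinatorics of simples tells you which components of the attracting locus actually arise as characteristic varieties.
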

In finite-type ADE cases and in affine type A, the domain and codomain of this map both carry $\mathfrak{g}_{\Gamma}$-actions, but it is not clear that this map intertwines them. We know that the two sides are isomorphic as irreducible $\mathfrak{g}_{\Gamma}$-modules, so if the characteristic cycle map is equivariant, it must be an isomorphism.  On the other hand, outside of the finite-type and affine type A cases, we have no pre-existing action on the codomain, and thus we wish to use this conjecture to define one.

\newcommand{\redu}[1]{[#1]}
\newcommand{\rb}{\Upsilon}

\subsection*{Acknowledgements}
We thank Alexander Braverman and Justin Hilburn for helpful conversations.  J.K.  and B.W. were supported by NSERC through Discovery Grants.
O.Y. was supported in part by the ARC through grant DP180102563.  
This research was supported in part by Perimeter
Institute for Theoretical Physics.
Research at Perimeter Institute is supported in part by the Government of Canada through the Department of Innovation, Science and Economic Development Canada and by the Province of Ontario through the Ministry of Colleges and Universities.

\section{Relating various Coulomb branch algebras}

\subsection{Coulomb branch algebras and their partial flag versions}

Let $ G $ be a reductive group.  Let $ T\subset B$ denote a fixed maximal torus and Borel subgroup, and let $W$ denote the Weyl group.
We write  $\mathcal K = \C((z)), \mathcal O = \C[[z]] $ and we
consider the groups $ G(\cK) = G((z))$ and $G(\cO) = G[[z]] $.   We will
study the affine Grassmannian $ \Gr_G = G(\cK)/G(\cO)$.  Recall that for any dominant coweight $ \lambda $, the $ G(\cO) $-orbits $ \Gr^\lambda = G(\cO) z^\lambda $ partition $\Gr_G $.

Let $ P$ denote any  parabolic subgroup of $ G $ containing $T$, and $L$ its Levi subgroup.  Let $ W_L $ be the Weyl group of $ L$.
There is a corresponding parahoric subgroup $ I_P \subset G(\cO) $
which is defined as the preimage of $ P $ under the evaluation map $
G(\cO) \rightarrow G $.  In particular, if $ P = B $, then $ I_P $ is
the usual Iwahori; on the other hand, if $ P = G $, then $ I_P =
G(\cO) $.
We have the corresponding partial affine flag variety $
G(\cK) / I_P $.

We fix an extension
$$
1 \rightarrow G \rightarrow \tG \rightarrow F \rightarrow 1
$$
where $ F $ is a torus.  Following the physics literature, we call $ F $ the \textbf{flavour torus} of this extension.

Let $ \tilde T $ denote the maximal torus of $ \tG $ containing $ T $.  Similarly, for any parabolic subgroup $ P $ of $ G $, let $\tilde{P}$ be the
parabolic subgroup in $\tG$ such that $G\cap \tilde{P}=P$
and $\tilde{I}_P$ its preimage in $\tG(\cO)$.
Note that the Levi $\tilde{L}$ of $\tilde{P}$ is an
extension of $F$ by $L$.  We also have that $ L(\cO) \subset I_P $.

Throughout, we will regularly need to use the cohomology rings with complex coefficients of the classifying spaces of these groups, and so we write 
\[\HBG{G}=H^*(BG)=H_G^*(pt)=(\Sym\ft^*)^W, \]  
and similarly for $ L, F, \tG, T $, etc.

We define the group $\tG_\cK^\cO$ to be the preimage of $ F(\cO) $
under the map $ \tG(\cK) \rightarrow F(\cK)$.  This subgroup contains
$\tG(\cO)$, and the quotient $\tG_\cK^\cO/\tG(\cO)$ is isomorphic to
the affine Grassmannian $\Gr_G$.  We have an action of $ \Cx $ on $
\tG_\cK^\cO $ by loop rotation, given by $(t\cdot g)(z) = g(tz)$ for $t \in \Cx$ and $g(z) \in \tG_\cK^\cO$ . There is an action of the semi-direct
product $  \tG_\cK^\cO \rtimes \Cx $ on $\Gr_G$.

Let $ N $ be a representation of $ \tG $.
We will typically construct
$\tG$ by starting with a representation of $G$, and letting $\tG$ be
the product of $G$ and a torus which acts on $N$ commuting with $G$; for convenience later, we will not assume this action is faithful.

Following Braverman-Finkelberg-Nakajima \cite{BFN}, we will now define the spaces used to construct the Coulomb branch.
Let $ N(\cO) = N \otimes \C[[z]] $ and $ N(\cK) = N \otimes \C((z))$;
these are naturally representations over $\tG(\cO) \rtimes \Cx$ and $
\tG_\cK^\cO \rtimes \Cx$, respectively.  We can consider the vector bundle
$$
T_{G,N} := G(\cK) \times_{G(\cO)} N(\cO) =  \tG_\cK^\cO \times_{\tG(\cO)} N(\cO) \rightarrow Gr_G
$$
Note that the subscript $G(\cO)$ (resp.~$\tG(\cO)$) indicates the quotient by a natural group action. There is a projection map $ p: T_{G,N} \rightarrow N(\cK) $ given by $ [g,v] \mapsto gv $.

Let
$$
R_{G,N} = p^{-1}(N(\cO)) = \{([g], w) \in Gr_G \times N(\cO) :  w \in g N(\cO) \}
$$
Following Braverman-Finkelberg-Nakajima \cite{BFN}, we  form the convolution algebra
$$ \CBh(G,N) :=   H_*^{\tG(\cO) \rtimes \Cx}(R_{G,N}) $$
We will use $\hbar$ throughout for the equivariant parameter corresponding to the loop $\Cx$-action.  We include it in the notation here to emphasize that we have left it as a free variable, whereas later, we will usually consider the quotient where we set $\hbar=1$.  A great deal of care is required to define the equivariant Borel-Moore homology for an infinite-dimensional space and
group; see \cite[\S 2(ii)]{BFN} for a detailed discussion.
We will refer to $\CBh(G,N)$ as the \textbf{spherical Coulomb branch algebra}.  

\begin{Remark}
	If we specialize $\hbar$ to 0, then the resulting algebra is
	commutative, and is the coordinate ring on a Poisson variety $
	M_C(G,N)$.  The inclusion $ H^*_F(pt) \rightarrow \CBh(G,N) $ gives rise a morphism $ M_C(G,N) \rightarrow \mathfrak f$.  The fibre over $0\in \mathfrak{f}$ is usually called the {\bf Coulomb branch}, and this family provides a Poisson deformation.
\end{Remark}

We will also need the
partial flag variety version of the BFN algebra.
\begin{Definition}
	Let $ P $ be a parabolic subgroup of $ G$.   We define
	\begin{align*}      T_{G,N}^P& := G(\cK) \times_{I_P} N(\cO) =  \tG_\cK^\cO \times_{\tilde{I}_P} N(\cO)\xrightarrow{p_P} N(\cK)\\
		R_{G,N}^P &:= p_P^{-1}(N(\cO)) = \{([g], w) \in G(\cK)/ I_P \times N(\cO) :  w \in g N(\cO) \}
	\end{align*}
	The {\bf parabolic Coulomb branch algebra} is the convolution
	algebra
	\begin{equation*}
		\PCBh(G,N) := H_*^{\tilde{I}_P \rtimes \Cx}(R_{G,N}^P).
	\end{equation*}
	
\end{Definition}

These algebras depend on the choice of $\tG$, but
we leave this implicit in the notation.  In the special case where $ P = B$, then we will call $ \ICBh $ the
\textbf{Iwahori Coulomb branch algebra}.  The spherical Coulomb
branch algebra is an example of a {\bf
	principal Galois order} as defined e.g. in \cite{FGRZGalois}.  The idea of replacing principal Galois
orders by flag orders (defined in \cite[Lem. 2.5]{WebGT}) played an important role in
\cite{WebGT,Webdual};   the algebra $\ICBh$ is an example of a flag
order in this case.

Note that the space $  R_{G,N}^P$ contains a copy of $G(\cO)/I_P\times
N(\cO)\cong G/P\times N(\cO)$.  We have vector space isomorphisms
\begin{equation}
    \label{eq:nilHecke}H_*^{{\tilde{I}_P \times
		\Cx}}(G/P\times N(\cO))\cong H_*^{\tP \times \Cx}(G/P)\cong H_*^{\tG
	\times \Cx}((G/P)^2).
\end{equation}
(Here we use $ G/P = \tG / \tilde P $ to get the action of $ \tG $ on $G/P $.)

Now, $  H_*^{\tG \times \Cx}((G/P)^2) $ has a convolution structure of its own, and Poincar\'e duality shows that it is a matrix algebra:
\[H^{\tG \times \Cx}_*((G/P)^2)\cong \End_{\HBG{\tG \times \Cx}}(H_*^{\tG \times \Cx
}(G/P))\cong \End_{\HBG{\tG \times \Cx}}(\HBG{\tL \times \Cx}).\]
When $ P = B $, then this is the nilHecke algebra of $ W$.
\begin{Lemma}
	The inclusion $ (G(\cO)/I_P)\times N(\cO) \hookrightarrow R_{G,N}^P $ induces an algebra map $$H_*^{\tG \rtimes \Cx }((G/P)^2)\to \PCBh(G,N).$$
\end{Lemma}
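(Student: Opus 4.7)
The plan is to realize $(G(\cO)/I_P)\times N(\cO)$ as a closed subscheme of $R^P_{G,N}$ whose restricted BFN convolution coincides with the standard convolution on $H_*^{\tG\rtimes \Cx}((G/P)^2)$, and then to take the desired algebra map to be proper pushforward in equivariant Borel--Moore homology along the inclusion.

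First I would check that the inclusion is closed. The finite-dimensional partial flag variety $G(\cO)/I_P\cong G/P$ sits inside $G(\cK)/I_P$ as a closed (projective) subvariety, and for $g\in G(\cO)$ one has $gN(\cO)=N(\cO)$, so the defining condition $w\in gN(\cO)$ of $R^P_{G,N}$ is automatic on the product $(G(\cO)/I_P)\times N(\cO)$. Proper pushforward therefore yields a linear map
$$H_*^{\tilde I_P\rtimes\Cx}\bigl((G(\cO)/I_P)\times N(\cO)\bigr)\longrightarrow H_*^{\tilde I_P\rtimes\Cx}(R^P_{G,N})=\PCBh(G,N),$$
and the isomorphisms (\ref{eq:nilHecke}) identify the source with $H_*^{\tG\rtimes\Cx}((G/P)^2)$.

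For multiplicativity I would recall BFN's convolution diagram $R^P_{G,N}\times R^P_{G,N}\xleftarrow{q}\tilde R\xrightarrow{m}R^P_{G,N}$, where $\tilde R\subset G(\cK)\times_{\tilde I_P}R^P_{G,N}$ consists of triples $([g_1],[g_2],w)$ with $g_1^{-1}w\in N(\cO)$ and $w\in g_1g_2 N(\cO)$, where $q$ records $([g_1],g_1^{-1}w)$ and $([g_2],w)$, and where $m([g_1],[g_2],w)=([g_1g_2],w)$. Restricting each of the three spaces to the closed loci where all appearing cosets lie in $G(\cO)/I_P$ and all vectors lie in $N(\cO)$, both conditions become vacuous because $G(\cO)\cdot G(\cO)\subset G(\cO)$ and $G(\cO)\cdot N(\cO)=N(\cO)$; after contracting the equivariantly contractible $N(\cO)$-factors, the restricted diagram becomes the classical convolution diagram $(G/P)^2\leftarrow G\times_P(G/P)\to (G/P)^2$. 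The standard base change argument for closed embeddings then shows that pushforward along our inclusion intertwines the two convolutions, giving the desired algebra homomorphism.

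The chief obstacle is verifying that BFN's refined Gysin pullback appearing in $q^!$, defined in the infinite-dimensional ind-scheme setting, restricts to the ordinary refined pullback on the finite-dimensional classical diagram once we pass to the closed subspace over $G(\cO)$. This is essentially a bookkeeping exercise with equivariant orientations, using that the infinite-dimensional ``extra'' directions in BFN's construction trivialize on $G(\cO)$, but it deserves a careful write-up to make sure all signs and normalizations match.
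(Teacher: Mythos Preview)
The paper states this lemma without proof, so there is no argument in the paper to compare against; your proposal supplies precisely the standard justification one would give. Your outline is correct: the inclusion is closed because $G/P\hookrightarrow G(\cK)/I_P$ is closed and the fiber condition is vacuous over $G(\cO)$, proper pushforward gives the linear map, and restricting the BFN convolution diagram to the locus over $G(\cO)$ collapses it to the classical convolution diagram for $(G/P)^2$, so pushforward is multiplicative by proper base change.

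Your flagged ``chief obstacle'' is not a genuine difficulty here. Over $G(\cO)$ the relevant lattice $gN(\cO)$ coincides with $N(\cO)$, so the BFN twisted product $R^P_{G,N}\tilde\times R^P_{G,N}$ restricted to this locus is just the ordinary fiber product $(G/P\times N(\cO))\times_{N(\cO)}(G/P\times N(\cO))$, and the refined pullback $q^!$ reduces to smooth pullback along a trivial vector bundle projection followed by restriction to the diagonal in the $N(\cO)$ factors---exactly the standard Chriss--Ginzburg convolution. No infinite-dimensional corrections enter, and the compatibility with proper pushforward is the usual base change for Borel--Moore homology.
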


Let $e' \in H_*^{\tG \rtimes \Cx }((G/P)^2)=H_*^{\tP \rtimes \Cx}(G/P)$ be the primitive
idempotent in this matrix algebra that projects to the
$W$-invariants.  We can formulate the usual abelianization isomorphism
as the statement  that for any $\tG$-space $X$,
we have $e'H_*^{\tP}(X)\cong H_*^{\tG}(X)$ where $H_*^{\tP \rtimes
	\Cx}(G/P)$ acts by convolution.

Applying this fact, we find that:
\begin{Proposition} \label{th:Morita}
	For any parabolic $ P \subset G$, we have isomorphisms
	\begin{equation}\label{eq:Morita}
	   \PCBh e' \cong H_*^{\tilde{I}_P \rtimes \Cx}(R_{G,N}) \qquad
	e'\PCBh \cong H_*^{\tG(\cO) \rtimes \Cx}(R_{G,N}^P)\qquad \CBh \cong e'\PCBh e' 
	\end{equation}
	The bimodules $ \PCBh e'  $ and $ e'\PCBh $ define a Morita equivalence between $ \PCBh $ and $\CBh$.
\end{Proposition}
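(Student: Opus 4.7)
The plan is to deduce the three isomorphisms in (\ref{eq:Morita}) by applying the abelianization isomorphism $e' H_*^{\tilde P \rtimes \Cx}(X) \cong H_*^{\tG \rtimes \Cx}(X)$ to suitable $\tG(\cO) \rtimes \Cx$-equivariant spaces. Since the evaluation map at $z = 0$ gives equivariant homotopy equivalences $\tG(\cO) \simeq \tG$ and $\tilde I_P \simeq \tilde P$, this promotes to $e' H_*^{\tilde I_P \rtimes \Cx}(X) \cong H_*^{\tG(\cO) \rtimes \Cx}(X)$ for any $\tG(\cO) \rtimes \Cx$-equivariant $X$, where $e'$ acts via its embedding into $\PCBh$ through the preceding Lemma.

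First, I would apply this abelianization with $X = R^P_{G,N}$, which is naturally a $\tG(\cO) \rtimes \Cx$-space: this yields the middle isomorphism directly,
\[
e' \PCBh = e' H_*^{\tilde I_P \rtimes \Cx}(R^P_{G,N}) \cong H_*^{\tG(\cO) \rtimes \Cx}(R^P_{G,N}).
\]
For the first isomorphism, I would exploit the $\tG(\cO)$-equivariant proper fibration $\pi : R^P_{G,N} \to R_{G,N}$ induced by $G(\cK)/I_P \to G(\cK)/G(\cO)$, whose fibers are $G(\cO)/I_P \cong G/P$. The key technical step is to verify that right convolution by $e'$ inside $\PCBh$ corresponds geometrically to proper pushforward along $\pi$; granting this, one obtains
\[
\PCBh \cdot e' \cong H_*^{\tilde I_P \rtimes \Cx}(R_{G,N}).
\]
Combining this with the abelianization isomorphism applied to the $\tG(\cO) \rtimes \Cx$-space $R_{G,N}$ then gives the third isomorphism,
\[
\CBh = H_*^{\tG(\cO) \rtimes \Cx}(R_{G,N}) \cong e' H_*^{\tilde I_P \rtimes \Cx}(R_{G,N}) \cong e' (\PCBh e') = e' \PCBh e'.
\]

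For the Morita equivalence, it suffices to show that $e'$ is a full idempotent in $\PCBh$, i.e.\ that $\PCBh e' \PCBh = \PCBh$. I would reduce this to fullness of $e'$ in the finite matrix algebra $H_*^{\tG \rtimes \Cx}((G/P)^2) \cong \End_{\HBG{\tG \rtimes \Cx}}(\HBG{\tL \rtimes \Cx})$, which is automatic since $\HBG{\tL \rtimes \Cx}$ is free of finite rank $|W/W_L|$ over $\HBG{\tG \rtimes \Cx}$ and $e'$ projects onto a direct summand that generates this endomorphism ring. Standard Morita theory then gives that the bimodules $\PCBh e'$ and $e' \PCBh$ implement the claimed equivalence between $\PCBh \mmod$ and $\CBh \mmod$.

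The main obstacle I expect is justifying that right convolution by $e'$ coincides with proper pushforward along $\pi$. This requires matching the combinatorially defined idempotent $e'$ of the nilHecke matrix algebra with the geometric fiber-integration operation coming from the BFN convolution product on $R^P_{G,N}$, which is a standard but delicate equivariant Borel--Moore homology computation. Once that compatibility is in hand, everything else reduces to formal consequences of the abelianization isomorphism and Morita theory for full idempotents.
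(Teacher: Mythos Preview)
Your proposal is correct and follows essentially the same approach as the paper. The paper's proof simply cites \cite[Th.~2.6]{WeekesCoulomb}, but the paragraph preceding the statement already sets up exactly the abelianization isomorphism $e' H_*^{\tilde P}(X) \cong H_*^{\tG}(X)$ that you invoke, and your argument fleshes out precisely how that yields the three displayed isomorphisms; the fullness of $e'$ for the Morita claim is likewise the standard argument, since $e'$ is already full in the matrix subalgebra $H_*^{\tG \rtimes \Cx}((G/P)^2)$ which shares its unit with $\PCBh$.
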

The first two isomorphisms of \eqref{eq:Morita} are related by the map from $\tilde{I}_P$-orbits on $R_{G,N}$ to $\tG(\cO)$ orbits on $R_{G,N}^P$ sending $(\redu{g},v)\mapsto(\redu{g^{-1}},gv)$.  
\begin{proof}
	This follows the same proof as \cite[Th. 2.6]{WeekesCoulomb}.
\end{proof}

The algebra $\CBh(G,N)$ contains a subalgebra given by the equivariant
parameters 
$$H_{\tilde G(\cO) \rtimes \Cx}^\ast(pt) \cong \HBG{\tG \times \Cx}.$$
Braverman, Finkelberg, and Nakajima \cite[Section 3(vi)]{BFN} call this the Cartan
subalgebra, but we prefer to call this the {\bf Gelfand-Tsetlin subalgebra}, since it is a generalization of this subalgebra
in $U(\mathfrak{gl}_n)$.  Similarly $ \CBh^P(G,N) $ contains $H^\ast_{\tilde{I}_P \rtimes \Cx}(pt)\cong \HBG{\tL \times \Cx}$.

The Gelfand-Tsetlin algebra $\HBG{\tG\times \Cx}$ of $ \CBh(G,N) $ contains the subalgebra
$Z:=\HBG{F\times \Cx}\cong \Sym \mathfrak{f}^*[\hbar]$, which is central in $\CBh(G,N)$ (here
$ \mathfrak f $ is the Lie algebra of our flavour torus $ F $).  In
fact, an application of \cite[Th. 4.1(4)]{FOgalois}, using the fact
that $\CBh(G,N)$ is a Galois order as shown in \cite[Th. B]{WebGT},
shows that $ Z $ is the full center of $\CBh(G,N)$.

\subsection{Inclusion of Coulomb branch algebras}
One advantage of considering these parabolic Coulomb branch algebras is
that they allow us to study the relation with Coulomb branch algebras
defined by Levi subgroups.

The inclusion $ L(\cK) \subset G(\cK)$ gives rise to an inclusion $ \Gr_L \hookrightarrow G(\cK) / I_P $.  Moreover it is easy to see that the restriction of $R^P_{G,N} $ to $ \Gr_L $ coincides with $ R_{L,N}$.  This leads to the following result.

\begin{Proposition} \label{th:inclusion}
	There is an inclusion of algebras $ \CBh(L, N) \rightarrow \PCBh(G,N)
	$, which respects their Gelfand-Tsetlin subalgebras $\HBG{\tL \times \Cx}$.
\end{Proposition}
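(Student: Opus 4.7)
The plan is to construct the map as a proper pushforward along a closed embedding, carried out in the equivariant Borel-Moore setting of \cite{BFN}.  The starting geometric input is the observation already made in the paragraph preceding the statement: because $L(\cK)\cap I_P=L(\cO)$, the natural map $\Gr_L = L(\cK)/L(\cO)\hookrightarrow G(\cK)/I_P$ is an ind-closed embedding, and its pullback of the vector bundle $T^P_{G,N}\to G(\cK)/I_P$ is canonically $T_{L,N}\to\Gr_L$.  Consequently the defining condition $w\in gN(\cO)$ matches on both sides, and one obtains a closed embedding $i\colon R_{L,N}\hookrightarrow R^P_{G,N}$ that is equivariant for $\tilde L(\cO)\rtimes\Cx$.

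Proper pushforward along $i$ then gives
\[i_*\colon\CBh(L,N)=H_*^{\tilde L(\cO)\rtimes \Cx}(R_{L,N})\longrightarrow H_*^{\tilde L(\cO)\rtimes \Cx}(R^P_{G,N}).\]
To land in $\PCBh(G,N)$ one has to upgrade the equivariance from $\tilde L(\cO)$ to $\tilde I_P$.  For this I would use that the quotient $\tilde I_P/\tilde L(\cO)$ is ind-contractible: a Lie-algebra complement to $\tilde{\mathfrak l}[[z]]$ inside $\operatorname{Lie}(\tilde I_P)=\tilde{\mathfrak l}+\mathfrak u_P+z\tilde{\mathfrak g}[[z]]$ is the nilpotent piece $\mathfrak u_P\oplus z\mathfrak u_P[[z]]\oplus z\mathfrak u_P^{-}[[z]]$.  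Using the BFN finite-dimensional approximations, this gives an isomorphism $H_*^{\tilde I_P\rtimes\Cx}(R^P_{G,N})\xrightarrow{\sim} H_*^{\tilde L(\cO)\rtimes\Cx}(R^P_{G,N})$, and composing $i_*$ with its inverse produces the desired map $\CBh(L,N)\to\PCBh(G,N)$.

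What remains is to check that this map is an injective algebra homomorphism which respects the Gelfand-Tsetlin subalgebras.  The homomorphism property reduces to compatibility with the BFN convolution diagrams: since $L(\cK)$ is closed under multiplication inside $G(\cK)$, the convolution data for $R_{L,N}$ is the base change of that for $R^P_{G,N}$ along $\Gr_L\subset G(\cK)/I_P$, and standard proper-base-change applies.  Injectivity is cleanest after localization to the $\tilde T\times\Cx$-fixed points, where the classes $[\overline{R^\mu_{L,N}}]$ for $L$-coweights $\mu$ are supported on distinct fixed points $z^\mu\in G(\cK)/I_P$ and hence remain linearly independent in $\PCBh(G,N)$.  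Finally, the Gelfand-Tsetlin subalgebra of both $\CBh(L,N)$ and $\PCBh(G,N)$ is the equivariant parameter ring $\HBG{\tilde L\times\Cx}$, and $i_*$ is $\HBG{\tilde L\times\Cx}$-linear by the projection formula.  The main obstacle is formalizing the ind-contractibility step rigorously in the ind-pro setting, which demands careful bookkeeping with the BFN finite-dimensional approximations rather than any naive infinite-dimensional claim.
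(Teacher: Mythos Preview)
Your proposal follows the same architecture as the paper: closed embedding $R_{L,N}\hookrightarrow R^P_{G,N}$, proper pushforward in equivariant Borel--Moore homology, then upgrading from $\tilde L(\cO)$-- to $\tilde I_P$--equivariance via contractibility of the quotient. The one genuine difference is in how the algebra-homomorphism property is verified. You invoke proper base change for the BFN convolution diagrams directly; the paper instead constructs an auxiliary \emph{right} action of $\CBh(L,N)$ on $\PCBh(G,N)$ (a special case of the auxiliary action diagram \cite[(27)]{BFNSpringer}, with $Z=R^P_{G,N}$), checks via \cite[Prop.~4.13]{BFNSpringer} that it commutes with left multiplication, observes that $1\star b=\iota(b)$ as in \cite[5.7(1)]{BFN}, and then deduces $\iota(bb')=(1\star b)\star b'=\iota(b)\iota(b')$ formally. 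This trick sidesteps having to compare the $L(\cO)$--twisted product defining convolution in $\CBh(L,N)$ with the $I_P$--twisted product defining convolution in $\PCBh(G,N)$ head-on, which is exactly the place where your base-change sketch would require the most careful bookkeeping in the ind-pro setting. Your localization argument for injectivity and the Gelfand--Tsetlin compatibility via $\HBG{\tilde L\times\Cx}$--linearity are fine and match the paper's (terser) treatment of those points.
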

\begin{proof}
	The inclusion $ R_{L,N} \subset R^P_{G,N} $ gives an inclusion
	
	\begin{equation}
		\iota \colon H_*^{L(\cO)\rtimes \Cx}(R_{L,N}) \hookrightarrow
		H_*^{L(\cO)\rtimes \Cx}(R_{G,N}^P).\label{eq:L-inc}
	\end{equation}
	To see the compatibility of this inclusion with multiplication, we use
	an argument similar to that in \cite[5(ii)]{BFN}. Consider
	the analogue of the diagram \cite[(3.2)]{BFN}:
	\begin{equation*}
		\tikz[->,thick]{
			\matrix[row sep=12mm,column sep=12mm,ampersand replacement=\&]{
				\node (a) {$ R_{G,N}^P \times R_{L,N}  $}; \& \node (b)
				{$p^{-1}(R_{G,N}^P \times R_{L,N}  )$}; \&\node (d) {$q(p^{-1}(R_{G,N}^P \times R_{L,N} ))$}; \& \node (e)
				{$R_{G,N}^P $}; \\
			};
			\draw (b) -- (a) node[above,midway]{$p$};
			\draw (b) -- (d) node[above,midway]{$q$} ;
			\draw (d) -- (e) node[above,midway]{$m$};
		}
	\end{equation*}
	where $ p : R^P_{G,N} \times L(\cK) \rightarrow  R^P_{G,N} \times T_{L,N} $ is given by $ ([g_2], w, g_1) \mapsto ([g_2], w, [g_1, w])$.
	This diagram defines a right action of $\CBh(L,N)$ on $\PCBh(G,N)$.
	In fact, this diagram is a special case of the auxilliary action diagram (27) from \cite{BFNSpringer}, where $ Z = R^P_{G,N} $ and $ G = L$, except with all factors reversed.  Thus from \cite[Proposition 4.13]{BFNSpringer}, we can deduce that it defines a
	right module structure of $ \CBh(L,N)$ on $\PCBh(G,N)$ commuting with left multiplication.
	
	As in \cite[5.7(1)]{BFN}, we can see that $1\star b =\iota(b)$.
	More generally, we must have $a\star b=a\iota(b)$.  This shows that
	for any $b,b\in \CBh(L,N)$, we have:
	\[\iota(bb')=1\star(bb')=(1\star b)\star b'=\iota(b)\star b'=\iota(b)\iota(b').\]
	Thus, \eqref{eq:L-inc} is an algebra map.
	
	The inclusion $ L(\cO) \subset I_P $ leads to an isomorphism
	$$
	H_*^{L(\cO)\rtimes \Cx}(R_{G,N}^P) \cong H_*^{I_P \rtimes \Cx}(R_{G,N}^P).
	$$ Composing this with \eqref{eq:L-inc} gives the
	desired injection.
\end{proof}

\subsection{Abelian theories and monopole operators}
\label{section: abelian theories and monopoles}
Let $ \nu: \C^\times \rightarrow T $ be a central coweight. Since $ \nu $ is central, $ z^\nu \in \Gr_G$ is a $ G(\cO)$ orbit.  We let
$r_{\nu}\in \CBh(G,N)$ be the homology class of the preimage of $z^\nu $ in $R_{G,N}$.  These elements $r_\nu$ are called \textbf{monopole operators}.

When $G$ is abelian all coweights are central, and the elements $r_\nu$ form a basis for $\CBh(G, N)$ as a left (or right) module over $\HBG{\tG \times \Cx}$, where $\nu$ runs over all coweights of $G$.  Their relations are known explicitly by \cite[Section 4(iii)]{BFN}:
\begin{equation}
	\label{eq: monopole relations}
	r_\xi r_\nu = \prod_{\langle\mu,\xi\rangle > 0 > \langle \mu,\nu\rangle} \prod_{j=1}^{d(\langle \mu, \xi\rangle, \langle \mu,\nu\rangle)} \big(\mu + (\langle\mu, \xi\rangle - j) \hbar \big ) \prod_{\langle\mu,\xi\rangle < 0 < \langle \mu,\nu\rangle} \prod_{j=0}^{d(\langle \mu, \xi\rangle, \langle \mu,\nu\rangle)-1} \big(\mu + (\langle\mu, \xi\rangle + j) \hbar \big ) r_{\xi+\nu},
\end{equation}
Here the first and third products range over weights $\mu$ of $N$,
with multiplicity.  These are weights for the action of  $ \tG$, and the products above lie in the Gelfand-Tsetlin subalgebra $ \HBG{\tG\times \Cx}$.  Also $d: \Z \times \Z \rightarrow \Z_{\geq 0}$ is the function defined by
$$
d(a,b) = \left\{ \begin{array}{cl}  0, & \text{if } a,b \text{ have the same sign,} \\ \min\{|a|, |b|\}, &  \text{if } a,b \text{ have different signs.} \end{array} \right.
$$
It will also be useful to have an inverted version of these formulas:

\begin{equation}\label{eq: inverse monopole relations}
	r_\xi ^{-1}r_{\nu}=\prod_{\langle\mu,\xi\rangle > 0 > \langle \mu,\nu-\xi\rangle} \prod_{j=1}^{d(\langle \mu, \xi\rangle, \langle \mu,\nu-\xi\rangle)} \frac{1}{\mu - j \hbar} \prod_{\langle\mu,\xi\rangle < 0 < \langle \mu,\nu-\xi\rangle} \prod_{j=0}^{d(\langle \mu, \xi\rangle, \langle \mu,\nu-\xi \rangle)-1} \frac{1}{\mu + j \hbar }  r_{\nu-\xi} ,	
\end{equation}
This version follows by rearranging \eqref{eq: monopole relations} for the product $r_\xi r_{\nu-\xi}$, using in addition the fact that for any  weight $\mu\in \HBG{\tG \times \Cx}$ we have
$$
r_\xi \mu = (\mu + \langle \mu,\xi \rangle \hbar) r_\xi
$$
by \cite[(4.8)]{BFN}. Note that this implies $r_\xi^{-1} \mu = (\mu - \langle \mu,\xi\rangle \hbar) r_\xi^{-1}$.

\begin{Remark}\label{rem:BFNhalf}
	Note that equation (\ref{eq: monopole relations}) differs slightly  from \cite{BFN}, as we do not follow the convention from \cite[Section 2(i)]{BFN} of shifting the weights of the loop rotation action on $N(\cK)$ by $1/2$.
\end{Remark}

We now recall some algebra homomorphisms defined in \cite{BFN}, which
will play an important role in this paper.

For general $G$ there is an abelianization map $(\iota_\ast)^{-1} : \CBh(G,N) \hookrightarrow \CBh(T,N)_{loc}$ described in  \cite[Remark 5.23]{BFN}.  Here $\CBh(T,N)_{loc}$ denotes the localization of $\CBh(T,N)$ at the multiplicative set generated by  $\hbar, \alpha + m\hbar$, where $\alpha$ runs over roots of $G$ and $m\in \Z$.

Next, recall from \cite[Remark 5.14]{BFN} that for any two representations $N_1, N_2$ there is a natural injective map $ \CBh(G,N_1 \oplus N_2) \hookrightarrow \CBh(G, N_2)$. For $ G $ abelian, this map is given by \cite[Section 4(vi)]{BFN}:
\begin{equation}
	r_\nu \mapsto \prod_{\langle \mu, \nu\rangle < 0} \prod_{j=\langle \mu,\nu\rangle}^{-1} (\mu + j \hbar) \ r_\nu
\end{equation}
Here the product is over the weights of $ N_1 $.  Also, since it is
potentially confusing, we note that we have written $r_\nu$ for the
monopole operators in $\CBh(G, N_1\oplus N_2)$ and $\CBh(G,N_2)$,
respectively.

Finally, assume there is a coweight $\wp\colon \C^*\hookrightarrow
\tilde{T}$ which acts on $N_2$ by scalar multiplication of weight 1 and on $N_1$ with weight $0$; this is always possible if we
extend the flavour torus $F$. 

By \cite[6(viii)]{BFN}, there  is a ``Fourier transform'' isomorphism
\begin{equation} \label{eq:Fourier0}
	\CBh(G, N_1 \oplus N_2) \stackrel{\sim}{\longrightarrow} \CBh(G, N_1 \oplus N_2^\ast),
\end{equation}
which is the identity on the Gelfand-Tsetlin subalgebra $\HBG{\tG\times \Cx}$.
In the abelian case, this isomorphism is defined by the map \begin{equation}
	r_{\nu}\mapsto (-1)^{\delta(\nu)}r_{\nu}\qquad
	\mu \mapsto \mu+\hbar \langle \mu, \wp\rangle. \label{eq:Fourier}
\end{equation} 
where $\delta(\nu)=\sum_{\langle
	\mu,\nu\rangle>0}\langle \mu,\nu\rangle$.  In this sum $\mu$ ranges over
weights of $N_2$.  (If we use the conventions of \cite{BFN} (as discussed in Remark \ref{rem:BFNhalf}), then this shift by $\wp$ is unnecessary.)

For general $G$, the isomorphism is defined using the abelian case, via the abelianization map \cite[Lemmas 5.9--5.10]{BFN}.

\subsection{Passing to invariants}

For $ \lambda $ a dominant coweight of $ G$, let $ R_{G,N}^\lambda $ denote the preimage of the $G(\cO)$-orbit closure $ \overline{\Gr^\lambda} $ under the map $ R_{G,N} \rightarrow \Gr_G $.
We write $ \CBh(G, N)^\lambda $ for the subspace of the Coulomb branch algebra coming from the homology of $ R_{G,N}^\lambda $.  We can restrict the injective algebra map $ \CBh(G,N_1 \oplus N_2) \hookrightarrow \CBh(G, N_2)$ to an injective linear map $ \CBh(G,N_1 \oplus N_2)^\lambda \hookrightarrow \CBh(G, N_1)^\lambda$.

\begin{Lemma}
	\label{lem: fibre condition}
	Let $ \lambda $ be a dominant coweight for $ G $. Suppose that for all $ [g] \in \overline{\Gr^\lambda} $, we have $ g (N_2 \otimes \cO) \subseteq N_2 \otimes \cO $. Then we have $ \CBh(G,N_1 \oplus N_2)^\lambda = \CBh(G, N_1)^\lambda$.
\end{Lemma}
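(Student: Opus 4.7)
The plan is to reduce to the abelian case $G = T$ via the abelianization injection $\iota_\ast^{-1} \colon \CBh(G,N) \hookrightarrow \CBh(T, N)_{\text{loc}}$, where the statement can be verified directly from the explicit monopole formulas of Section~\ref{section: abelian theories and monopoles}. I read the conclusion as asserting that the restriction of the forgetful injection $\CBh(G, N_1 \oplus N_2) \hookrightarrow \CBh(G, N_1)$ (dropping the $w_2$ coordinate in $R_{G, N_1 \oplus N_2} \to R_{G, N_1}$) is an equality on $\lambda$-components.

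First, I would observe that this forgetful injection, constructed in \cite[Remark 5.14]{BFN}, is compatible with abelianization: it fits into a commutative square with the analogous map for $T$, since both the abelianization map and the forgetful map arise from natural geometric constructions on $R_{G,N}$ that respect the preimage of $\overline{\Gr^\lambda}$. It therefore suffices to prove equality after passing to $\CBh(T, N)_{\text{loc}}$.

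In the abelian case, $\overline{\Gr^\lambda} = \{z^\lambda\}$ is a single point, so $\CBh(T, N_1 \oplus N_2)^\lambda$ is a free module of rank one over $\HBG{\tilde T \times \Cx}$, generated by the monopole $r_\lambda$, and the forgetful map acts by
\[
r_\lambda \;\longmapsto\; \prod_{\mu :\, \langle \mu, \lambda\rangle < 0}\;\prod_{j=\langle \mu, \lambda\rangle}^{-1} (\mu + j\hbar) \cdot r_\lambda,
\]
with $\mu$ ranging over the $T$-weights of $N_2$. The hypothesis $z^\lambda (N_2 \otimes \cO) \subseteq N_2 \otimes \cO$ is exactly the condition that $\langle \mu, \lambda\rangle \geq 0$ for each such $\mu$, so the outer index set is empty and the map acts as the identity on $r_\lambda$. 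In particular, the equality $\CBh(T, N_1 \oplus N_2)^\lambda = \CBh(T, N_1)^\lambda$ holds.

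For general $G$, the $T$-fixed points of $\overline{\Gr^\lambda}$ are the $z^\nu$ for $T$-coweights $\nu$ in the set of dominated coweights, and the $G$-level hypothesis specializes to the abelian hypothesis at each such $\nu$. Abelianizing a class in $\CBh(G, N_1 \oplus N_2)^\lambda$ yields a combination of $r_\nu$'s for $\nu$ in this set, and the previous paragraph applied stratum-by-stratum shows the forgetful map is the identity on each piece; combined with injectivity of the abelianization this yields the desired equality. The main hurdle I anticipate is extracting the precise compatibility of the $\lambda$-filtration with abelianization from the arguments of \cite[Section 5]{BFN}, specifically verifying that $\iota_\ast^{-1}$ sends $\CBh(G, N)^\lambda$ into the span of the $r_\nu$ over exactly this set of $T$-coweights.
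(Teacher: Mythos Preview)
Your approach through abelianization is much more indirect than necessary, and the final step has a genuine gap.

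The paper's proof is a single geometric observation. The forgetful map on algebras is pullback along the projection $R_{G,N_1\oplus N_2}^\lambda \to R_{G,N_1}^\lambda$, whose fibre over a point $([g],w_1)$ is $N_2(\cO)\cap g\,N_2(\cO)$. Under the hypothesis this equals $g\,N_2(\cO)$, so the projection is a vector bundle and pullback is an isomorphism on equivariant Borel--Moore homology. That is the entire argument.

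The gap in your route is the inference from ``$f_T$ is the identity on each $r_\nu$'' to ``$f_G$ is surjective.'' What you have is a commuting square
\[
\begin{tikzcd}
\CBh(G,N_1\oplus N_2)^\lambda \ar[r,"f_G"] \ar[d,hook,"\iota_\ast^{-1}"'] & \CBh(G,N_1)^\lambda \ar[d,hook,"\iota_\ast^{-1}"] \\
\CBh(T,N_1\oplus N_2)_{\mathrm{loc}} \ar[r,"\mathrm{id}"] & \CBh(T,N_1)_{\mathrm{loc}}
\end{tikzcd}
\]
with injective verticals and an isomorphism along the bottom. That yields injectivity of $f_G$ (which was already known) but not surjectivity: for $y\in\CBh(G,N_1)^\lambda$, you can view $\iota_\ast^{-1}(y)$ on the left via $f_T^{-1}=\mathrm{id}$, but nothing guarantees it lies in the image of the \emph{left} vertical map. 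Equivalently, you would need to know that the two abelianization maps have the same image inside the localized torus algebra. That statement is true, but establishing it (e.g.\ via the identification of $\CBh(G,N)_{\mathrm{loc}}$ with $W$-invariants, together with control of the $\lambda$-filtration under that identification) is a separate argument of comparable depth to the direct one, and is exactly the ``hurdle'' you flagged. Without it, the injective-plus-iso-after-localization situation is no better than multiplication by a nonunit on a polynomial ring.
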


\begin{proof}
	
	The map $\CBh(G,N_1 \oplus N_2)^\lambda \hookrightarrow \CBh(G, N_1)^\lambda$ comes from pullback along the map $$ R_{G,N_1 \oplus N_2}^\lambda \rightarrow R_{G,N_1}^\lambda. $$  However, by the hypothesis, this map is a vector bundle (with fibre over $([g],w) \in R_{G,N_1}^\lambda$ being $g(N_2 \otimes \cO) $).
\end{proof}

\begin{Definition}\label{def:Npm}
	For any coweight $\xi \colon \Cx\to T$, we let $N_0^{\xi},
	N_\pm^{\xi}, N_{\geq}^{\xi}, N_{\leq}^{\xi}$ be the sum of weight
	spaces where the weight of $\xi$ is zero, positive, negative,
	non-negative or non-positive, respectively.
	
	Note that we always have
	\[N = N^{\xi}_- \oplus N^{\xi}_0 \oplus N^{\xi}_+= N^{\xi}_{\leq}
	\oplus N^{\xi}_+= N^{\xi}_- \oplus N^{\xi}_{\geq}.\]
\end{Definition}

We will assume that the coweight $\xi$ is central.  Recall the monopole operator $r_\xi$ from Section \ref{section: abelian theories and monopoles}.

\begin{Theorem} \label{th:invertrxi}
	Assume that $ N^{\xi}_- = 0 $.
	The natural map $ \CBh(G, N) \hookrightarrow \CBh(G, N^{\xi}_0) $ gives an isomorphism
	$$ \CBh(G, N)[r_\xi^{-1}] \cong \CBh(G, N^{\xi}_0)$$
\end{Theorem}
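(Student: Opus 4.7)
The plan is to reduce to the abelian case $G = T$ via the abelianization inclusion $\CBh(G,N) \hookrightarrow \CBh(T,N)_{loc}$, since in the abelian setting the claim can be verified directly from the explicit basis of monopole operators.

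For $G=T$ abelian, the hypothesis $N_-^\xi = 0$ gives $N = N_+^\xi \oplus N_0^\xi$, and the inclusion $\CBh(T,N) \hookrightarrow \CBh(T, N_0^\xi)$ acts on the monopole basis by
\[
r_\nu \longmapsto \Big(\prod_{\mu \in \mathrm{wts}(N_+^\xi),\, \langle\mu,\nu\rangle<0}\, \prod_{j=\langle\mu,\nu\rangle}^{-1}(\mu + j\hbar)\Big)\, r_\nu^{(0)}
\]
by the formula from Section \ref{section: abelian theories and monopoles}. Since every weight $\mu$ of $N_+^\xi$ satisfies $\langle\mu,\xi\rangle>0$, the polynomial prefactor is trivial at $\nu = \xi$ and at $\nu = \nu_0 + k\xi$ for any $\nu_0$ and $k \gg 0$; hence $r_\xi \mapsto r_\xi^{(0)}$ and $r_{\nu_0+k\xi} \mapsto r_{\nu_0+k\xi}^{(0)}$. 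Meanwhile, because every weight of $N_0^\xi$ pairs trivially with $\xi$, relation \eqref{eq: monopole relations} collapses in $\CBh(T,N_0^\xi)$ to $r_\xi^{(0)} r_\nu^{(0)} = r_{\nu+\xi}^{(0)}$. Combining these, every basis element is recovered as $r_{\nu_0}^{(0)} = r_\xi^{-k}\, r_{\nu_0+k\xi}$ inside $\CBh(T,N)[r_\xi^{-1}]$, giving surjectivity. Injectivity follows from that of the uninverted map together with the regularity of $r_\xi$ (which shifts the basis up to invertible polynomial factors in the generic locus).

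For general $G$, the abelianization maps (for the representations $N$ and $N_0^\xi$) fit into a commutative square
\begin{equation*}
\begin{array}{ccc}
\CBh(G,N) & \hookrightarrow & \CBh(G,N_0^\xi) \\
\downarrow & & \downarrow \\
\CBh(T,N)_{loc} & \longrightarrow & \CBh(T,N_0^\xi)_{loc}
\end{array}
\end{equation*}
with injective vertical arrows. After inverting $r_\xi$, the bottom row is an isomorphism by the abelian case. Both non-abelian algebras are pinned down inside their abelian localizations by the same $W$-invariance and root-divisibility conditions, which are intrinsic to $G$ and imposed on the common Gelfand-Tsetlin subalgebra $\HBG{\tilde{G}\times\Cx}$ on which the abelian isomorphism acts as the identity. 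This forces $\CBh(G,N)[r_\xi^{-1}] \cong \CBh(G, N_0^\xi)$.

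I expect the last step to be the main obstacle: verifying precisely that the image of $\CBh(G, N_0^\xi)$ inside $\CBh(T, N_0^\xi)_{loc}$ coincides with the image of $\CBh(G,N)[r_\xi^{-1}]$ under the localized abelian isomorphism. If this pinning-down is awkward, an alternative is to promote the argument to the parabolic version $\ICBh$ using Lemma \ref{lem: fibre condition} (where the finer grading by $W$-orbits on the affine flag variety makes the structure more transparently parallel to the abelian case) and then descend to $\CBh$ via the Morita equivalence of Proposition \ref{th:Morita}.
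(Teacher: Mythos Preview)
Your abelian case is fine and is morally the same computation the paper does. The gap is in the reduction step. You assert that the images of $\CBh(G,N)$ and $\CBh(G,N_0^\xi)$ inside their abelian localizations are cut out by ``the same $W$-invariance and root-divisibility conditions, intrinsic to $G$.'' This is not established anywhere and is not obviously true: the image of $\CBh(G,N)$ under abelianization does depend on $N$, not just on $G$. Concretely, take $b \in \CBh(G,N_0^\xi)$; you need $r_\xi^k b \in \CBh(G,N)$ for some $k$. Your abelian argument shows the abelianized image of $r_\xi^k b$ lands in $\CBh(T,N)_{loc}$, but that does not force it to lie in the subalgebra $\CBh(G,N)$ --- you would need an independent characterization of that subalgebra inside the localized abelian algebra, and you have not supplied one. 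Your own last paragraph flags exactly this, which means the proof as written is incomplete rather than wrong in spirit.

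The paper avoids abelianization entirely. It works directly with the filtration of $\CBh(G,N_0^\xi)$ by the subspaces $\CBh(G,N_0^\xi)^\lambda$ coming from $G(\cO)$-orbit closures $\overline{\Gr^\lambda}$, and proves geometrically that $r_\xi^m \CBh(G,N_0^\xi)^\lambda \subset \CBh(G,N)^{\lambda+m\xi}$ for $m$ large. The key input is Lemma~\ref{lem: fibre condition}: choosing $m = \max_\mu \langle\mu,\lambda\rangle$ over weights of $N_+^\xi$ guarantees $z^{m\xi} g(N_+^\xi\otimes\cO)\subset N_+^\xi\otimes\cO$ for every $[g]\in\overline{\Gr^{\lambda}}$, so the restriction map on homology is an isomorphism over that orbit closure. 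This is a direct geometric argument that never needs to pin down the image under abelianization, and it handles the non-abelian case uniformly with the abelian one.
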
 

For the proof, we appeal to the following basic fact about Ore localizations:

\begin{Lemma}
	Let $S$ be a multiplicative set in a domain $A$.  Suppose that $\varphi: A \hookrightarrow B$ is an injective homomorphism into a ring $B$, such that:
	\begin{enumerate}
		\item[(i)] for all $s \in S$, the image $\varphi(s)$ is invertible in $B$,
		
		\item[(ii)] for all $b \in B$, there exist $s\in S$ and $a \in A$ so that $b = \varphi(s)^{-1} \varphi(a)$.
	\end{enumerate}
	Then $S$ satisfies the left Ore condition in $A$, and $\varphi$ gives an isomorphism $S^{-1} A \cong B$.
\end{Lemma}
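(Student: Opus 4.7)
The plan is a short and direct verification, split into checking the left Ore condition and then constructing the isomorphism via the universal property of the Ore localization.

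\smallskip

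\textbf{Step 1: The left Ore condition.} Given $a \in A$ and $s \in S$, I would consider the element $\varphi(a)\varphi(s)^{-1} \in B$, which makes sense by hypothesis (i). By hypothesis (ii), this element can be written as $\varphi(s')^{-1}\varphi(a')$ for some $s' \in S$ and $a' \in A$. Clearing denominators gives $\varphi(s')\varphi(a) = \varphi(a')\varphi(s)$, i.e.\ $\varphi(s'a) = \varphi(a's)$, and since $\varphi$ is injective this yields $s'a = a's$, which is exactly the left Ore condition. (The domain hypothesis is not even needed here; it is presumably included because it is the case of interest and guarantees torsion-freeness.)

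\smallskip

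\textbf{Step 2: Construction of the comparison map.} With the Ore condition verified, the localization $S^{-1}A$ exists. By hypothesis (i), $\varphi$ sends each $s \in S$ to a unit of $B$, so by the universal property of Ore localization, $\varphi$ factors uniquely through an algebra homomorphism
\[
\tilde\varphi \colon S^{-1}A \longrightarrow B, \qquad s^{-1}a \longmapsto \varphi(s)^{-1}\varphi(a).
\]

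\smallskip

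\textbf{Step 3: Bijectivity.} Surjectivity of $\tilde\varphi$ is immediate from hypothesis (ii). For injectivity, suppose $\tilde\varphi(s^{-1}a) = 0$, so $\varphi(s)^{-1}\varphi(a) = 0$ in $B$. Multiplying by $\varphi(s)$ gives $\varphi(a) = 0$, hence $a = 0$ by injectivity of $\varphi$, and therefore $s^{-1}a = 0$ in $S^{-1}A$. Thus $\tilde\varphi$ is an isomorphism.

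\smallskip

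There is no real obstacle here: the whole content is that conditions (i) and (ii) are exactly what is needed to both force the Ore condition and match $S^{-1}A$ up with $B$ via the universal property. The only thing to be mindful of is the direction of the Ore condition (left versus right), which is why (ii) is phrased with $\varphi(s)^{-1}$ on the \emph{left} of $\varphi(a)$.
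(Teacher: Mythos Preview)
Your proof is correct and is exactly the standard argument one would give. The paper in fact does not prove this lemma at all: it is stated as ``a basic fact about Ore localizations'' and left to the reader, so there is nothing to compare against beyond noting that your verification is the natural one. One small clarification worth making explicit is that the domain hypothesis, while not needed for the Ore condition itself (as you observe), does ensure that $S$ consists of regular elements, so that $S$ is a left denominator set and the localization $S^{-1}A$ is well-defined in the first place.
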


\begin{proof}[Proof of Theorem \ref{th:invertrxi}]
	We will verify the stipulations of the lemma. 	
	Since $ \C^\times_\xi  $ acts trivially on $ N^{\xi}_0$,  the
	element $ r_\xi $
	is  invertible in $ \CBh(G, N^{\xi}_0) $ (with inverse $ r_{-\xi}$).
	Thus, it remains to check condition (ii) above. This will be implied by the following statement:
	\begin{enumerate}
		\item[$(\dagger)$] 
		\label{claim}	For all dominant $ \lambda $, there exists $ m \in \Z_{\geq 0} $
		such that  $r_\xi^m \CBh(G, N^{\xi}_0)^\lambda \subset \CBh(G,
		N)^{\lambda + m \xi}.$
	\end{enumerate} 		
	Given a dominant coweight $ \lambda$, let $ m = \max \langle \mu, \lambda \rangle $ where $ \mu $ ranges over all weights of the representation $ N^{\xi}_+ $.  A standard reasoning shows that for all $ [g] \in \overline{\Gr^\lambda} $, we have $ g (N^{\xi}_+ \otimes \cO) \subset z^{-m} (N^{\xi}_+ \otimes \cO)$.  Thus $ z^{m\xi} g (N^{\xi}_+ \otimes \cO) \subset N^{\xi}_+ \otimes \cO $ (as the weights of $ \Cx_\xi $ on $N^{\xi}_+ $ are positive).  So by  Lemma \ref{lem: fibre condition}, $  \CBh(G,N)^{\lambda + m\xi} = \CBh(G, N^{\xi}_0)^{\lambda + m\xi}$.
	
	Now we observe that $ r_\xi^m \CBh(G, N^{\xi}_0)^\lambda = \CBh(G, N^{\xi}_0)^{\lambda + m \xi}$ and thus we have established $(\dagger)$.
\end{proof}

\subsection{Hamiltonian reduction}\label{sec:ham-red}
Note that the operations we've discussed thus far only relate groups
with the same rank, so the dimension of the Coulomb branch will be unchanged.  When we change the matter from $N$ to $N^{\xi}_0$,  the action of the central subgroup $ \Cx_\xi $ becomes trivial.  So we can consider $N^\xi_0 $ as a representation of the quotient group $ G/\Cx_\xi$.  This invites us to consider the relationship between
$\CBh(G, N^{\xi}_0)$ and $\CBh(G/\Cx_\xi, N^{\xi}_0)$, which decreases
the dimension of the Coulomb branch by 2.

Let $A$ be an algebra, and $b\in A$. Recall that the \textbf{quantum Hamiltonian reduction} of $ A$ by $b$ (at level 1) is defined in two stages.  First, we form the right $A$--module $ A / (b -1) A$.  Then we construct the quantum Hamiltonian reduction by considering
$$ A \sslash_1 b :=  \operatorname{End}_A\big(A / (b-1)A\big) \cong \big\{ [a] \in A / (b-1)A \ \mid \  a(b-1) \in  (b-1)A \big\}
$$
Thought of as an endomorphism ring, $A \sslash_1 b$ has a natural algebra structure, which may equivalently be defined on equivalence classes by $[a_1]\cdot [a_2] = [a_1 a_2]$.  Geometrically, this algebra is the
quantization of the operation of passing to the level set $b=1$, and then dividing by the flow of the Hamiltonian vector field associated to $ b $.

\begin{Theorem} \label{th:Reduction}
	Assume that $ \xi : \C^\times \rightarrow G $ is central and primitive (i.e.~it is not an integer multiple of any other cocharacter), and that $ \Cx_\xi $
	acts trivially on $ N $. Then we have
	$$
	\CBh(G, N) \sslash_1 r_\xi \cong \CBh(G / \Cx_\xi, N)
	$$
\end{Theorem}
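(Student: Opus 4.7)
The plan is to reduce the statement to the abelian (torus) case via abelianization, and handle that case by an explicit factorization and a direct rank-one computation.  Since $\xi$ is central and $\Cx_\xi$ acts trivially on $N$, every weight $\mu$ of $N$ satisfies $\langle \mu, \xi\rangle = 0$, so Theorem~\ref{th:invertrxi} (applied with $N_-^\xi = 0$ and $N_0^\xi = N$) shows that $r_\xi$ is invertible in $\CBh(G,N)$ with inverse $r_{-\xi}$, and conjugation by $r_\xi$ is an algebra automorphism $\sigma$ of $\CBh(G,N)$ acting on the Gelfand--Tsetlin subalgebra by $\sigma(\mu) = \mu + \langle\mu,\xi\rangle\hbar$ and fixing monopole operators associated to central cocharacters.

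Using primitivity of $\xi$, fix $\mu_0 \in X^*(\tilde T)$ with $\langle\mu_0,\xi\rangle = 1$, and pick a splitting $X_*(T) = X_*(T') \oplus \Z\xi$, where $T' = T/\Cx_\xi$; dually, $X^*(\tilde T) = X^*(\tilde T') \oplus \Z\mu_0$.  In the abelian case $G = T$, because the weights of $N$ vanish on $\xi$, the monopole relations decouple the $\xi$-direction from its complement, producing a $\C[\hbar]$-algebra factorization
\[
 \CBh(T,N) \;\cong\; \CBh(T',N) \otimes_{\C[\hbar]} \mathcal D_\hbar,
\]
where $\mathcal D_\hbar = \C[\mu_0,\hbar]\langle r_\xi^{\pm 1}\rangle$ with $r_\xi\mu_0 = (\mu_0 + \hbar)r_\xi$.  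Since $r_\xi$ lies entirely in the $\mathcal D_\hbar$-tensor factor, the quantum Hamiltonian reduction by $r_\xi$ descends to a reduction on $\mathcal D_\hbar$ alone; an explicit rank-one calculation then identifies $\mathcal D_\hbar \sslash_1 r_\xi$ with a polynomial algebra in which the surviving parameter $\mu_0$ takes on the role of the new loop-rotation variable.  Tensoring this rank-one reduction against $\CBh(T',N)$ yields $\CBh(T',N)$ itself, under the identification $\mu_0 \leftrightarrow \hbar$, establishing the abelian case.

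For general reductive $G$, the plan is to descend the abelian case through the abelianization embedding $\CBh(G,N) \hookrightarrow \CBh(T,N)_{\mathrm{loc}}$ from \cite[Rem.~5.23]{BFN}, or equivalently through the Morita picture of Proposition~\ref{th:Morita}.  Since $\xi$ is central, $r_\xi$ is $W$-invariant, and both the localization at $r_\xi$ and the quantum Hamiltonian reduction by $r_\xi$ preserve $W$-invariance; identifying $\CBh(G,N)$ with the $W$-symmetric part of $\CBh(T,N)_{\mathrm{loc}}$ then descends the abelian isomorphism to the desired statement $\CBh(G,N) \sslash_1 r_\xi \cong \CBh(G/\Cx_\xi, N)$.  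The main obstacle will be the rank-one computation for $\mathcal D_\hbar$ and making the identification $\mu_0 \leftrightarrow \hbar$ canonical (it encodes a Fourier-type swap between the old gauge parameter and the new loop-rotation parameter).  A secondary difficulty is rigorously carrying out the descent from the abelian to the general case, which requires careful bookkeeping of how the Weyl action interacts with localization at $r_\xi$ and with the identifications of the Gelfand--Tsetlin subalgebras on either side.
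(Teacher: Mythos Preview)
Your approach is genuinely different from the paper's.  The paper gives a direct geometric argument valid for arbitrary $G$: writing $G'=G/\Cx_\xi$, it identifies $R_{G',N}=R_{G,N}/\Z$ (the $\Z$-action being translation by $z^\xi$, which is free precisely because $\xi$ is primitive), and then computes $H_*^{G(\cO)\rtimes\Cx}(R_{G',N})$ in two ways.  One gives $\CBh(G,N)/(r_\xi-1)\CBh(G,N)$; the other gives $\CBh(G',N)\otimes_{\HBG{G'}}\HBG{G}\cong\CBh(G',N)[a]$ where $a$ is a character splitting $\xi$.  The QHR is then the kernel of $(r_\xi-1)$ acting on the right, and since $[r_\xi,a]=\hbar r_\xi$ while $r_\xi$ commutes with $\CBh(G',N)$, this kernel is exactly $\CBh(G',N)$.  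No abelianization is needed.

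Your abelian case is essentially this same computation, but your description of the rank-one step is confused: $\mathcal D_\hbar\sslash_1 r_\xi$ is $\C[\hbar]$, full stop.  The variable $\mu_0$ is \emph{killed}, not ``identified with $\hbar$''; the surviving $\hbar$ is the original loop-rotation parameter, and there is no Fourier-type swap.  Once you compute this correctly, tensoring over $\C[\hbar]$ with $\CBh(T',N)$ immediately gives $\CBh(T',N)$.

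The more serious issue is the descent to general $G$, which you flag as ``secondary'' but is in fact the crux.  It is true (and worth knowing) that QHR by a $W$-fixed element commutes with taking $W$-invariants in characteristic zero, and that QHR commutes with Ore localization at elements commuting with $r_\xi$.  Combining these, you do get $(\CBh(G,N)\sslash_1 r_\xi)_{\mathrm{loc}}\cong\CBh(G',N)_{\mathrm{loc}}$.  But $\CBh(G,N)$ is only the $W$-invariants of $\CBh(T,N)$ \emph{after} localizing at the roots; before localization it is a proper principal Galois order inside the invariants.  You therefore need to show that the localized isomorphism restricts to an isomorphism $\CBh(G,N)\sslash_1 r_\xi\cong\CBh(G',N)$, i.e.\ that neither side acquires spurious denominators.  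You give no argument for this, and it is not automatic: there is no evident map between the two unlocalized algebras in either direction without invoking the geometry.  The paper's approach sidesteps this entirely by never leaving $G$.
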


\begin{proof}
	For the purposes of this proof, let us write $ G' = G/\Cx_\xi $.	Consider $ R_{G', N}$.  This space has an action of $ G(\cO) $ which factors through the map $ G(\cO) \rightarrow G'(\cO)$.  On the one hand,
	$$
	H_*^{G(\cO) \rtimes \Cx}(R_{G'}, N) \cong \CBh(G, N) / (r_\xi - 1) \CBh(G,N) $$
	as a module over $ \CBh(G, N) $.  To see this, we note that $ R_{G', N} = R_{G,N} / \Z $ where the generator of $ \Z $ acts by translation by the element $ z^\xi $, since $\xi$ is primitive.
	
	On the other hand,
	$$H_*^{G(\cO) \rtimes \Cx}(R_{G', N}) \cong  H_*^{G'(\cO) \rtimes \Cx}(R_{G'}, N) \otimes_{\HBG{G'}}\HBG{G}
	$$
	as a $ \CBh(G', N) $ module.	
	
	Therefore we have that
	$$
	\CBh(G, N) /(r_\xi - 1)  \CBh(G,N) \cong \CBh(G', N) \otimes_{\HBG{G'}} \HBG{G}.
	$$
	Note that $\HBG{G}\cong \HBG{G'}[a]$, where $a$ is a character of the Lie algebra $\mathfrak{g}$ splitting the derivative of $\xi$.  The action of $r_\xi$ on the RHS above commutes with $\HBG{G'}$, and satisfies  $[r_\xi,a]=r_{\xi}$.  This shows that the kernel of $r_{\xi}-1$ is $\CBh(G', N)$.
\end{proof}
\begin{Remark}
	The hypothesis of primitivity is needed here.  For example, if we reduce $\CBh(\C^{\times},0)$ by the square $r_2=r_1^2$, then the result will be $\C[r_1]/(r_1^2-1)\ncong \C$.  \end{Remark}

\begin{Remark}
	One can also consider the ``right'' quantum Hamiltonian reduction
	$$
	\operatorname{End}_A\big( A/ A(b-1)\big)^{op} \ \cong \ \big\{ [a] \in A / A (b-1) \ \mid \  (b-1)a \in  A(b-1) \big\}
	$$
	With the same assumptions as in the previous theorem, the right Hamiltonian reduction of $\CBh(G,N)$ by $r_\xi$ is also isomorphic to $\CBh(G / \Cx_\xi, N)$. 
\end{Remark}

\subsection{Combining all the steps}
Now, we will see how to combine the above results.

Let $ G $ be a reductive group, $ N $ a representation, $ \xi : \Cx \rightarrow T $ any coweight.
Let $ P, L $ be the parabolic and Levi subgroups corresponding to $ \xi $ and let $ N^{\xi}_0 $ be the invariants for the action of $ \Cx_\xi $ on $ N $.

\begin{Theorem}\label{thm:rel}
	The algebras
	$$ \CBh(G,N),\ \PCBh(G,N),\ \CBh(L,N),\ \CBh(L, N^{\xi}_0), \text{ and }\ \CBh(L/\Cx_\xi, N^{\xi}_0) $$
	are related as follows.
	\begin{enumerate}
		\item There is a Morita equivalence between
		$$
		\CBh(G,N) \text{ and } \PCBh(G,N).
		$$
		\item There is an inclusion of algebras
		$$ \CBh(L,N) \hookrightarrow \PCBh(G,N).
		$$
		\item There is an isomorphism
		$$
		\CBh(L,N)[r_\xi^{-1}] \cong \CBh(L, N^{\xi}_0).
		$$
		\item If $\xi$ is primitive, there is an isomorphism
		$$
		\CBh(L,N^{\xi}_0) \sslash_1 r_\xi \cong \CBh(L/\Cx_\xi, N^{\xi}_0).
		$$
	\end{enumerate}
	All these maps are compatible with the maps between the Gelfand-Tsetlin subalgebras
	$$
	\HBG{\tG} \hookrightarrow\HBG{\tL}\hookleftarrow \HBG{\tL/\Cx_{\xi}}
	$$
\end{Theorem}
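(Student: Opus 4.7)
The plan is to assemble the four statements from the preceding subsections and then check that all the resulting maps are compatible with the inclusions between the relevant Gelfand--Tsetlin subalgebras.

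Item (1) is immediate from Proposition \ref{th:Morita}, which provides the idempotent $e'$ and the isomorphism $\CBh(G,N) \cong e'\PCBh(G,N)e'$. Item (2) is Proposition \ref{th:inclusion}, which already includes the assertion that the embedding $\CBh(L,N) \hookrightarrow \PCBh(G,N)$ respects the common Gelfand--Tsetlin subalgebra $\HBG{\tL \times \Cx}$. Item (4) is a direct application of Theorem \ref{th:Reduction} to the group $L$ with representation $N^\xi_0$; this is legal since $\xi$ is central in $L$ by construction, acts trivially on $N^\xi_0$ by definition, and is primitive by hypothesis.

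Item (3) is where the only real work lies, because Theorem \ref{th:invertrxi} requires $N^\xi_- = 0$ and this need not hold for arbitrary $\xi$. The idea is to remove the negative part using the Fourier transform isomorphism \eqref{eq:Fourier0}. Decompose $N = N_1 \oplus N_2$ with $N_1 = N^\xi_{\geq}$ and $N_2 = N^\xi_-$, extend the flavour torus if necessary by a coweight $\wp$ acting on $N_2$ with weight $1$ and trivially on $N_1$, and apply \eqref{eq:Fourier0} to obtain $\CBh(L,N) \cong \CBh(L, N_1 \oplus N_2^*)$. The new representation has no negative $\xi$-weights, and its zero $\xi$-weight space is still $N^\xi_0$ since $(N^\xi_-)^*$ has strictly positive $\xi$-weights. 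Theorem \ref{th:invertrxi} then applies and gives $\CBh(L, N_1 \oplus N_2^*)[r_\xi^{-1}] \cong \CBh(L, N^\xi_0)$. A direct calculation from \eqref{eq:Fourier} shows that $r_\xi$ is sent to $\pm r_\xi$ under the Fourier isomorphism, so inverting $r_\xi$ on either side is consistent and yields the claim.

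For the Gelfand--Tsetlin compatibility, each step is either the identity on the relevant subalgebra or realises the standard inclusion. The Morita equivalence in (1) identifies $\HBG{\tG} \subset \CBh(G,N)$ with $\HBG{\tG} e' = e' \HBG{\tL} e' \subset e'\PCBh(G,N)e'$, which is precisely the inclusion of $W$-invariants into $W_L$-invariants in $\Sym \tilde{\ft}^*$. The embedding in (2) is the identity on $\HBG{\tL}$ by Proposition \ref{th:inclusion}. The Fourier transform in (3) fixes $\HBG{\tL}$ up to a shift by a flavour element of $\HBG{F} \subset \HBG{\tL}$, and localisation at $r_\xi$ is manifestly the identity on $\HBG{\tL}$. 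Finally, the reduction in (4) descends $\HBG{\tL}$ to $\HBG{\tL/\Cx_\xi}$ as the kernel of the inner derivation associated to $r_\xi$, exactly as in the proof of Theorem \ref{th:Reduction}. The main obstacle will be coordinating the Fourier shift by $\wp$ so that these identifications align on the nose rather than merely up to a central twist.
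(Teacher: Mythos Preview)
Your proof follows the same strategy as the paper's, assembling items (1), (2), (4) directly from Propositions \ref{th:Morita}, \ref{th:inclusion}, and Theorem \ref{th:Reduction}, and handling item (3) by first applying the Fourier transform \eqref{eq:Fourier0} with $N_2 = N^\xi_-$ to eliminate the negative weights and then invoking Theorem \ref{th:invertrxi}. Two small corrections: since every weight $\mu$ of $N^\xi_-$ satisfies $\langle \mu, \xi\rangle < 0$, the sum defining $\delta(\xi)$ in \eqref{eq:Fourier} is empty, so $r_\xi \mapsto r_\xi$ on the nose rather than $\pm r_\xi$; and the shift on the Gelfand--Tsetlin subalgebra is translation by $\hbar\wp$ on $\tilde{\ft}$, not literally addition of an element of $\HBG{F}$.

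The one step you leave open, and flag as the ``main obstacle,'' is how to undo this $\hbar\wp$-shift so that the composite map in (3) is the identity on $\HBG{\tL}$. The paper closes this gap with a one-line observation you are missing: $\wp$ was chosen to act with weight $0$ on $N_1 = N^\xi_{\geq}$, hence in particular it acts trivially on $N^\xi_0$. Consequently every weight $\mu$ of $N^\xi_0$ satisfies $\langle \mu, \wp\rangle = 0$, so the monopole relations \eqref{eq: monopole relations} for $\CBh(L, N^\xi_0)$ are unaffected by the substitution $\mu \mapsto \mu - \hbar\langle\mu,\wp\rangle$. This substitution therefore extends to an automorphism of $\CBh(L, N^\xi_0)$ fixing all monopole operators and undoing the Fourier shift on $\HBG{\tL}$. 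Composing the Fourier isomorphism and the localisation map with this automorphism gives the map in (3) with the claimed Gelfand--Tsetlin compatibility.
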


\begin{proof}\hfill
	\begin{enumerate}
		\item This follows immediately from Proposition \ref{th:Morita}.
		\item This follows immediately from Proposition \ref{th:inclusion}.
		\item 
		Note that because $ \xi $ is central in $ L$,  all
		the subspaces from Definition \ref{def:Npm} are invariant subspaces for the action of $L $.  By  \eqref{eq:Fourier0}, we have an isomorphism
		\begin{equation}\label{eq:Fourier2}
			\CBh(L, N) \cong \CBh(L, (N^{\xi}_-)^* \oplus N^{\xi}_{\geq})	
		\end{equation}

		Since $ {N^{\xi}_-}^* \oplus N^{\xi}_{\geq} $ has non-negative weight vectors for the action of $ \Cx_\xi $, we can apply Theorem \ref{th:invertrxi} to give an isomorphism
		$$
		\CBh(L, {N^{\xi}_-}^* \oplus N^{\xi}_{\geq})[r_\xi^{-1}] \cong \CBh(L, N^{\xi}_0)
		$$
		Finally, we recall that map \eqref{eq:Fourier2} is not
		the identity on the integrable system $\HBG{\tL}$;
		it involves a shift by a  cocharacter $\wp$.  However,
		this cocharacter acts trivially on $N^{\xi}_0$, and so
		$\CBh(L, N^{\xi}_0)$ carries an automorphism shifting
		the integrable system $\HBG{\tL}$ by $-\hbar \wp$, and
		leaving all monopole operators unchanged.
		Thus, if consider the composition of the map \eqref{eq:Fourier2} with the map of Theorem \ref{th:invertrxi}, and then finally the automorphism discussed above, the composition will give the desired map.  
		\item This follows immediately from Theorem \ref{th:Reduction}.\qedhere
	\end{enumerate}
\end{proof}

\begin{Remark}
	If we swap $\xi$ and $-\xi$, then the
	algebras appearing in Theorem \ref{thm:rel} are unchanged. However the subspaces $ N^{\xi}_-, N^{\xi}_+ $ are swapped, and so we will use a different map $\CBh(L,N)\to \CBh(L,N^{\xi}_0)$, which will induce a different isomorphism
	$\CBh(L,N)[r_{-\xi}^{-1}]\cong \CBh(L,N^{\xi}_0)$.
\end{Remark}
\begin{Remark}
	The assumption that $\xi$ is primitive appears only in part (4) of the theorem.  In fact, we may safely make this assumption throughout: for any $\xi$ and any integer $k\geq 1$, the coweights $\xi$ and $k \xi$ determine the same subgroups $P,L$, and determine the same subspaces $N_0^{\xi} = N_0^{k \xi}$ and $N_\pm^{\xi} = N_\pm^{k \xi}$.   Moreover $r_{k \xi} = r_\xi^{\phantom{.}k}$ by (\ref{eq: monopole relations}), so $\CBh(L,N)[r_{k\xi}^{-1}] = \CBh(L,N)[r_\xi^{-1}]$.  It follows that parts (1)--(3) of the theorem are identical for both $\xi$ and $k\xi$, and for this reason we could assume that $\xi$ is primitive throughout.
\end{Remark}

\section{Quiver gauge theories}
In this section we focus on quiver gauge theories, and identify in important special cases the algebras which appear in Theorem \ref{thm:rel}.

\label{sec:quiver}
\subsection{Quiver data} \label{se:quiverdata}
Let
$\Gamma$ be a quiver with vertex set $\vertex$ and edge set $\edge(\Gamma)$, and dimension vectors
$\mathbf{v},\mathbf{w}\colon \vertex\to \Z_{\geq 0}$ (we allow loops and multiple edges).  For an edge $ e \in \edge(\Gamma)$, we write $ h(e) $ for its head and $ t(e) $ for its tail.

Consider the group and representation
\[N=\bigoplus_{e \in \edge(\Gamma)}\Hom(\C^{v_{t(e)}},\C^{v_{h(e)}})\oplus
\bigoplus_{i\in I}\Hom(\C^{v_i},\C^{w_i})\qquad G=\prod_{i\in I} GL(v_i).\]
We consider also the larger group
$$\tG :=  G \times \prod_{i\in I} (\Cx)^{w_i} \times (\Cx)^{\edge(\Gamma)}$$
where the middle factor is product of the diagonal matrices inside each $ GL(w_i) $.
We have a (non-faithful) action of $\tG$
on $N$.  We'll use $\CBh(\Bv,\Bw) := \CBh(G, N)$ to denote the Coulomb branch
algebra attached to these dimension vectors.

This is slightly cleaner if we use the ``Crawley-Boevey trick'' of adding a new vertex
$\infty$ and $w_i$ new edges from $i$ to $\infty$ to make a larger quiver $\Gamma^{\Bw}$.  We extend $ \Bv $ to this new vertex by setting $ v_\infty = 1 $.  Then, we can think of an element
of $N$ as a representation of $\Gamma^{\Bw}$, using $ \Hom(\C^{v_i},
\C^{w_i}) = \Hom(\C^{v_i}, \C^{v_\infty})^{\oplus w_i} $, so 
$ N = \bigoplus_{e \in \edge(\Gamma^\Bw)} \Hom(\C^{v_{t(e)}},\C^{v_{h(e)}}) $.
Also, from this perspective, $\tG$ is obtained from
$G$ by  adding a
scaling along each edge of $\Gamma^{\Bw}$.

\subsection{Relating algebras} \label{se:Quiverxi}
As in the general case, we want to consider a coweight $\xi \colon
\Cx\to T$.  This is the same as choosing a $ \Z$-grading on the
vector spaces $\C^{v_i}$; to correct some later sign issues, we let the degree $k$ elements be those with weight $-k$.  Thus, for each $p\in \Z$, we have a
dimension vector $v_i^{(p)}$ such that $v_i=\sum_{p\in \Z}v_i^{(p)}$.

In this case, $L$ is the set of grading
preserving automorphisms of these vector spaces,  so $L=\prod_{p\in \Z} L^{(p)}$
where $L^{(p)}=\prod_{i\in \vertex}GL(v_i^{(p)})$.
The subspace $N^{\xi}_0$ is just the grading
preserving quiver representations, where $\C^{w_i}$ is given degree 0. That is, $N^{\xi}_0=\bigoplus_{p \in \Z} N^{(p)}$ where \[N^{(p)}=
\begin{cases}
	\displaystyle
	\bigoplus_{e \in \edge(\Gamma)}\Hom(\C^{v_{t(e)}^{(p)}},\C^{v_{h(e)}^{(p)}}) & p\neq 0\\
	\displaystyle\bigoplus_{e \in \edge(\Gamma)}\Hom(\C^{v_{t(e)}^{(p)}},\C^{v_{h(e)}^{(p)}})\oplus
	\bigoplus_{i}\Hom(\C^{v_i^{(p)}},\C^{w_i}) & p=0.
\end{cases}\]
By \cite[(3(vii).(a))]{BFN}, we immediately deduce the following.
\begin{Proposition}\label{prop:relate} We have an isomorphism of algebras
	\[  \CBh(L, N^{\xi}_0)\cong \cdots \otimes \CBh(\Bv^{(-1)}, 0)\otimes
	\CBh(\Bv^{(0)}, \Bw)\otimes \CBh(\Bv^{(1)}, 0)\otimes\cdots,\]
	where
	each factor on the RHS is again the Coulomb branch of a quiver gauge theory of
	the same underlying quiver, all but one of them unframed.
\end{Proposition}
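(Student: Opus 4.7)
The plan is to recognize that the setup decomposes cleanly as an external product of quiver gauge theories, and then invoke the external product formula for Coulomb branch algebras from \cite[(3(vii).(a))]{BFN}.

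First I would observe the factorization of the gauge group and matter representation. By definition of $\xi$, the Levi is $L = \prod_{p \in \Z} L^{(p)}$ where $L^{(p)} = \prod_{i\in\vertex} GL(v_i^{(p)})$; and the degree-$0$ subspace for the $\Cx_\xi$-action on $N$ is $N^{\xi}_0 = \bigoplus_{p \in \Z} N^{(p)}$ with $N^{(p)}$ as defined in the statement. The key observation is that $N^{(p)}$ is acted on only by the factor $L^{(p)}$: every edge summand in $N^{(p)}$ involves $\C^{v_{t(e)}^{(p)}}$ and $\C^{v_{h(e)}^{(p)}}$, both sitting in $L^{(p)}$, and the framing summand $\Hom(\C^{v_i^{(0)}},\C^{w_i})$ similarly only involves $L^{(0)}$. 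Thus as a representation of $L$, the space $N^{\xi}_0$ is the external direct sum of representations of the factors $L^{(p)}$.

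Next I would match up the flavour tori. The extension $\tG \to \tG/G = F$ restricts to an extension $\tL \to \tL/L$, and the decomposition $L = \prod_p L^{(p)}$ lifts to a product decomposition of $\tL$ as follows: the scaling torus $(\Cx)^{\edge(\Gamma)}$ acting on each $N^{(p)}$-summand can be extracted (one factor $(\Cx)^{\edge(\Gamma)}$ per non-zero $p$, and one for $p=0$), and the framing torus $\prod_i (\Cx)^{w_i}$ attaches only to the $p=0$ factor. This produces an extension $\tilde L^{(p)}$ of $L^{(p)}$ by a torus on each side, which is precisely the extended gauge group for the quiver gauge theory $(\Bv^{(p)}, 0)$ for $p \neq 0$, resp. $(\Bv^{(0)}, \Bw)$ for $p=0$.

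Having arranged the data as an external product, I apply the external product formula for Coulomb branch algebras, which asserts that if $(G_1, N_1)$ and $(G_2, N_2)$ are two gauge-matter pairs (with compatible extensions to flavour tori), then
\[
\CBh(G_1 \times G_2,\, N_1 \oplus N_2) \cong \CBh(G_1, N_1) \otimes \CBh(G_2, N_2).
\]
This is the content of \cite[(3(vii).(a))]{BFN}, proved there by observing that $R_{G_1\times G_2, N_1\oplus N_2} \cong R_{G_1,N_1}\times R_{G_2,N_2}$ and applying the Künneth formula for equivariant Borel--Moore homology. Iterating this isomorphism over the (finitely many nonzero) factors indexed by $p \in \Z$ gives exactly the stated identification of $\CBh(L, N^{\xi}_0)$ with the tensor product of the Coulomb branch algebras $\CBh(\Bv^{(p)}, 0)$ for $p \neq 0$ and $\CBh(\Bv^{(0)}, \Bw)$ for $p=0$.

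The step I expect to require the most care is the bookkeeping of the extended flavour group, since it was set up to act (non-faithfully) on all of $N$ simultaneously and must be split consistently across the factors so that each tensor factor really is the Coulomb branch of the quiver gauge theory with its standard extended gauge group. Once that is in place, the result is a direct application of the BFN external product formula and there is no further computation.
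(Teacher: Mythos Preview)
Your proposal is correct and takes essentially the same approach as the paper: the paper's entire proof is the single sentence ``By \cite[(3(vii).(a))]{BFN}, we immediately deduce the following,'' invoking the external product formula for Coulomb branch algebras exactly as you do. Your elaboration of the factorization $L=\prod_p L^{(p)}$, $N^{\xi}_0=\bigoplus_p N^{(p)}$ and the flavour-torus bookkeeping simply spells out what the paper leaves implicit, and your flagging of the flavour group as the point requiring care is apt (the paper itself notes earlier that the dependence on $\tG$ is left implicit in the notation).
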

Thus for a quiver gauge theory, Theorem \ref{thm:rel}
relates the algebra
$\CBh(\Bv,\Bw)$ to the tensor product appearing in this proposition.

\subsection{Truncated shifted Yangians} \label{se:Ylamu}
Assume that $\Gamma$ is a Dynkin quiver and let $ \fg_\Gamma$ be the corresponding simply-laced simple Lie algebra.  The dimension vectors $\mathbf w, \mathbf v$, encode a pair of coweights $\lambda, \mu $, for $ \fg_\Gamma$, where $ \lambda = \sum w_i \varpi_i $ and $ \lambda - \mu = \sum v_i \alpha_i $.

Let $ Y^\lambda_\mu $ denote the corresponding truncated shifted Yangian (with formal parameters and with $ \hbar $) as defined in appendix B(viii) of \cite{BFNslices}.  In \cite{WeekesCoulomb}, the fourth author proved that there is an isomorphism $ \CBh(\Bv, \Bw) \cong Y^\lambda_\mu $,  building on the results in appendix B of \cite{BFNslices}.

The splitting $v_i=\sum_{p\in \Z} v_i^{(p)} $ gives us a list of $ \mu^{(p)} $, where $ \mu^{(p)} = - \sum v_i^{(p)} \alpha_i$ for $ p \ne 0 $ and $ \lambda - \mu^{(0)} = \sum v_i^{(0)} \alpha_i $ (in particular we have $ \mu = \sum \mu^{(p)} $).  Thus, Theorem \ref{thm:rel} and Proposition \ref{prop:relate} relate  $Y^\lambda_\mu $ with $ Y^\lambda_{\mu^{(0)}} \otimes \bigotimes_{p \ne 0 } Y^0_{\mu^{(p)}} $.  For example, we can relate $ Y^\lambda_{\mu^{(0)} + \mu^{(1)}} $ with $ Y^\lambda_{\mu^{(0)}} \otimes Y^0_{\mu^{(1)}}$.

An important special case is when we take $ \mu^{(1)} = -\alpha_i $ for some $ i$ and all other $ \mu^{(p)} = 0 $.  This corresponds to taking $ \xi $ to be the $i$th standard coweight $ \xi = \varpi_{i,1} $ (i.e. we map $ \Cx $ into $ G $ by sending it to the upper left matrix slot of $ GL(v_i)$).  In this case, $( L/\Cx_\xi, N^{\xi}_0) $ is the quiver gauge theory corresponding to $ \lambda$ and $\mu + \alpha_i $, so $ \CBh(L/\Cx_\xi, N^{\xi}_0) \cong Y^\lambda_{\mu + \alpha_i}$.  Thus Theorem \ref{thm:rel} relates $Y^\lambda_\mu $ and $Y^\lambda_{\mu + \alpha_i}$.  In section \ref{se:FunctorQuiver}, this will allow us to construct functors between category $\mathcal O $ for these algebras giving a categorical $ \fg_\Gamma$-action.

\begin{Remark} \label{re:comult}
	In \cite{FKPRW}, half of the authors along with Finkelberg, Pham, and
	Rybnikov, constructed a comultiplication map $ Y_{\mu^{(0)} +
		\mu^{(1)}} \rightarrow Y_{\mu^{(0)}} \otimes Y_{\mu^{(1)}} $.  We
	believe that the comultiplication descends to maps $ Y_{\mu^{(0)} +
		\mu^{(1)}}^{\lambda^{(0)} + \lambda^{(1)}} \rightarrow
	Y_{\mu^{(0)}}^{\lambda^{(0)}} \otimes Y_{\mu^{(1)}}^{\lambda^{(1)}}$,
	and in particular to a map $Y_{\mu^{(0)}+\mu^{(1)}}^\lambda
	\rightarrow Y_{\mu^{(0)}}^\lambda \otimes Y_{\mu^{(1)}}^0$.  In
	\cite{KPW}, we partially proved this statement in the case that $
	\mu^{(1)} = -\alpha_i$.  However, we don't currently understand the relationship between comultiplication and Theorem \ref{thm:rel}.  In particular, the comultiplication map is not compatible with the integrable systems.
\end{Remark}

\section{Modules for Coulomb branch algebras}
\label{sec:modul-coul-branch}

\subsection{Gelfand-Tsetlin modules}

We
wish to understand the representation theory of the algebra
$\CBh(G,N)$, following the approach of \cite{Webdual,KTWWYO,WebGT}.  In particular we will focus on the Gelfand-Tsetlin modules.

Now and for the remainder of the paper we specialize to $ \hbar = 1 $, and we will 
write $ \CB(G,N) $ for the result of this specialization\footnote{In \cite{KTWWY,KTWWYO}, we considered the specialization $ \hbar = 2 $ in order to match the conventions of
	some earlier papers.  In any case, all non-zero $\hbar $ give isomorphic specializations.}.  Later in the paper, we will further specialize at a point $ \varphi \in \mathfrak f = \MaxSpec Z $, and consider $  \CB_\varphi(G,N) = \CB(G,N)/ \CB(G,N) \mathsf m_\varphi$, where $\mathsf{m}_\varphi\subset Z$ is the corresponding maximal ideal.

Given $\gamma\in \tilde{\ft} / W = \MaxSpec (\HBG{\tG})$, let $\mathfrak{m}_\gamma\subset \HBG{\tG}$
be the corresponding maximal ideal.
Consider the weight functors $ \Wei_\gamma: \CB(G,N)\mmod \rightarrow \Vect $ defined by
\[ \Wei_\gamma(M)=\{ m\in M \mid
\mathfrak{m}_\gamma^Nm=0 \quad\text{ for some }  N\gg 0\}.\]  The reader might reasonably be
concerned about the fact that this is a {\it generalized} eigenspace.
In this paper, we will always want to consider these, and thus will
omit ``generalized'' before instances of ``weight.''  These spaces $ \Wei_\gamma(M) $ are called  {\bf Gelfand-Tsetlin (GT) weight spaces}.

\begin{Definition} \label{def:GT}
	An $\CB(G,N)$-module $M$ is called a {\bf ($Z$-semisimple) Gelfand-Tsetlin module}
	if it is finitely generated, $M=\bigoplus_{\gamma\in \tilde{\ft}/W} {\Wei_{\gamma}}(M)$,  and the center $ Z $ acts semisimply on $ M $.  For the rest of the paper, we let $\Wei_\gamma$ denote the restriction of the weight functor to the category of $Z$-semisimple Gelfand-Tsetlin modules.

	The {\bf support} of a Gelfand-Tsetlin module is the set  \[\Supp(M)= \{ \gamma
	\in \tilde{\ft} / W \mid \Wei_{\gamma} (M) \ne 0 \} .\]
\end{Definition}

\begin{Remark}
	In this paper, we will always assume that Gelfand-Tsetlin modules are $Z$-semisimple, but will periodically add comments about how our results would change if we allowed the center $Z$ to act with non-trivial nilpotent part. 
\end{Remark}

\begin{Remark}
	For the parabolic Coulomb branch algebra $ \CB^P(G,N)$, we also have a Gelfand-Tsetlin subalgebra $ \HBG{\tL} $.  In a similar way, we can speak about Gelfand-Tsetlin weight functors $ {}^P \Wei_\nu $ and Gelfand-Tsetlin modules.  In this case, the weight $ \nu $ lies in $ \tilde{\ft} / W_L=\MaxSpec( \HBG{\tL} )$.
\end{Remark}

\subsection{Morphisms between weight functors} \label{se:FS}
Gelfand-Tsetlin modules can be classified using the approach of \cite{FOD},
which is based on understanding the space of natural transformations $\Hom(\Wei_\gamma,\Wei_{\gamma'})$.  Observe that
\begin{equation*}
	\Hom(\Wei_\gamma,\Wei_{\gamma'})=\varprojlim
	\CB/(\CB\mathfrak{m}_\gamma^N+\mathfrak{m}_{\gamma'}^N\CB+\CB\mathsf{m}_\varphi)=\varprojlim
	\CBs{\varphi}/(\CBs{\varphi}\mathfrak{m}_\gamma^N+\mathfrak{m}_{\gamma'}^N
	\CBs{\varphi}),
\end{equation*}
for $ \gamma, \gamma' \in \tilde{\ft}/W $ with common image $ \varphi
\in \mathfrak f $.  If $ \gamma, \gamma' $ don't have the same image in $\mathfrak{f}$, then $\Hom(\Wei_\gamma,\Wei_{\gamma'})=0$.

\begin{Remark}
	Note that this calculation of natural transformations is only valid for the restriction of $\Wei_{\gamma}$ to the category of $Z$-semisimple modules; on the full category of $\CB(G,N)$-modules, each element $z\in Z$ defines a natural transformation.  Under our convention of $Z$-semisimplicity, this element $ z $ acts on $\Wei_\gamma$ by the scalar $\varphi(z)$, but there are Gelfand-Tsetlin modules which are not $Z$-semisimple where this does not hold.
\end{Remark}

The space $ \Hom(\Wei_\gamma,\Wei_{\gamma'}) $ has a natural weak
operator topology, where a sequence converges if and only if it is
eventually constant  on $\Wei_\gamma(M)$ for all $M$.
This is the same as the inverse limit topology.

These spaces $\Hom(\Wei_\gamma, \Wei_{\gamma'}) $ can be organized into the algebra $\End(\oplus \Wei_{\gamma})$,
or alternatively, into the category whose objects are weight functors. 
Since $\tilde{\ft} / W$ is an uncountably infinite set, $\End(\oplus
\Wei_{\gamma})$ is a very large algebra.  Luckily it naturally
decomposes into summands: consider the partition of
$\tilde{\ft} / W$ into the disjoint union of the images of
orbits of the extended affine Weyl group $\widehat{W}=N_{G(\mathcal{K})}(T)/T\cong W\ltimes \ft_{\Z}$ on $ \tilde{\ft}$.  As
discussed in \cite[\S 2.5]{WebGT}, the
space of natural transformations $\Hom(\Wei_\gamma,\Wei_{\gamma'})$ is
non-zero if and only if $\gamma$ and $\gamma'$ lie in the image $\bar{\mathscr S} \subset \tilde{\ft} / W$ of a $\widehat{W}$--orbit 
$\mathscr{S} \subset \tilde{\ft}$.  Thus, an indecomposable Gelfand-Tsetlin module must
have weights concentrated on the image of single orbit $\mathscr{S}$ and the category of all Gelfand-Tsetlin modules decomposes as a direct sum subcategories of modules with  weights concentrated on the image of single orbit.

Consider a set $\mathsf{S}\subset \tilde{\ft}$, and its image $\bar{\mathsf{S}} \subset \tilde{\ft} / W$. We define\footnote{Since English-speaking readers
	may not be familiar with these letters, the cyrillic $\GTc$ is
	pronounced roughly ``geh-tseh.'' These are the first letters of the
	names ``Gelfand'' and ``Tsetlin'' in Russian.} $\CB(G,N)\dash\GTc_\mathsf{S}$ to be
the category of  Gelfand-Tsetlin
modules modulo the subcategory of modules killed by $\Wei_{\gamma}$
for all $\gamma\in \bar{\mathsf{S}}$. In the case where
$\mathsf{S}=\mathscr{S}$ is a single $\widehat{W}$-orbit, $\CB(G,N)\dash\GTc_\mathsf{S}$ is just the subcategory of modules with
support in $\bar{\mathscr{S}}$.

Consider the algebra  $\wtAlg(\mathsf{S})=\End(\oplus_{\gamma\in
	\bar{\mathsf{S}}} \Wei_{\gamma})$.  More generally, given two sets $ \mathsf S, \mathsf S' \subset \tilde{\ft}$, we let
\begin{equation} \label{eq:defFSS}
	\wtAlg(\mathsf{S},
	\mathsf{S}')=\Hom\Big(\bigoplus_{\gamma\in \bar{\mathsf{S}}}\Wei_{\gamma}, \bigoplus_{\gamma'\in \bar{\mathsf{S}'}}\Wei_{\gamma'}\Big),
\end{equation}
which is an $ \wtAlg(\mathsf S'), \wtAlg(\mathsf S) $ bimodule.

\begin{Proposition}[\mbox{\cite[Th. 17]{FOD}; \cite[Th. 2.23]{WebGT}}]\label{thm:GTc-F}
	The functor $\Wei_{\mathsf S} := \oplus_{\gamma\in \bar{\mathsf{S}}} \Wei_{\gamma}$ gives an
	equivalence of categories between $\CB(G,N)\dash\GTc_\mathsf{S}$ and
	modules over $\wtAlg(\mathsf{S})$ continuous in the discrete topology.
\end{Proposition}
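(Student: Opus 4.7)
The plan is to follow the blueprint of \cite[Thm.~17]{FOD} and \cite[Thm.~2.23]{WebGT}. First I would check that $\Wei_{\mathsf{S}}$ lands in continuous $\wtAlg(\mathsf{S})$-modules. For any Gelfand-Tsetlin module $M$ supported on $\bar{\mathsf{S}}$, the space $\Wei_{\mathsf{S}}(M)=\bigoplus_{\gamma\in\bar{\mathsf{S}}}\Wei_\gamma(M)$ carries a tautological $\wtAlg(\mathsf{S})$-action by the very definition of $\wtAlg(\mathsf{S})$ as the endomorphism algebra of $\bigoplus_\gamma \Wei_\gamma$. Continuity amounts to showing that each $m\in \Wei_\gamma(M)$ has open stabilizer; this follows from Definition \ref{def:GT}, since some $\mathfrak{m}_\gamma^N$ annihilates $m$, and so any natural transformation which factors through $\mathfrak{m}_\gamma^N \cdot e_\gamma$ kills $m$, where $e_\gamma = \mathrm{id}_{\Wei_\gamma}$.

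Next I would construct a quasi-inverse $R$. A continuous $\wtAlg(\mathsf{S})$-module $V$ decomposes as $V=\bigoplus_{\gamma\in \bar{\mathsf{S}}} e_\gamma V$ using the orthogonal idempotents $e_\gamma$. To put a $\CB(G,N)$-action on $V$, I observe that each $a\in\CB(G,N)$ induces, by left multiplication on a Gelfand-Tsetlin module, a natural transformation $\Wei_\gamma\to \Wei_{\gamma'}$ for every pair $\gamma,\gamma'\in \bar{\mathsf{S}}$, and hence a formal element of the column-finite completion of $\wtAlg(\mathsf{S})$. Continuity of $V$ implies that for each fixed $v\in e_\gamma V$ only finitely many of these components act non-trivially on $v$, so the sum converges to a well-defined element $a\cdot v\in V$. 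Associativity of this action reduces to associativity of the natural transformations attached to products $aa'$, which holds on every weight functor by functoriality. By construction each $e_\gamma V$ is the $\gamma$-weight space of $R(V)$, so $R(V)$ is Gelfand-Tsetlin with support in $\bar{\mathsf{S}}$; finite generation of $R(V)$ as a $\CB(G,N)$-module follows from that of $V$ over $\wtAlg(\mathsf{S})$.

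Finally, the compositions $R\circ\Wei_{\mathsf{S}}$ and $\Wei_{\mathsf{S}}\circ R$ are tautologically isomorphic to the identity functors, because in both cases the $\gamma$-weight space of the output is canonically identified with that of the input, and the $\CB(G,N)$- and $\wtAlg(\mathsf{S})$-actions are defined to match. The main obstacle, which \cite{FOD,WebGT} handle carefully, is justifying the ``formal sum'' definition of the $\CB(G,N)$-action on a continuous module: one must check that for fixed $v\in e_\gamma V$ only finitely many of the components $a\colon\Wei_\gamma\to \Wei_{\gamma'}$ act non-trivially. This rests on the finiteness statement for the quotient $\CBs{\varphi}/(\CBs{\varphi}\mathfrak{m}_\gamma^N+\mathfrak{m}_{\gamma'}^N \CBs{\varphi})$ as $\gamma'$ varies in the $\widehat{W}$-orbit image, which is essentially the principal Galois order property of $\CB(G,N)$ proved in \cite{WebGT}.
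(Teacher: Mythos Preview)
The paper does not give its own proof here, deferring to \cite{FOD} and \cite{WebGT}. Your outline has the right shape, but there is a real gap in your construction of the quasi-inverse $R$, tied to a misreading of what $\CB(G,N)\dash\GTc_\mathsf{S}$ is.

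As defined just before the proposition, $\CB(G,N)\dash\GTc_\mathsf{S}$ is the \emph{Serre quotient} of all Gelfand-Tsetlin modules by those killed by every $\Wei_\gamma$ with $\gamma\in\bar{\mathsf{S}}$; only when $\mathsf{S}$ is a full $\widehat W$-orbit (or contains a complete set) does this coincide with the subcategory of modules supported on $\bar{\mathsf{S}}$. Your $R$ tries to put a $\CB(G,N)$-module structure directly on $V=\bigoplus_\gamma e_\gamma V$ via $a\mapsto (a_{\gamma'\gamma})_{\gamma,\gamma'\in\bar{\mathsf{S}}}$, but this assignment is \emph{not} a ring homomorphism: for $v\in\Wei_\gamma(M)$ one has $(ab)_{\gamma''\gamma}(v)=\sum_{\gamma'} a_{\gamma''\gamma'}\bigl(b_{\gamma'\gamma}(v)\bigr)$ with the sum running over \emph{all} $\gamma'\in\tilde{\ft}/W$, and restricting the intermediate index to $\bar{\mathsf{S}}$ drops terms. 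So the ``associativity by functoriality'' step fails for general $\mathsf{S}$, and $R(V)$ is not a $\CB(G,N)$-module. Consequently the claim that $R\circ\Wei_{\mathsf{S}}\cong\mathrm{id}$ ``tautologically'' also fails: a general $M$ in the quotient category need not be isomorphic (in $\CB(G,N)\mmod$) to anything supported only on $\bar{\mathsf{S}}$. The arguments in the cited references instead use the pro-objects $P_\gamma$ representing the functors $\Wei_\gamma$: the direct sum $\bigoplus_{\gamma\in\bar{\mathsf{S}}}P_\gamma$ is a projective generator of the quotient category with endomorphism ring $\wtAlg(\mathsf{S})$, and the equivalence follows by Morita-type considerations. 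The quasi-inverse is then $V\mapsto \bigl(\bigoplus_\gamma P_\gamma\bigr)\widehat\otimes_{\wtAlg(\mathsf{S})}V$, which inherits an honest $\CB(G,N)$-action from the $P_\gamma$ and lands correctly in the quotient category.
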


We recall the following definition from \cite[Def. 2.24]{WebGT}.

\begin{Definition} \label{de:CompleteSet}
	Let $ \mathscr S \subset \tilde{\ft} $ be a $ \widehat W$-orbit.  A finite set $\mathsf{S} \subset \mathscr S $ is called a {\bf complete set} for $\mathscr{S}$, if for every $\gamma\in
	\bar{\mathscr{S}}$, there is a $\gamma'\in \bar{\mathsf{S}}$ such that
	$\Wei_{\gamma}\cong \Wei_{\gamma'}$.
\end{Definition}

By \cite[Cor. 4.16]{WebGT}, a complete set $ \mathsf S $ always exists.  In this case, 
$\CB(G,N)\dash\GTc_\mathsf{S}$ is equal to $\CB(G,N)\dash\GTc_\mathscr{S}$.  Moreover, if $\mathsf{S},\mathsf{S}'$ are both complete sets for $ \mathscr S $, then $\wtAlg(\mathsf S, \mathsf S') $ gives
a Morita equivalence between $ \wtAlg(\mathsf S) $ and $ \wtAlg(\mathsf S') $.  

Note that the existence of a complete set shows that $\CB(G,N)\dash\GTc_\mathscr{S}$ is Artinian, since the functor $\Wei_{\mathsf S}$ is an equivalence and any module in its image is finite-dimensional.

\subsection{Category $\cO$}
\label{sec:category-co}

The Coulomb branch $\CB(G,N)$ has a Hamiltonian action of the torus
$K=(G/[G,G])^\vee$, so we have a Hamiltonian $\Cx$-action on $ \CB(G,N)$ for each character $\chi$ of
$G$. 

More precisely, the map $\chi\colon G\to \Cx$ induces a map $ \Gr_G \rightarrow \Gr_{\Cx}$.  Since $\Gr_{\Cx} = \Z$, this gives a
$\Z$-grading to $ \CB(G,N) $, and thus an action of $\Cx$.  By abuse of notation, we let $\chi$
also denote the derivative of the character $ \chi$, interpreted as an element of $(\ft^*)^W$.  Then by \cite[Lemma 3.19]{BFN}, we have the
quantum moment map relation: for $a \in \CB(G,N)$ an element of weight $k \in \Z$ (with respect to $ \chi$),
we have $[\chi,a]=ka$.

\begin{Definition}\label{def:catO}
	The {\bf category $\Oof{\CB(G,N)}$} for $\chi$ is the full subcategory of
	finitely generated
	$\CB(G,N)$-modules on which
	\begin{itemize}
		\item $Z$ acts semisimply, and 
		\item $\chi$ acts locally finitely, with finite dimensional generalized eigenspaces with eigenvalues whose real parts are bounded above.
	\end{itemize}  
\end{Definition}
\begin{Remark}
	A module in $ \Oof{\CB(G,N)}$ is necessarily a Gelfand-Tsetlin module, so this is equivalent to
	asking that elements of non-negative weight for the grading induced by $ \chi $ act locally finitely, see \cite[Lemma 3.13]{BLPWgco}.
\end{Remark}

As discussed in \cite[Th. 4.10]{Webdual},  we can realize category
$\Oof{\CB(G,N)}$ using the same algebraic approach that we used to
understand the category
$\CB(G,N)\dash\GTc_\mathscr{S}$.  The set $\bar{\mathscr{S}}$ is divided into finitely
many equivalence classes by the relation $\gamma\sim \gamma'$ if
$\Wei_{\gamma}\cong \Wei_{\gamma'}$ as functors.  On each equivalence
class, either:
\begin{itemize}
	\item Thought of as a function on $\bar{\mathscr{S}}$, the function $\gamma \mapsto \Re(\chi(\gamma))$ attains a
	maximum, and attains it at a finite number of points.  In this case,
	we call the equivalence class {\bf bounded}.
	\item  On $ \bar{\mathscr S}$, the function $\gamma \mapsto \Re(\chi(\gamma))$ either has no  maximum, or it attains its
	maximum on an infinite set.  In this case, we call the equivalence
	class {\bf  unbounded}.
\end{itemize}

\begin{Proposition} \label{prop:GTinO}
	Every module in category $\Oof{\CB(G,N)}$ is a Gelfand-Tsetlin module.  Conversely,
	a  Gelfand-Tsetlin module $M\in \CB(G,N)\dash\GTc_\mathscr{S}$ lies in category $\Oof{\CB(G,N)}$ if
	and only if $\Wei_{\gamma}(M)=0$ for all $\gamma$ in an unbounded equivalence class. 
\end{Proposition}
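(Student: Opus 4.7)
The plan is to establish the two assertions separately. The first (every module in $\Oof{\CB(G,N)}$ is Gelfand-Tsetlin) follows quickly from the quantum moment map relation together with the local finiteness condition defining $\mathcal{O}$. For the characterization, I would analyze the generalized $\chi$-eigenspaces of a Gelfand-Tsetlin module $M$ in terms of its weight spaces along equivalence classes of weight functors.

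For the first claim, fix $M \in \Oof{\CB(G,N)}$. By the remark after Definition \ref{def:catO}, elements of non-negative $\chi$-weight act locally finitely on $M$. Since the quantum moment map relation $[\chi,a]=ka$ places the Gelfand-Tsetlin subalgebra $\HBG{\tG}$ in weight zero of the $\chi$-grading, $\HBG{\tG}$ acts locally finitely on $M$. Combined with the assumed semisimple $Z$-action and finite generation of $M$, this forces a decomposition $M = \bigoplus_\gamma \Wei_\gamma(M)$ into generalized eigenspaces, so $M$ is a Gelfand-Tsetlin module.

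For the characterization, the key observations are: since $\chi \in \HBG{\tG}$ acts on $\Wei_\gamma(M)$ with generalized eigenvalue $\chi(\gamma)$, the generalized $\chi$-eigenspace $M[c]$ decomposes as $M[c] = \bigoplus_{\gamma:\chi(\gamma)=c}\Wei_\gamma(M)$; the isomorphism $\Wei_\gamma \cong \Wei_{\gamma'}$ on $\CB(G,N)\dash\GTc_\mathscr{S}$ for $\gamma \sim \gamma'$ makes $\dim \Wei_\gamma(M)$ constant along each equivalence class and finite on each class by the complete-set description; finally $\bar{\mathscr S}$ contains only finitely many equivalence classes. For the forward direction, suppose some unbounded class $E$ has $\Wei_\gamma(M) \neq 0$; then by constancy, every $\Wei_{\gamma'}(M)$ for $\gamma' \in E$ is nonzero of the same positive dimension, so either the values $\Re(\chi(\gamma'))$ are unbounded above on $E$ (contradicting the bound on the real parts of $\chi$-eigenvalues) or the maximum is attained infinitely often (making the corresponding $M[c]$ infinite-dimensional). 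Conversely, if $\Wei_\gamma(M)$ vanishes on every unbounded class, then $\Supp(M)$ is contained in the union of finitely many bounded classes, so $\Re(\chi)$ is bounded above on $\Supp(M)$, and each level set contributing to $M[c]$ is finite, whence $M \in \Oof{\CB(G,N)}$.

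The main technical subtlety I expect is controlling the level sets $\{\gamma \in E : \chi(\gamma) = c\}$ for non-maximal values $c$ on a bounded equivalence class $E$; the definition of ``bounded'' only directly controls the maximum. The finiteness of all level sets follows from the affine-linear structure of $\widehat{W}$-orbits in $\tilde{\ft}$ (since $\chi$ is $W$-invariant, it descends to a linear function on the translation part $\ft_\Z$), and I would appeal to the analogous analysis carried out in \cite[Th. 4.10]{Webdual}. Everything else in the argument is a direct unpacking of the definitions of category $\mathcal{O}$ and of Gelfand-Tsetlin modules together with the weight-functor description of $\CB(G,N)\dash\GTc_\mathscr{S}$ from Proposition \ref{thm:GTc-F}.
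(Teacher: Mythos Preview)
Your proposal is essentially correct, and in fact goes further than the paper itself: the paper states this proposition without proof, relying on the Remark after Definition~\ref{def:catO} (which cites \cite[Lemma 3.13]{BLPWgco}) for the first assertion and implicitly on \cite[Th.~4.10]{Webdual} for the second, via the discussion surrounding Proposition~\ref{prop:O-F}.

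One minor point on the first claim: your invocation of the Remark is circular, since that Remark already asserts that modules in $\mathcal{O}$ are Gelfand--Tsetlin and deduces the equivalent characterization from that. You do not need the Remark at all; the argument you sketch in the subsequent sentences stands on its own. Since $\chi$ acts locally finitely with finite-dimensional generalized eigenspaces, and the Gelfand--Tsetlin subalgebra $\HBG{\tG}$ has $\chi$-weight zero (hence preserves each finite-dimensional $\chi$-eigenspace), it follows directly that $\HBG{\tG}$ acts locally finitely. Drop the citation and keep the argument.

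For the characterization, your analysis is sound, and you correctly flag the one genuine subtlety: the definition of ``bounded'' only controls the level set at the maximum of $\Re(\chi)$, not at lower values. Your proposed resolution via the polyhedral structure of clans is the right idea. Concretely, each equivalence class is the $W$-orbit of the lattice points in a rational convex polyhedron (cf.~\cite[Cor.~4.16]{WebGT}); if the maximum of the linear function $\chi$ is attained on a bounded face, then $\chi$ is strictly negative on every nonzero recession direction, forcing all level sets to be bounded and hence contain finitely many lattice points. This is exactly the content absorbed into \cite[Th.~4.10]{Webdual}, so your deferral there is appropriate and matches the paper's own approach.
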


Thus, let $\Oof{\CB(G,N)}_\mathscr{S}$ denote the subcategory of
$\CB(G,N)\dash\GTc_\mathscr{S}$ given by objects in $\Oof{\CB(G,N)}$.  For any $ \mathsf S \subset \tilde{\ft}$, let $\mathscr{I}\subset
\wtAlg(\mathsf{S})$ be the two-sided ideal generated by the
identity on $\Wei_{\gamma}$ for all $\gamma\in \bar{\mathsf{S}}$ whose
equivalence class is
unbounded. (Alternatively, this is the span
of the natural transformations that factor through such $\Wei_{\gamma}$.)

This gives us the following modification of Proposition
\ref{thm:GTc-F}:
\begin{Proposition}\label{prop:O-F}
	Let $\mathsf{S}\subset \mathscr{S}$ be a finite
	complete set.  The functor $\oplus_{\gamma\in \bar{\mathsf{S}}} \Wei_{\gamma}$ gives an
	equivalence of categories between $\Oof{\CB(G,N)}_\mathscr{S}$ and
	modules over $\wtAlg(\mathsf{S}) /\mathscr{I}$ continuous in the discrete topology.
\end{Proposition}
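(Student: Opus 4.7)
The plan is to bootstrap this from Proposition \ref{thm:GTc-F} using the characterization of category $\cO$ in Proposition \ref{prop:GTinO}. First I would recall that $\Wei_\mathsf{S} := \oplus_{\gamma \in \bar{\mathsf{S}}} \Wei_\gamma$ already yields an equivalence $\CB(G,N)\dash\GTc_\mathscr{S} \xrightarrow{\sim} \wtAlg(\mathsf{S})\mmod_{cts}$. Since $\Oof{\CB(G,N)}_\mathscr{S}$ is by definition a full subcategory of $\CB(G,N)\dash\GTc_\mathscr{S}$, it suffices to (a) identify the essential image of $\Oof{\CB(G,N)}_\mathscr{S}$ under $\Wei_\mathsf{S}$ as precisely the continuous $\wtAlg(\mathsf{S})/\rif$-modules, and (b) check that the restricted functor remains an equivalence onto this image.

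For (a), the key translation step is to convert the weight-vanishing condition from Proposition \ref{prop:GTinO} into a module-theoretic condition on $\Wei_\mathsf{S}(M)$. Because $\mathsf{S}$ is a complete set, for every $\gamma \in \bar{\mathscr S}$ lying in an unbounded equivalence class there exists $\gamma' \in \bar{\mathsf{S}}$ with $\Wei_\gamma \cong \Wei_{\gamma'}$ as functors, and $\gamma'$ necessarily lies in the same unbounded equivalence class. Thus the condition $\Wei_\gamma(M) = 0$ for all $\gamma$ in unbounded classes is equivalent to $\Wei_{\gamma'}(M) = 0$ for all such $\gamma' \in \bar{\mathsf{S}}$, which in turn is equivalent to the idempotent $e_{\gamma'} \in \wtAlg(\mathsf{S})$ projecting to $\Wei_{\gamma'}$ acting as zero on $\Wei_\mathsf{S}(M)$. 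By the definition of $\rif$ as the two-sided ideal generated by these idempotents, this is equivalent to $\rif \cdot \Wei_\mathsf{S}(M) = 0$, i.e.\ to $\Wei_\mathsf{S}(M)$ being a module over $\wtAlg(\mathsf{S})/\rif$.

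For (b), continuity is automatically preserved: a discrete $\wtAlg(\mathsf{S})$-module which is annihilated by $\rif$ is the same as a discrete $\wtAlg(\mathsf{S})/\rif$-module, using the quotient topology on the latter. Fully faithfulness is inherited from Proposition \ref{thm:GTc-F}, and essential surjectivity follows because any continuous $\wtAlg(\mathsf{S})/\rif$-module pulls back to a continuous $\wtAlg(\mathsf{S})$-module annihilated by $\rif$, which by the first direction of (a) corresponds to a Gelfand-Tsetlin module in $\Oof{\CB(G,N)}_\mathscr{S}$.

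The main obstacle I anticipate is verifying that the characterization in (a) is exhaustive: one must be sure that the finite complete set $\mathsf{S}$ actually detects all unbounded equivalence classes appearing in $\bar{\mathscr S}$. This relies on the fact that $\widehat W$-orbits in $\tilde{\ft}$ give rise to only finitely many equivalence classes under $\sim$ (and correspondingly a finite partition of $\bar{\mathsf{S}}$), so that the unbounded/bounded dichotomy from the definition above Proposition \ref{prop:GTinO} is faithfully reflected inside the finite index set $\bar{\mathsf{S}}$. Granted this, the argument is essentially formal.
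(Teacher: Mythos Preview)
Your argument is correct and is precisely the natural way to deduce the proposition from Propositions~\ref{thm:GTc-F} and~\ref{prop:GTinO}. The paper itself does not give an independent proof: it simply observes that the statement is a rephrasing of \cite[Th.~4.10]{Webdual}. Your proposal spells out the content of that citation using only what has already been established in the paper, so in substance the two approaches coincide; you have merely made the argument self-contained.

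One minor remark: the ``obstacle'' you anticipate is not really an obstacle. By Definition~\ref{de:CompleteSet}, every $\gamma\in\bar{\mathscr S}$ has some $\gamma'\in\bar{\mathsf S}$ with $\Wei_\gamma\cong\Wei_{\gamma'}$, so in particular every equivalence class (bounded or unbounded) is represented in $\bar{\mathsf S}$. Thus the translation in step (a) goes through without any further input about the finiteness of the set of equivalence classes.
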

As mentioned above, this is a rephrasing of  \cite[Th. 4.10]{Webdual}: in the notation of that paper, the algebra $F(\mathsf{S})$ would be denoted $A_{\mathsf{S}}$ and the ideal $\mathscr{I}$ by $\mathscr{I}_{-\chi}$.  

\subsection{Gelfand-Kirillov dimension}
\label{sec:gelf-kirill-dimens}

Let $\mathbbm{k}$ be a field.  
Consider a $\mathbbm{k}$-algebra $A$   which
is generated by a finite dimensional subspace $A_0$, and a left $A$-module
$M$ which is finitely generated by a finite dimensional subspace
$M_0$.  In this context, the {\bf Gelfand-Kirillov dimension}
$\operatorname{GKdim}_A(M)$ is defined by:
\begin{equation}
	\label{eq:1}
	\operatorname{GKdim}_A(M)=\limsup_{n\to \infty }\log_n\dim_{\mathbbm{k}}(A_0^nM_0)
\end{equation}
It's a standard result that this number is independent of the choice of
$A_0$ and $M_0$, and only depends on the structure of $M$ as an
$A$-module.   


Given a Gelfand-Tsetlin module $M$ for $\CB(G,N)$, let $M^B$ denote the corresponding module of the Morita equivalent algebra $\CB^B(G,N)$ (see Theorem \ref{thm:rel}(1) and  Section \ref{sec:res-ind} below). Recall that $\CB^B(G,N)$ contains $\CB(T,N)$ as a subalgebra, by Proposition \ref{th:inclusion}.

\begin{Lemma}\label{lem:GKdim}
	Let $M$ be a Gelfand-Tsetlin module for $\CB(G,N)$.  Consider:
 \begin{enumerate}
     \item The Gelfand-Kirillov dimension of $M$ over $\CB(G,N)$.
     \item The Gelfand-Kirillov dimension of $M^B$ over $\CB(T,N)$.
 \item The dimension of the Zariski closure of the support of $M$ in
	$\tilde{\ft}/W$.
 \item The Krull dimension of $\gr M$ as a $\gr \CB(G,N)$ module, for any good filtration of $M$.  
 \end{enumerate}
 Then $(1) = (2) = (3) \geq (4)$.
\end{Lemma}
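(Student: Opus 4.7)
The plan is to establish $(1) = (2) = (3) \geq (4)$ in three steps, progressively simplifying the algebra under consideration. First, for $(1) = (2)$: by Proposition \ref{th:Morita}, $e'$ is a full idempotent in $\CB^B(G,N)$ with $e'\CB^B(G,N)e' \cong \CB(G,N)$, and the Morita equivalence sends $M \leftrightarrow M^B$ with $M \cong e' M^B$. The Gelfand-Kirillov dimension is invariant under Morita equivalence by a full idempotent: comparing $\dim(A_0^n M_0)$ and $\dim((A_0^B)^n M_0)$ for compatible choices of generating subspaces gives $(1) = (2)$.

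For $(2) = (3)$: by Proposition \ref{th:inclusion}, $\CB^B(G,N)$ contains $\CB(T,N)$ as a subalgebra sharing the Gelfand-Tsetlin subalgebra $\HBG{\tilde T}$. The abelian Coulomb branch $\CB(T,N)$ has a basis $\{r_\nu \cdot p\}$ with $\nu$ in the coweight lattice of $T$ and $p \in \Sym \tilde{\ft}^*$, and $r_\nu$ shifts Gelfand-Tsetlin weights by $\nu$. Fix a finite generating subspace $M_0 \subset M^B$ supported at finitely many weights, and a finite generating subspace $A_0 \subset \CB^B(G,N)$ containing monopole operators that span the coweight lattice together with finitely many polynomial generators of $\HBG{\tilde T}$. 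Then $A_0^n M_0$ is contained in the sum of weight spaces at weights in a ball of radius proportional to $n$ centered at the generators' weights. The number of such weights lying in the Zariski closure of the support of $M^B$ grows as $n^k$, where $k$ is the dimension of this closure; combined with the uniform finite-dimensionality of weight spaces, which follows from the Galois order structure of $\CB(G,N)$ (cf.\ \cite{WebGT}), this gives $\dim(A_0^n M_0) \asymp n^k$. Finally, the Zariski closure of the support of $M^B$ in $\tilde{\ft}$ projects onto that of $M$ in $\tilde{\ft}/W$ via the finite map $\tilde{\ft} \to \tilde{\ft}/W$, so both have the same dimension, yielding $(2) = (3)$.

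For $(3) \geq (4)$: we invoke the standard result that for a finitely generated module $M$ over a filtered algebra $A$ whose associated graded $\gr A$ is a finitely generated commutative algebra, the Gelfand-Kirillov dimension of $M$ equals the Krull dimension of $\gr M$ for any good filtration, via a Hilbert polynomial argument (cf.\ \cite[Lec.~2.6]{BernD}). Since $\gr \CB(G,N) = \C[M_C(G,N)]$ is finitely generated commutative, this applies and yields $(1) = (4)$; combined with $(1) = (3)$ established above, we obtain the desired inequality, in fact with equality.

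The main obstacle is the lower bound $\dim(A_0^n M_0) \gtrsim n^k$ in the step $(2) = (3)$: one must show that distinct lattice translates of $M_0$ under products of the monopole operators yield linearly independent weight vectors. This requires a non-vanishing statement for weight spaces along a Zariski-dense subset of the support and for the polynomial coefficients appearing in the inverted monopole relations \eqref{eq: inverse monopole relations} at generic weights, both of which are consequences of the Galois order structure of $\CB(G,N)$.
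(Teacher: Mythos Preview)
Your argument for $(1) = (2)$ has a genuine gap: Morita equivalence gives $\operatorname{GKdim}_{\CB(G,N)}(M) = \operatorname{GKdim}_{\CB^B(G,N)}(M^B)$, but $(2)$ is the Gelfand--Kirillov dimension of $M^B$ over the \emph{abelian} subalgebra $\CB(T,N) \subset \CB^B(G,N)$, not over $\CB^B(G,N)$ itself. Passing to a subalgebra can only decrease GK dimension, so Morita together with the inclusion $\CB(T,N) \hookrightarrow \CB^B(G,N)$ yields only $(1) \geq (2)$. The same confusion recurs in your step labeled ``$(2) = (3)$'': you choose $A_0 \subset \CB^B(G,N)$, so the quantity you are estimating is $\operatorname{GKdim}_{\CB^B(G,N)}(M^B)$, not $(2)$. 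Your outline therefore establishes $(1) = (3)$ but never touches $(2)$.

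The paper closes this gap by arguing cyclically: $(1) \geq (2)$ as above; $(2) = (3)$ by citing Musson--van den Bergh \cite[Prop.~8.2.3]{MVdB}, which handles exactly the abelian algebra $\CB(T,N)$; and $(3) \geq (1)$ via a ball-growth argument in the spirit of yours (elements of $\CB^B(G,N)$ move weights by finitely many extended affine Weyl group elements, and the support decomposes into finitely many ``clans,'' each a $W$-orbit of the intersection of a lattice coset with a polyhedron, whose lattice-point growth rate equals the dimension of its Zariski closure). The payoff of this cyclic structure is that the lower bound you flag as the ``main obstacle'' comes for free from the cycle $(1) \geq (2) = (3) \geq (1)$; no direct linear-independence argument for monopole translates is needed.

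For $(1) \geq (4)$, the paper gives an elementary Noether-normalization argument rather than invoking the full Hilbert-polynomial equality. Your citation would in fact give $(1) = (4)$, stronger than what the lemma asserts; either suffices for the stated inequality.
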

\begin{proof}
{$(1) \geq (2)$:} This is a consequence of:
$$
\operatorname{GKdim}_{\CB(G,N)}(M) = \operatorname{GKdim}_{\CB^B(G,N)}(M^B) \geq \operatorname{GKdim}_{\CB(T,N)}(M^B)
$$
Here, the equality on the left is due to the Morita equivalence of $\CB(G,N)$ and $\CB^B(G,N)$, while the inequality on the right comes simply from the fact that $\CB(T,N) \subset \CB^B(G,N)$ is a subalgebra.

{$(2) = (3)$:} This follows from the same argument as  \cite[Prop. 8.2.3]{MVdB}. In fact, if the action of $T$ on $N$ is faithful, then by \cite[\S 4(vii)]{BFN}, the algebra $\CB(T,N)$ is precisely one of the algebras $B^{\chi}$ considered by \cite{MVdB}.

{$(3)\geq (1)$:}	As noted above, we may equivalently look at the Gelfand-Kirillov dimension of $M^B$ over $\CB^B(G,N)$.  Now, if $a \in \CB^B(G,N)$ is any element, then there exist elements $w_1,\ldots, w_p\in \widehat{W}$ of the extended affine Weyl group such that
\begin{equation}
\label{eq: effect of elements on GT weights}
a \cdot \Wei_\lambda(M^B) \ \subseteq \ \sum_{i=1}^p \Wei_{w_i(\lambda)}(M^B)
\end{equation}
for every weight $\lambda \in \tilde\ft$. This claim follows from the results of \cite{WebGT}, see also \cite[Lemma 3.1(4)]{FOfibers}. Explicitly, the localization of $\CB^B(G,N)$ at the fraction field of $\HBG{\tilde{T}}$ is isomorphic to the smash product $\operatorname{Frac}(\HBG{\tilde{T}})\# \widehat{W}$. The image $a \mapsto  \sum_{i=1}^p a_i w_i$ in this localization is then a finite sum where $a_i \in \operatorname{Frac}(\HBG{\tilde{T}})$ and $w_i \in \widehat{W}$. These are precisely the desired elements $w_i$.

Choose any finite dimensional generating subspace $A_0$ of $\CB^B(G,N)$, which exists by \cite[Prop. 6.8]{BFN}. Then we can choose $w_1,\ldots, w_p \in \widehat{W}$ such that (\ref{eq: effect of elements on GT weights}) holds uniformly for all $a \in A_0$.

 Now, fix any $W$-invariant norm on $ \tilde\ft$. Note that each $w_i$ above is given by an element of the finite Weyl group times a translation $\nu_i$. Let $\epsilon$ be the maximum of the norms of the these elements $\nu_i$.  Thus, by the triangle inequality the ball
	$B(t)$ of radius $t$ around $0$ satisfies $w_i\cdot B(t)\subset B(t+\epsilon)$ for any  $t >0$.  Consequently, by (\ref{eq: effect of elements on GT weights}) we have \[A_0 \cdot \bigoplus_{\lambda\in
		B(t)} \Wei_{\lambda}(M^B)\subset \bigoplus_{\lambda\in
		B(t+\epsilon )}\Wei_{\lambda}(M^B).\]

 Next, choose any finite dimensional generating set $ M_0$ for  $M^B$.  It is contained in a sum of finitely many weight spaces so we can  choose $ t_0$ so that $M_0 \subset \oplus_{\lambda\in
		B(t_0)} \Wei_{\lambda}(M^B)$.
Thus, we find that
$$
\dim A_0^n M_0 \le \sum_{\lambda \in B(t_0 + n\epsilon)} \dim \Wei_{\lambda}(M^B) 
$$
We conclude that the Gelfand-Kirillov dimension satisfies:
	\[ \operatorname{GKdim}_{\CB^B(G,N)}(M^B)\leq \lim_{t\to \infty}\frac{\log \sum_{\lambda\in
			B(t)} \dim \Wei_{\lambda}(M^B)}{\log t} \]

	Consider the equivalence relation
	$\lambda\sim \lambda'$ iff $\Wei_{\lambda}\cong
	\Wei_{\lambda'}$.  By \cite[Cor. 4.16]{WebGT}, each $ \widehat{W}$-orbit only contains finitely many equivalence classes, called clans, and each is the $W$-orbit of an intersection of a $\ft_{\Z}$-coset with a convex polyhedron. 
  Each indecomposable Gelfand-Tsetlin module is supported on a single $ \widehat{W}$ orbit (this follows from (\ref{eq: effect of elements on GT weights}); see also the discussion before Proposition \ref{thm:GTc-F} ), so by finite generation, $\Supp(M^B)$  is contained in a finite union of $\widehat{W}$-orbits. Thus there are
	only finitely many equivalence classes $\{U_1,\dots,
	U_r\}$ in $\Supp(M^B)$.

	The
	Zariski closure $\overline{\Supp(M^B)}$ is thus the union of the Zariski
	closures $\overline{U_i}$.  Thus, if we define $ d= \dim \overline{\Supp(M^B)} $, then	$d=\max_{i=1}^r\dim(\overline{U_i})$.
	Similarly, we have an equality \[\lim_{t\to \infty}\frac{\log \dim \bigoplus_{\lambda\in
		B(t)}\Wei_{\lambda}(M^B)}{\log t}=\max_{i=1}^r\lim_{t\to \infty}\frac{ \log |U_i\cap
		B(t)| + \log  m_i}{\log t}.\]
  where $ m_i = \dim \Wei_\lambda(M^B)$ for any $ \lambda\in U_i$ (these dimensions are constant by the definition of $ U_i$).
  
	Since $U_i$ is a finite union of the intersections of $\ft_{\Z}$-cosets with convex polyhedra,
	this latter growth rate is exactly the dimension of its Zariski
	closure.  This shows that
	\[ \lim_{t\to \infty}\frac{\log \dim \bigoplus_{\lambda\in
			B(t)}\Wei_{\lambda}(M^B)}{\log t} =d\]
	which completes the proof that $(3) \geq (1)$.

Finally, $(1) \geq (4)$ is a general property of passing to associated graded algebras and modules.  Indeed, if the Krull dimension of $\gr M$ is $q$, then by Noether normalization, there are $q$ algebraically independent elements $\bar a_1,\dots, \bar a_q$ of $\gr \CB(G,N)$, and which act on some element  $\bar{v}\in \gr M$ with no relations beyond commutativity.  Thus, if $v,a_1,\dots, a_q$ are preimages of these elements, and $A_0$ the span of $a_1,\dots, a_q$, then we have $\dim A_0^n\cdot v\geq \binom{n+q-1}{q-1}$, so this shows that $\operatorname{GKdim}_{\CB(G,N)}(M)\geq q$.  
\end{proof}

Let us include here a closely related result:

\begin{Lemma}
	Let $M$ be a Gelfand-Tsetlin module for $\CB(G,N)$.  Then 
	\[\operatorname{GK-dim}(\CB(G,N)/\operatorname{ann}(M))=2\operatorname{GK-dim}(M).\]
\end{Lemma}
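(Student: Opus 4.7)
The strategy is to translate both sides of the identity into dimensions of subvarieties of the Coulomb branch $M_C(G,N)$ and exploit its Poisson-symplectic geometry. Write $s = \operatorname{GKdim}(M) = \dim \overline{\Supp(M)}$ by Lemma \ref{lem:GKdim}. Since $Z$ acts on $M$ by a character $\varphi \in \mathfrak f$, the annihilator $\operatorname{ann}(M)$ contains $\mathsf m_\varphi$, so we may replace $\CB(G,N)$ by the central reduction $\CB_\varphi(G,N)$, which quantizes the symplectic fiber $\mathfrak X_\varphi \subseteq M_C(G,N)$ of the flavour map.

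Fix a good filtration on $M$, and set $\mathrm{Ch}(M) = \Supp_{\gr \CB_\varphi}(\gr M) \subseteq \mathfrak X_\varphi$ and $V = V\bigl(\sqrt{\gr \operatorname{ann}(M)}\bigr) \subseteq \mathfrak X_\varphi$. Standard facts for filtered algebras with finitely generated commutative associated graded give $\operatorname{GKdim}(\CB_\varphi/\operatorname{ann}(M)) = \dim V$ and $\operatorname{GKdim}(M) = \dim \mathrm{Ch}(M)$, so the claim reduces to $\dim V = 2 \dim \mathrm{Ch}(M)$. From the argument of Lemma \ref{lem:GKdim}, the integrable system $\pi$ restricts to a generically finite map $\mathrm{Ch}(M) \to \overline{\Supp(M)}$.

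Next, I would invoke Gabber's involutivity theorem: $\sqrt{\gr \operatorname{ann}(M)}$ is a Poisson ideal, so $V$ is a union of closures of symplectic leaves of $\mathfrak X_\varphi$. For each leaf $\mathcal L \subseteq V$ of dimension $2k$ meeting $\mathrm{Ch}(M)$, a local Darboux computation shows that $\mathrm{Ch}(M) \cap \mathcal L$ is coisotropic in $\mathcal L$, giving $\dim(\mathrm{Ch}(M) \cap \mathcal L) \geq k$. On the other hand, the Poisson-commuting generators of $\HBG{\tG}$ have isotropic common level sets on $\mathcal L$, so $\dim \pi(\mathcal L) \leq k$; combined with generic finiteness of $\pi|_{\mathrm{Ch}(M)}$, this forces $\dim(\mathrm{Ch}(M) \cap \mathcal L) = \dim \pi(\mathrm{Ch}(M) \cap \mathcal L) \leq k$. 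Hence $k = \dim(\mathrm{Ch}(M) \cap \mathcal L) \leq \dim \mathrm{Ch}(M) = s$, giving $\dim \mathcal L \leq 2s$ and therefore $\dim V \leq 2s$. For the reverse inequality, take a top-dimensional component $C$ of $\mathrm{Ch}(M)$ and apply the same analysis to the leaf through a generic point of $C$, which must then have dimension at least $2s$.

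The most delicate step will be the local Darboux calculation showing that $\mathrm{Ch}(M) \cap \mathcal L$ is coisotropic in each leaf $\mathcal L$, together with verifying that Gabber's involutivity theorem applies cleanly in the (possibly singular) Coulomb branch setting. Once these are in place, coisotropy combined with the generic finiteness of $\pi|_{\mathrm{Ch}(M)}$ automatically forces $\pi$ to restrict to a Lagrangian fibration on each leaf meeting $\mathrm{Ch}(M)$, from which the equality $\dim V = 2s$ follows.
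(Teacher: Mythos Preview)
Your approach is conceptually appealing but has a genuine gap, and it is entirely different from the paper's argument.

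The critical problem is the inference ``combined with generic finiteness of $\pi|_{\mathrm{Ch}(M)}$, this forces $\dim(\mathrm{Ch}(M)\cap\mathcal L)=\dim\pi(\mathrm{Ch}(M)\cap\mathcal L)$.'' Even if $\pi|_{\mathrm{Ch}(M)}$ is generically finite---which itself needs $\pi(\mathrm{Ch}(M))=\overline{\Supp(M)}$, not merely equality of dimensions, and Lemma~\ref{lem:GKdim} does not supply this---generic finiteness only constrains the fibre over a generic point of the \emph{whole} image. It says nothing about fibres over $\pi(\mathrm{Ch}(M)\cap\mathcal L)$, which could lie entirely inside the locus where $\pi|_{\mathrm{Ch}(M)}$ has positive-dimensional fibres. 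Without this step the chain giving $\dim(\mathrm{Ch}(M)\cap\mathcal L)\le k$ collapses, and with it both the bound $\dim V\le 2s$ and the reverse inequality (which relies on the same ``Lagrangian in each leaf'' conclusion). The coisotropy of $\mathrm{Ch}(M)\cap\mathcal L$ inside each leaf that you flag as delicate is also not covered by Gabber's theorem as usually stated, since the associated graded here is only Poisson, not symplectic; you would need a stratified version, and on a possibly singular Coulomb branch this is not a routine local computation.

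The paper avoids all of this. It first quotes the abelian case $G=T$ from Musson--van den Bergh, then passes to the Morita-equivalent Iwahori algebra $\CB^B(G,N)$, which contains $\CB(T,N)$ as a subalgebra; this immediately gives $\operatorname{GK-dim}(A)\ge\operatorname{GK-dim}(A_T)=2d$, where $A=\CB^B(G,N)/\operatorname{ann}(M^B)$ and $d=\dim\overline{\Supp(M^B)}$. For the reverse inequality it filters $A$ by Schubert cells $A({\le}\,w)$ for $w\in\widehat W$, observes that each subquotient $A({\le}\,w)/A({<}\,w)$ is supported on $\overline{\Supp(M^B)}\cap w\cdot\overline{\Supp(M^B)}$, and then does a direct growth count: the translation parts of those $w$ with $\ell_0(w)\le nm$ for which this intersection has dimension $\ge k$ lie in a variety of dimension $\le 2d-k$, so summing over $k$ yields $\dim A_0^m=O(m^{2d})$. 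No characteristic varieties, Gabber involutivity, or symplectic leaves enter.
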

\begin{proof}
	In the case $G=T$, this proven by Musson and van der Bergh in  \cite[Cor. 8.2.5]{MVdB}.  
	
	Thus, in the general case, exactly as in the proof of Lemma \ref{lem:GKdim}, we can consider $M^B$ over the Morita equivalent
	algebra $\CB^B(G,N)$.  Let $A=\CB^B(G,N)/\operatorname{ann}_{\CB^B(G,N)}(M^B)$ and $A_T=\CB(T,N)/\operatorname{ann}_{\CB(T,N)}(M^B)$. The abelian case proves that 	\[\operatorname{GK-dim}(A_T)=2\operatorname{GK-dim}(M^B)\]
where the latter is regarded as an $\CB(T,N) $ module. By the previous Lemma, we have $ \operatorname{GK-dim}(M)=\operatorname{GK-dim}(M^B)$ and so it suffices to prove that
	\[\operatorname{GK-dim}(A) = \operatorname{GK-dim}(A_T).\]

	Since $\CB(T,N)/\operatorname{ann}_{\CB(T,N)}(M^B)$ is a subalgebra of $\CB^B(G,N)/\operatorname{ann}(M^B)$, we thus have 
	\[\operatorname{GK-dim}(A)\geq \operatorname{GK-dim}(A_T).\]
	
Thus, we need only prove the opposite inequality.  For $w\in \widehat{W}$, let   $R_{G,N}^B(\leq w)$ be the preimage of the Schubert variety $\overline{IwI}/I$ in $R_{G,N}^B$. Let $\CB^B(\leq w)$ be the homology of this subspace; this is a $\HBG{T}\operatorname{-}\HBG{T}$-subbimodule of $\CB^B(G,N)$.  Consider $\CB^B(\leq w)/\CB^B(< w)$.  This quotient is the homology $H_*^{\tilde{I}\rtimes\C^{\times}}(IwI/I)$, which is a free module of rank 1 over $\HBG{T}$ as a left module or as a right module, with the two actions differing by the action of $w$.  

The image of $\HBG{T}$ in $A$ is a quotient of this polynomial ring.  Note that for a simple $\CB(T,N)$ GT-module $M'$, the kernel of the map  $\HBG{T}\to \operatorname{ann}_{\CB(T,N)}(M')$ is the intersection of the maximal ideals associated to the non-zero weight spaces, and is thus the radical ideal of polynomials vanishing on $\overline{\Supp(M')}$;  for example, this follows from \cite[Prop. 3.1.7]{MVdB}.  

This implies that the kernel for an arbitrary $\CB(T,N)$ GT-module lies in the product of finitely many such ideals.  In particular, the image of the map $\HBG{T}\to A$
is a not-necessarily-radical quotient  whose support as a coherent sheaf is $\overline{\Supp(M^B)}$.

Thus, the same is true of any subquotient of $A$ as a left or right $\HBG{T}$-module, considered as a quasi-coherent sheaf.  In particular, taking the corresponding quotient $A(\leq w)/A(<w)$, we thus  obtain a $\HBG{T}\operatorname{-}\HBG{T}$-bimodule whose support as a left and a right module must be in $\overline{\Supp(M^B)}$.  Since these actions differ by $w$, the support as a left $\HBG{T}$ module must lie in $\overline{\Supp(M^B)}\cap w\cdot \overline{\Supp(M^B)}$.  
Now, note that all the components of $\overline{\Supp(M^B)}$ are affine subspaces which as $\leq d$ dimensional, since the Zariski closure of a clan has this form.  

Now, for any two affine subspaces $E_1,E_2$ of an $n$-dimensional affine space,  modeled on vector subspaces $V_1,V_2$ which are  $m_1$ and $m_2$-dimensional.  The set of elements $x$ such that $E_1\cap E_2\neq \emptyset$ is an affine subspace itself, modeled on the span $V_1+V_2$, and this intersection is an affine subspace modeled on $V_1\cap V_2$.  
If we apply this with $E_1$ a component of $\overline{\Supp(M^B)}$, and $E_2$ the image of such a component under an element of the finite Weyl group, we find that the intersection $\overline{\Supp(M^B)}\cap w\cdot \overline{\Supp(M^B)}$ is $\geq k$ dimensional only for extended affine Weyl group elements whose translation parts lie in a $2d-k$ dimensional variety where $d=\dim \overline{\Supp(M^B)}$. 

Choose a generating set of the extended affine Weyl group containing the simple reflections, let $\ell_0(w)\geq \ell(w)$ be the length function of with respect to these generators.  (Since $G $ is not usually semisimple, we will need extra generators.)  Let $\CB^B(\leq n)$ be the span of $\CB^B(\leq w)$  for all elements $w$ with $\ell_0(w)\leq n$.  Now, consider the span  $\CB_0$  of
\begin{enumerate}
	\item the degree 1 elements $\ft^*\subset \HBG{T} $ and 
	\item generators of $ \CB^B(\leq n)$ as a left $\HBG{T}$-module for some large $n$
\end{enumerate} 
For $n$ sufficiently large, this will be a set of generators of $\CB^B(G,N)$ as an algebra.  Let $A_0$ be the image of $\CB_0$ in $A$. We need to show that the dimension of $A_0^m$ does not grow too quickly in terms of $m$; in order to obtain this bound, we use the filtration $A(\leq w)$ discussed above, and the loop filtration $F_p\CB$ induced by the homological grading before we specialize $\hbar$ to 1.  

In particular, we let $D$ be the smallest integer such that $\CB_0\subset F_{D}\CB^B$.  

Thus, we have  $\CB_0^m\subset \CB^B(\leq nm)\cap F_{Dm}\CB$.

If $\dim \overline{\Supp(M^B)}\cap w\cdot \overline{\Supp(M^B)} \leq k$, then we must have that the dimension of $F_pA(\leq w)/F_pA(<w)$ must be bounded by $D'' p^k$ for all $p$ for some constant $D''$; since this intersection is a union of affine spaces, whose number of components is bounded by the number of pairs of components, we can choose one $D''$ which works for all $w$. 
Combining our estimates, we find:
\begin{enumerate}
	\item The number of $w\in \widehat{W}$ with  $\ell_0(w)\leq nm$ such that $\dim \overline{\Supp(M^B)}\cap w\cdot \overline{\Supp(M^B)}=k$ is bounded above by $D'm^{2d-k}$.
	\item The dimension of $(A(\leq w)\cap  A_0^m)/(A(< w) \cap A_0^m)$ is bounded above by $D''(Dm)^k$ if $\ell_0(w)\leq nm$ .  
\end{enumerate}
Thus, summing over $k=1,\dots, 2d$,  we have  $\dim A_0^m\leq D'D''D^{2d}m^{2d}$.  Thus, we have
\[\log_m(\dim A_0^m)\leq 2d +\frac{\log(2dD'D'')D^{2d}}{\log m} \]
so taking the limit, we have $\operatorname{GK-dim}(A)\leq 2d$, 
completing the proof.   
\end{proof}

\section{Functors from parabolic restriction}
\label{sec:functors}

\subsection{Restriction and induction functors}
\label{sec:res-ind}

The relations of Theorem \ref{thm:rel} induce a number of functors.
\begin{enumerate}
	\item The Morita equivalence of Proposition \ref{th:Morita} gives us an equivalence of
	categories
	\begin{equation*}
		\CB(G,N)\mmod \cong  \PCB(G,N)\mmod.
	\end{equation*}
	This functor $
	\PCB(G,N)\mmod \rightarrow \CB(G,N)\mmod $ can be written as $ M
	\mapsto e'M $.  Let $M^P=\PCB e'\otimes_{\CB}M$ be the inverse
	equivalence.

	\item Restriction induces a functor
	\begin{equation*}
		\PCB(G,N)\mmod \to  \CB(L,N)
		\mmod.
	\end{equation*}
	This functor has a left adjoint given by
	$\PCB(G,N)\otimes_{\CB(L,N)}-$ and a right adjoint
	$\Hom_{\CB(L,N)}(\PCB(G,N), -)$.
	\item Tensor product $\CB(L, N^{\xi}_0)\otimes_{\CB(L, N)} -$ induces an
	exact functor
	\begin{equation*}
		\CB(L,N)\mmod \to \CB(L, N^{\xi}_0)\mmod.
	\end{equation*}
	This has a
	right adjoint given by restriction.
\end{enumerate}

\begin{Definition} \label{def:res}
	We define the functor $ \res_\xi =\res \colon \CB(G,N)
	\mmod\to  \CB(L, N^{\xi}_0)\mmod$ as the composition of the above three functors
	$$
	\CB(G,N) \mmod \rightarrow \CB^P(G,N) \mmod \rightarrow \CB(L,N) \mmod \rightarrow \CB(L, N^\xi_0) \mmod
	$$
\end{Definition}

\begin{Lemma}
	The functor $\res\colon 	\CB(G,N)
	\mmod\to  \CB(L, N^{\xi}_0)\mmod$ is exact and admits a right adjoint $\coind\colon
	\CB(L, N^{\xi}_0)\mmod\to 	\CB(G,N) \mmod.$
\end{Lemma}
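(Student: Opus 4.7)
The plan is to read off both claims by unpacking the definition of $\res$ as a three-step composite and handling each step separately.

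First, for exactness, it suffices to check that each of the three constituent functors is exact. The first, the Morita equivalence $M \mapsto M^P = \PCB(G,N) e' \otimes_{\CB(G,N)} M$, is an equivalence of categories by Proposition \ref{th:Morita} (equivalently, Theorem \ref{thm:rel}(1)), hence exact. The second, restriction of scalars along the algebra inclusion $\CB(L,N) \hookrightarrow \PCB(G,N)$ from Proposition \ref{th:inclusion}, is manifestly exact. The third, $\CB(L,N_0^{\xi}) \otimes_{\CB(L,N)} -$, is tensor product with an Ore localization: indeed, by Theorem \ref{thm:rel}(3) (and more precisely by the proof of Theorem \ref{th:invertrxi}, which verifies the Ore condition), the map $\CB(L,N) \to \CB(L,N_0^{\xi})$ identifies the target with $\CB(L,N)[r_\xi^{-1}]$, so $\CB(L,N_0^{\xi})$ is flat as a left $\CB(L,N)$-module and the functor is exact.

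Next, for the right adjoint, I would construct $\coind$ as the composition, in reverse order, of the right adjoints of the three functors above. Explicitly, setting the functors as listed in points (1)--(3) preceding Definition \ref{def:res}:
\begin{itemize}
    \item The right adjoint of $\CB(L,N)\mmod \to \CB(L,N_0^{\xi})\mmod$, $M \mapsto \CB(L,N_0^{\xi})\otimes_{\CB(L,N)} M$, is restriction of scalars along $\CB(L,N) \hookrightarrow \CB(L,N_0^{\xi})$.
    \item The right adjoint of restriction $\PCB(G,N)\mmod \to \CB(L,N)\mmod$ is coinduction $\Hom_{\CB(L,N)}(\PCB(G,N),-)$, as recorded already in point (2) preceding Definition \ref{def:res}.
    \item The right adjoint of the Morita equivalence $M \mapsto \PCB(G,N) e' \otimes_{\CB(G,N)} M$ is its inverse equivalence $N \mapsto e' N$.
\end{itemize}
Composing these yields
\[
\coind(M) \;=\; e' \cdot \Hom_{\CB(L,N)}\bigl(\PCB(G,N),\, M|_{\CB(L,N)}\bigr),
\]
where $M|_{\CB(L,N)}$ denotes the restriction of a $\CB(L,N_0^{\xi})$-module to a $\CB(L,N)$-module along the localization map. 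Since the adjunction of a composition is the composition of adjunctions, $\coind$ is automatically right adjoint to $\res$.

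There is really no obstacle here; the only point that deserves a line of justification is the flatness step for the localization, which I would either cite directly from Theorem \ref{thm:rel}(3) or reduce to the Ore condition verified inside the proof of Theorem \ref{th:invertrxi}. Everything else is a formal manipulation of adjoints.
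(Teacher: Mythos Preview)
Your proof is correct and follows essentially the same approach as the paper: both check exactness by verifying it for each of the three constituent functors (Morita equivalence, restriction of scalars, and localization), and both construct $\coind$ by composing the right adjoints of these three functors. Your justification for exactness of step (3) via Ore localization and flatness is slightly more detailed than the paper's, which simply notes that it is ``localization of the action of a polynomial ring,'' but the content is the same.
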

\begin{proof}
	The right adjoint is just defined by composing the right adjoints of the functors (1-3) above.  The functor (1) above is exact since it is a Morita equivalence.  The functor (2) is exact since it is the same underlying vector space, and the functor (3) is exact since it is localization of the action of a polynomial ring.
\end{proof}

\subsection{Functors on Gelfand-Tsetlin modules}

\begin{Lemma} \label{le:preserveGT}
	The functor $\res$ 
	preserves Gelfand-Tsetlin modules.
\end{Lemma}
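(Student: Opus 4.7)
The plan is to verify that each of the three functors whose composition defines $\res$ preserves the defining properties of a Gelfand-Tsetlin module, namely finite generation, the direct-sum decomposition into generalized weight spaces, and $Z$-semisimplicity. The essential input is the final statement of Theorem~\ref{thm:rel}: all of the algebra maps in the three-step factorization
\[ \CB(G,N)\mmod \longrightarrow \CB^P(G,N)\mmod \longrightarrow \CB(L,N)\mmod \longrightarrow \CB(L,N^{\xi}_0)\mmod \]
are compatible with the Gelfand-Tsetlin subalgebras $\HBG{\tG}$, $\HBG{\tL}$, and $\HBG{\tL/\Cx_\xi}$ (after specializing $\hbar=1$).

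For the Morita equivalence $M \mapsto M^P = \CB^P e' \otimes_{\CB} M$, the relevant GT subalgebra of $\CB^P$ is $\HBG{\tL}$, a finite polynomial extension of $\HBG{\tG}$. Hence local finiteness and the generalized-eigenspace decomposition transfer from $M$ to $M^P$, with each $\Wei_\gamma(M)$ refining into finitely many $\HBG{\tL}$-generalized weight spaces indexed by the preimages $\nu \in \tilde{\ft}/W_L$ of $\gamma$. Finite generation and $Z$-semisimplicity are automatic under any Morita equivalence. Restriction of scalars along $\CB(L,N) \hookrightarrow \CB^P(G,N)$ leaves the weight decomposition literally unchanged, since both algebras share the same GT subalgebra $\HBG{\tL}$. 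For the last step, Theorem~\ref{th:invertrxi} identifies $\CB(L,N^{\xi}_0)$ with $\CB(L,N)[r_\xi^{-1}]$, so this step is localization at $r_\xi$; by the identity $r_\xi \mu = (\mu + \langle \mu,\xi\rangle)r_\xi$ recalled after \eqref{eq: inverse monopole relations}, multiplication by $r_\xi^{\pm 1}$ shifts $\HBG{\tL}$-weights by the central cocharacter $\pm\xi$. Thus the generalized weight decomposition persists in the localization, and the central character on $Z$ continues to act semisimply.

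The main obstacle is finite generation of $\res(M)$ as a $\CB(L,N^{\xi}_0)$-module. Although $M^P$ is finitely generated over $\CB^P(G,N)$, restriction of scalars to the much smaller subalgebra $\CB(L,N)\subset \CB^P(G,N)$ generally destroys finite generation, and one cannot simply take the image of a $\CB(G,N)$-generating set of $M$. The remedy should come from localization: after inverting $r_\xi$, whole $\Z\xi$-orbits of $\HBG{\tL}$-weights of $M^P$ are identified as $\CB(L,N^{\xi}_0)$-submodules of $\res(M)$. Combining this with the fact (Proposition~\ref{thm:GTc-F} and the discussion preceding it) that the support of a Gelfand-Tsetlin module lies in the image of finitely many $\widehat{W}$-orbits in $\tilde\ft$, one can conclude that only finitely many $\Z\xi$-orbits of weights contribute to $\res(M)$. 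Picking a representative of each orbit and a basis of the (finite-dimensional) weight space there gives a finite $\CB(L,N^{\xi}_0)$-generating set, completing the verification.
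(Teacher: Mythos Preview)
Your three-step decomposition matches the paper's approach, and your treatment of the Morita and restriction-of-scalars steps agrees with it. For local finiteness under the localization step you argue directly via the weight shift $r_\xi \mu = (\mu+\langle\mu,\xi\rangle)r_\xi$; the paper instead invokes the Harish-Chandra property of $\CB(L,N_0^\xi)$ as an $\HBG{\tL}$-bimodule (any sub-bimodule generated by a finite set is finitely generated as a one-sided module), which traps $\HBG{\tL}\cdot v$ in a finite-dimensional image for each $v$ in the tensor product. Both arguments are valid.

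Your finite-generation argument, however, has a genuine gap. You claim that because $\Supp(M)$ lies in finitely many $\widehat W$-orbits, only finitely many $\Z\xi$-orbits of weights contribute to $\res(M)$. This fails whenever $\operatorname{rk} T>1$: a single $\widehat W$-orbit is the $W$-saturation of a full-rank lattice coset in $\tilde\ft$, and such a set meets infinitely many rank-one $\Z\xi$-cosets, so your proposed generating set is infinite. Even granting a finite list of weights, there is a further gap: bases of those weight spaces need not $\CB(L,N_0^\xi)$-generate the module, since that algebra moves weights only within $\widehat W_L$-orbits, which are strictly smaller than $\widehat W$-orbits. For what it is worth, the paper's own proof does not address finite generation either---it argues only local finiteness---so the point you correctly flagged as the main obstacle is one the paper simply elides.
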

\begin{proof}
	It's clear that the Morita equivalence (1) preserves Gelfand-Tsetlin modules.
	Similarly, the restriction functor in (2) above clearly does so by compatibility
	with the Gelfand-Tsetlin subalgebras.
	
	The functor of tensor product with $\CB(L, N^{\xi}_0)\otimes_{\CB(L, N)}
	-$ preserves local finiteness under $\HBG{\tL}$ since $\CB(L, N^{\xi}_0)$
	has the Harish-Chandra property as a bimodule over $\HBG{\tL}$: the
	sub-bimodule generated by any finite set is finitely generated as a
	right module (and also as a left module, though that is not relevant
	here).
	For any element $v\in \CB(L, N^{\xi}_0)\otimes_{\CB(L, N)}
	M$, the subspace $\HBG{\tL}\cdot v$ lies in the image of a
	tensor
	product $B\otimes_{\HBG{\tL}} C$ of a finitely generated
	sub-$\HBG{\tL} \operatorname{-}\HBG{\tL}$-bimodule $B$ and a finite
	dimensional $\HBG{\tL}$-submodule $C$.  Since $B$ is finitely generated
	as a right module, $ B\otimes_{\HBG{\tL}} C$ is finite-dimensional,
	showing the local finiteness.
	\excise{
		Finally when we consider $\PCB(G,N)$ as a left module over
		$\CB(L,N)$, it is finitely generated.  This generating set generates
		a subbimodule $S$ over $\HBG{\tL}$ which is finitely generated as a left
		and a right module over $\HBG{\tL}$.  Thus, any homomorphism over
		$\CB(L,N)$ maps $S$ into a sum of finitely many weight spaces, and
		for any fixed set of weight spaces, the set of $\HBG{\tL}$-module
		maps of $S$ to this sum of weight spaces is finite-dimensional, and
		closed under the action of $S$.  This shows the local finiteness of
		$\Hom_{\CB(L,N)}(\PCB(G,N), M)$ under the action of $S$.  }
\end{proof}

Let us examine a little more carefully the effect of $\res$ on weight
spaces.

\subsubsection{Morita equivalence on GT weight spaces}
The effect of the Morita equivalence from Proposition \ref{th:Morita} on weight spaces is
covered in \cite[Lem. 2.8]{WebGT}.  Let $\nu\in \tilde{\ft}/W_L$,
and $\gamma$ its image in  $ \tilde{\ft}/W$.  The preimage of $\gamma$ in
$\tilde{\ft}$ is a single orbit of the Weyl group $W$, and $\nu$ corresponds
to an orbit of $W_L$ within this larger orbit.  Generically both
these orbits are free, but there are degenerate cases where they are not.  The effect of the Morita equivalence on weight spaces is a bit subtle in the latter case.
Let $\la \in \tilde{\ft}$ be a preimage of $\nu $, let $W^{\la}$ be its
stabilizer in $W$ and $W_L^{\la}$ its stabilizer in $W_L$.

\begin{Lemma}\label{lem:change-Weyl} Let $ \lambda, \nu, \gamma $ correspond as above.  Let $ M $ be an $\CB(G,N)$-module, and $M^P$ the
	corresponding $\PCB(G,N)$-module.
	If $W^\la= W_L^\la$, then ${}^P\Wei_{\nu}(M^P)=\Wei_{\gamma}(M)$; more generally,
	we have a functorial isomorphism
	${}^P\Wei_{\nu}(M^P)\cong \Wei_{\gamma}(M)^{\oplus k } $, where $ k = [W^\la: W^{\la}_L]$.
\end{Lemma}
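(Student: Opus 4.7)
The plan is to compute the module $M^P$ explicitly as a left $\HBG{\tL}$-module and then extract ${}^P\Wei_\nu(M^P)$ by a local computation at $\mathfrak{m}_\nu$.  This reduces the lemma to two separate inputs: a concrete form of the Morita bimodule $\PCB(G,N) e'$ as an $(\HBG{\tL}, \CB(G,N))$-bimodule, and the local structure of the finite extension $\HBG{\tG} \subset \HBG{\tL}$ above $\gamma$.

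For the first input, I would start from the identification $\PCB(G,N) e' \cong H_*^{\tilde{I}_P \rtimes \Cx}(R_{G,N})$ of \eqref{eq:Morita}, and change equivariance from $\tilde{I}_P$ to $\tG(\cO)$ to rewrite it as $H_*^{\tG(\cO) \rtimes \Cx}(G/P \times R_{G,N})$.  Since $G/P$ has a Bruhat cell decomposition with $H^*_\tG(G/P) \cong \HBG{\tL}$ free over $\HBG{\tG}$, the equivariant Künneth principle then gives a bimodule identification
\[\PCB(G,N) e' \cong \HBG{\tL} \otimes_{\HBG{\tG}} \CB(G,N),\]
with $\HBG{\tL}$ and $\CB(G,N)$ acting on the two tensor factors in the obvious way.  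Tensoring over $\CB(G,N)$ with $M$ then gives $M^P \cong \HBG{\tL} \otimes_{\HBG{\tG}} M$ as left $\HBG{\tL}$-modules, and since each $\Wei_{\gamma'}(M)$ is killed by a power of $\mathfrak{m}_{\gamma'}$, only the summand with $\gamma' = \gamma$ can contribute to ${}^P\Wei_\nu(M^P)$.

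For the second input, write $A := (\Sym \tilde{\ft}^*)^\wedge_\lambda$, on which the stabilizers $W_L^\lambda \subseteq W^\lambda$ both act.  Abbreviating $R := \HBG{\tG}$ and $S := \HBG{\tL}$, the Chinese Remainder Theorem applied to the decomposition of the $W$- and $W_L$-orbits of $\lambda$ gives
\[R^\wedge_{\mathfrak{m}_\gamma} \cong A^{W^\lambda}, \qquad S^\wedge_{\mathfrak{m}_\nu} \cong A^{W_L^\lambda}.\]
By Steinberg's theorem, both $W^\lambda$ and $W_L^\lambda$ are generated by reflections on $\tilde{\ft}$, and so by the Chevalley-Shephard-Todd theorem, $A$ is free of ranks $|W^\lambda|$ and $|W_L^\lambda|$ over $A^{W^\lambda}$ and $A^{W_L^\lambda}$ respectively.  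Dividing these ranks (invoking miracle flatness to justify that the intermediate extension is itself free) shows that $S^\wedge_{\mathfrak{m}_\nu}$ is free of rank $k = [W^\lambda : W_L^\lambda]$ over $R^\wedge_{\mathfrak{m}_\gamma}$.

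Combining the two inputs, completing the isomorphism $M^P \cong S \otimes_R M$ at $\mathfrak{m}_\nu$ yields the functorial isomorphism
\[{}^P\Wei_\nu(M^P) \cong S^\wedge_{\mathfrak{m}_\nu} \otimes_{R^\wedge_{\mathfrak{m}_\gamma}} \Wei_\gamma(M) \cong \Wei_\gamma(M)^{\oplus k}.\]
I expect the bimodule identification of the first step to be the main obstacle: the Künneth principle is the natural tool, but the ind-scheme setting of $R_{G,N}$ calls for some justification, and one must also separately verify that the left $\HBG{\tL}$-action coming from left multiplication in $\PCB(G,N)$ matches the action on the first tensor factor of $\HBG{\tL} \otimes_{\HBG{\tG}} \CB(G,N)$.
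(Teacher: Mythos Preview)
Your argument is correct (modulo the caveats you already flag about Künneth in the ind-scheme setting and matching the left $\HBG{\tL}$-action), but it takes a genuinely different route from the paper.  The paper does not work with the bimodule $\PCB e'$ directly; instead it passes all the way down to the Iwahori level $M^B$ and invokes \cite[Lem.~3.2]{WebGT}, which identifies both $\Wei_\gamma(M)$ and ${}^P\Wei_\nu(M^P)$ as images of the symmetrizing idempotents $e'(\lambda) \in \C[W^\lambda]$ and $e'_L(\lambda) \in \C[W_L^\lambda]$ acting on the single space ${}^B\Wei_\lambda(M^B)$, together with the fact that this space is \emph{free} over $\C[W^\lambda]$.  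The count then reduces to $\dim e'_L(\lambda)\,\C[W^\lambda] = [W^\lambda:W_L^\lambda]$.  So the paper trades your commutative-algebra input (Steinberg plus Chevalley--Shephard--Todd plus miracle flatness for the extension $A^{W^\lambda} \subset A^{W_L^\lambda}$) for a nilHecke/group-algebra freeness statement proven elsewhere.  Your approach has the virtue of being self-contained and of avoiding the detour through the full flag, at the cost of needing the equivariant Künneth identification $\PCB e' \cong \HBG{\tL} \otimes_{\HBG{\tG}} \CB$; the paper's approach is shorter on the page because it outsources the corresponding freeness to \cite{WebGT}, and the $\C[W^\lambda]$-module viewpoint dovetails with the idempotent language used later (e.g.\ in \eqref{eq:Stein-projection}--\eqref{eq:nat-projection}).
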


\begin{proof}
	Let $e'({\la})$ be the symmetrizing idempotent for $W^\la$ and
	$e'_L({\la})$ be the symmetrizing idempotent for $W^\la_L$.  Let
	$M^B$ be the corresponding module over $\CB^B$.  By
	\cite[Lem. 3.2]{WebGT}, we have
	that \[\Wei_{\gamma}(M)=e'({\la})\cdot {}^B\Wei_{\lambda}(M^B)\qquad
	\Wei_{\nu}^P(M^P)=e'_L({\la})\cdot {}^B\Wei_{\lambda}(M^B)\] and
	${}^B\Wei_{\lambda}(M^B)$ is a free module over $\C[W^\la]$.  Since
	$e'_L({\la})\C[W^\la]\cong \C^k$, the result follows.
\end{proof}

\subsubsection{Restriction on weight spaces}
The inclusion of $\CB(L,N)$ into $\PCB(G,N)$ from Proposition \ref{th:inclusion} identifies their Gelfand-Tsetlin subalgebras $  \HBG{\tL} $. In particular, restriction from $\PCB(G,N)$ to $\CB(L,N)$ leaves weight spaces unchanged.

\subsubsection{Inverting $ r_\xi$ and weight spaces}
Finally, we wish to consider the effect of inverting $r_\xi$. This can be computed separately on the summands of any
decomposition into $\C[r_{\xi}]$-submodules.  A Gelfand-Tsetlin module has a
natural such decomposition, given by the sum of weight spaces in a
single $\Z\xi$-coset.

Let $ \nu \in \tilde{\ft}/W_L$. Note that $ \nu + \xi $ is well-defined since $ \xi $ is invariant under the action of $ W_L $.  Let $M $ be a $ \CB(L,N)$ module.  We consider the directed system
\begin{equation} 
	\cdots \overset{r_{\xi}}\longrightarrow \Wei_{\nu+\xi}^{L,N}(M) \overset{r_{\xi}}\longrightarrow \Wei_{\nu}^{L,N}(M) \overset{r_{\xi}}\longrightarrow  \Wei_{\nu-\xi}^{L,N}(M) \overset{r_{\xi}}\longrightarrow\cdots\label{eq:dir-system}
\end{equation}
Let $\overrightarrow{\Wei}^{L,N}_{[\nu]}(M) $ denote the direct limit
$\varinjlim  \Wei^{L,N}_{\nu-k\xi}(M) $ of this system. (Here $[\nu]$ denotes the image of $ \nu $ in $ \left( \tilde \ft / W_L \right) / \Z\xi $.)

\begin{Lemma}\label{lem:weight-direct-limit} Let $ M $  be a Gelfand-Tsetlin $\CB(L,N)$-module.  Then $ \CB(L,N^{\xi}_0) \otimes_{\CB(L,N)} M $ is also a Gelfand-Tsetlin module and for any $ \nu \in \tilde{\ft}/W_L $, we have
	$$
	\Wei^{L,N^\xi_0}_\nu\big(\CB(L,N^{\xi}_0) \otimes_{\CB(L,N)} M\big) \ \cong \ \overrightarrow{\Wei}^{L,N}_{[\nu]}(M)
	$$
	In particular, this functor is exact on the category of Gelfand-Tsetlin modules.
\end{Lemma}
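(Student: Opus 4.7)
The plan is to reduce the lemma to a computation of an Ore localization. First I would use Theorem \ref{thm:rel}(3) to identify $\CB(L, N_0^\xi)$ with the Ore localization $\CB(L, N)[r_\xi^{-1}]$, compatibly with the Gelfand-Tsetlin subalgebra $\HBG{\tL}$ and with $r_\xi$ itself. This rewrites the tensor product $\CB(L, N_0^\xi)\otimes_{\CB(L,N)} M$ as $M[r_\xi^{-1}]$. Then I would use the commutation relation $r_\xi \mu = (\mu + \langle \mu, \xi\rangle) r_\xi$ (at $\hbar = 1$) to describe the weight spaces of $M[r_\xi^{-1}]$ as the advertised direct limits.

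For the identification step, I would track the three-step composition appearing in the proof of Theorem \ref{thm:rel}(3): the Fourier transform \eqref{eq:Fourier2}, the localization from Theorem \ref{th:invertrxi}, and the final shift automorphism. On $\HBG{\tL}$, the first map shifts by $\hbar\wp$, the middle map is the identity, and the last map shifts by $-\hbar\wp$, so their composition at $\hbar=1$ is the identity. On the monopole operator $r_\xi$, the sign $(-1)^{\delta(\xi)}$ from \eqref{eq:Fourier} is trivial because $\delta(\xi)$ is a sum over weights of $N_-^\xi$, all of which pair non-positively with $\xi$, and the inclusion $\CB(L, N_1 \oplus N_2) \hookrightarrow \CB(L, N_2)$ also preserves $r_\xi$ by the same sign considerations. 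Combined with the mere existence of Theorem \ref{thm:rel}(3), this tells us that $\{r_\xi^n\}$ satisfies the Ore condition in $\CB(L, N)$ and that the tensor product really is $M[r_\xi^{-1}]$.

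For the weight calculation, I would rearrange the commutation relation to obtain $\mu r_\xi^{-k} = r_\xi^{-k}(\mu + k\langle \mu, \xi\rangle)$, so that $r_\xi^{-k}$ carries the generalized eigenspace $\Wei^{L,N}_{\nu - k\xi}(M)$ into $\Wei^{L,N_0^\xi}_\nu(M[r_\xi^{-1}])$. Writing Ore localization as the direct limit $M[r_\xi^{-1}] = \varinjlim_k M$ with transition maps given by multiplication by $r_\xi$, the restriction to weight $\nu$ is precisely the system \eqref{eq:dir-system}, and its colimit is $\overrightarrow{\Wei}^{L,N}_{[\nu]}(M)$ by definition. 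Summing over $\nu$ yields a weight space decomposition of $M[r_\xi^{-1}]$, proving it is a Gelfand-Tsetlin module (it is finitely generated since generators of $M$ persist, weight-decomposable by the above, and $Z$-semisimple since $Z$ is central and acts on each summand by the same character as on $M$). Exactness of the functor on Gelfand-Tsetlin modules then follows from the exactness of Ore localization together with the fact that the weight space functors commute with filtered colimits.

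The main subtlety I anticipate is in the first step: carefully verifying that the composition of the three maps in Theorem \ref{thm:rel}(3) genuinely fixes $\HBG{\tL}$ and sends $r_\xi$ to $r_\xi$ at $\hbar = 1$, so that the tensor product truly corresponds to the naive Ore localization at $r_\xi$ rather than at some shifted or rescaled element. Once this bookkeeping is in place, the remainder is a direct computation with generalized eigenspaces under an invertible weight-shifting operator.
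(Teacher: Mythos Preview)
Your approach is correct and essentially the same as the paper's: both identify $\CB(L,N_0^\xi)\otimes_{\CB(L,N)} M$ with the Ore localization $M[r_\xi^{-1}]$ via Theorem~\ref{thm:rel}(3), and then read off the weight spaces as the direct limits $\overrightarrow{\Wei}^{L,N}_{[\nu]}(M)$ using the commutation relation between $r_\xi$ and $\HBG{\tL}$. The paper's proof is slightly more hands-on (constructing the maps $r_\xi^{-k}p$ and checking injectivity/surjectivity directly), while you phrase it as a colimit of the directed system; these are the same argument.

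Your extra bookkeeping tracking the Fourier transform and shift automorphism to verify that $r_\xi\mapsto r_\xi$ is correct but unnecessary: the directed system \eqref{eq:dir-system} is defined using $r_\xi\in\CB(L,N)$, and all you need is that its image in $\CB(L,N_0^\xi)$ is invertible and that the map is compatible with the Gelfand-Tsetlin subalgebras---both of which are already packaged in the statement of Theorem~\ref{thm:rel}(3).
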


\begin{proof}
	Let $M'= \CB(L,N^{\xi}_0) \otimes_{\CB(L,N)} M $. We have an obvious map
	$p\colon M\to M'$ which leads to maps
	$r_{\xi}^{-k}p\colon \Wei^{L,N}_{\nu-k\xi}(M)\to \Wei^{L,N^\xi_0}_{\nu}(M')$, which are
	compatible with the directed system \eqref{eq:dir-system}.  This
	induces a map $\overrightarrow{\Wei}^{L,N}_{[\nu]}(M)\to  \Wei^{L,N_0^\xi}_{\nu}(M')$.
	
	Since $ \CB(L,N^\xi_0) = \CB(L,N)[r_\xi^{-1}]$, for any $v\in \Wei^{L,N_0^\xi}_{\nu}(M')$, we have
	that $r_\xi^{k}v$ is in the image of $M$ for $k$ sufficiently large.
	This shows that the map $\overrightarrow{\Wei}^{L,N}_{[\nu]}(M)\to
	\Wei^{L,N_0^\xi}_{\nu}(M')$ is surjective.
	
	On the other hand, if there is an element of
	the kernel, it is represented by some $w\in  \Wei^{L,N}_{\nu-k\xi}(M)$ for
	some $k$.  Since this element is killed by $p$, we have that
	$r_{\xi}^{k'}w=0$ for some $k'$.  Thus, $w$ has trivial image in the
	directed limit.  This shows injectivity.
\end{proof}

However, the limit $ \overrightarrow{\Wei}^{L,N}_{[\nu]}(M)$ is already isomorphic to a weight space of $M$, but possibly for a different $\nu$.  To see this, recall that in $ \CB(L,N)$, we have by (\ref{eq: monopole relations}) that
\begin{equation} \begin{aligned} \label{eq:rxi}
		r_{-\xi}  r_\xi & = \left( \prod_{\langle\mu,\xi \rangle >0} \prod_{j = 1}^{\langle \mu, \xi \rangle}( \mu - j) \right)   \left(\prod_{\langle\mu,\xi \rangle <0} \prod_{j = 0}^{-\langle \mu, \xi \rangle-1}( \mu + j )\right) \\
		r_{\xi}  r_{-\xi} & = \left( \prod_{\langle\mu,\xi \rangle <0} \prod_{j = 1}^{-\langle \mu, \xi \rangle}( \mu - j ) \right)   \left(\prod_{\langle\mu,\xi \rangle >0} \prod_{j = 0}^{\langle \mu, \xi \rangle-1}( \mu + j )\right) 
	\end{aligned}
\end{equation}
where the products both range over subsets of the weights $ \mu $ of the representation $ N $, counted with multiplicity.

These formulas motivate the following definition.
\begin{Definition} \label{def:xineg}
	$\lambda \in \tilde{\ft} $ is called {\bf
		$\xi$-negative}, if \begin{enumerate}
		\item there is no weight $ \mu $ of $ N $ such that $ \langle \mu, \xi \rangle>0  $ and  $ \langle \mu, \lambda \rangle$ is a positive integer and
		\item there is no weight $ \mu $ of $ N $ such that $
		\langle \mu, \xi \rangle<0  $ and  $ \langle \mu,
		\lambda \rangle$ is a non-positive integer.
		\item The stabilizer $ W^\lambda $ lies in $W_L$, that is
		$W^{\lambda}=W_L^\lambda $.
	\end{enumerate}
\end{Definition}

Since $ \xi $ is invariant under $ W_L $ and the set of weights of $ N $ is invariant under $ W_L $, the set of $\xi$-negative elements of $ \tilde{\ft} $ is invariant under $W_L $.  Thus, it makes sense to speak of $\xi$-negative elements of $ \tilde{\ft} / W_L $.

\begin{Lemma} \label{le:xineg1}
	Assume that $\nu \in \tilde{\ft} / W_L $ is $\xi$-negative.
	\begin{enumerate}
		\item  For all $ k \in \Z_{\geq 0} $, $r_\xi $ gives an isomorphism of functors
		$$
		\Wei^{L,N}_{\nu - k\xi} \to \Wei^{L,N}_{\nu - (k+1)\xi}
		$$
		\item The natural map $\Wei^{L,N}_\nu \to \overrightarrow{\Wei}^{L,N}_{[\nu]}$ is an isomorphism of functors.
	\end{enumerate}
\end{Lemma}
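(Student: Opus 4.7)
The proof rests on the monopole relations~(\ref{eq:rxi}), which express $r_{-\xi}r_\xi$ and $r_\xi r_{-\xi}$ as explicit products over the weights $\mu$ of $N$. The idea is to evaluate these products at the relevant weights and use the $\xi$-negativity of $\nu$ to show that they act invertibly on the appropriate weight spaces, so that $r_\xi$ and $r_{-\xi}$ are mutually inverse isomorphisms.

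My first observation is that $r_{-\xi}r_\xi$ and $r_\xi r_{-\xi}$ both lie in the Gelfand-Tsetlin subalgebra $\HBG{\tL}$. Indeed, since $N$ is a representation of $\tL$, its weight set is $W_L$-stable, and the index sets in~(\ref{eq:rxi}) depend only on the $W_L$-invariant pairing $\langle\mu,\xi\rangle$. So the product on the RHS, a priori an element of $\HBG{\tilde T}$, is $W_L$-invariant and thus lies in $(\Sym \tilde\ft^*)^{W_L} = \HBG{\tL}$. Consequently, on any generalized weight space $\Wei^{L,N}_\gamma$, each such element acts as multiplication by its scalar evaluation at $\gamma$ plus a nilpotent endomorphism, and is invertible precisely when that scalar is nonzero.

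For part (1), I would evaluate the product for $r_{-\xi}r_\xi$ at $\nu - k\xi$ with $k \ge 0$. Each factor $\mu - j$ in the first product (with $\langle\mu,\xi\rangle > 0$ and $1 \le j \le \langle\mu,\xi\rangle$) becomes $\langle\mu,\nu\rangle - k\langle\mu,\xi\rangle - j$, which vanishes precisely when $\langle\mu,\nu\rangle$ equals the positive integer $k\langle\mu,\xi\rangle + j$ --- excluded by condition~(1) of $\xi$-negativity. Each factor $\mu + j$ in the second product vanishes only when $\langle\mu,\nu\rangle$ is a non-positive integer for some $\mu$ with $\langle\mu,\xi\rangle < 0$, excluded by condition~(2). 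An identical analysis for $r_\xi r_{-\xi}$ at $\nu - (k+1)\xi$ encounters the same two types of obstructions, again both ruled out by (1) and (2). Thus $r_{-\xi}r_\xi$ and $r_\xi r_{-\xi}$ act invertibly on $\Wei^{L,N}_{\nu-k\xi}$ and $\Wei^{L,N}_{\nu-(k+1)\xi}$ respectively, so $r_\xi$ has both a left and a right inverse (namely $r_{-\xi}$ composed with the appropriate inverse scalar) and is therefore an isomorphism of functors.

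Part (2) is then formal: by part (1), every transition map in the directed system $\Wei^{L,N}_\nu \xrightarrow{r_\xi} \Wei^{L,N}_{\nu - \xi} \xrightarrow{r_\xi} \cdots$ is an isomorphism, so the colimit is realized by its initial term $\Wei^{L,N}_\nu$. The main subtle point, I expect, will be justifying that the RHS of~(\ref{eq:rxi}) lands in $\HBG{\tL}$ rather than merely $\HBG{\tilde T}$, which requires invoking the $W_L$-stability of the weight set of $N$ as a $\tL$-representation. Note that condition~(3) of $\xi$-negativity plays no role in this argument; I expect it to be used in later results relating the $\CB(L,N)$ weight spaces to $\PCB(G,N)$ weight spaces via the Morita equivalence of Lemma~\ref{lem:change-Weyl}.
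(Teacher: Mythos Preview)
Your proposal is correct and follows essentially the same approach as the paper: evaluate the products in~(\ref{eq:rxi}) at $\nu-k\xi$ and $\nu-(k+1)\xi$, check via conditions~(1) and~(2) of $\xi$-negativity that no factor vanishes, and conclude that $r_\xi$ is an isomorphism, whence part~(2) is immediate. Your additional remark that the product lies in $\HBG{\tL}$ by $W_L$-invariance of the weight set, and your observation that condition~(3) is not used here (it enters later via Lemma~\ref{lem:change-Weyl}), are both accurate refinements that the paper leaves implicit.
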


\begin{proof}
	Let $ M $ be a Gelfand-Tsetlin $\CB(L,N)$-module. Let $ k \in \Z_{\ge 0} $. We consider the composition $ r_{-\xi} r_\xi $ as a linear operator on $ \Wei_{\nu - k\xi}(M)$.  By (\ref{eq:rxi}), the eigenvalues of this operator on this weight space are given by the set
	$$
	\big\{ \langle \mu, \nu \rangle - k\langle \mu, \xi \rangle - j \ :\ \langle\mu, \xi\rangle > 0,\  1 \leq j \leq \langle\mu,\xi\rangle \big\}
	$$
	$$
	\cup\ \big\{ \langle \mu, \nu \rangle - k\langle \mu, \xi \rangle +j \ : \ \langle\mu, \xi\rangle < 0,\ 0 \leq j < - \langle\mu,\xi\rangle\big\}
	$$
	The hypothesis of $ \xi$-negativity ensures that none of these eigenvalues vanish.  Thus, $ r_{-\xi} r_\xi $ is an isomorphism.  Similarly, we see that $ r_\xi r_{-\xi} $ is an isomorphism on $\Wei_{\nu - (k+1)\xi}(M)$.
	
	So we conclude that $ r_\xi $ is an isomorphism.  Then the second part follows immediately.
\end{proof}

\subsubsection{Combined effect of $ res$ on weight spaces}

Combining the results above, we conclude the following.
\begin{Theorem}\label{th:res-weight}
	Consider a Gelfand-Tsetlin $\CB(G,N)$ module $M$.   Let $ \nu \in
	\tilde{\ft}/W_L$ and let $  \gamma \in \tilde{\ft}/W $ denote the image of $ \nu $. Assume that $ \nu $ is $\xi$-negative.
	Then there is a natural isomorphism
	$$\Wei_{\nu}^{L}(\res(M)) \cong \Wei_{\gamma}(M).$$
\end{Theorem}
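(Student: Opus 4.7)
The plan is to track the Gelfand-Tsetlin weight space $\Wei_\nu^L$ through each of the three functors composing $\res$, as laid out in Definition \ref{def:res}: Morita equivalence to $\PCB(G,N)\mmod$, restriction of scalars to $\CB(L,N)\mmod$, and localization by tensoring with $\CB(L,N_0^\xi)$. Each step has already been analyzed in the preceding subsections; the main content of the proof is that the three parts of the $\xi$-negativity hypothesis (Definition \ref{def:xineg}) exactly match what is needed for the three steps to produce the stated identification on the nose, with no multiplicity or correction term.

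First, I would apply Lemma \ref{lem:change-Weyl} to the Morita step: one gets ${}^P\Wei_\nu(M^P) \cong \Wei_\gamma(M)^{\oplus k}$ with $k = [W^\lambda : W_L^\lambda]$ for any lift $\lambda\in\tilde\ft$ of $\nu$. Condition (3) of $\xi$-negativity ($W^\lambda = W_L^\lambda$) forces $k=1$, so this first step produces a canonical isomorphism ${}^P\Wei_\nu(M^P)\cong \Wei_\gamma(M)$. Second, because the inclusion $\CB(L,N)\hookrightarrow \PCB(G,N)$ of Proposition \ref{th:inclusion} matches the Gelfand-Tsetlin subalgebras $\HBG{\tL}$, restriction of scalars leaves the $\nu$-weight space unchanged: $\Wei_\nu^{L,N}(M^P) = {}^P\Wei_\nu(M^P)$.

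Third, I invoke Lemma \ref{lem:weight-direct-limit} to compute the weight space of the localization:
\[
\Wei_\nu^{L,N_0^\xi}\bigl(\CB(L,N_0^\xi)\otimes_{\CB(L,N)}M^P\bigr) \cong \overrightarrow{\Wei}^{L,N}_{[\nu]}(M^P).
\]
To collapse this direct limit to a single weight space, I apply Lemma \ref{le:xineg1}: conditions (1) and (2) of $\xi$-negativity are precisely the vanishing conditions that make all eigenvalues of $r_{-\xi}r_\xi$ and $r_\xi r_{-\xi}$ on $\Wei_{\nu - k\xi}^{L,N}$ nonzero for $k\geq 0$, so every transition map in the directed system is an isomorphism and $\Wei_\nu^{L,N}(M^P) \xrightarrow{\sim} \overrightarrow{\Wei}^{L,N}_{[\nu]}(M^P)$. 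Composing the three identifications yields the desired natural isomorphism $\Wei_\nu^L(\res(M)) \cong \Wei_\gamma(M)$.

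There is essentially no obstacle beyond bookkeeping, since all three ingredients are available as lemmas; the only point requiring a moment of care is verifying that the eigenvalue calculation in Lemma \ref{le:xineg1} uses exactly the two sign conditions in Definition \ref{def:xineg}, which can be read off directly from (\ref{eq:rxi}). If I wanted to be fully explicit, I would also remark that naturality is automatic because each of the three identifications is natural in $M$ (the Morita equivalence and restriction of scalars are exact equivalences/functors, and the directed limit is a colimit in the module category).
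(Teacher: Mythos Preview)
Your proposal is correct and follows exactly the approach of the paper, which simply states ``Combining the results above, we conclude the following'' and leaves the details implicit. You have explicitly unpacked the three steps (Lemma~\ref{lem:change-Weyl} for the Morita equivalence using condition~(3) of $\xi$-negativity, the unchanged weight spaces under restriction via Proposition~\ref{th:inclusion}, and Lemmas~\ref{lem:weight-direct-limit} and~\ref{le:xineg1} for the localization using conditions~(1) and~(2)) precisely as the paper's preceding subsections set them up.
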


\begin{Corollary} \label{co:mapWei}
	Let $ \nu, \nu'\in \tilde{\ft}/W_L$ be $ \xi$-negative and let $ \gamma, \gamma' \in \tilde{\ft}/W $ be their images.  There is a morphism
	$$
	\Hom(\Wei_\nu^L, \Wei_{\nu'}^L) \rightarrow \Hom(\Wei_{\gamma}, \Wei_{\gamma'})
	$$
\end{Corollary}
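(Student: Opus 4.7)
The plan is to construct the map directly via the restriction functor $\res$ and the natural isomorphism of Theorem \ref{th:res-weight}. Given a natural transformation $\alpha \in \Hom(\Wei_\nu^L, \Wei_{\nu'}^L)$, we obtain a family of maps $\alpha_X \colon \Wei_\nu^L(X) \to \Wei_{\nu'}^L(X)$ for each Gelfand-Tsetlin $\CB(L,N_0^\xi)$-module $X$, natural in $X$. For $M$ a Gelfand-Tsetlin $\CB(G,N)$-module, we may set $X = \res(M)$, which lies in $\CB(L,N_0^\xi)\dash\GTc$ by Lemma \ref{le:preserveGT}.

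The $\xi$-negativity hypothesis then lets us invoke Theorem \ref{th:res-weight} to identify
\[
\Wei_\nu^L(\res(M)) \cong \Wei_\gamma(M), \qquad \Wei_{\nu'}^L(\res(M)) \cong \Wei_{\gamma'}(M),
\]
and under these identifications the map $\alpha_{\res(M)}$ gives a linear map $\beta_M \colon \Wei_\gamma(M) \to \Wei_{\gamma'}(M)$. The assignment $\alpha \mapsto (M \mapsto \beta_M)$ is the desired morphism.

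The only thing to verify is naturality of $\beta$ in $M$. For a morphism $f \colon M \to M'$ in $\CB(G,N)\dash\GTc$, functoriality of $\res$ produces $\res(f) \colon \res(M) \to \res(M')$, and the isomorphisms of Theorem \ref{th:res-weight} are themselves natural in $M$ (they come from the identification of the Gelfand-Tsetlin subalgebras and the effect of the Morita equivalence, inclusion of $\CB(L,N)$, and inverting $r_\xi$, each of which is compatible with morphisms). Naturality of $\alpha$ applied to $\res(f)$ then makes the required square commute, so $\beta$ is indeed a natural transformation $\Wei_\gamma \to \Wei_{\gamma'}$ on $\CB(G,N)\dash\GTc$.

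There is no real obstacle here; the statement is essentially a formal consequence of Theorem \ref{th:res-weight} together with the fact that $\res$ is a functor on Gelfand-Tsetlin categories. The only mildly subtle point is checking that the isomorphism of Theorem \ref{th:res-weight} is natural in $M$ rather than just an abstract isomorphism, but this is clear from its construction: it is assembled from the Morita equivalence $M \mapsto M^P$ (Lemma \ref{lem:change-Weyl} in the case $W^\lambda = W^\lambda_L$ forced by $\xi$-negativity), the identity on weight spaces under $\CB(L,N) \hookrightarrow \PCB(G,N)$, and the isomorphism $\Wei_\nu^{L,N}(M) \xrightarrow{\sim} \overrightarrow{\Wei}^{L,N}_{[\nu]}(M) \cong \Wei_\nu^{L,N_0^\xi}(\CB(L,N_0^\xi)\otimes_{\CB(L,N)} M)$ of Lemmas \ref{lem:weight-direct-limit} and \ref{le:xineg1}, all of which are manifestly natural in $M$.
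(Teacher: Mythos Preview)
Your proof is correct and follows essentially the same approach as the paper: apply Theorem \ref{th:res-weight} to $\res(M)$ and transport the natural transformation $\alpha$ through the resulting identifications. The paper's proof is terser and omits the explicit naturality check you supply, but the argument is the same.
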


\begin{proof}
	By Theorem \ref{th:res-weight}, we have
	$$
	\Wei_{\nu}^L(\res(M))\cong \Wei_{\gamma}(M) \qquad \Wei_{\nu'}^L(\res(M))\cong \Wei_{\gamma'}(M)
	$$
	Given any $ x \in \Hom(\Wei_\nu^L, \Wei_{\nu'}^L) $, the map $ \Wei_{\nu}^L(\res(M)) \xrightarrow{x} \Wei_{\nu'}^L(\res(M)) $ gives us our desired map $ \Wei_{\gamma}(M) \rightarrow \Wei_{\gamma'}(M)
	$.
\end{proof}

\subsection{Algebraic description of the res functor} \label{se:FSSL}
We will now combine Proposition \ref{thm:GTc-F} with Corollary \ref{co:mapWei} to obtain an algebraic description of the functor res.

Consider a
$\widehat{W}$-orbit $\mathscr{S}\subset \tilde{\ft}$. As before, we let
$\CB(G,N)\dash\GTc_\mathscr{S}$ be the category of Gelfand-Tsetlin modules
supported on the image of this
orbit in $ \tilde{\ft}/W$.  Since $\mathscr{S}$ is closed under addition by $\xi$, Lemma \ref{lem:weight-direct-limit} shows that if $M$ is supported on $\bar{\mathscr{S}}$, then
$\res(M)$ is supported on $\bar{\mathscr{S}}$ as well.  

The set  $\mathscr{S}$ is a finite
union of $ \widehat{W}^L$-orbits.	
We will fix attention on a single
one of these $\widehat{W}^L$-orbits, which we denote $\mathscr{S}^L$.
Let $\res^{\mathscr{S}}_{\mathscr{S}^L}\colon \CB(G,N)\dash\GTc_\mathscr{S}\to
\CB(L,N_0^\xi)\dash\GTc_{\mathscr{S}^L}$ be the functor given by applying $\res$ and then taking the summand supported on $\bar{\mathscr{S}^L}$.

Let
$\mathsf{S},\mathsf{S}^L$ be finite complete sets in $\mathscr{S}, \mathscr{S}^L$
respectively, and without loss of generality, assume that
$\mathsf{S}^L\subset \mathsf{S}$.

Since $ r_\xi $ is invertible in $ \CB(L, N^\xi_0) $ as in Theorem \ref{th:invertrxi}, we have $ \Wei^{L,N_0^\xi}_\nu \cong \Wei^{L, N_0^\xi}_{\nu + \xi} $ for any $ \nu \in \tilde{\ft}/W_L $.  Thus, without loss of generality, we can choose all elements of
$\mathsf{S}^L$ to be $\xi$-negative.

By Proposition \ref{thm:GTc-F}, we
have equivalences
\[\CB(G,N)\dash\GTc_\mathscr{S}\cong \wtAlg(\mathsf{S})\mmod\qquad
\CB(L,N_0^\xi)\dash\GTc_{\mathscr{S}^L}\cong \wtAlg^L(\mathsf{S}^L)\mmod\]

We can define natural $\wtAlg^L(\mathsf{S}^L), \wtAlg(\mathsf{S})$-bimodules corresponding to the restriction and induction functors:
\begin{equation}\label{eq:I-def}
	\wtBim(\mathsf{S}^L, \mathsf{S})=
	\bigoplus_{\substack{\gamma'\in \bar{\mathsf{S}}\\\nu\in
			\bar{\mathsf{S}^L}}}\Hom(\Wei_{\nu}^L\circ \res,\Wei_{\gamma'})\qquad \wtBim(\mathsf{S}, \mathsf{S}^L)=
	\bigoplus_{\substack{\gamma'\in \bar{\mathsf{S}}\\\nu\in
			\bar{\mathsf{S}^L}}}\Hom(\Wei_{\gamma'}, \Wei_{\nu}^L\circ \res)
\end{equation}
where $ \bar{\mathsf S} $ denotes the image of $ \mathsf S $ in $\tilde \ft / W $ and $ \bar{\mathsf S^L} $ denotes the image of $ \mathsf S^L $ in $\tilde \ft / W_L $.

Recall that by construction, each $\nu \in \mathsf{S}^L$ is
$\xi$-negative.  Thus, by Theorem \ref{th:res-weight}, we can choose
an isomorphism $\Wei_{\nu}^L\circ \res\cong \Wei_{\gamma}$ where
$\gamma$ is the image of $\nu$ in $\tilde{\ft}/W$.  This induces a vector space isomorphism  $\wtAlg(\mathsf S, \mathsf S^L)  \cong \wtBim(\mathsf S, \mathsf S^L )$ (see (\ref{eq:defFSS}) for the definition of $\wtAlg(\mathsf S, \mathsf S^L)$).   The right action of $\wtAlg(\mathsf S^L)$ on
$ \wtAlg(\mathsf S, \mathsf S^L) $, and the right action of
$\wtAlg^L(\mathsf S^L) $ on $\wtBim(\mathsf S, \mathsf S^L )$ are related by the
homomorphism of Corollary \ref{co:mapWei}.

Let $e^L\in \wtAlg(\mathsf{S})$ be the idempotent obtained by summing the identities on elements of $\mathsf{S}^L$. In this case,
$e^L\wtAlg(\mathsf{S}) e^L=\wtAlg(\mathsf{S}^L)$ and $\wtBim(\mathsf{S}, \mathsf{S}^L)=e^L\wtAlg(\mathsf{S}) $.  Note that for $ M \in \wtAlg(\mathsf S)\mmod$ we have functorial
isomorphisms
\begin{equation}
	\wtBim(\mathsf{S}, \mathsf{S}^L)
	\otimes_{\wtAlg(\mathsf S)} M\cong e^LM\cong
	\Hom_{\wtAlg(\mathsf{S})}(\wtBim(\mathsf{S}^L, \mathsf{S}),M).\label{eq:both-functors}
\end{equation}

\begin{Theorem}\label{thm:Splus}
	We have a commutative diagram
	\[\tikz[->,thick]{
		\matrix[row sep=12mm,column sep=30mm,ampersand replacement=\&]{
			\node (d) {$\wtAlg(\mathsf{S})\mmod$}; \& \node (e)
			{$ \wtAlg^L(\mathsf{S}^L)\mmod$}; \\
			\node (a) {$\CB(G,N)\dash\GTc_\mathscr{S}$}; \& \node (b)
			{$\CB(L,N_0^\xi)\dash\GTc_{\mathscr{S}^L}$}; \\
		};
		\draw (a) -- (b) node[below,midway]{$\res$};
		\draw (a) -- (d) node[left,midway]{$\Wei_{\mathsf S}$} ;
		\draw (b) -- (e) node[right,midway]{$\Wei^L_{\mathsf S^L}$};
		\draw (d) -- (e) node[above,midway]{$\wtBim(\mathsf{S}, \mathsf{S}^L)
			\otimes_{\wtAlg(\mathsf S)} -$};
	}\]
\end{Theorem}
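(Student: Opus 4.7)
The plan is to exhibit a natural isomorphism between the two compositions and then verify that it respects all the relevant module structures. The key tool is the identification (\ref{eq:both-functors}), which rewrites the top horizontal functor as $M \mapsto e^L M$, together with Theorem \ref{th:res-weight}, which identifies the weight spaces of $\res(\tilde M)$ at $\xi$-negative weights with weight spaces of the original module.

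First, fix $\tilde M \in \CB(G,N)\dash\GTc_{\mathscr S}$ and set $M = \Wei_{\mathsf S}(\tilde M) \in \wtAlg(\mathsf S)\mmod$. By (\ref{eq:both-functors}), the upper-right composite is canonically isomorphic to $e^L M = \bigoplus_{\nu \in \bar{\mathsf S^L}} \Wei_\nu(\tilde M)$, where we use that the idempotent $e^L$ precisely projects onto the summand indexed by $\bar{\mathsf S^L} \subseteq \bar{\mathsf S}$ (this uses that we arranged $\mathsf S^L \subseteq \mathsf S$). On the other hand, the lower-right composite is $\Wei^L_{\mathsf S^L}(\res(\tilde M)) = \bigoplus_{\nu \in \bar{\mathsf S^L}} \Wei^L_\nu(\res(\tilde M))$. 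Since each $\nu \in \mathsf S^L$ is chosen to be $\xi$-negative, Theorem \ref{th:res-weight} supplies a natural isomorphism $\Wei^L_\nu(\res(\tilde M)) \cong \Wei_\gamma(\tilde M)$, where $\gamma$ is the image of $\nu$ in $\tilde\ft/W$. Summing over $\nu \in \bar{\mathsf S^L}$ gives a natural isomorphism between the two composites at the level of vector spaces (and this is functorial in $\tilde M$ since $\res$ and the weight functors are all functors of $\tilde M$).

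The main step is to promote this vector space isomorphism to an isomorphism of $\wtAlg^L(\mathsf S^L)$-modules. The right $\wtAlg(\mathsf S)$-action on $\wtBim(\mathsf S, \mathsf S^L) = e^L \wtAlg(\mathsf S)$ passes through the homomorphism of Corollary \ref{co:mapWei}, $\wtAlg^L(\mathsf S^L) \to \wtAlg(\mathsf S)_{e^L e^L}$ (i.e.\ the corner algebra), precisely because that homomorphism is defined by the same ``action on weight spaces via $\res$'' recipe. Concretely, given a natural transformation $x \in \Hom(\Wei^L_\nu, \Wei^L_{\nu'})$ with $\nu, \nu' \in \bar{\mathsf S^L}$, Corollary \ref{co:mapWei} produces a natural transformation in $\Hom(\Wei_\gamma, \Wei_{\gamma'})$ by conjugating by the isomorphisms of Theorem \ref{th:res-weight}; this is exactly the identification that makes the two actions agree. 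Unwinding the definitions, the isomorphism $e^L M \xrightarrow{\sim} \Wei^L_{\mathsf S^L}(\res(\tilde M))$ built in the previous paragraph intertwines the two $\wtAlg^L(\mathsf S^L)$-actions by construction.

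The point requiring the most care is the compatibility of the right $\wtAlg(\mathsf S)$-module structure on $\wtBim(\mathsf S, \mathsf S^L)$ with the vector space identification $\wtBim(\mathsf S, \mathsf S^L) \cong \wtAlg(\mathsf S, \mathsf S^L)$: this is where Corollary \ref{co:mapWei} enters as a nontrivial input rather than a formal matter. Given that compatibility, commutativity of the diagram follows by tracking a module $\tilde M$ around both ways and invoking the natural isomorphisms above. Finally, since both vertical arrows are equivalences by Proposition \ref{thm:GTc-F}, the commutativity we have verified on the level of objects and morphisms is exactly what is asserted.
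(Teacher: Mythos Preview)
Your proof is correct and follows essentially the same approach as the paper. In fact, the paper does not give an explicit proof of Theorem~\ref{thm:Splus}: it is stated as an immediate consequence of the preceding discussion (the identification $\wtBim(\mathsf{S},\mathsf{S}^L)\cong e^L\wtAlg(\mathsf{S})$, equation~(\ref{eq:both-functors}), Theorem~\ref{th:res-weight}, and Corollary~\ref{co:mapWei}), and your argument unpacks that discussion correctly. One small notational point: when you write $e^L M = \bigoplus_{\nu\in\bar{\mathsf S^L}}\Wei_\nu(\tilde M)$, the summands should strictly be written as $\Wei_\gamma(\tilde M)$ with $\gamma\in\tilde\ft/W$ the image of $\nu$, since $\Wei$ here denotes the $G$-weight functor; you handle this correctly later in the argument.
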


Note that \eqref{eq:both-functors} allows us to construct left and
right adjoints to $\res$ on the category of GT modules:
\[\tikz[->,thick]{
	\matrix[row sep=12mm,column sep=30mm,ampersand replacement=\&]{
		\node (d) {$\wtAlg(\mathsf{S})\mmod$}; \& \node (e)
		{$ \wtAlg^L(\mathsf{S}^L)\mmod$}; \\
		\node (a) {$\CB(G,N)\dash\GTc_\mathscr{S}$}; \& \node (b)
		{$\CB(L,N_0^\xi)\dash\GTc_{\mathscr{S}^L}$}; \\
	};
	\draw[<-] (a) -- (b) node[below,midway]{$\ind$};
	\draw (a) -- (d) node[left,midway]{$\Wei$} ;
	\draw (b) -- (e) node[right,midway]{$\Wei^L$};
	\draw[<-] (d) -- (e) node[above,midway]{$\wtBim(\mathsf{S}^L, \mathsf{S})
		\otimes_{\wtAlg^L(\mathsf {S}^L)} -$};
}\]
\[\tikz[->,thick]{
	\matrix[row sep=12mm,column sep=30mm,ampersand replacement=\&]{
		\node (d) {$\wtAlg(\mathsf{S})\mmod$}; \& \node (e)
		{$ \wtAlg^L(\mathsf{S}^L)\mmod$}; \\
		\node (a) {$\CB(G,N)\dash\GTc_\mathscr{S}$}; \& \node (b)
		{$\CB(L,N_0^\xi)\dash\GTc_{\mathscr{S}^L}$}; \\
	};
	\draw[<-] (a) -- (b) node[below,midway]{$\operatorname{coind}$};
	\draw (a) -- (d) node[left,midway]{$\Wei$} ;
	\draw (b) -- (e) node[right,midway]{$\Wei^L$};
	\draw[<-] (d) -- (e) node[above,midway]{$\Hom_{\wtAlg^L(\mathsf {S}^L)}(\wtBim(\mathsf{S}, \mathsf{S}^L),
		-)$};
}\]
The right adjoint $\coind$ is well-defined on all modules, based on the definition in Section \ref{sec:res-ind}.  Note that it's not clear that $\ind$ is well-defined for all modules, though it
seems likely that it agrees with $\coind$ for the coweight $-\xi$.
We can also construct $\ind$ from $\coind$ by using duality on the
category of Gelfand-Tsetlin modules.

\subsection{Hamiltonian reduction} \label{sec:Hamiltonian reduction}
In this section we consider part (4) of Theorem \ref{thm:rel}, and the corresponding functors on modules induced by quantum Hamiltonian reduction. 

Let $A$ be an algebra and $b\in A$, and recall that $A \sslash_1 b = \operatorname{End}_A( A / (b-1) A)$. There is a natural $A \sslash_1 b, A$--bimodule structure on $A /  (b-1) A$. In particular, there is a natural functor $A\mmod \rightarrow A\sslash_1 b \mmod$ defined by
$$
M\ \mapsto \ A / (b-1) A \otimes_A M \cong M / (b-1) M,
$$
which has right adjoint $\Hom_{A \sslash_1 b}( A / (b-1) A, -)$.

An instructive example to think about is when $G=\Cx$ and the action on $N$ is trivial (in this case, the algebra $\CB(\Cx,N)$ does not depend on $N$).  By \cite[4(iv)]{BFN}, the resulting algebra is generated by $x=r_{-1},x^{-1}=r_{1}$ and $a$ with the relation $[a,x]=x$, where $ a $ is the equivariant parameter coming from $\HBG{\Cx_{\xi}}$.  We can either think of this algebra as a ring of difference operators on the polynomial ring $\C[a]$, with $x$ acting by translation,
or as differential operators on the torus $\Cx$ with coordinate $x$, with $a=x\frac{\partial}{\partial x}$ (the isomorphism relating these two realizations is called the {\it Mellin transform}).  In this case, the quantum Hamiltonian reduction is simply $\CB(\Cx, N) \sslash_1 x  \cong \C$.

The representation theory of $\CB(\Cx,N)$ is not trivial, but it is simple.  A Gelfand-Tsetlin module $M$ over $\CB(\Cx,N)$ is one on which $a$ acts locally finitely, and can equivalently be thought of as a D-module on $\Cx$ on which $a=x\frac{\partial}{\partial x}$ acts locally finitely; this implies that $M$ corresponds to a regular local system. Thus, a simple module of this type must be a 1-dimensional local system with monodromy $c \in \C^\times$.  This module is isomorphic to $ \CB / \CB(a-m) $ for any $m$ such that $ e^{2\pi i m} = c $.

For a general module $ M$,  $M/(x-1)M$ is the fibre of the local system at $x=1$.  In particular, for a Gelfand-Tsetlin module, the number of simple composition factors is the dimension of this fibre. In order to reconstruct $M$, we need to also remember the action of the monodromy map $\exp(2 \pi i a)$ on this quotient (which is well-defined).

Now, consider the situation of Theorem \ref{th:Reduction}:  Assume that $\C^{\times}_{\xi}\subset G$ is the image of a primitive cocharacter $\xi\colon \C^{\times}\to G$ which acts trivially on a representation $N$.  Recall from Theorem \ref{th:Reduction} that in this case 
$\CB(G,N) \sslash_1 r_\xi\ \cong \ \CB(G/\C^\times_\xi, N)$. 

Let $\tilde{\ft}'=\tilde{\ft}/\C \xi$ where we use $\xi$ to denote the derivative of the cocharacter $\xi$. That is, $\tilde \ft'$ is the Lie algebra of a maximal torus of $\tilde{G}'=\tilde G/\C^{\times}_{\xi}$.  Thus, we have a map $\C[ \tilde{\ft}']^W\subset \C[\tilde{\ft}]^W\to \CB(G,N)$;  the composed map commutes with $r_{\xi}$, so  $\C[\tilde{\ft}']^W$ maps into the Hamiltonian reduction.  This induces the usual map $\HBG{G'}\cong\C[ \tilde{\ft}']^W\to \CB(G',N)$.  

We have a Hamiltonian reduction functor on left modules:
$$M \mapsto M/(r_{\xi}-1)M$$
\begin{Proposition}
	\label{prop: qhr and gt}
	Let $ M \in \CB(G,N)\dash\GTc $.  Then $ M / (r_\xi - 1) M $ is a Gelfand-Tsetlin module for $\CB(G', N) $, and for $ \gamma' \in \tilde \ft' /W$ we have
	$$
	\Wei_{\gamma'}\big(M/(r_{\xi}-1)M\big)\ \cong \  \bigoplus_{[\gamma]\in (\gamma'+\C\xi)/\Z\xi}  \Wei_{\gamma}(M)
	$$
	where the direct sum ranges over a set of representatives modulo $ \Z\xi $ for the preimage of $ \gamma' $ in $ \tilde \ft/W $.  
\end{Proposition}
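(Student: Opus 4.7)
The plan hinges on two observations.  First, under the hypotheses of Theorem \ref{th:Reduction} the monopole operator $r_\xi$ is \emph{invertible} in $\CB(G,N)$: since $\Cx_\xi$ acts trivially on $N$, every weight $\mu$ of $N$ satisfies $\langle\mu,\xi\rangle = 0$, so the monopole relations \eqref{eq: monopole relations} collapse to $r_\xi r_{-\xi} = r_{-\xi} r_\xi = 1$.  Second, the commutation relation $r_\xi \mu = (\mu + \langle\mu,\xi\rangle) r_\xi$ (specialized at $\hbar=1$) shows that $r_\xi$ restricts to an isomorphism $\Wei_\gamma(M) \xrightarrow{\sim} \Wei_{\gamma+\xi}(M)$ for every $\gamma \in \tilde{\ft}/W$.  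Since $\tilde{\ft}' = \tilde{\ft}/\C\xi$, the Gelfand-Tsetlin subalgebra $\HBG{\tG'}$ sits inside $\HBG{\tG}$ as the $W$-invariant polynomials that are constant along $\C\xi$-cosets; writing $\pi\colon \tilde{\ft}/W \to \tilde{\ft}'/W$ for the projection, the generalized $\gamma'$-eigenspace for $\HBG{\tG'}$ on $M$ is therefore
\[ M^{[\gamma']} := \bigoplus_{\gamma \in \pi^{-1}(\gamma')} \Wei_\gamma(M). \]
Because $r_\xi$ moves weights within $\C\xi$-cosets, each $M^{[\gamma']}$ is stable under $r_\xi - 1$, giving a direct sum decomposition $\bar M := M/(r_\xi - 1)M = \bigoplus_{\gamma'} M^{[\gamma']}/(r_\xi - 1)M^{[\gamma']}$; in particular this identifies $\Wei_{\gamma'}(\bar M)$ with $M^{[\gamma']}/(r_\xi - 1)M^{[\gamma']}$ and reduces the calculation to a single $\C\xi$-coset.

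Fixing such a coset, I pick a set of representatives $\{\gamma_i\}_i$ for $(\gamma' + \C\xi)/\Z\xi$ that meet $\Supp(M)$, and decompose $M^{[\gamma']} = \bigoplus_i V^{(i)}$ with $V^{(i)} := \bigoplus_{n\in\Z} \Wei_{\gamma_i + n\xi}(M)$, on each of which $r_\xi$ restricts to an invertible operator shifting the $\Z$-grading by $+1$.  The heart of the argument is then a purely linear-algebraic lemma: for any $\Z$-graded vector space $V = \bigoplus_n V_n$ equipped with an invertible operator $T$ satisfying $T(V_n) = V_{n+1}$, the composition $V_0 \hookrightarrow V \twoheadrightarrow V/(T-1)V$ is an isomorphism.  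The cleanest way to see this is to identify $V \cong V_0 \otimes_\C \C[T, T^{-1}]$ as a graded $\C[T, T^{-1}]$-module (via $v \otimes T^n \mapsto T^n v$), so that $V/(T-1)V \cong V_0 \otimes_\C \bigl( \C[T, T^{-1}]/(T-1) \bigr) = V_0$; equivalently, surjectivity follows from the telescope $T^n w - w = (T-1)(1 + T + \cdots + T^{n-1})w$ (and its analogue for $n<0$), while injectivity uses that any $w \in V$ has finite graded support, forcing $w = 0$ whenever $(T-1)w$ lies in $V_0$.  Applying this with $V = V^{(i)}$ and $T = r_\xi$ yields $V^{(i)}/(r_\xi - 1)V^{(i)} \cong \Wei_{\gamma_i}(M)$, and summing over $i$ produces the asserted isomorphism.

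Finally, to confirm that $\bar M$ is a Gelfand-Tsetlin module over $\CB(G',N) \cong \CB(G,N) \sslash_1 r_\xi$ in the sense of Definition \ref{def:GT}: the weight decomposition with finite-dimensional weight spaces is immediate from the computation just performed; the center $Z$ is unchanged in passing from $\CB(G,N)$ to $\CB(G',N)$ (the flavour torus is untouched by $G \rightsquigarrow G/\Cx_\xi$) and commutes with $r_\xi$, so its semisimple action on $M$ descends semisimply to $\bar M$; and finite generation of $\bar M$ over $\CB(G',N)$ follows from finite generation of $M$, using that $\pi(\Supp(M))$ meets only finitely many orbits of the extended affine Weyl group of $G'$.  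The main place where care is required is the linear-algebra lemma, which depends crucially on both the invertibility of $r_\xi$ (established from the monopole relation computation above) and on the direct-sum structure of the weight decomposition of $M$; the rest is bookkeeping.
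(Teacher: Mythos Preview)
Your proof is correct and follows essentially the same strategy as the paper: decompose $M$ along the fibres of $\tilde\ft/W \to \tilde\ft'/W$, then along $\Z\xi$-cosets, and observe that each $\Z\xi$-orbit of weight spaces is a free rank-one module over $\C[r_\xi,r_\xi^{-1}]$, so that modding out by $(r_\xi-1)$ picks out a single representative weight space. The paper phrases the last step as ``free module over $\C[r_\xi,r_{-\xi}]$'' and your linear-algebra lemma $V_0 \cong V/(T-1)V$ is exactly this observation unpacked.

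Two small points. First, a harmless sign slip: from $r_\xi\mu = (\mu+\langle\mu,\xi\rangle)r_\xi$ one gets $r_\xi\colon \Wei_\gamma(M)\to \Wei_{\gamma-\xi}(M)$, not $\Wei_{\gamma+\xi}(M)$ (cf.\ the directed system \eqref{eq:dir-system} and Lemma~\ref{le:xineg1}); this does not affect the argument. Second, the monopole relations \eqref{eq: monopole relations} are stated in the paper only for abelian $G$, so strictly speaking your citation for $r_\xi r_{-\xi}=1$ needs a word of justification in the nonabelian case; since $\xi$ is central and acts trivially on $N$, the classes $r_{\pm\xi}$ are supported over the single points $z^{\pm\xi}\in\Gr_G$ with full fibre $N(\cO)$, and convolution gives the identity directly (the paper uses this implicitly when it asserts that $r_{\pm k\xi}$ are mutually inverse isomorphisms between weight spaces). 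The paper is also slightly more explicit than you are about why only finitely many $\Z\xi$-cosets contribute to each fibre---this comes from $\Supp(M)$ being a finite union of $\widehat W$-orbits---which is needed both for the finite-rank freeness and for finite-dimensionality of the resulting weight spaces.
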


For $\gamma$ a $\Z\xi$-coset representative, the weight space
$\Wei_{\gamma}(M)$ can be written more canonically as the limit of the directed system (\ref{eq:dir-system}). Since $\C^{\times}_{\xi}$ acts trivially on $N$, every map of this directed system will be an isomorphism.

\begin{proof}
	For any $\gamma \in \ft/W$ the weight spaces $\Wei_{\gamma}(M)$ and $\Wei_{\gamma+k\xi}(M)$ are canonically isomorphic by $r_{\pm k\xi}$. 
	
	Since all weight spaces of $M$ are finite dimensional, it follows that $\bigoplus_{k \in \Z} \Wei_{\gamma+k\xi}(M)$ is a free module of finite rank over the Laurent polynomial ring $\C[r_{\xi},r_{-\xi}]$, which is freely generated by $\Wei_\gamma(M)$.  Since $M$ is finitely generated, the support of $M$ is a finite union of orbits of the extended affine Weyl group of $G$, and so there are only finitely many cosets of $\Z\xi$ in any given coset of $\C\xi$ that lie in the support of $M$.
	
	Given $\gamma' \in \tilde{\ft}' / W$, we have that $M'_{\gamma'}:=\bigoplus_{\gamma\in \gamma'+\C\xi}\Wei_{\gamma}(M)$  is also a free module of finite rank over $\C[r_{\xi},r_{-\xi}]$: there are only finitely many cosets $[\gamma] \in (\gamma'+\C\xi)/\Z\xi$ for which $\Wei_{\gamma}(M) \neq 0$, and we may apply the above argument on each coset. We can choose free generators for $M'_{\gamma'}$ by fixing one representative $\gamma$ from each such coset $[\gamma]$, and thus
	\begin{equation*}
		M / (r_\xi -1)M = \bigoplus_{\gamma'} M'_{\gamma'} / (r_\xi-1) M'_{\gamma'} = \bigoplus_{\gamma'} \Big(\bigoplus_{[\gamma]\in (\gamma'+\C\xi)/\Z\xi}  \Wei_\gamma(M)\Big).  \qedhere 
	\end{equation*}

\end{proof}

Since $\xi$ is primitive, we can find $ a \in \tilde \ft^*_{\Z} $
with $ \xi(a) = 1 $.  
Choose a logarithm map $ \log : \Cx \rightarrow \C $ inverse to $ m \mapsto e^{2 \pi i m} $. For simplicity, we can uniquely fix this by requiring its image to lie in $[0,1)+i\R$.

The adjoint action of $a$ on $\CB(G,N)$
integrates to a $ \C^\times$ action and thus has only
integer eigenvalues.  Thus the module $M$ is the direct sum of submodules
\begin{equation}\label{eq:c-splitting}
	M_c := \bigoplus_{k\in \Z}\Wei^a_{\log c+k}(M) 
\end{equation}
where $ \Wei^a_x(M) $ denotes the generalized $ x$-eigenspace for the action of $ a $ on $ M $, and where $ c $ ranges over $ \Cx$.

We use $ a $ to split $ \tilde \ft = \tilde \ft' \oplus \C \xi $.  
For $ c \in \Cx $, define 
$$ (M/(r_\xi - 1)M))_c := \Wei^a_{\log c}(M) = \bigoplus_{\gamma' \in \tilde \ft'/W} \Wei_{\gamma' + (\log c) \xi}(M)
$$  Via Proposition
\ref{prop: qhr and gt}, we can identify $ (M/(r_\xi - 1)M))_c $ with a
subspace of $ M/(r_\xi-1)M $.

This gives a decomposition 
\begin{equation} \label{eq:monodromy decomposition}
	M/(r_\xi-1)M = \bigoplus_{c \in \Cx} (M/(r_\xi - 1)M))_c  =\bigoplus_{c \in \Cx}M_c/(r_{\xi}-1)M_c
\end{equation} 
into $ \CB(G',N)$-submodules by reducing the decomposition \eqref{eq:c-splitting}.

To connect this to the $D$-modules on the punctured line described above, note that $ a, r_\xi^{\pm} $ generate a copy of $ D(\Cx) $ inside of $ \CB(G,N) $.  The module  $ M $ can be viewed as a $D$-module on $ \Cx $ via this isomorphism.  Then the left hand side of (\ref{eq:monodromy decomposition}) is the fibre of this $D$-module at $ 1 $ and the right hand side is the  decomposition of the fibre (or equivalently, the nearby cycles at the origin) into generalized eigenspaces for the monodromy around the origin.

\section{Geometric description of weight modules and functors}

\subsection{Recollection on earlier results} \label{se:recollect}
By Proposition \ref{thm:GTc-F}, spaces of natural transformations between weight functors control the category of Gelfand-Tsetlin modules.
In the papers \cite{WebGT,Webdual}, the third author gave a geometric description of these spaces.  We will now recall this description; it will be phrased as an equivalence of categories.  We begin by defining the two categories involved.

In this section we will only work with integral weights.  Here $ \tilde{\ft}_\Z = \Hom(\C^\times, \tilde T) \subset \tilde \ft $.

\subsubsection{Category of weight functors}
\begin{Definition} \label{def:Acat}
	We call a Gelfand-Tsetlin module over $\CB(G,N)$ or $\CB^B(G,N)$ {\bf integral} if its support lies in $\tilde{\ft}_\Z$.  
	
	Let $ \widehat{\mathscr A}_\Z(G,N) $ be the category with objects $ \tilde{\ft}_\Z / W$ and morphisms given by
	$$
	\Hom_{\widehat{\mathscr A}_\Z(G,N)}(\gamma, \gamma') = \Hom(\Wei_\gamma, \Wei_{\gamma'})
	$$
	Similarly, let $\widehat{\mathscr A}_\Z^B(G,N) $ be category with objects $ \tilde{\ft}_\Z  $ and morphisms given by
	$$
	\Hom_{\widehat{\mathscr A}_\Z^B(G,N)}(\lambda, \lambda') = \Hom({}^B\Wei_\lambda, {}^B\Wei_{\lambda'})
	$$
	where as before, ${}^B \Wei_\lambda $ is a weight functor on the category of $ \mathcal A^B(G,N)$-modules.
\end{Definition}

\begin{Remark} \label{re:Karoubi}
	The reader might naturally wonder about the relationship between $\widehat{\mathscr A}_\Z(G,N) $ and $\widehat{\mathscr A}_\Z^B(G,N)$.  They are not equivalent: for example, in the pure case
	where $N=0$, if $\gamma$ is a $W$-orbit with a single element, the
	endomorphism algebra of $\gamma$ in
	$\widehat{\mathscr A}(G,N) $ has a single 1-dimensional discrete irreducible representation,
	and no object in  $\widehat{\mathscr A}^B(G,N) $ has this
	property.  On the other hand, by Lemma \ref{lem:change-Weyl},  there
	is an equivalence between the Karoubian envelopes of these
	categories, sending $\lambda \in \tilde{\ft}_\Z  $ to the direct sum of $W^\la$ copies of
	its image in $\tilde{\ft}_\Z/ W$. The inverse functor thus
	sends an element of $\tilde{\ft}/W$ corresponding to a
	non-free orbit to the image of the symmetrizing idempotent $e'(\lambda)$ in $W^\la$
	acting on a pre-image $\la$.
\end{Remark}

\subsubsection{Steinberg category}
Next, we will define certain Steinberg-type varieties which will be the building blocks of our second category.

First, given coweights $ \gamma, \gamma' \in \tilde{\ft}_\Z /W $, as before, let $ \lambda, \lambda' $ be the antidominant lifts of $ \gamma, \gamma' $.  As above, $N^{\lambda}_{\leq}$ is the subspace in $N$ on which $\lambda$
has non-positive weight, and let $P_\lambda \subset G$ be the parabolic subgroup on whose Lie
algebra $\lambda$ has non-positive weight (and similarly for $\lambda'$).  Let:
\begin{align}
\label{Ygamma-def}	Y_\gamma&=(G\times N^{\lambda}_{\leq})/{P}_\lambda\cong \{(gP_{\lambda},n) \mid n\in gN^{\lambda}_{\leq}\}\\
    {}_\gamma\pStein_{{\gamma'}}&=Y_\gamma\times_{N} Y_{\gamma'}=\{(g_1 P_\lambda,g_2P_{\lambda'}, n) \mid n\in
	g_1N^{\lambda}_{\leq}\cap g_2N_{\leq}^{\lambda'}\}.
\end{align}
Let $\widehat{H}^{G}(  {}_\gamma\pStein_{{\gamma'}})$ denote the
completion of the equivariant Borel-Moore homology of $
{}_\gamma\pStein_{{\gamma'}}$ with respect to its grading.

We will also need a full flag version of this construction.  Let:
\begin{equation}\label{eq:Xdef}
	X_\lambda =(G\times N^{\lambda}_{\leq})/{B} \qquad
	{}_\lambda\Stein_{{\lambda'}}=X_\lambda\times_{N} X_{\lambda'}=\{(g_1 B,g_2 B, n) \mid n\in
	g_1N^{\lambda}_{\leq}\cap g_2N_{\leq}^{\lambda'}\}.
\end{equation}
Note that $ B \subset P_\lambda $ and if $ \lambda $ is the antidominant lift of $ \gamma$, we have a natural morphism $ X_\lambda \rightarrow Y_\gamma $ which is a $ P_\lambda / B $ bundle.  

We can easily extend the definition of these spaces when $\lambda, \lambda'$ are not anti-dominant.  To this end, we define $ B_\la $ to be the Borel subgroup whose Lie algebra consists of those root spaces $ \fg_\alpha $, for all $ \alpha $ such that either $ \langle \lambda, \alpha \rangle < 0 $, or $ \langle \lambda, \alpha \rangle = 0 $ and $ \alpha $ is negative.  Then $N^{\lambda}_{\leq}$ will be invariant under $B_{\la}$ and we define $X_\la=(G\times N^{\lambda}_{\leq})/{B_{\la}}$.   Note that this space would be the same for any other Borel contained in $P_{\la}$; any two such Borels are conjugate in $P_\la$, and any element in $P_{\la}$ conjugating between them induces an isomorphism by $(g,n)\mapsto (gp^{-1},pn)$.  More generally, this assignment of spaces to coweights is equivariant for the action of the Weyl group.  Even though $wB_{\la}w^{-1}\neq B_{w\la}$ in some cases, we still have $wB_{\la}w^{-1}\subset P_{w\la}$, and so $X_{\la}\cong X_{w\la}$.  Finally, we extend the definition \eqref{eq:Xdef} of ${}_\lambda\Stein_{{\lambda'}}$ to the case of arbitrary $ \lambda, \lambda' \in \tilde{\ft}_\Z $.

Let $\widehat{H}^{G}(  {}_\lambda\Stein_{{\lambda'}})$ denote the
completion of the equivariant Borel-Moore homology of $
{}_\lambda\Stein_{{\lambda'}}$ with respect to its grading.

\begin{Remark}
	When there is an ambiguity, we will write $ Y_\gamma^{G,N}$ etc. to keep track of the group $ G $ and representation $ N $.
\end{Remark}

\begin{Definition}
	Let $ \widehat{\mathscr X}(G,N) $ be the category with objects $ \tilde{\ft}_\Z/W $ and morphisms
	$$\Hom_{\widehat{\mathscr X}(G,N)}(\gamma, \gamma') = \widehat{H}^{G}(
	{}_\gamma\pStein_{{\gamma'}})$$
	Similarly, let  $ \widehat{\mathscr X}^B(G,N) $ be the category with objects $ \tilde{\ft}_\Z $ and morphisms
	$$\Hom_{\widehat{\mathscr X}^B(G,N)}(\lambda, \lambda') = \widehat{H}^{G}(  {}_\lambda\Stein_{{\lambda'}})$$
	Composition in these categories is defined by convolution, as defined
	in \cite[(2.7.9)]{CG97}, viewing $
	{}_\lambda\Stein_{{\lambda'}}\subset X_\lambda\times X_{\lambda'}$ (in
	the notation of \cite{CG97}, $M_1=X_\lambda, M_2=X_{\lambda'}$). The
	associativity of the convolution product is given by \cite[\S
	2.7.18]{CG97}.
\end{Definition}

The category $ \widehat{\mathscr X}^B(G,N) $ is a variation of the
Steinberg category defined in  \cite[\S 2.4]{Webdual}. The definition from \cite{Webdual} involves assigning a subspace in $N$ to each element of  $
\tilde{\ft}_\Z $, which we have done here by the
assignment $\la\mapsto N^{\lambda}_{\leq}$; in \cite[\S 2.4]{Webdual},
this subspace is encoded as a sign sequence on a basis of $N$
representing which basis vectors lie in $ N^{\lambda}_{\leq}$ and
which do not.

\subsubsection{The equivalences}
Following \cite[Def 4.2]{Webdual}, we will now construct equivalences between the weight functor categories and the Steinberg categories.  Our strategy will be to begin with $ \widehat{\mathscr X}(T,N), \ \widehat{\mathscr A}_\Z(T,N)$, then study $ \widehat{\mathscr X}^B(G,N),\  \widehat{\mathscr A}_\Z^B(G,N)$, and finally $\widehat{\mathscr X}(G,N), \  \widehat{\mathscr A}_\Z(G,N)$.

Recall from Section \ref{section: abelian theories and monopoles}, that the algebra $\CBs{\varphi}(T,N)$ is generated over $\HBG{{\tilde T}}$ by the elements $r_{\nu}$ (for $ \nu \in \ft_\Z$), with relations given by (\ref{eq: monopole relations}).

For $ \la, \la' \in \ft_\Z$, we define $ \Phi_0(\la, \la')  \in  \HBG{T} $ by the following formula, where the product ranges over the weights $ \mu $ of the representation $ N $, counted with multiplicity: 	
\begin{equation}\label{eq:Phi0}
	\Phi_0(\la,\la')	 = \prod_\mu \prod_{\substack{j = 1,\dots, -\langle \mu,  \la-\la'\rangle\\j\neq\langle\mu,\la' \rangle }}(  \mu - j ).
\end{equation}
Note that the polynomial $\Phi_0(\la,\la')$ acts invertibly on the functor $\Wei_{\la'}$, so we can define morphisms in $\widehat{\mathscr A}_\Z(T,N)$ by \[\mathbbm{w}(\la,\la')=\frac{1}{\Phi_0(\la,\la')}r_{\la-\la'}\colon \Wei_{\la}\to \Wei_{\la'}.\]

Note that $ {}_{\la'}\Stein^{T,N}_{{\lambda}} $ is a vector space.  We also let  $\mathbbm{w}(\la,\la')$ denote its fundamental class  $[{}_{\la'}\Stein^{T,N}_{{\lambda}}] $, which is a morphism in $\widehat{\mathscr X}(T,N)$. 

A special case of \cite[Thm 4.3]{Webdual} is that:
\begin{Theorem} \label{th:XAT}
	There is an equivalence $ \mathsf E : \widehat{\mathscr X}(T,N)\cong  \widehat{\mathscr A}_\Z(T,N)$ which is the identity on objects.  On morphisms, this functor  sends an element of $\ft^*\subset \HBG{{T}}$ to the nilpotent part of the action of the same element in $\widehat{\mathscr A}_\Z(T,N)$, and 
	takes $ \mathbbm{w}(\la,\la') $ to the same-named morphism.
\end{Theorem}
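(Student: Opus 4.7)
The plan is to construct the functor $\mathsf E$ explicitly and verify it is an equivalence; the theorem is a specialization of \cite[Thm 4.3]{Webdual}, but the abelian case admits a direct verification that is considerably simpler and worth recording. On objects I declare $\mathsf E$ to be the identity on $\tilde{\ft}_\Z$. On morphisms I send $p \in \ft^* \subset \HBG{T}$ to the nilpotent part of its action on $\Wei_\la$ (the semisimple part being scalar multiplication by the value of $p$ at $\la$), and I send the fundamental class $[{}_{\la'}\pStein_{\la}]$ to $\Phi_0(\la,\la')^{-1} r_{\la-\la'}$.

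First I would identify both morphism spaces as rank-one free modules over the completion of $\HBG{T}$. Since $T$ is abelian, $P_\la = T$ for every $\la$, so $Y_\gamma = N^\la_\leq$ and ${}_\gamma\pStein_{\gamma'} = N^\la_\leq \cap N^{\la'}_\leq$; as this is a vector space, its completed equivariant Borel--Moore homology is freely generated over the completion of $\HBG{T}$ by the fundamental class. On the algebra side, the monopole operators $\{r_\nu\}_{\nu \in \ft_\Z}$ form an $\HBG{\tilde T}$-basis of $\CB(T,N)$ by the results recalled in Section \ref{section: abelian theories and monopoles}, so only $r_{\la-\la'}$ contributes a nonzero natural transformation between the generalized eigenspaces for $\la$ and $\la'$; since $\Phi_0(\la,\la')$ as defined in \eqref{eq:Phi0} is a product of linear forms none of which vanish at $\la'$, it acts invertibly on $\Wei_{\la'}$, and $\mathbbm{w}(\la,\la')$ is a well-defined generator of $\Hom(\Wei_\la,\Wei_{\la'})$ as a module over the completion of $\HBG{T}$.

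The only nontrivial step is compatibility with composition. For a chain $\la \to \la' \to \la''$, convolution of the fundamental classes of $N^\la_\leq \cap N^{\la'}_\leq$ and $N^{\la'}_\leq \cap N^{\la''}_\leq$ inside the ambient fibre product is computed by the standard linear-subspace formula as the fundamental class of $N^\la_\leq \cap N^{\la''}_\leq$ multiplied by the equivariant Euler class of the excess intersection bundle; this Euler class is a product over precisely those $\tilde{T}$-weights $\mu$ of $N$ for which the pairings $\langle \mu, \la - \la'\rangle$ and $\langle \mu, \la' - \la''\rangle$ have opposite signs. On the algebra side, the product $r_{\la'-\la''}\, r_{\la-\la'}$ in $\CB(T,N)$ is given by \eqref{eq: monopole relations}, and this produces a polynomial in the same weights $\mu$ subject to the same sign conditions.

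The main obstacle is this last identity: one must match the combinatorial product in \eqref{eq: monopole relations} with the Euler class computation, including the shifts by integer multiples of $\hbar$ coming from loop rotation (in the convention of Remark \ref{rem:BFNhalf}) and the normalization factor $\Phi_0$, which was introduced precisely to absorb the denominators appearing in \eqref{eq: inverse monopole relations}. Concretely, the target identity is that $\Phi_0(\la,\la'')^{-1}$ times the polynomial from \eqref{eq: monopole relations} equals $\Phi_0(\la,\la')^{-1}\Phi_0(\la',\la'')^{-1}$ times the Euler class of the excess intersection, up to an overall sign. Once this is checked, fullness, faithfulness, and preservation of identities follow formally from the rank-one freeness of both hom-spaces over the completion of $\HBG{T}$, together with the observation that $\mathsf E$ sends the generator $[{}_{\la}\pStein_{\la}] = [N^\la_\leq]$ to $\Phi_0(\la,\la)^{-1} r_0 = 1$.
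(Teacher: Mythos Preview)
The paper does not give its own proof here; it simply records the statement as a special case of \cite[Thm.~4.3]{Webdual}. So there is no argument in the paper to compare against---your proposal is an attempt to supply a self-contained abelian-case proof where the paper just cites.

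Your setup is correct and is the natural direct approach: when $G=T$ each ${}_\la\pStein_{\la'}$ is the vector space $N^\la_\leq \cap N^{\la'}_\leq$, both hom-spaces are free of rank one over the completion of $\HBG{T}$, and your proposed assignment on generators is well-defined and sends identity to identity. The only content is compatibility with composition, and here your write-up stops short of a proof. You correctly identify the target identity---that the excess-intersection Euler class arising in the convolution $\mathbbm{w}(\la',\la'')\circ\mathbbm{w}(\la,\la')$, after absorbing the three $\Phi_0$-factors, matches the polynomial coefficient from \eqref{eq: monopole relations} in $r_{\la'-\la''}\,r_{\la-\la'}$---but ``one must match'' and ``Once this is checked'' leave the verification undone. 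This identity is elementary but not a tautology: one has to track, for each weight $\mu$ of $N$, which of the spaces $N^\la_\leq, N^{\la'}_\leq, N^{\la''}_\leq$ contain the $\mu$-line, compute the resulting excess bundle, and compare the linear factors (including their $\hbar$-shifts at $\hbar=1$) against those produced by \eqref{eq: monopole relations} and the definition \eqref{eq:Phi0}. The hedge ``up to an overall sign'' is a further warning sign: the construction has no sign ambiguity, so if you are unsure of the sign you have not actually carried the computation through. Either complete this weight-by-weight check, or fall back on the citation as the paper does.
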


From Proposition \ref{th:inclusion}, we have an inclusion $ \CBs{\varphi}(T,N)
\rightarrow \CBs{\varphi}^B(G,N)$.  This leads to a functor $ \widehat{\mathscr
	A}_\Z(T,N)\to \widehat{\mathscr A}_\Z^B(G,N)$ which is the identity
on objects.  Describing the additional generators needed to generate
$\CBs{\varphi}^B(G,N)$ is tricky when working purely with the algebra $\CBs{\varphi}$;
this process is simplified by considering the extended category
introduced in \cite[\S 3]{Webdual}.  Here we proceed a little differently.

\nc{\Hfrac}{\mathbb{L}}
Let $\Hfrac$ denote the fraction field of $\HBG{T}$ and $\widehat{\Hfrac}$ the fraction field of the completion $\widehat{\HBG{T}}$.  Recall from \cite[Prop. 4.2]{WebGT} that $\CBs{\varphi}(T,N)$ is a principal Galois order inside the skew group algebra $\Hfrac\rtimes \ft_{\Z}$ for the usual action of $\ft_{\Z}$ on $\Hfrac$ by translations on $\ft$. Similarly, $\CBs{\varphi}(G,N)$ is a principal Galois order inside the Weyl invariants of $(\Hfrac\rtimes \ft_{\Z})^W$.   The algebra $\CBs{\varphi}^B(G,N)$ is the corresponding flag order in the skew group algebra $\Hfrac\rtimes\widehat{W}$.  That is, in particular, these inclusions induce isomorphisms: 
\[\Hfrac\otimes_{\HBG{T}}\CBs{\varphi}(T,N)\cong \Hfrac\rtimes \ft_{\Z},\qquad \Hfrac^W\otimes_{\HBG{G}}\CBs{\varphi}(G,N)\cong (\Hfrac\rtimes \ft_{\Z})^W,\qquad \Hfrac\otimes_{\HBG{T}}\CBs{\varphi}^B(G,N)\cong \Hfrac\rtimes\widehat{W}.\]

By \cite[Lem. 2.11(4)]{WebGT}, the Hom space \[\Hom_{\widehat{\mathscr A}_\Z^B(G,N)}(\nu, \nu')=\CBs{\varphi}^B/(\CBs{\varphi}^B\mathfrak{m}_{\nu}^N+\mathfrak{m}_{\nu'}^N
	\CBs{\varphi}^B),\] is isomorphic to 
\begin{equation}\label{eq:Fhat}
	\widehat{\HBG{{T}}}\otimes_{\HBG{{T}}}\big \{\sum_{w\in \widehat{W}}a_ww\in \CBs{\varphi}^B(G,N)\subset \Hfrac\rtimes\widehat{W}\mid a_w=0 \text{ if }w\nu\neq \nu'\big\}
\end{equation} 

Applying the same result with $G=B=T$ and the product decomposition $\widehat{W}=\ft_{\Z}\cdot W$, we obtain an isomorphism:
\begin{equation}\label{eq:A-localization}
	\widehat{\Hfrac}\otimes_{\widehat{\HBG{T}}} \Hom_{\widehat{\mathscr A}_\Z^B(G,N)}(\nu, \nu')\cong\bigoplus_{w\in W} 	\widehat{\Hfrac}\otimes_{\widehat{\HBG{T}}} \Hom_{\widehat{\mathscr A}_\Z(T,N)}(w\nu, \nu')\cdot w
\end{equation}
On the other hand, we also have a functor $\widehat{\mathscr X}(T,N)\to \widehat{\mathscr X}^B(G,N)$, which is the identity on objects and which acts on morphisms by 
\begin{equation}\label{eq:G-push-saturation}
    \widehat{H}^{T}(  {}_\lambda\Stein^{T,N}_{{\lambda'}}) \rightarrow\widehat{H}^{B}(  {}_\lambda\Stein^{G,N}_{{\lambda'}})\rightarrow \widehat{H}^{G}(  {}_\lambda\Stein^{G,N}_{{\lambda'}}),
\end{equation} the composition of pushforward in $T$-equivariant homology, followed by $G$-saturation, the map $\widehat{H}^{B}(X)\cong \widehat{H}^{G}((G\times X)/B)\to \widehat{H}^{G}(X)$ which ``averages'' $B$-equivariant cycles under $G$.    

We can write
\begin{align*}
    \Hom_{\widehat{\mathscr X}_\Z^B(G,N)}(\nu, \nu')&\cong \widehat{H}^{B}(\{(g B, n) \mid n\in
N^{\lambda}_{\leq}\cap gN_{\leq}^{\lambda'}\} )\\
&\cong \widehat{H}^{T}(\{(g B, n) \mid n\in
N^{\lambda}_{\leq}\cap gN_{\leq}^{\lambda'}\} )
\end{align*}
The fixed points of $T$ on the space are given by $\{(wB,n)\mid w\in W, n\in N^T \}$.   
Applying localization in $T$-equivariant Borel-Moore homology, we find that
we have a natural isomorphism:
\begin{equation}\label{eq:X-localization}
	\widehat{\Hfrac}\otimes_{\widehat{\HBG{T}}} \Hom_{\widehat{\mathscr X}_\Z^B(G,N)}(\nu, \nu')\cong\bigoplus_{w\in W} 	\widehat{\Hfrac}\otimes_{\widehat{\HBG{T}}} \Hom_{\widehat{\mathscr X}_\Z(T,N)}(w\nu, \nu')\cdot w
\end{equation}

By \cite[Thm 4.3]{Webdual}, we have the following result.
\begin{Theorem} \label{th:XA}
	There is an equivalence $\mathsf{E} \colon \widehat{\mathscr X}^B(G,N)\rightarrow  \widehat{\mathscr A}_\Z^B(G,N) $ compatible with the isomorphisms \eqref{eq:A-localization} and \eqref{eq:X-localization}, making the following diagram commute
	$$				\begin{tikzcd}
		\widehat{\mathscr X}(T,N) \arrow{r}  \arrow{d} &
		\widehat{\mathscr A}_\Z(T,N) \arrow{d} \\
		\widehat{\mathscr X}^B(G,N)
		\arrow{r}  &  \widehat{\mathscr A}_\Z^B(G,N)
	\end{tikzcd}
	$$		
\end{Theorem}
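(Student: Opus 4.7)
The plan is to bootstrap from the abelian equivalence of Theorem \ref{th:XAT} by passing through the localized picture on each side. Both \eqref{eq:A-localization} and \eqref{eq:X-localization} decompose the localization of the Hom spaces as a direct sum indexed by $w \in W$ of copies of the abelian Hom spaces. Accordingly, I would first define $\mathsf E$ after base change to $\widehat{\Hfrac}$ by applying the abelian equivalence $\mathsf E \colon \widehat{\mathscr X}(T,N) \to \widehat{\mathscr A}_\Z(T,N)$ from Theorem \ref{th:XAT} to each $w$-summand, together with the matching identification of the $w$-labels on the two sides. This automatically makes the functor compatible with the localization isomorphisms in the statement.

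The main step is then to show that this localized definition descends to an isomorphism of the unlocalized Hom spaces, i.e.\ $\Hom_{\widehat{\mathscr X}^B(G,N)}(\nu,\nu') \to \Hom_{\widehat{\mathscr A}_\Z^B(G,N)}(\nu,\nu')$. On the algebraic side, these Hom spaces are given by the formula \eqref{eq:Fhat} in terms of the flag order $\CB^B(G,N) \subset \Hfrac \rtimes \widehat W$. On the geometric side, they are the completed equivariant Borel--Moore homology of ${}_\lambda\Stein_{\lambda'}$, generated (via $T$-equivariant localization) by the fundamental classes of the $T$-fixed components labelled by $w \in W$, pushed forward and then $G$-saturated via \eqref{eq:G-push-saturation}. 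To check the descent I would compute the image of each such generator: applying the abelian $\mathsf E$ on each $w$-summand turns the localized $T$-equivariant class into the element $\mathbbm w(w\lambda,\lambda')\cdot w$ of $\Hfrac\rtimes\widehat W$, and one must verify that summing these contributions over $w \in W$ yields an element of the flag order $\CB^B(G,N)$, not merely of its localization. This is exactly what is identified in \cite[Thm 4.3]{Webdual} via the description of flag orders from \cite[Lem.~2.5]{WebGT}.

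Next, I would verify functoriality: that the convolution product on the $\widehat{\mathscr X}^B(G,N)$ side corresponds to composition on the $\widehat{\mathscr A}_\Z^B(G,N)$ side. Since both localize to the skew group algebra $\Hfrac \rtimes \widehat W$, and since the localized $\mathsf E$ is the identity on each $w$-summand (after the identifications built into \eqref{eq:A-localization}, \eqref{eq:X-localization} and the abelian case), this reduces to the compatibility of convolution and composition on each abelian block, which is part of Theorem \ref{th:XAT}. Commutativity of the square in the statement then follows because the vertical functors $\widehat{\mathscr X}(T,N) \to \widehat{\mathscr X}^B(G,N)$ and $\widehat{\mathscr A}_\Z(T,N) \to \widehat{\mathscr A}_\Z^B(G,N)$ both correspond, after localization, to the inclusion of the $w = e$ summand, and on this summand $\mathsf E$ is by construction the abelian equivalence of Theorem \ref{th:XAT}.

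The hard part is the integrality check in the second paragraph: showing that the $G$-saturation of the sum over $w \in W$ of the abelian classes, each dressed by the denominator $\Phi_0(w\lambda,\lambda')$ from \eqref{eq:Phi0}, lands in the flag order $\CB^B(G,N)$. The individual $w$-summands only lie in $\Hfrac \rtimes \widehat W$, so the required integrality arises only after cancellation of denominators across different Weyl group elements; this phenomenon is precisely the geometric content of the identification of Coulomb branch algebras with flag orders, and is where \cite[Thm 4.3]{Webdual} is used. Once this is in hand, fully faithfulness on Hom spaces follows from the corresponding statement for the abelian equivalence together with \eqref{eq:A-localization}--\eqref{eq:X-localization}, and essential surjectivity on objects is trivial since both categories have object set $\tilde\ft_\Z$.
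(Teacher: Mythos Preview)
The paper does not give its own proof here: the sentence immediately preceding the theorem says ``By \cite[Thm 4.3]{Webdual}, we have the following result,'' so the entire content is a citation. Your proposal is therefore not competing with a proof in this paper but rather sketching the strategy behind the cited result, and you correctly identify \cite[Thm 4.3]{Webdual} as the place where the hard work is done.

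Your outline---define the functor after base change to $\widehat{\Hfrac}$ via the abelian equivalence on each $w$-summand, then check descent to the unlocalized Hom spaces---is a coherent way to organize the argument and is compatible with how the paper frames things via \eqref{eq:A-localization} and \eqref{eq:X-localization}. Two small cautions. First, your description of the integrality step as ``cancellation of denominators across different Weyl group elements'' is a bit misleading: the classes one actually pushes forward (fundamental classes of Schubert-type strata, Demazure operators, etc.) are individually integral in the flag order; the localization is a tool to \emph{identify} them inside $\Hfrac\rtimes\widehat W$, not a mechanism producing integrality by cancellation. In practice \cite{Webdual} proceeds by specifying $\mathsf E$ on explicit generators (dots, crossings, the $\mathbbm w$'s) and checking relations, with the localization serving to guarantee injectivity and to pin down the formulas. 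Second, your claim that the vertical functors correspond after localization to ``the inclusion of the $w=e$ summand'' is correct for the algebraic side, but on the geometric side the map \eqref{eq:G-push-saturation} is pushforward followed by $G$-saturation, and matching this with the $w=e$ inclusion requires the compatibility of fixed-point localization with saturation---true, but worth saying rather than asserting.

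In short: your proposal is a reasonable unpacking of the citation, with the right dependencies flagged; the paper itself offers nothing further to compare against.
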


Let $ \la, \la' $ be the antidominant lifts of $ \gamma, \gamma' \in \tilde{\ft}_\Z / W $. The $ P_\lambda/ B $ fibre bundle $X_\la\to Y_{\gamma}$ implies that  the convolution algebra $H^{G}(
X_\la\times_{Y_{\gamma}}X_\la)$ is a copy of the nilHecke algebra
for $W^\lambda $.   By \cite[Th. 8.6.7]{CG97}, this algebra acts on  the pushforward of the constant sheaf from $X_\la$.  Since the nilHecke algebra is a matrix algebra on the commutative algebra $H^*_G(pt)$, this shows that this pushforward is a sum of $\# W$ copies of the constant
sheaf on $ Y_{\gamma}$.  Thus, any primitive idempotent in this nilHecke algebra (in
particular, the symmetrizing idempotent $e'({\la})\in \C W^{\la}$) gives this constant sheaf.
This shows that:
\begin{equation}
	\widehat{H}^{{L}}( {}_\gamma\pStein_{{\gamma'}})
	\cong e'({\la})
	\widehat{H}^{{L}}({}_{\la}\Stein_{{\lambda'}}
	)e'({\la'})\label{eq:Stein-projection}
\end{equation}

By \cite[Lem. 2.8]{WebGT} (also discussed in the proof
of Lemma \ref{lem:change-Weyl}), we have a similar formula
\begin{equation}
	\Hom(\Wei_\gamma, \Wei_{\gamma'})\cong
	e'({\la})\Hom({}^B\Wei_\lambda, {}^B\Wei_{\lambda'})e'({\la'}).\label{eq:nat-projection}
\end{equation}
Thus, comparing equations \eqref{eq:Stein-projection} and
\eqref{eq:nat-projection}, we have that:
\begin{Corollary} \label{co:XA}
	There is an equivalence $ \mathsf{E}\colon \widehat{\mathscr X}(G,N) \rightarrow \widehat{\mathscr A}_\Z(G,N) $.
\end{Corollary}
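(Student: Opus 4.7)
The plan is to build $\mathsf{E}\colon \widehat{\mathscr X}(G,N) \to \widehat{\mathscr A}_\Z(G,N)$ as the restriction of the equivalence $\mathsf{E}\colon \widehat{\mathscr X}^B(G,N)\to \widehat{\mathscr A}_\Z^B(G,N)$ of Theorem \ref{th:XA} to the idempotent subcategories cut out on both sides by the symmetrizing idempotents $e'(\lambda)\in \C W^\lambda$. Concretely, on objects the functor is the identity on $\tilde{\ft}_\Z/W$; for $\gamma,\gamma'\in \tilde{\ft}_\Z/W$ with antidominant lifts $\lambda,\lambda'$, we use \eqref{eq:Stein-projection} to identify $\Hom_{\widehat{\mathscr X}(G,N)}(\gamma,\gamma')$ with $e'(\lambda)\,\widehat{H}^{L}({}_{\la}\Stein_{{\lambda'}})\,e'(\lambda')$, apply $\mathsf{E}$ from Theorem \ref{th:XA}, and then use \eqref{eq:nat-projection} to identify the result with $\Hom_{\widehat{\mathscr A}_\Z(G,N)}(\gamma,\gamma')$.

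The first thing to verify is that $\mathsf{E}$ from Theorem \ref{th:XA} carries the idempotent $e'(\lambda)\in\End_{\widehat{\mathscr X}^B(G,N)}(\lambda)$ to the corresponding idempotent $e'(\lambda)\in\End_{\widehat{\mathscr A}_\Z^B(G,N)}(\lambda)$. This idempotent lies in the copy of $\C[W^\lambda]$ sitting inside the nilHecke algebra for $W^\lambda$ attached to the fibration $X_\lambda\to Y_\gamma$. The Weyl group elements act on both sides as symmetrization operators, and the compatibility of $\mathsf{E}$ with the localization isomorphisms \eqref{eq:A-localization} and \eqref{eq:X-localization} shows that $w\in W^\lambda$ corresponds on both sides. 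Hence $e'(\lambda)\mapsto e'(\lambda)$ under $\mathsf{E}$.

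Given this, the composition rule is transparent: both categories are defined by convolution/composition which is compatible with multiplication by the idempotents (since $e'(\lambda)$ is central in the relevant endomorphism subalgebra indexed by $W^\lambda$), and the equality $\mathsf{E}(e'(\lambda) f e'(\lambda'))=e'(\lambda)\mathsf{E}(f)e'(\lambda')$ makes the functoriality inherited from Theorem \ref{th:XA} descend. Essential surjectivity is immediate as $\mathsf{E}$ is the identity on objects, and fully faithfulness follows because $\mathsf{E}\colon \widehat{\mathscr X}^B(G,N)\to\widehat{\mathscr A}_\Z^B(G,N)$ is a bijection on Hom spaces and this bijection commutes with left and right multiplication by $e'(\lambda)$.

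The main technical point I expect to linger on is the identification of the idempotents under $\mathsf{E}$; more specifically, checking that the abstract Weyl group elements in the smash product description of the weight-functor algebra \eqref{eq:A-localization} match the geometric Weyl group elements appearing in the $T$-fixed-point localization of the Steinberg variety \eqref{eq:X-localization}. This is essentially done by Theorem \ref{th:XA} through the compatibility square involving $\widehat{\mathscr X}(T,N)$ and $\widehat{\mathscr A}_\Z(T,N)$, but one should check carefully that under Theorem \ref{th:XA} the class of a translation $r_{\lambda-\la'}$ (or its localized/normalized version $\mathbbm{w}(\la,\la')$) at the $T$-level extends to give the Weyl elements of $W^\lambda$ after passing to $G$-equivariant homology. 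Once this compatibility is in hand, Corollary \ref{co:XA} reduces to formal manipulation of idempotents.
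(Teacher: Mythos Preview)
Your approach is correct and is essentially the same as the paper's: both reduce the corollary to Theorem \ref{th:XA} by sandwiching with the symmetrizing idempotents $e'(\lambda)$, invoking \eqref{eq:Stein-projection} on the Steinberg side and \eqref{eq:nat-projection} on the weight-functor side. You give more detail than the paper does on why $\mathsf{E}$ matches the idempotents $e'(\lambda)$ (via the compatibility with localization in \eqref{eq:A-localization} and \eqref{eq:X-localization}), which is a reasonable point to make explicit; the paper treats this as implicit in the statement of Theorem \ref{th:XA}.
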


\begin{Remark}
	In \cite[proof of Theorem 4.4]{WebGT}, following a suggestion of Nakajima, the third author gave a sketch proof of Theorem \ref{th:XA} and Corollary \ref{co:XA} using abelianization in equivariant homology.
\end{Remark}

\begin{Remark}
	The effect of requiring $Z$-semisimplicity is encapsulated very cleanly in this
	theorem.  If we consider the weight functors $ \Wei^f_\gamma$ on the category of all modules (rather than restricting to $ Z$-semisimple ones) we obtain a different category, temporarily denoted $\widehat{\mathscr A}_\Z(G,N)^f$, whose morphism spaces are given by $$
 Hom (\Wei^f_\gamma, \Wei^f_{\gamma'}) = \varprojlim
		\CB/(\CB\mathfrak{m}_\gamma^N+\mathfrak{m}_{\gamma'}^N
		\CB)$$  A simple Gelfand-Tsetlin module over $\CB$ will factor through one of the quotients $\CB_{\varphi}$, but there can be extensions of such modules where the center $Z$ acts non-semisimply, and thus don't factor through any such quotient.
  
  In order to fix Corollary \ref{co:XA} to work in this setting, we define $ \widehat{\mathscr X}(G,N)^f $ by working with $\tG$-equivariant cohomology instead of $G$-equivariant so that
$$  \Hom_{\widehat{\mathscr X}(G,N)^f}(\gamma, \gamma') = \widehat{H}^{\tG}(
	{}_\gamma\pStein_{{\gamma'}})$$
   
  With these definitions, essentially the same argument gives us an equivalence of categories
  $$
  \widehat{\mathscr X}(G,N)^f \rightarrow \widehat{\mathscr A}_\Z(G,N)^f
  $$
	 The additional equivariant parameters for the larger group $\tG$
	capture the action of the nilpotent part of $Z$, which as we mentioned above might be non-trivial.
\end{Remark}

\subsection{Geometric viewpoint on parabolic restriction}
\label{sec:geometric-viewpoint}
Now, let us consider how to understand Corollary \ref{co:mapWei} in the context of the geometric description provided by
Corollary \ref{co:XA}.

Let $ \nu, \nu' \in \tilde{\ft}_\Z/W_L $ be $ \xi$-negative, and let $ \gamma, \gamma' \in \tilde{\ft}_\Z/W $ denote their images.
From Corollary \ref{co:XA}, we have an isomorphism
$$\widehat{H}^{{G}}(  {}_\gamma\pStein_{{\gamma'}}) \xrightarrow{\sim}
\Hom(\Wei_\gamma, \Wei_{\gamma'})
$$

On the other hand, from Corollary \ref{co:mapWei}, we have a morphism
$$
\Hom(\Wei_\nu^L, \Wei_{\nu'}^L) \rightarrow \Hom(\Wei_{\gamma}, \Wei_{\gamma'})
$$
where $ \Wei_\nu^L, \Wei_{\nu'}^L $ denote the weight functors on the category of $ \mathcal A(L, N_0^\xi)$-modules.

We begin with the following observation which follows immediately from Definition \ref{def:xineg}.
\begin{Lemma}\label{lem:xi-neg-flag}
	If $ \nu $ is $\xi$-negative, then $ Y_\nu^{L,N} = Y_\nu^{L, N^\xi_0}\times N^{\xi}_{+}$.  This induces an isomorphism $ {}_\gamma\pStein^{L,N}_{{\gamma'}}\cong {}_\gamma\pStein^{L,N^0_\xi}_{{\gamma'}}\times N^{\xi}_{+}$.
\end{Lemma}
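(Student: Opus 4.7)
The plan is to extract a direct-sum decomposition
\[
N^{\lambda}_{\leq} \;=\; (N^{\xi}_{0})^{\lambda}_{\leq} \;\oplus\; N^{\xi}_{+}
\]
from the $\xi$-negativity of $\nu$, and then exploit the fact that $N^{\xi}_{+}$ is a full $L$-representation (not merely a $P^L_\lambda$-representation) in order to trivialize it inside the associated bundle defining $Y^{L,N}_{\nu}$. Fix any lift $\lambda \in \tilde{\ft}_{\Z}$ of $\nu$; the resulting decomposition will be $W_L$-equivariant, so the choice is immaterial. Since $\C^{\times}_{\xi}$ is central in $L$, both $L$ and its parabolic $P^L_\lambda$ preserve the $\xi$-grading $N = N^{\xi}_{-} \oplus N^{\xi}_{0} \oplus N^{\xi}_{+}$. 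For any weight $\mu$ of $N$ with $\langle \mu, \xi \rangle > 0$, condition (1) of Definition \ref{def:xineg} together with $\langle \mu, \lambda \rangle \in \Z$ forces $\langle \mu, \lambda \rangle \leq 0$; dually, $\langle \mu, \xi \rangle < 0$ forces $\langle \mu, \lambda \rangle \geq 1$ via condition (2). Hence $N^{\xi}_{+} \subseteq N^{\lambda}_{\leq}$ and $N^{\xi}_{-} \cap N^{\lambda}_{\leq} = 0$, giving the claimed splitting as $P^L_\lambda$-submodules, where $(N^{\xi}_{0})^{\lambda}_{\leq} := N^{\xi}_{0} \cap N^{\lambda}_{\leq}$.

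Next I would invoke the standard trivialization: for any $L$-representation $V$ (restricted to $P^L_\lambda$), the map $[g, v] \mapsto (gP^L_\lambda,\, gv)$ gives an isomorphism $L \times_{P^L_\lambda} V \cong (L/P^L_\lambda) \times V$. Applying this to $V = N^{\xi}_{+}$ and combining it with the splitting above yields
\[
Y^{L,N}_{\nu} \;=\; L \times_{P^L_\lambda} N^{\lambda}_{\leq} \;\cong\; \bigl(L \times_{P^L_\lambda} (N^{\xi}_{0})^{\lambda}_{\leq}\bigr) \times N^{\xi}_{+} \;=\; Y^{L,N^{\xi}_{0}}_{\nu} \times N^{\xi}_{+},
\]
with explicit formula $[g,\, n_0 + m_+] \mapsto \bigl([g, n_0],\, g m_+\bigr)$.

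For the Steinberg identification I would apply this isomorphism to both $\nu$ and $\nu'$ and chase the fibre product over $N$. Under the trivialization, the moment map $Y^{L,N}_{\nu} \to N$ becomes $([g, n_0],\, m_+) \mapsto g n_0 + m_+$, which lands in $N^{\xi}_{0} \oplus N^{\xi}_{+}$ because $g \in L$ preserves the $\xi$-grading. A pair
\[
\bigl(([g_1, n_0], m_+),\, ([g_2, p_0], q_+)\bigr) \in \bigl(Y^{L,N^{\xi}_{0}}_{\nu} \times N^{\xi}_{+}\bigr) \times \bigl(Y^{L,N^{\xi}_{0}}_{\nu'} \times N^{\xi}_{+}\bigr)
\]
therefore lies in the fibre product over $N$ precisely when $g_1 n_0 + m_+ = g_2 p_0 + q_+$; comparing $\xi$-components splits this into $g_1 n_0 = g_2 p_0$ (the defining equation of ${}_\nu\pStein^{L,N^{\xi}_{0}}_{\nu'}$) together with $m_+ = q_+$ (identifying a single copy of $N^{\xi}_{+}$). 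This produces the asserted product decomposition. The only substantive step is the splitting of $N^{\lambda}_{\leq}$ along the $\xi$-grading, which is exactly what the $\xi$-negativity condition is designed to guarantee; everything else is formal manipulation of associated bundles and fibre products.
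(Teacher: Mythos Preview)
Your proof is correct and is precisely the natural unpacking of what the paper leaves implicit: the paper simply states that the lemma ``follows immediately from Definition~\ref{def:xineg}'' without writing out the argument. Your identification $N^{\lambda}_{\leq} = (N^{\xi}_{0})^{\lambda}_{\leq} \oplus N^{\xi}_{+}$ via conditions (1) and (2) of $\xi$-negativity, followed by trivialising the $L$-equivariant summand $N^{\xi}_{+}$ in the associated bundle, is exactly the intended content.
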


Now, the map $L\rightarrow G$ induces a map $Y_\nu^L \rightarrow Y_\gamma $ and thus we have a map
\begin{equation} \label{eq:G-sat}
	\widehat{H}^{L}(  {}_\nu\pStein_{{\nu'}}^L)\to
	\widehat{H}^{P}(  {}_\gamma\pStein_{{\gamma'}})\to
	\widehat{H}^{G}(  {}_\gamma\pStein_{{\gamma'}})
\end{equation}
via $G$-saturation of $P$-equivariant cycles
after pushforward as in \eqref{eq:G-push-saturation}; note that any parabolic $P$ with Levi $L$ will give the same result.

Equivalently, we can use the identifications
\begin{gather*}
	\widehat{H}^{G}(  {}_\gamma\pStein_{{\gamma'}})\cong \widehat{H}^{{P}_\lambda}( \{(gP_{\lambda'}, n) \mid n\in
	N^{\lambda}_{\leq}\cap gN_{\leq}^{\lambda'}\}) \\
	\widehat{H}^{L}(  {}_\nu\pStein_{{\nu'}}^L)\cong \widehat{H}^{P^L_\lambda}( \{(gP_{\lambda'}^L, n) \mid n\in
	N^{\lambda}_{\leq}\cap gN^{\lambda'}_{\leq}\})
\end{gather*}
where $ \lambda, \lambda' $ denote the antidominant lifts of $ \gamma, \gamma'$.
Since $ \nu $ is $ \xi $-negative, $W^\la \subset W_L $, and so $P_\la^L $ and $P_\la $ have the same reductive quotients.  
Thus, the map (\ref{eq:G-sat}) is simply the push-forward in homology for the map
$$
\{(gP_{\lambda'}, n) \mid n\in
N^{\lambda}_{\leq}\cap gN_{\leq}^{\lambda'}\} \rightarrow \{(gP_{\lambda'}^L, n) \mid n\in
N^{\lambda}_{\leq}\cap gN^{\lambda'}_{\leq}\}.
$$

The functor $\mathsf{E}$ is far from the only equivalence between the corresponding categories.  In order to compare the restriction functors with geometry, it is convenient to use a variation of the functor $\mathsf{E}_L\colon \widehat{\mathscr X}(L,N^{\xi}_0)\rightarrow  \widehat{\mathscr A}_\Z(L,N^{\xi}_0)$. 

For each $\la\in \tilde{\ft}_{\Z}$, we define $ \kappa_\la \in \Hfrac$ by the following product over the weights $\mu$ of $N$, taken with multiplicity:
\begin{equation}
	\label{eq: def of kappa}
	\kappa_\la = \frac{\displaystyle \prod_{\substack{\langle \mu, \xi\rangle<0\\ \langle \mu, \la\rangle>0}}\prod_{j = 1}^{\langle \mu, \la\rangle-1}(  \mu - j )}{\displaystyle \prod_{\substack{\langle \mu, \xi\rangle<0\\\langle \mu, \la\rangle\leq 0}}\prod_{j = 0}^{-\langle \mu, \la \rangle-1}(  \mu + j )}
\end{equation}

Note that $\kappa_\la$ acts invertibly on $\Wei_\la^B$; since this assignment is $W_L$-equivariant, this induces a natural transformation on the functor $\Wei_{\nu}$ where $\nu$ is the image of $\la$ in $\tilde{\ft}/W_L$.  
Let \[\mathsf{E}'_L=\kappa_{\nu'}^{-1}\mathsf{E}_L\kappa_{\nu}\colon \widehat{H}^{\tilde{L}}( {}_\nu\pStein_{{\nu'}}^L)\to \Hom(\Wei^L_\nu, \Wei^L_{\nu'})\] be the twist of the functor $\mathsf{E}_L$ from Corollary \ref{co:XA} by the maps $\kappa_\nu$.

\begin{Lemma}\label{lem:EL-kappa}
	Assume that $L$ is abelian.  The functor $\mathsf{E}'_L$ sends 
	\begin{equation}
		\mathbbm{w}(\nu,\nu')\mapsto \frac{1}{\Phi_0'(\nu,\nu')}r_{\nu-\nu'},\qquad \Phi_0'(\nu,\nu'):=\Phi_0^{L,N^{\xi}_0}(\nu,\nu')\prod_{\langle \mu, \xi\rangle<0}\frac{\displaystyle \prod_{\substack{j=1,\dots,-\langle \mu,\nu-\nu' \rangle\\j\neq \langle \mu,\nu' \rangle}}   (\mu - j)}{\displaystyle \prod_{\substack{j=0,\dots, \langle \mu,\nu-\nu' \rangle-1\\j\neq -\langle \mu,\nu' \rangle}} (\mu + j)  }\label{eq:ELprime}  	\end{equation} 
\end{Lemma}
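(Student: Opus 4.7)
The plan is to combine Theorem \ref{th:XAT} (applied to the abelian group $L$ and representation $N_0^\xi$) with the commutation relation $r_\eta\cdot \mu = (\mu + \langle \mu, \eta\rangle)\, r_\eta$ from \cite[(4.8)]{BFN} (setting $\hbar=1$). Theorem \ref{th:XAT} immediately gives
\[
\mathsf{E}_L(\mathbbm{w}(\nu,\nu')) \ =\ \frac{1}{\Phi_0^{L,N_0^\xi}(\nu,\nu')}\, r_{\nu-\nu'}
\]
in $\widehat{\mathscr A}_\Z(L,N_0^\xi)$.

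Next, I would unwind the definition $\mathsf{E}'_L = \kappa_{\nu'}^{-1}\,\mathsf{E}_L\,\kappa_\nu$. Composition of natural transformations corresponds to multiplication in the algebra, so
\[
\mathsf{E}'_L(\mathbbm{w}(\nu,\nu')) \ =\ \kappa_{\nu'}^{-1}\cdot \frac{1}{\Phi_0^{L,N_0^\xi}(\nu,\nu')}\, r_{\nu-\nu'}\cdot \kappa_\nu.
\]
Moving $\kappa_\nu$ to the left past $r_{\nu-\nu'}$ via the commutation relation yields
\[
\mathsf{E}'_L(\mathbbm{w}(\nu,\nu')) \ =\ \frac{\kappa_\nu^{[\nu-\nu']}}{\kappa_{\nu'}\,\Phi_0^{L,N_0^\xi}(\nu,\nu')}\, r_{\nu-\nu'},
\]
where $\kappa_\nu^{[\nu-\nu']}$ denotes the rational function obtained from $\kappa_\nu$ in \eqref{eq: def of kappa} by substituting $\mu \mapsto \mu + \langle \mu,\nu-\nu'\rangle$ for each weight $\mu$ of $N$ appearing in the formula.

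What will then remain is the polynomial identity
\[
\frac{\kappa_\nu^{[\nu-\nu']}}{\kappa_{\nu'}} \ =\ \prod_{\langle\mu,\xi\rangle<0}\ \frac{\displaystyle\prod_{\substack{j=0,\dots,\langle\mu,\nu-\nu'\rangle-1\\ j\neq -\langle\mu,\nu'\rangle}}(\mu+j)}{\displaystyle\prod_{\substack{j=1,\dots,-\langle\mu,\nu-\nu'\rangle\\ j\neq \langle\mu,\nu'\rangle}}(\mu-j)},
\]
which is precisely what \eqref{eq:ELprime} requires. This factors as a product over weights $\mu$ of $N$ with $\langle\mu,\xi\rangle<0$; for each such $\mu$, setting $a=\langle\mu,\nu\rangle$ and $b=\langle\mu,\nu'\rangle$, the verification is a case analysis based on the signs of $a$ and $b$. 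The substitutions $k=j-a+b$ (when $a>0$) and $k=j+a-b$ (when $a\leq 0$) bring the shifted product $\kappa_\nu^{[\nu-\nu']}$ into a form that combines or cancels cleanly against the corresponding factors of $\kappa_{\nu'}$.

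The main obstacle will be the bookkeeping in this case analysis. The excluded indices $j=\langle\mu,\nu'\rangle$ and $j=-\langle\mu,\nu'\rangle$ in $\Phi_0'$ are \emph{not} built into the ratio $\kappa_\nu^{[\nu-\nu']}/\kappa_{\nu'}$ as explicit exclusions; rather, they emerge from the fact that the defining product \eqref{eq: def of kappa} for $\kappa_\lambda$ splits at the threshold between $\langle\mu,\lambda\rangle>0$ and $\langle\mu,\lambda\rangle\leq 0$, and one must carefully track which boundary factors appear or disappear as the reindexing crosses from one regime to the other. Once the four sign cases are checked (paying attention to empty products when $a=0$ or $b=0$), the identity drops out.
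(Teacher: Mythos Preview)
Your proposal is correct and follows essentially the same approach as the paper: apply the definition $\mathsf{E}'_L=\kappa_{\nu'}^{-1}\mathsf{E}_L\kappa_\nu$, use Theorem~\ref{th:XAT} to compute $\mathsf{E}_L(\mathbbm{w}(\nu,\nu'))$, commute $\kappa_\nu$ past $r_{\nu-\nu'}$, and then perform a four-case analysis on the signs of $\langle\mu,\nu\rangle$ and $\langle\mu,\nu'\rangle$ to simplify the resulting ratio. The paper's proof writes out the intermediate expression explicitly and then treats the four sign cases one by one, exactly as you anticipate.
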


\begin{proof}
	By definition, this functor sends 
	\begin{align}
		\mathbbm{w}(\nu,\nu')&\mapsto \kappa_{\nu'}^{-1}\frac{1}{\Phi_0^{L,N^{\xi}_0}(\nu,\nu')}r_{\nu-\nu'}\kappa_{\nu}\notag\\
		&=\frac{1}{\Phi_0^{L,N^{\xi}_0}(\nu,\nu')}\frac{\displaystyle \prod_{\substack{\langle \mu, \xi\rangle<0\\\langle \mu, \nu'\rangle\leq 0}}\prod_{j = 0}^{-\langle \mu, \nu' \rangle-1}(  \mu + j )}
		{\displaystyle \prod_{\substack{\langle \mu, \xi\rangle<0\\ \langle \mu, \nu'\rangle>0}}\prod_{j = 1}^{\langle \mu, \nu'\rangle-1}(  \mu - j )}\frac{\displaystyle \prod_{\substack{\langle \mu, \xi\rangle<0\\ \langle \mu, \nu\rangle>0}}\prod_{j = 1+\langle \mu, \nu'-\nu\rangle}^{\langle \mu, \nu'\rangle-1}(  \mu - j )}{\displaystyle \prod_{\substack{\langle \mu, \xi\rangle<0\\\langle \mu, \nu\rangle\leq 0}}\prod_{j = \langle \mu, \nu-\nu' \rangle}^{-\langle \mu, \nu' \rangle-1}(  \mu + j )}r_{\nu-\nu'}\notag\end{align}   
	For any weight $ \mu$, its contribution to this product depends on the signs of $\langle \mu, \nu\rangle$ and $ \langle \mu, \nu'\rangle$.  
	
	If $\langle \mu, \nu\rangle>0, \langle \mu, \nu'\rangle \le 0$, then the contribution of $\mu$ to the denominator is trivial, and  its contribution to the numerator becomes 
	\[\prod_{j = 0}^{-\langle \mu, \nu' \rangle-1}(  \mu + j ) \prod_{j = -\langle \mu, \nu'\rangle+1}^{-\langle \mu, \nu'-\nu\rangle-1}(  \mu + j )= \prod_{\substack{j=0,\dots, \langle \mu,\nu-\nu' \rangle-1\\j\neq -\langle \mu,\nu' \rangle}} (\mu + j)\]
	
	Dually, if  $\langle \mu, \nu\rangle \le 0, \langle \mu, \nu'\rangle >0$, the numerator is trivial, and the denominator is 
	\[\prod_{j = 1}^{\langle \mu, \nu'\rangle-1}(  \mu - j )\prod_{j =\langle \mu, \nu' \rangle+1}^{\langle \mu, \nu'-\nu \rangle}(  \mu - j )=\prod_{\substack{j=1,\dots,-\langle \mu,\nu-\nu' \rangle\\j\neq \langle \mu,\nu' \rangle}}   (\mu - j)\]
	On the other hand, if $\langle \mu, \nu\rangle > 0, \langle \mu, \nu'\rangle > 0$, respectively $\langle \mu, \nu\rangle\leq 0, \langle \mu, \nu'\rangle \leq 0$, then these contributions are:
	\[ \frac{\displaystyle \prod_{j = 1+\langle \mu, \nu'-\nu\rangle}^{\langle \mu, \nu'\rangle-1}(  \mu - j )}
	{\displaystyle \prod_{j = 1}^{\langle \mu, \nu'\rangle-1}(  \mu - j )},\qquad \text{respectively} \qquad\frac{\displaystyle \prod_{j = 0}^{-\langle \mu, \nu' \rangle-1}(  \mu + j )}{\displaystyle \prod_{j = \langle \mu, \nu-\nu' \rangle}^{-\langle \mu, \nu' \rangle-1}(  \mu + j )}.\]
	Applying the obvious cancelation gives the desired result.  
\end{proof}

Recall that in Theorem \ref{th:res-weight}, we fixed a natural
isomorphism  of functors
$\rb_{\nu}\colon \Wei_{\nu}^{L}\circ\res \to \Wei_{\gamma}$.  Also recall the map $  \rb : \Hom(\Wei_{\nu}^{L}, \Wei_{\nu'}^{L}) \rightarrow \Hom(\Wei_\gamma, \Wei_{\gamma'}) $ from  
Corollary \ref{co:mapWei}.  By definition, for $ x \in \Hom(\Wei_{\nu}^{L}, \Wei_{\nu'}^{L}) $, we have
$
\rb(x) = \rb_{\nu'} x\rb_{\nu}^{-1} $.

We will use these factors $\kappa_\nu $ to twist these maps, and define
\begin{equation}\label{eq:rb-def}
	\rb'_{\nu}=\rb_{\nu}\kappa_{\nu}^{-1}\qquad \text{and}\qquad \rb'(x)=\rb_{\nu'}' x(\rb_{\nu}')^{-1}.
\end{equation}		
\begin{Theorem}\label{thm:saturation} 	Assume that $ \nu, \nu' \in \tilde{\ft}_\Z/W_L$ are both $\xi$-negative.
	Let $ \gamma, \gamma' $ be their images in $\tilde{\ft}_\Z/W$.  Then we have a commutative diagram
	\begin{equation*}
		\begin{tikzcd}[sep=huge]
			\widehat{H}^{{L}}( {}_\nu\pStein_{{\nu'}}^L) \arrow{r} {\text{(\ref{eq:G-sat})}} \arrow{d}[left]{\mathsf{E}_L} &
			\widehat{H}^{G}(  {}_\gamma\pStein_{{\gamma'}}) \arrow{d}{\mathsf{E}_G}\\
			\Hom(\Wei^L_\nu, \Wei^L_{\nu'})
			\arrow{r}[below]{\rb'}   &  \Hom(\Wei_\gamma, \Wei_{\gamma'})
		\end{tikzcd}
	\end{equation*}
\end{Theorem}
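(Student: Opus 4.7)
The plan is to reduce the commutativity to an explicit computation in the abelian case, exploiting the same $\kappa_\nu$ twist that appears in the definitions of both $\mathsf{E}'_L$ and $\rb'$. First I reduce to verifying the analogous diagram at the Iwahori/Borel level, replacing $\pStein$ by $\Stein$ and $\widehat{\mathscr A}_\Z$ by $\widehat{\mathscr A}_\Z^B$, and working with antidominant lifts $\la, \la' \in \tilde{\ft}_\Z$ in place of their $W$-orbits. The $\xi$-negativity of $\nu,\nu'$ forces $W^{\la} = W_L^{\la}$ and $W^{\la'} = W_L^{\la'}$, so the symmetrizing idempotents $e'(\la), e'(\la')$ appearing in (\ref{eq:Stein-projection}) and (\ref{eq:nat-projection}) are the same whether computed in $W$ or in $W_L$. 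This brings the $G$-side and $L$-side of the diagram into a common Borel setting.

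Next I reduce to the case $L = T$. Both Hom spaces embed into their localizations at $\widehat{\Hfrac}$, which by (\ref{eq:A-localization}) and (\ref{eq:X-localization}) decompose as $W$-indexed sums of torus Hom spaces. Since Theorem \ref{th:XA} is established by proving compatibility with the torus equivalence of Theorem \ref{th:XAT} through precisely these decompositions, commutativity of the diagram in the abelian case will imply it in general after applying $G$-saturation to the components.

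With $L = T$ abelian, I evaluate both composites on the generating class $\mathbbm{w}(\nu,\nu') = [{}_{\nu'}\Stein^{T,N_0^\xi}_\nu]$. By Lemma \ref{lem:EL-kappa}, its image under $\mathsf{E}'_L$ is $\Phi_0'(\nu,\nu')^{-1} r_{\nu-\nu'}$, where the correction $\Phi_0'/\Phi_0^{T,N_0^\xi}$ involves only weights $\mu$ with $\langle\mu,\xi\rangle < 0$. On the geometric side, Lemma \ref{lem:xi-neg-flag} gives a vector bundle decomposition $Y_\nu^{T,N} \cong Y_\nu^{T,N_0^\xi} \times N^\xi_+$, through which the map (\ref{eq:G-sat}) factors; the resulting pushforward produces an Euler class from the $N^\xi_+$-fibre together with equivariant factors from the $G$-saturation step. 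After applying $\mathsf{E}_G$ and using the explicit description of the torus case from Theorem \ref{th:XAT}, these factors should assemble, via the monopole relations (\ref{eq: monopole relations}) and (\ref{eq: inverse monopole relations}), into $\Phi_0'(\nu,\nu')^{-1} r_{\nu-\nu'}$ transported to $\Hom(\Wei_\gamma,\Wei_{\gamma'})$ by the map $\rb'$ defined via (\ref{eq:rb-def}).

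The main obstacle is this final matching: the $\kappa$-twists in both $\mathsf{E}'_L$ and $\rb'$ are engineered precisely so that the Euler-class factors appearing in the pushforward realign with the $\Phi_0'$-corrections rather than $\Phi_0^{T,N}$, but extracting the precise combinatorial identity requires a careful case analysis on the signs of $\langle\mu,\nu\rangle$, $\langle\mu,\nu'\rangle$, and $\langle\mu,\xi\rangle$, in the spirit of the proof of Lemma \ref{lem:EL-kappa}. Once commutativity is verified on the generating classes $\mathbbm{w}(\nu,\nu')$, it extends to all of $\widehat{H}^L({}_\nu\pStein^L_{\nu'})$ by compatibility with the $\HBG{\tilde T}$-bimodule structure on both sides, which $\mathsf{E}_G$ and $\mathsf{E}'_L$ respect by construction.
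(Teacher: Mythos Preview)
Your overall strategy is the same as the paper's: reduce to the Borel/Iwahori level via the idempotents $e'(\la)$, pass to the torus via the localization decompositions (\ref{eq:A-localization}) and (\ref{eq:X-localization}), and then verify commutativity on the generators $\mathbbm{w}(\nu,\nu')$. The paper organizes the middle step more explicitly by inserting the intermediate node $(L,N)$ and breaking the Borel-level square into two commuting squares (\ref{eq:twosquares}): a left square changing the representation $N_0^\xi \rightsquigarrow N$ (this is where the $\kappa$-twist and Lemma \ref{lem:EL-kappa} enter, and where the abelian computation of Lemma \ref{lem:abelian-saturation} is actually used) and a right square changing the group $L \rightsquigarrow G$ (handled purely by matching the inclusion $W_L \hookrightarrow W$ after localization, with no further calculation). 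Making this two-square factorization explicit is what keeps the abelian check clean.

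Your sketch of the abelian computation, however, contains a genuine error. You expect the map (\ref{eq:G-sat}) to produce an Euler class from the $N_+^\xi$ fibre, but it does not: in the case $G=L=T$ the map reduces to the isomorphism $\beta$ in (\ref{eq:twosquares}), which is pullback along the affine-bundle projection coming from Lemma \ref{lem:xi-neg-flag}, and this sends the fundamental class $\mathbbm{w}(\nu,\nu')$ to $\mathbbm{w}(\nu,\nu')$ with no extra factor. If you instead used pushforward along the zero section you would pick up the equivariant Euler class of $N_+^\xi$ and the identity would fail. The correct check is therefore that
\[
\frac{1}{\Phi_0^{T,N}(\nu,\nu')}\, r_{\nu-\nu'} \;=\; \rb\Big(\frac{1}{\Phi_0'(\nu,\nu')}\, r_{\nu-\nu'}\Big),
\]
where $\rb$ sends the monopole operator $r_{\nu-\nu'}$ of $\CB(T,N_0^\xi)$ to $r_\xi^{-k} r_{\nu-\nu'+k\xi}$ in $\CB(T,N)$ for $k\gg 0$; expanding the right-hand side via (\ref{eq: inverse monopole relations}) and comparing with the ratio $\Phi_0^{T,N}/\Phi_0^{T,N_0^\xi}$ gives the match. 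No Euler-class bookkeeping is involved.
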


Here $ \Wei^L_\nu $ denotes the weight functor on the category of $ \CB(L, N^\xi_0) $ modules and $ \Wei_\gamma $ denotes the weight functor on the category of $ \CB(G, N) $ modules.

We'll complete this proof in a couple of steps.  First, we consider the abelian case:
\begin{Lemma}\label{lem:abelian-saturation}
	Theorem \ref{thm:saturation} holds in the case where $G=T$ is abelian.
\end{Lemma}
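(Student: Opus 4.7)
The strategy is to verify the diagram on the distinguished classes $\mathbbm{w}(\nu,\nu')$, which generate the morphism spaces in $\widehat{\mathscr X}(T,N_0^\xi)$ as modules over $\widehat{\HBG{T}}$; since all four maps in the square are $\widehat{\HBG{T}}$-linear on morphisms, it suffices to check commutativity on these generators. Note that in the abelian setting $W = W_L = \{1\}$, so $\gamma = \nu$ and $\gamma' = \nu'$.

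First I would describe the top horizontal map concretely. Since $L = G = T$, the saturation step in \eqref{eq:G-sat} is trivial, so this map is simply pushforward in $T$-equivariant Borel-Moore homology along the closed embedding furnished by Lemma \ref{lem:xi-neg-flag}. Under the decomposition ${}_\gamma\pStein^{T,N}_{\gamma'} \cong {}_\nu\pStein^{T,N_0^\xi}_{\nu'} \times N_+^\xi$, this embedding is the zero section of the trivial bundle with fibre $N_+^\xi$, so by the Thom isomorphism the pushforward multiplies by the equivariant Euler class $\mathrm{Eu}(N_+^\xi) = \prod_{\mu\,:\,\langle\mu,\xi\rangle > 0} \mu$ (product over weights of $N$ with multiplicity). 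Composing with $\mathsf{E}_G$ via Theorem \ref{th:XAT}, the right-then-down route sends $\mathbbm{w}(\nu,\nu')$ to $\mathrm{Eu}(N_+^\xi)\cdot \Phi_0^{T,N}(\nu,\nu')^{-1}\,r^{(T,N)}_{\nu-\nu'}$, where I use a superscript on $r$ to indicate which Coulomb branch the monopole operator inhabits.

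For the other route, I would use the identity $\rb'\circ\mathsf{E}_L = \rb\circ\mathsf{E}'_L$, which is immediate from the definitions $\mathsf{E}'_L = \kappa_{\nu'}^{-1}\mathsf{E}_L\kappa_\nu$ and \eqref{eq:rb-def}, together with Lemma \ref{lem:EL-kappa}, to rewrite the down-then-right route as $\rb\big(\Phi'_0(\nu,\nu')^{-1}\,r^{(T,N_0^\xi)}_{\nu-\nu'}\big)$. To compare with the first expression, I rewrite $r^{(T,N_0^\xi)}_{\nu-\nu'}$ as a scalar multiple of $r^{(T,N)}_{\nu-\nu'}$ using the algebra inclusion $\CB(T,N)\hookrightarrow \CB(T,N)[r_\xi^{-1}]\cong \CB(T,N_0^\xi)$ coming from Theorem \ref{th:invertrxi} combined with the Fourier isomorphism \eqref{eq:Fourier2}, whose effect on monopole operators is given by the explicit formulas recalled in Section \ref{section: abelian theories and monopoles}. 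The final comparison is then a bookkeeping exercise which I would organize by partitioning the weights $\mu$ of $N$ according to $\mathrm{sign}(\langle\mu,\xi\rangle)$: weights with $\langle\mu,\xi\rangle = 0$ (those of $N_0^\xi$) are matched by $\Phi_0^{T,N_0^\xi}$ on both sides; weights with $\langle\mu,\xi\rangle > 0$ produce $\mathrm{Eu}(N_+^\xi)$ on the right-then-down side and match the Fourier-shift contribution on the other side; weights with $\langle\mu,\xi\rangle < 0$ contribute only through $\kappa$ and the Fourier shift, and here the $\xi$-negativity of $\nu,\nu'$ in Definition \ref{def:xineg} forces $\langle\mu,\nu\rangle,\langle\mu,\nu'\rangle > 0$, causing the factors in $\kappa_\nu,\kappa_{\nu'}$ to match exactly the relevant terms of $\Phi_0^{T,N}/(\Phi'_0\cdot\Phi_0^{T,N_0^\xi})$. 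The main obstacle is this three-case bookkeeping, but the definitions \eqref{eq:ELprime} of $\Phi'_0$ and \eqref{eq: def of kappa} of $\kappa_\la$ were engineered precisely so that each case is forced term-by-term.
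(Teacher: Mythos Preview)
Your proposal has a genuine error in identifying the top horizontal map. You interpret it as pushforward in Borel--Moore homology along the closed embedding ${}_\nu\pStein^{T,N_0^\xi}_{\nu'}\hookrightarrow {}_\nu\pStein^{T,N}_{\nu'}$ given by the zero section, which would introduce the Euler class factor $\mathrm{Eu}(N_+^\xi)=\prod_{\langle\mu,\xi\rangle>0}\mu$. But this is not the map the paper uses. In the paper's proof (see \eqref{eq:GT-square} and the surrounding text, and compare the left square of \eqref{eq:twosquares}), the top map in the abelian case is the isomorphism $\beta$ of Lemma~\ref{lem:xi-neg-flag}: the Borel--Moore homology isomorphism induced by the product decomposition ${}_\nu\pStein^{T,N}_{\nu'}\cong {}_\nu\pStein^{T,N_0^\xi}_{\nu'}\times N_+^\xi$, i.e.\ pullback along the projection. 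This sends the fundamental class $\mathbbm{w}(\nu,\nu')$ to the fundamental class $\mathbbm{w}(\nu,\nu')$, with no Euler class. The paper states this explicitly: ``the map (\ref{eq:G-sat}) sends $\mathbbm{w}(\la,\la')$ to $\mathbbm{w}(\la,\la')$.'' Consequently your right-then-down route is off by $\mathrm{Eu}(N_+^\xi)$, and the term-by-term matching you describe cannot close.

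Your treatment of the other route is also somewhat oblique: you propose computing $\rb$ by rewriting $r^{(T,N_0^\xi)}_{\nu-\nu'}$ via the algebra inclusion and the Fourier isomorphism \eqref{eq:Fourier2}. The paper instead works directly from the definition of $\rb$ via the isomorphisms $\rb_\nu$ of Theorem~\ref{th:res-weight}, obtaining $\rb(r_{\nu-\nu'})=r_\xi^{-k}r_{\nu-\nu'+k\xi}$ for $k\gg 0$ and then applying \eqref{eq: inverse monopole relations}. Your approach could in principle be made to agree with this, but you would need to verify carefully that the Fourier sign $(-1)^{\delta(\nu-\nu')}$ and the $\wp$-shift in \eqref{eq:Fourier} match what $\rb$ actually does --- this is not automatic, and you have not addressed it. Once the top map is corrected, the cleanest route is the paper's: compare $\Phi_0^{L,N}/\Phi_0^{L,N_0^\xi}$ against the explicit product coming from \eqref{eq: inverse monopole relations}, exactly as in \eqref{eq: rewrite 1}--\eqref{eq: rewrite 2}.
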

\begin{proof}
	Note that in this case $L=T$ as well, and $ \nu = \gamma, \nu' = \gamma$.  
	We can equivalently show the commutativity of the diagram:
	\begin{equation}\label{eq:GT-square}
		\begin{tikzcd}[sep=huge]
			\widehat{H}^{{T}}( {}_\nu\pStein_{{\nu'}}) \arrow{r} {\text{(\ref{eq:G-sat})}} \arrow{d}[left]{\mathsf{E}'_T} &
			\widehat{H}^{T}(  {}_\nu\pStein_{{\nu'}}) \arrow{d}{\mathsf{E}_T}\\
			\Hom(\Wei_\nu, \Wei_{\nu'}) \arrow{r}[below]{\rb}   &  \Hom(\Wei_\nu, \Wei_{\nu'})
		\end{tikzcd}
	\end{equation}
	The morphism $ \rb $ from Corollary \ref{co:mapWei} comes from the functor $\res\colon \CB(T,N)\mmod\to \CB(T, N^\xi_0)\mmod$ and sends the morphism $r_{\nu-\nu'}$ to $r_{\xi}^{-k}r_{\nu-\nu'+k\xi}$ for $k\gg 0$, which is well-defined by the $\xi$-negativity, and the map (\ref{eq:G-sat})  sends $\mathbbm{w}(\la,\la')$ to $\mathbbm{w}(\la,\la')$.
	In the left-hand square of \eqref{eq:twosquares}, going right and then down sends
	\[\mathbbm{w}(\nu,\nu')\mapsto \frac{1}{\Phi_0^{L,N}(\nu,\nu')}r_{\nu-\nu'},\]
	while by Lemma \ref{lem:EL-kappa}, going down and then
	to the right sends
	\[\mathbbm{w}(\nu,\nu')\mapsto \frac{1}{\Phi_0^{L,N^{\xi}_0}(\nu,\nu')} \prod_{\langle \mu, \xi\rangle<0}\frac{\displaystyle \prod_{\substack{j=0,\dots, \langle \mu,\nu-\nu' \rangle-1\\j\neq \langle \mu,\nu' \rangle}} (\mu + j)  }{\displaystyle \prod_{\substack{j=1,\dots,-\langle \mu,\nu-\nu' \rangle\\j\neq \langle \mu,\nu' \rangle}}  (\mu - j)} r_{\xi}^{-k}r_{\nu-\nu'+k\xi}
	.\]
	Now, we must apply \eqref{eq: inverse monopole relations}:
	\begin{equation*}r_\xi ^{-k}r_{\nu-\nu'+k\xi}=\prod_{\langle\mu,k\xi\rangle > 0 > \langle \mu,\nu-\nu'\rangle} \prod_{j=1}^{d(\langle \mu, k\xi\rangle, \langle \mu,\nu-\nu'\rangle)} \frac{1}{\mu - j } \prod_{\langle\mu,k\xi\rangle < 0 < \langle \mu,\nu-\nu'\rangle} \prod_{j=0}^{d(\langle \mu,k \xi\rangle, \langle \mu,\nu-\nu' \rangle)-1} \frac{1}{\mu + j  }  r_{\nu-\nu'} ,	
	\end{equation*}
	Note that for $k\gg 0$, this is the same as 
	\begin{equation}\label{eq: rewrite 1}r_\xi ^{-k}r_{\nu-\nu'+k\xi}=\prod_{\langle\mu,\xi\rangle > 0 > \langle \mu,\nu-\nu'\rangle} \prod_{j=1}^{- \langle \mu,\nu-\nu'\rangle} \frac{1}{\mu - j } \prod_{\langle\mu,\xi\rangle < 0 < \langle \mu,\nu-\nu'\rangle} \prod_{j=0}^{\langle \mu,\nu-\nu' \rangle-1} \frac{1}{\mu + j  }  r_{\nu-\nu'} ,	
	\end{equation} which is, as promised, independent of $k$.  
	On the other hand, we have that \begin{equation}\label{eq: rewrite 2}\frac{\Phi_0^{L,N^{\xi}_0}(\nu,\nu')}{\Phi_0^{L,N}(\nu,\nu')}=\prod_{\langle\mu,\xi \rangle\neq 0} \prod_{\substack{j = 1,\dots, -\langle \mu,  \la-\la'\rangle\\j\neq\langle\mu,\la' \rangle }}(  \mu - j )\end{equation}
	Combining  equations \eqref{eq: rewrite 1} and \eqref{eq: rewrite 2},  
	we have the commutativity of this square.
\end{proof}

\begin{Lemma} \label{le:CommuteAbelian} Under the hypotheses of \ref{thm:saturation}, we have a commutative diagram
	\begin{equation*}
		\begin{tikzcd}[sep=huge]
			\widehat{H}^{{L}}( {}_\la\Stein_{{\la'}}^L) \arrow{r} {\text{(\ref{eq:G-sat})}} \arrow{d}[left]{\mathsf{E}_L} &
			\widehat{H}^{G}(  {}_\gamma\Stein_{{\gamma'}}) \arrow{d}{\mathsf{E}_G}\\
			\Hom({}^B\Wei^L_\la, {}^B\Wei^L_{\la'})
			\arrow{r}[below]{\rb'}   &  \Hom({}^B\Wei_\gamma, {}^B\Wei_{\gamma'})
		\end{tikzcd}
	\end{equation*}
\end{Lemma}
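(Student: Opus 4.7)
The plan is to reduce to the abelian case Lemma \ref{lem:abelian-saturation} by localizing all four corners at the fraction field $\widehat{\Hfrac}$ of $\widehat{\HBG{T}}$. Each corner of the diagram is a finitely generated torsion-free module over $\widehat{\HBG{T}}$, so it suffices to verify commutativity after tensoring with $\widehat{\Hfrac}$. Under this localization, the bottom row decomposes via \eqref{eq:A-localization} (and its $L$-analogue) as a direct sum indexed by $W_L$ on the left and by $W$ on the right, and similarly the top row decomposes via \eqref{eq:X-localization} and its $L$-version. By Theorem \ref{th:XA} and its $L$-analogue, the vertical arrows $\mathsf{E}_L$ and $\mathsf{E}_G$ are compatible with these Weyl-group decompositions.

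It then remains to identify, summand-by-summand, the two horizontal maps with the pure-$T$ horizontal maps in the square of \eqref{eq:GT-square}. First I would check that, after localization, the $G$-saturation map \eqref{eq:G-sat} agrees with the inclusion of the $W_L$-indexed summands into the $W$-indexed summands, twisted by the Euler classes of the normal bundles at the $T$-fixed points coming from the root spaces of $\fg$ not in $\mathfrak{l}$. This is a standard consequence of the localization theorem, since the $T$-fixed points of ${}_\lambda\Stein_{\lambda'}^L$ sit inside those of ${}_\lambda\Stein_{\lambda'}$ as exactly the $W_L$-subset of $W$. Second, I would verify that the algebraic map $\rb'$, which was defined by the $\kappa_\nu$-twist from \eqref{eq:rb-def}, produces exactly the same Euler-class normalization on the algebraic side. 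The point is that the factors $\kappa_\nu$ in \eqref{eq: def of kappa} were constructed so that Lemma \ref{lem:EL-kappa} gives precisely the rational function $\Phi_0'(\nu,\nu')$ needed to match the geometric normalization computed in Lemma \ref{lem:abelian-saturation}.

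Once those two compatibilities are in hand, each $w \in W_L$-summand of the square reduces to the pure-$T$ square (\ref{eq:GT-square}) applied to the pair $(w\lambda,\lambda')$, and Lemma \ref{lem:abelian-saturation} delivers commutativity on that summand; summing over $w \in W_L$ finishes the proof.

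The main obstacle is the careful bookkeeping of Euler-class contributions in the second step. One must partition the weights $\mu$ of $N$ according to the signs of $\langle \mu,\xi\rangle$ and $\langle \mu,\lambda\rangle$ and confirm that the factors of $(\mu \pm j)$ arising from the Euler class of the normal bundle to $G/P$, combined with those coming from the $\mathsf{E}_G$-normalization of $\mathbbm{w}(\lambda,\lambda')$, match the rational function $\Phi_0'$ appearing in \eqref{eq:ELprime}. This matching is by design — it is precisely the reason for the twist in \eqref{eq:rb-def} — but the explicit cancellation patterns (analogous to those carried out at the end of the proof of Lemma \ref{lem:abelian-saturation}) are the real technical content.
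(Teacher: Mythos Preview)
Your overall strategy---localize at $\widehat{\Hfrac}$ and reduce to Lemma \ref{lem:abelian-saturation} via the decompositions \eqref{eq:A-localization} and \eqref{eq:X-localization}---is exactly the paper's strategy. But you have conflated two separate phenomena, and this would cause your bookkeeping to fail. The paper does not treat the square in one step: it inserts an intermediate column for the pair $(L,N)$, breaking the diagram into two squares as in \eqref{eq:twosquares}. The \emph{right} square changes the group from $L$ to $G$ with the representation $N$ fixed; after localization, both the geometric saturation and the algebraic inclusion $\CB^{B_L}(L,N)\hookrightarrow\CB^B(G,N)$ become simply the inclusion of $W_L$-indexed summands into $W$-indexed summands, with no Euler-class normalization at all (fixed-point classes push forward to fixed-point classes, and the flag-order inclusion is compatible with $\widehat\Hfrac\rtimes\widehat W_L\hookrightarrow\widehat\Hfrac\rtimes\widehat W$). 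The \emph{left} square changes the representation from $N_0^\xi$ to $N$ with the group $L$ fixed; this is where Lemma \ref{lem:abelian-saturation} is invoked, summand by summand over $W_L$, and this is where the $\kappa_\nu$-twist does its work.

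The specific error in your steps (3)--(4) is the claim that the $\kappa_\nu$ factors match Euler classes coming from root spaces of $\fg$ not in $\mathfrak{l}$. They do not: the $\kappa_\nu$ of \eqref{eq: def of kappa} are products over \emph{weights $\mu$ of $N$} with $\langle\mu,\xi\rangle<0$, and they compensate for the discrepancy between the monopole relations \eqref{eq: monopole relations} in $\CB(L,N)$ versus $\CB(L,N_0^\xi)$---that is, for the rational function picked up when rewriting $r_\xi^{-k}r_{\nu-\nu'+k\xi}$ via \eqref{eq: inverse monopole relations}. This is a change-of-representation effect, not a change-of-group effect; it is already the entire content of Lemma \ref{lem:abelian-saturation} in the case $G=L=T$, where there are no roots at all. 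Without the two-square decomposition, your ``main obstacle'' paragraph would be hunting for root contributions that are not there while missing the weight-of-$N$ contributions that are.
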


\begin{proof}
	As in the previous proof, we can equivalently show the commutativity of the diagram:
	\begin{equation*}
		\begin{tikzcd}[sep=huge]
			\widehat{H}^{L}( {}_\la\Stein_{{\la'}}^L) \arrow{r} {\text{(\ref{eq:G-sat})}} \arrow{d}[left]{\mathsf{E}'_L} &
			\widehat{H}^{G}(  {}_\gamma\Stein_{{\gamma'}}) \arrow{d}{\mathsf{E}_G}\\
			\Hom({}^B\Wei^L_\la, {}^B\Wei^L_{\la'}) \arrow{r}[below]{\rb}   &  \Hom({}^B\Wei_\gamma, {}^B\Wei_{\gamma'})
		\end{tikzcd}
	\end{equation*}
	The morphism $ \rb $ from Corollary \ref{co:mapWei} comes from the functor res, which involves passing between modules for the  algebras $ \CBs{\varphi}^B(G,N), \CBs{\varphi}^{B_L}(L,N)$ and $\CBs{\varphi}^{B_L}(L, N^\xi_0)$.

	As we did earlier, let us write $B_L=B_L$ and $ {}^{B_L}\Wei_\la^{L, N} $ for the weight functor on the category of $ \CBs{\varphi}^{B_L}(L, N) $-modules.
	
	In order to prove the commutativity of the diagram, we break it into two squares
	\begin{equation} \label{eq:twosquares}
		\begin{tikzcd}[column sep=36pt]
			\widehat{H}^{L}( {}_\la\Stein_{{\la'}}^{L, N^\xi_0}) \arrow{r}{\beta} \arrow{d}[left] {\mathsf{E}'_L}  & \widehat{H}^{L}( {}_\la\Stein_{{\la'}}^{L, N}) \arrow{r} \arrow{d}{\mathsf{E}_L} &
			\widehat{H}^{G}(
			{}_\gamma\Stein_{{\gamma'}})  \arrow{d}{\mathsf{E}_G}\\
			\Hom( {}^{B_L}\Wei^L_\la,  {}^{B_L}\Wei^L_{\la'}) \arrow{r}[below]{\alpha}  & \Hom( {}^{B_L}\Wei^{L,N}_\la,  {}^{B_L}\Wei^{L,N}_{\la'}) \arrow{r}  &  \Hom( {}^{B}\Wei_\gamma,  {}^{B}\Wei_{\gamma'})
		\end{tikzcd}
	\end{equation}
	The map $\alpha$ at the bottom of the left square is induced by the
	fact that ${}^{B_L}\Wei_{\la}^{L,N}(M)={}^{B_L}\Wei_{\la}^L(M[r_{\xi}^{-1}])$, so
	a natural transformation $f\colon {}^{B_L}\Wei_{\la}\to {}^{B_L}\Wei_{\la'}$ applied to
	$M[r_{\xi}^{-1}]$ induces a map \[\alpha(f)\colon
	{}^{B_L}\Wei_{\la}^{L,N}(M)\to {}^{B_L}\Wei_{\la'}^{L,N}(M).\]  The top isomorphism
	$\beta$ is induced by the isomorphism $\widehat{H}^{L}(
	{}_\la\Stein_{{\la'}}^{L, N^\xi_0}) \cong \widehat{H}^{L}(
	{}_\la\Stein_{{\la'}}^{L, N})$, which comes from Lemma
	\ref{lem:xi-neg-flag}. Note that all the maps in this left-hand square are isomorphisms.  
	
	We will reduce proving the commutativity of the left square to the abelian case.  The ring $\HBG{T}$ acts freely on the morphism spaces in both categories, so we can check the commutativity after tensor product with $\Hfrac$ without loss of generality.  
	Since $\xi$ is $W_L$-invariant by definition, all the maps in the diagram \eqref{eq:GT-square} will be $W_L$-invariant.  
	Thus, by Lemma \ref{le:CommuteAbelian}, the back square of the cube below commutes:
	\tikzcdset{scale cd/.style={every label/.append style={scale=#1},
			cells={nodes={scale=#1}}}}
	\[  \begin{tikzcd}[column sep=-25pt,scale cd=.935]
		& {\displaystyle \bigoplus_{w\in W_L} \widehat{\Hfrac}\otimes_{\widehat{\HBG{T}}} \Hom_{\widehat{\mathscr X}_\Z(T,N^\xi_0)}(w\nu, \nu')\cdot w} \arrow[rr]   \arrow[dd]         &                          & {\displaystyle\bigoplus_{w\in W_L} 	\widehat{\Hfrac}\otimes _{\widehat{\HBG{T}}}\Hom_{\widehat{\mathscr X}_\Z(T,N)}(w\nu, \nu')\cdot w}          \arrow[dd]  \\
		{\Hom_{\widehat{\mathscr X}_\Z(L,N^\xi_0)}(\nu, \nu')} \arrow[ru] \arrow[dd]\arrow[rr,crossing over]            &                          & {\Hom_{\widehat{\mathscr X}_\Z(L,N)}(\nu, \nu')} \arrow[ru]            &               \\
		& {\displaystyle \bigoplus_{w\in W_L} 	\widehat{\Hfrac}\otimes_{\widehat{\HBG{T}}} \Hom_{\widehat{\mathscr A}_\Z(T,N^\xi_0)}(w\nu, \nu')\cdot w} \arrow[rr]  &                          & {\displaystyle\bigoplus_{w\in W_L} 	\widehat{\Hfrac}\otimes_{\widehat{\HBG{T}}} \Hom_{\widehat{\mathscr A}_\Z(T,N)}(w\nu, \nu')\cdot w}  \\
		{\Hom_{\widehat{\mathscr A}_\Z(L,N^\xi_0)}(\nu, \nu')} \arrow[ru]  \arrow[rr] &                          & {\Hom_{\widehat{\mathscr A}_\Z(L,N)}(\nu, \nu')} \arrow[ru] \arrow[from=uu, crossing over] &              
	\end{tikzcd}  \]
	The horizontal maps along the sides are injective by 
	\eqref{eq:A-localization} and \eqref{eq:X-localization}, and the sides commute by the compatibility of these isomorphisms.
	Thus, the front commutes as well, establishing the left-hand square of \eqref{eq:twosquares}.
	
	Now let's concentrate on the right hand
	square of (\ref{eq:twosquares}).  We will establish this commutativity for all $\lambda, \lambda'$, not necessarily antidominant.
	To this end, we define a functor $ \widehat{\mathscr X}^B(L,N) \rightarrow \widehat{\mathscr X}^B(G,N) $ which is the identity on objects and on morphisms is given by the natural map
	$$
	\widehat{H}^{L}(  {}_\lambda\Stein^L_{{\lambda'}}) \rightarrow \widehat{H}^{G}(  {}_\lambda\Stein_{{\lambda'}}).
	$$
	When we compare this with the isomorphism \eqref{eq:X-localization}, we find that it is induced by the usual map $W_L\to W$, since fixed point classes push forward to fixed point classes.  
	
	Next, recall the inclusion of algebras $ \CBs{\varphi}^{B_L}(L,N) \rightarrow \CBs{\varphi}^B(G,N) $.  This map is compatible with the natural map $\widehat{\Hfrac}\rtimes \widehat{W}_L\to \widehat{\Hfrac}\rtimes \widehat{W}$
	and leads to a morphism
	\begin{equation} \label{eq:WeiLWeiG}
		\Hom({}^{B_L}\Wei^{L,N}_\lambda, {}^{B_L}\Wei^{L,N}_{\lambda'}) \rightarrow \Hom({}^B\Wei^{G,N}_\lambda, {}^B\Wei^{G,N}_{\lambda'})
	\end{equation}
	as in Corollary \ref{co:mapWei}, based on the description \eqref{eq:Fhat}.  Thus we obtain a functor $
	\widehat{ \mathscr A}_\Z^{B_L}(L,N) \rightarrow \widehat{\mathscr
		A}_\Z^B(G,N) $ which is the identity of objects and given on
	morphisms by (\ref{eq:WeiLWeiG}). When $G=L=T$, this map is the identity, and after applying the isomorphism \eqref{eq:A-localization}, is induced the inclusion $W_L\hookrightarrow W$.
	and the identity map on $\Hom_{\widehat{\mathscr A}_\Z(T,N)}(w\nu, \nu')$ for each $w\in W_L$.  
	That is, the top and bottom squares of the cube below are commutative:
	\[ \begin{tikzcd}[column sep=-25pt,scale cd=.935]
		& {\displaystyle \bigoplus_{w\in W_L} \widehat{\Hfrac}\otimes_{\widehat{\HBG{T}}} \Hom_{\widehat{\mathscr X}_\Z(T,N)}(w\nu, \nu')\cdot w} \arrow[rr]   \arrow[dd]         &                          & {\displaystyle\bigoplus_{w\in W} 	\widehat{\Hfrac}\otimes _{\widehat{\HBG{T}}}\Hom_{\widehat{\mathscr X}_\Z(T,N)}(w\nu, \nu')\cdot w}          \arrow[dd]  \\
		{\Hom_{\widehat{\mathscr X}_\Z(L,N)}(\nu, \nu')} \arrow[ru] \arrow[dd]\arrow[rr,crossing over]            &                          & {\Hom_{\widehat{\mathscr X}_\Z(G,N)}(\nu, \nu')} \arrow[ru]            &               \\
		& {\displaystyle \bigoplus_{w\in W_L} 	\widehat{\Hfrac}\otimes_{\widehat{\HBG{T}}} \Hom_{\widehat{\mathscr A}_\Z(T,N)}(w\nu, \nu')\cdot w} \arrow[rr]  &                          & {\displaystyle\bigoplus_{w\in W} 	\widehat{\Hfrac}\otimes_{\widehat{\HBG{T}}} \Hom_{\widehat{\mathscr A}_\Z(T,N)}(w\nu, \nu')\cdot w}  \\
		{\Hom_{\widehat{\mathscr A}_\Z(L,N)}(\nu, \nu')} \arrow[ru]  \arrow[rr] &                          & {\Hom_{\widehat{\mathscr A}_\Z(G,N)}(\nu, \nu')} \arrow[ru] \arrow[from=uu, crossing over] &              
	\end{tikzcd}  \]
	The commutation of the back square is immediate.  This implies that the front square is commutative as well.  This is exactly the right-hand square of \eqref{eq:twosquares}, completing the proof.  
\end{proof}
\begin{proof}[Proof of Theorem \ref{thm:saturation}]
	Now, we wish to reduce from the diagram \eqref{eq:twosquares} to the
	corresponding diagram for the spherical algebras:
	\begin{equation} \label{eq:Y-twosquares}
		\begin{tikzcd}[column sep=36pt]
			\widehat{H}^{L}( {}_\nu\pStein_{{\nu'}}^{L, N^\xi_0}) \arrow{r}\arrow{d}[left] {\mathsf{E}'_L}  & \widehat{H}^{L}( {}_\nu\pStein_{{\nu'}}^{L, N}) \arrow{r} \arrow{d}{\mathsf{E}_L} &
			\widehat{H}^{G}(
			{}_\gamma\pStein_{{\gamma'}})  \arrow{d}{\mathsf{E}_G}\\
			\Hom( \Wei^L_\nu, \Wei^L_{\nu'}) \arrow{r}  & \Hom( \Wei^{L,N}_\nu,  \Wei^{L,N}_{\nu'}) \arrow{r}  &  \Hom( \Wei_\gamma,  \Wei_{\gamma'})
		\end{tikzcd}
	\end{equation}
	First, note that the top line of \eqref{eq:twosquares} sends $e'({\nu})$ to $e'({\nu})$, so the
	top line of the righthand square in \eqref{eq:Y-twosquares} is just
	obtained from the top line of \eqref{eq:twosquares} by multiplying
	on the left and right by idempotents as in
	\eqref{eq:Stein-projection}.   On the other hand, the same is true of
	the bottom row, using \eqref{eq:nat-projection}.
\end{proof}

\section{Flavoured KLRW algebras}
\label{sec:flavoured-KLRW}

In this section, we study two variants of KLR algebras which we will later connect to Coulomb branch algebras associated to quiver gauge theories.  
We  briefly discuss weighted KLRW algebras (there is
already a significant literature on them, for example \cite{WebwKLR,WebRou,
	Webalt, BCS,Bowmanmany}).  It will be more convenient
for us to work with a slight variation on these algebras, which we introduce here, and which may
prove to be of independent interest. We call these {\bf flavoured KLRW
	algebras}, since they allow us to more easily incorporate the flavour
parameters of the Coulomb branch, and make a statement more uniform in
these parameters.  These are close analogues of the metric KLRW
algebras which we introduced in \cite{KTWWYO}, but apply to more general
quivers. In Section \ref{sec:induct-restr}, we'll introduce a more general notion of $\longi$-flavoured KLRW algebras which include both the weighted and flavoured algebras defined in this section as special cases.

\subsection{Reminder on weighted KLRW algebras}
\label{sec:remind-weight-klr}

In this
section, we remind the reader of the definition of (reduced) weighted
KLRW algebras.

Let $ \Gamma = (\vertex, \edge)$ be a quiver and let $ \Bw \in
\Z_{\geq 0}^{\vertex} $ be a
dimension vector.  Recall that for an edge $e=i \to j$, we set $t(e)=i$ and $h(e)=j$.  Recall the  Crawley-Boevey
quiver $\Gamma^{\Bw}$, defined in Section \ref{se:quiverdata}.  Its vertex set is $\vertex\sqcup \{\infty\}$ and its edge set is the union of the ``old'' edges $\edge(\Gamma)$ and $w_i$ ``new'' edges oriented from $i$ to $\infty$.  Assume we
have chosen a {\bf weighting} of this graph, i.e. a map $\vartheta\colon \edge(\Gamma^{\Bw})
\to \R,  e \mapsto \vartheta_e$.

\begin{Definition}
	A {\bf loading} is a function  $\boldsymbol{\ell}:\R \to \vertex\cup\{0\}$ which is
	non-zero at finitely many points.  Equivalently, a loading is a finite subset of the real line and a labeling of its elements with vertices of $ \Gamma$.
	
	We call a point $a\in \R$ {\bf corporeal} for the loading $\boldsymbol{\ell}$ if $\boldsymbol{\ell}(a)\neq 0$, and call $\boldsymbol{\ell}(a)$ its label.  We call a point $a\in \R$ {\bf ghostly} for this loading if $\boldsymbol{\ell}(a-\vartheta_e)=h(e)$ for some old edge $e$, and call $e$ its label.  We call the points $\vartheta_e$ for the new edges $e$ {\bf red}; these do not depend on the loading.   
\end{Definition} 

For our purposes, it will be more convenient to organize this information differently.  Let $\lx_1<\lx_2<\cdots <\lx_n$ be the full list of corporeal values $\lx_i\in \R$ (i.e. points where $\boldsymbol{\ell}(\lx_i)\neq 0$).  Let $i_k=\boldsymbol{\ell}(\lx_k)$, giving a sequence $\bi = (i_1,\ldots,i_n)$. This identification of the corporeal points with the integers $\Sc=[1,n]$identifies each ghostly and red point as image of an element of the sets $\Sg$ and $\Sr$ where
\begin{equation}
	\Sg=\big\{(k,e) \in \Sc\times \edge\mid   i_k=h(e)\big\} \qquad \Sr= \big\{(\star,e) \mid e \in \edge(\Gamma^\Bw), \infty=h(e) \big\}\label{eq:Gplus}	
\end{equation}
under the map \[\mathfrak{a}_{\boldsymbol{\ell}}(x)=\begin{cases} \lx_k & x=k\in \Sc\\ \lx_k+\vartheta_e& x=(k,e)\in \Sg\\ \vartheta_e & x=(\star,e)\in \Sr.\end{cases}\]  We can think of this map $\mathfrak{a}_{\boldsymbol{\ell}}\colon \Sall\to \R$ as defined on the union $\Sall = \Sc\cup \Sg \cup \Sr$. We call a loading or loaded sequence {\bf generic} if all the values
of $\mathfrak{a}_{\boldsymbol{\ell}}$ are distinct.  Throughout the rest
of the paper, we will only consider generic loadings. For a generic loading, we can think of $\Sg$ and $\Sr$ as identified with the set of ghostly and red points of the loading.  
Since we think of red points as ghosts attached to the Crawley-Boevey vertex, we will sometimes want to consider the red and ghostly points together in the set $\Sgr=\Sg\cup \Sr$.

For a generic
loading, 
the map $\mathfrak{a}_{\boldsymbol{\ell}}$ induces an order on $\Sall$, which carries all the important
information of the loading.  In particular, for different values of
$\vartheta_e$, different orders are possible.   We'll call the
resulting triple $((i_1,\dots, i_n),(\lx_1,\dots, \lx_n),<)$ a {\bf loaded sequence}.

\begin{Definition}\label{diagram-def}
	A {\bf weighted KLRW} {\bf diagram} is a collection  of
	finitely many oriented smooth curves in
	$\R\times [0,1]$; we call these curves {\bf strands}.  Each strand must have
	one endpoint on $y=0$ and one on $y=1$, at distinct points from the other strands.  These diagrams satisfy the following:
	\begin{itemize}
		\item There is a red strand for each new edge $e$, with endpoints at
		$x=\vartheta_e$, and labeled by $t(e)\in \vertex$.
		\item There are black strands which are not constrained to be vertical, but
		whose projection to the $y$-axis must be a
		diffeomorphism onto $[0,1]$.  These are also labeled with vertices in   $\vertex$, and are allowed to carry a finite number of dots.
		\item For
		every edge $e \in \edge(\Gamma)$ with $h(e)=i$ we add a ``ghost'' of each strand labeled $i$
		shifted $\vartheta_e$ units to the right (or left if $\vartheta_e$ is
		negative).  The ghost is labelled by $e$ and is depicted with a black dotted line.  When we need to contrast the original strands with ghosts,
		we refer to the original strands as ``corporeal.''
	\end{itemize}

	We require that there are no tangencies or triple
	intersection points between any combination of strands (corporeal, red or ghostly), and
	no dots on intersection points.  We consider these diagrams up to isotopy (relative to the top and bottom) which preserves all these conditions.
\end{Definition}

For example, if we have an edge $i\to j$, then the diagram $a$ is a
weighted KLRW diagram, whereas $b$ is not since it has a tangency between a
strand and a ghost, and two triple points: \[a=
\begin{tikzpicture}[baseline,very thick,xscale=1.5]
	\draw[wei] (-.25,-1) -- node[below,at start]{$i$} (-.25,1);
	\draw[wei] (.9,-1) -- node[below,at start]{$i$} (.9,1);
	\draw (-.5,-1) to[out=90,in=-90] node[below,at start]{$j$} (-.6,0) to[out=90,in=-90](.5,1);
	\draw  (1.2,-1) to[out=90,in=-90] node[below,at start]{$j$}
	node[midway,circle,fill=black,inner sep=2pt]{} (0,1);
	\draw[dashed] (.2,-1) to[out=90,in=-90] (.1,0) to[out=90,in=-90](1.2,1);
	\draw[dashed]  (1.7,-1) to[out=90,in=-90]   (.7,1);
	\draw(0,-1) to[out=90,in=-90]node[below,at start]{$i$} (-.5,1);
\end{tikzpicture}\qquad \qquad
b=
\begin{tikzpicture}[baseline,very thick,xscale=1.5]
	\draw[wei] (-.25,-1) -- node[below,at start]{$i$} (-.25,1);
	\draw[wei] (.9,-1) -- node[below,at start]{$i$} (.9,1);
	\draw (-.4,-1) to[out=90,in=-90] node[below,at start]{$j$} (-.6,.2) to[out=90,in=-90](.5,1);
	\draw  (1.2,-1) to[out=90,in=-90] node[below,at start]{$j$}
	node[midway,circle,fill=black,inner sep=2pt]{} (0,1);
	\draw[dashed] (.3,-1) to[out=90,in=-90] (.1,0) to[out=90,in=-90](1.2,1);
	\draw[dashed]  (1.7,-1) to[out=90,in=-70]   (.7,1);
	\draw(-.1,-1) to[out=90,in=-90] node[below,at start]{$i$}(.1,0) to[out=90,in=-90] (-.5,1);
\end{tikzpicture}
\]

\begin{Definition}\label{def:degree}
	The degree of a weighted KLRW diagram is the sum of:
	\begin{enumerate}
		\item $2$ times the number of dots,
		\item $-2$ times the number of crossings of corporeal strands with the same
		label, and
		\item $1$ for each crossing of a
		corporeal strand with label $i$ and a red or ghostly strand with label $e$
		such that $t(e)=i$.
	\end{enumerate}
\end{Definition}

Reading the positions of the corporeal strands along the lines $y=0$ and $y=1$, we obtain loadings, which we call the
{\bf bottom} and {\bf top}  of the diagram.   There is a notion of {\bf
	composition}  $ab$ of weighted KLRW diagrams $a$ and $b$: this is
given by stacking $a$ on top of $b$ and attempting to join the
bottom of $a$ and top of $b$. If the loadings
from the bottom of $a$ and top of $b$ don't match, then the
composition is not defined and by convention is 0, which is not a
weighted KLRW diagram, just a formal symbol.
This composition rule makes the formal
span of all weighted KLRW diagrams over $\C$ into a graded algebra
{\it $\doubletilde{T}^\vartheta$.} For each loading
$\boldsymbol{\ell}$, we have a straight line diagram $e_{\boldsymbol{\ell}} \in \doubletilde{T}^\vartheta$ where every
horizontal slice is $\boldsymbol{\ell}$, and there are no dots.

We will need the notion of equivalent loadings,
defined in \cite[Def. 2.9]{WebwKLR}. Informally, two loadings are
equivalent when they are isotopic without passing any strand through a
relevant ghostly or red point.
\begin{Definition}\label{def:equivalent}
	Let $\boldsymbol{\ell},\boldsymbol{\ell}'$ be two loadings with associated sequences of nodes $\Bi,\Bi'$.
	We say  $\boldsymbol{\ell},\boldsymbol{\ell}'$ are {\bf
		equivalent} if there is a bijection $\sigma \colon \Sc\to \Sc$
	such that for all $r\in \Sc$ we have that:
	\begin{enumerate}
		\item $i'_{\sigma(r)}=i_r$
		\item  for every $(x,e)\in \Sgr$ such that $i_r=t(e)$, we
		have that
		$r<(x,e)$ if and only if $\sigma(r)<'(\sigma(x),e)$ (by
		convention $\sigma(\star)=\star$).  
	\end{enumerate}
\end{Definition}

\begin{Definition}\label{def-wKLR}
	The {\bf weighted KLRW algebra} $\walg^\vartheta$ is the quotient
	of {\it
		$\doubletilde{T}^\vartheta$} by relations similar to the original
	KLR relations, but with interactions between differently labelled strands
	turned into relations between strands and ghosts.

	We give the list of local relations below.  (Note that $ \vartheta_e < 0 $ in all these pictures.)
	Some care must be used when understanding what it means to apply these
	relations locally.  In each case, the LHS and RHS have a dominant term
	which are related to each other via an isotopy through a disallowed
	diagram with a tangency, triple point or a dot on a crossing.  You can only apply the
	relations if this isotopy avoids tangencies, triple points and dots on crossings
	everywhere else in the diagram; one can always choose isotopy
	representatives sufficiently generic for this to hold.
	
	Another important subtlety is that the relations below are only correct for $\vartheta$ generic.  If, for example, $\vartheta_e=0$ for some edge, causing a corporeal strand and its ghost to coincide, then these relations change (but the non-generic relations can be found by applying a small perturbation to $\vartheta$, and applying the relations below).  For example, the usual bigon relation in a KLR algebra (i.e. \cite[(3.1c)]{KTWWYO}) is a limit of (\ref{strand-bigon}--\ref{ghost-bigon2a}) below, as $\vartheta_e\to 0$ for all edges.
	\newseq
	\begin{equation*}\subeqn\label{dots-1}
		\begin{tikzpicture}[scale=.45,baseline]
			\draw[very thick](-4,0) +(-1,-1) -- +(1,1) node[below,at start]
			{$i$}; \draw[very thick](-4,0) +(1,-1) -- +(-1,1) node[below,at
			start] {$j$}; \fill (-4.5,.5) circle (5pt);
			\node at (-2,0){=}; \draw[very thick](0,0) +(-1,-1) -- +(1,1)
			node[below,at start] {$i$}; \draw[very thick](0,0) +(1,-1) --
			+(-1,1) node[below,at start] {$j$}; \fill (.5,-.5) circle (5pt);
			\node at (4,0){for $i\neq j$};
	\end{tikzpicture}\end{equation*}
	\begin{equation*}\label{dots-2}\subeqn
		\begin{tikzpicture}[scale=.45,baseline]
			\draw[very thick](-4,0) +(-1,-1) -- +(1,1) node[below,at start]
			{$i$}; \draw[very thick](-4,0) +(1,-1) -- +(-1,1) node[below,at
			start] {$i$}; \fill (-4.5,.5) circle (5pt);
		\node at (-2,0){=}; \draw[very thick](0,0) +(-1,-1) -- +(1,1)
			node[below,at start] {$i$}; \draw[very thick](0,0) +(1,-1) --
			+(-1,1) node[below,at start] {$i$}; \fill (.5,-.5) circle (5pt);
			\node at (2,0){+}; \draw[very thick](4,0) +(-1,-1) -- +(-1,1)
			node[below,at start] {$i$}; \draw[very thick](4,0) +(0,-1) --
			+(0,1) node[below,at start] {$i$};
		\end{tikzpicture}\qquad
		\begin{tikzpicture}[scale=.45,baseline]
			\draw[very thick](-4,0) +(-1,-1) -- +(1,1) node[below,at start]
			{$i$}; \draw[very thick](-4,0) +(1,-1) -- +(-1,1) node[below,at
			start] {$i$}; \fill (-4.5,-.5) circle (5pt);
			\node at (-2,0){=}; \draw[very thick](0,0) +(-1,-1) -- +(1,1)
			node[below,at start] {$i$}; \draw[very thick](0,0) +(1,-1) --
			+(-1,1) node[below,at start] {$i$}; \fill (.5,.5) circle (5pt);
			\node at (2,0){+}; \draw[very thick](4,0) +(-1,-1) -- +(-1,1)
			node[below,at start] {$i$}; \draw[very thick](4,0) +(0,-1) --
			+(0,1) node[below,at start] {$i$};
		\end{tikzpicture}
	\end{equation*}
	\begin{equation*}\label{strand-bigon}\subeqn
		\begin{tikzpicture}[very thick,scale=.8,baseline]
			\draw (-2.8,0) +(0,-1) .. controls (-1.2,0) ..  +(0,1)
			node[below,at start]{$i$}; \draw (-1.2,0) +(0,-1) .. controls
			(-2.8,0) ..  +(0,1) node[below,at start]{$i$}; \node at (-.5,0)
			{=}; \node at (0.4,0) {$0$};
			\node at (1.5,.05) {and};
		\end{tikzpicture}
		\hspace{.4cm}
		\begin{tikzpicture}[very thick,scale=.8 ,baseline]
			
			\draw (-2.8,0) +(0,-1) .. controls (-1.2,0) ..  +(0,1)
			node[below,at start]{$i$}; \draw (-1.2,0) +(0,-1) .. controls
			(-2.8,0) ..  +(0,1) node[below,at start]{$j$}; \node at (-.5,0)
			{=};

			\draw (1.8,0) +(0,-1) -- +(0,1) node[below,at start]{$j$};
			\draw (1,0) +(0,-1) -- +(0,1) node[below,at start]{$i$};
		\end{tikzpicture}
	\end{equation*}
	In all the diagrams below, we assume that $i$ and $j$ are vertices with an edge $e\colon i\to j$, and the ghost shown is that attached to $e$ for the strand with label $j$ shown:
	\begin{equation*}\label{ghost-bigon1}\subeqn
		\begin{tikzpicture}[very thick,xscale=1.6 ,yscale=.8,baseline]
			\draw (1,-1) to[in=-90,out=90]  node[below, at start]{$k$} (1.5,0) to[in=-90,out=90] (1,1)
			;
			\draw[dashed] (1.5,-1) to[in=-90,out=90] (1,0) to[in=-90,out=90] (1.5,1);
			\draw (2.5,-1) to[in=-90,out=90]  node[below, at start]{$j$} (2,0) to[in=-90,out=90] (2.5,1);
			\node at (3,0) {=};
			\draw (3.7,-1) -- (3.7,1) node[below, at start]{$k$}
			;
			\draw[dashed] (4.2,-1) to (4.2,1);
			\draw (5.2,-1) -- (5.2,1) node[below, at start]{$j$};  \node at
			(6.2,0){for $i\neq  k$};
		\end{tikzpicture}
	\end{equation*}
	\begin{equation*}\label{ghost-bigon1a}\subeqn
		\begin{tikzpicture}[very thick,xscale=1.6 ,yscale=.8,baseline]
			\draw (1.5,-1) to[in=-90,out=90]  node[below, at start]{$k$} (1,0) to[in=-90,out=90] (1.5,1)
			;
			\draw[dashed] (1,-1) to[in=-90,out=90] (1.5,0) to[in=-90,out=90] (1,1);
			\draw (2,-1) to[in=-90,out=90]  node[below, at start]{$j$} (2.5,0) to[in=-90,out=90] (2,1);
			\node at (3,0) {=};
			\draw (4.2,-1) -- (4.2,1) node[below, at start]{$k$}
			;
			\draw[dashed] (3.7,-1) to (3.7,1);
			\draw (4.7,-1) -- (4.7,1) node[below, at start]{$j$};  \node at (6.2,0){for $i\neq  k$};
		\end{tikzpicture}
	\end{equation*} \begin{equation*}\label{ghost-bigon2}\subeqn
		\begin{tikzpicture}[very thick,xscale=1.4,baseline=25pt]
			\draw (1,0) to[in=-90,out=90]  node[below, at start]{$i$} (1.5,1) to[in=-90,out=90] (1,2)
			;
			\draw[dashed] (1.5,0) to[in=-90,out=90] (1,1) to[in=-90,out=90] (1.5,2);
			\draw (2.5,0) to[in=-90,out=90]  node[below, at start]{$j$} (2,1) to[in=-90,out=90] (2.5,2);
			\node at (3,1) {=};
			\draw (3.7,0) -- (3.7,2) node[below, at start]{$i$}
			;
			\draw[dashed] (4.2,0) to (4.2,2);
			\draw (5.2,0) -- (5.2,2) node[below, at start]{$j$} node[midway,fill,inner sep=2.5pt,circle]{};
			\node at (5.75,1) {$-$};
			
			\draw (6.2,0) -- (6.2,2) node[below, at start]{$i$} node[midway,fill,inner sep=2.5pt,circle]{};
			\draw[dashed] (6.7,0)-- (6.7,2);
			\draw (7.7,0) -- (7.7,2) node[below, at start]{$j$};\end{tikzpicture}
	\end{equation*}
	\begin{equation*}\label{ghost-bigon2a}\subeqn
		\begin{tikzpicture}[very thick,xscale=1.4,baseline=25pt]
			\draw (1.5,0) to[in=-90,out=90]  node[below, at start]{$i$} (1,1) to[in=-90,out=90] (1.5,2)
			;
			\draw[dashed] (1,0) to[in=-90,out=90] (1.5,1) to[in=-90,out=90] (1,2);
			\draw (2,0) to[in=-90,out=90]  node[below, at start]{$j$} (2.5,1) to[in=-90,out=90] (2,2);
			\node at (3,1) {=};
			\draw (4.5,0) -- (4.5,2) node[below, at start]{$i$};
			\draw[dashed] (4,0) to (4,2);
			\draw (5,0) -- (5,2) node[below, at start]{$j$} node[midway,fill,inner sep=2.5pt,circle]{};
			\node at (5.75,1) {$-$};
			
			\draw (6.9,0) -- (6.9,2) node[below, at start]{$i$} node[midway,fill,inner sep=2.5pt,circle]{};
			\draw[dashed] (6.4,0)-- (6.4,2);
			\draw (7.4,0) -- (7.4,2) node[below, at start]{$j$}; \end{tikzpicture}
	\end{equation*}
	\excise{ \begin{equation*}\subeqn\label{triple-boring}
			\begin{tikzpicture}[very thick,scale=1 ,baseline]
				\draw (-3,0) +(1,-1) -- +(-1,1) node[below,at start]{$m$}; \draw
				(-3,0) +(-1,-1) -- +(1,1) node[below,at start]{$i$}; \draw
				(-3,0) +(0,-1) .. controls (-4,0) ..  +(0,1) node[below,at
				start]{$j$}; \node at (-1,0) {=}; \draw (1,0) +(1,-1) -- +(-1,1)
				node[below,at start]{$m$}; \draw (1,0) +(-1,-1) -- +(1,1)
				node[below,at start]{$i$}; \draw (1,0) +(0,-1) .. controls
				(2,0) ..  +(0,1) node[below,at start]{$j$};
			\end{tikzpicture}
	\end{equation*}}
	\begin{equation*}\subeqn \label{eq:triple-point1}
		\begin{tikzpicture}[very thick,xscale=1.6,yscale=.8,baseline]
			\draw[dashed] (-3,0) +(.4,-1) -- +(-.4,1);
			\draw[dashed]      (-3,0) +(-.4,-1) -- +(.4,1);
			\draw (-1.5,0) +(.4,-1) -- +(-.4,1) node[below,at start]{$j$}; \draw
			(-1.5,0) +(-.4,-1) -- +(.4,1) node[below,at start]{$j$};
			\draw (-3,0) +(0,-1) .. controls (-3.5,0) ..  +(0,1) node[below,at
			start]{$i$};\node at (-.75,0) {=};  \draw[dashed] (0,0) +(.4,-1) -- +(-.4,1);
			\draw[dashed]      (0,0) +(-.4,-1) -- +(.4,1);
			\draw (1.5,0) +(.4,-1) -- +(-.4,1) node[below,at start]{$j$}; \draw
			(1.5,0) +(-.4,-1) -- +(.4,1) node[below,at start]{$j$};
			\draw (0,0) +(0,-1) .. controls (.5,0) ..  +(0,1) node[below,at
			start]{$i$};
			\node at (2.25,0)
			{$-$};
			\draw (4.5,0)
			+(.4,-1) -- +(.4,1) node[below,at start]{$j$}; \draw (4.5,0)
			+(-.4,-1) -- +(-.4,1) node[below,at start]{$j$};
			\draw[dashed] (3,0)
			+(.4,-1) -- +(.4,1); \draw[dashed] (3,0)
			+(-.4,-1) -- +(-.4,1);
			\draw (3,0)
			+(0,-1) -- +(0,1) node[below,at start]{$i$};
		\end{tikzpicture}
	\end{equation*}
	\begin{equation*}\subeqn\label{eq:triple-point2}
		\begin{tikzpicture}[very thick,xscale=1.6,yscale=.8,baseline]
			\draw[dashed] (-3,0) +(0,-1) .. controls (-3.5,0) ..  +(0,1) ;
			\draw (-3,0) +(.4,-1) -- +(-.4,1) node[below,at start]{$i$}; \draw
			(-3,0) +(-.4,-1) -- +(.4,1) node[below,at start]{$i$};
			\draw (-1.5,0) +(0,-1) .. controls (-2,0) ..  +(0,1) node[below,at
			start]{$j$};\node at (-.75,0) {=};
			\draw (0,0) +(.4,-1) -- +(-.4,1) node[below,at start]{$i$}; \draw
			(0,0) +(-.4,-1) -- +(.4,1) node[below,at start]{$i$};
			\draw[dashed] (0,0) +(0,-1) .. controls (.5,0) ..  +(0,1);
			\draw (1.5,0) +(0,-1) .. controls (2,0) ..  +(0,1) node[below,at
			start]{$j$};
			\node at (2.25,0)
			{$+$};
			\draw (3,0)
			+(.4,-1) -- +(.4,1) node[below,at start]{$i$}; \draw (3,0)
			+(-.4,-1) -- +(-.4,1) node[below,at start]{$i$};
			\draw[dashed] (3,0)
			+(0,-1) -- +(0,1);\draw (4.5,0)
			+(0,-1) -- +(0,1) node[below,at start]{$j$};
		\end{tikzpicture}.
	\end{equation*}
	\begin{equation*}\subeqn\label{cost}
		\begin{tikzpicture}[very thick,baseline,scale=.8,xscale=.7]
			\draw (-2.8,0)  +(0,-1) .. controls (-1.2,0) ..  +(0,1) node[below,at start]{$i$};
			\draw[wei] (-1.2,0)  +(0,-1) .. controls (-2.8,0) ..  +(0,1) node[below,at start]{$i$};
			\node at (-.3,0) {=};
			\draw[wei] (2.8,0)  +(0,-1) -- +(0,1) node[below,at start]{$i$};
			\draw (1.2,0)  +(0,-1) -- +(0,1) node[below,at start]{$i$}
			node[midway,circle,fill=black, inner sep=2pt]{};
		\end{tikzpicture}\qquad\qquad
		\begin{tikzpicture}[very thick,baseline,scale=.8,xscale=.7]
			\draw[wei] (6.8,0)  +(0,-1) .. controls (5.2,0) ..  +(0,1) node[below,at start]{$j$};
			\draw (5.2,0)  +(0,-1) .. controls (6.8,0) ..  +(0,1) node[below,at start]{$i$};
			\node at (7.7,0) {=};
			\draw (9.2,0)  +(0,-1) -- +(0,1) node[below,at start]{$i$};
			\draw[wei] (10.8,0)  +(0,-1) -- +(0,1) node[below,at start]{$j$};
		\end{tikzpicture}
	\end{equation*}
	\begin{equation*}\subeqn
		\begin{tikzpicture}[very thick,scale=.8,baseline]
			\draw (-3,0)  +(1,-1) -- +(-1,1) node[at start,below]{$i$};
			\draw (-3,0) +(-1,-1) -- +(1,1)node [at start,below]{$j$};
			\draw[wei] (-3,0)  +(0,-1) .. controls (-4,0) ..  node[at start,below]{$m$}+(0,1);
			\node at (-1,0) {=};
			\draw (1,0)  +(1,-1) -- +(-1,1) node[at start,below]{$i$};
			\draw (1,0) +(-1,-1) -- +(1,1) node [at start,below]{$j$};
			\draw[wei] (1,0) +(0,-1) .. controls (2,0) .. node[at start,below]{$m$} +(0,1);
			\node at (2.8,0) {$+ $};
			\draw (6.5,0)  +(1,-1) -- +(1,1)  node[at start,below]{$i$};
			\draw (6.5,0) +(-1,-1) -- +(-1,1) node [at start,below]{$j$};
			\draw[wei] (6.5,0) +(0,-1) -- node[at start,below]{$m$} +(0,1);
			\node at (3.8,-.2){$\delta_{i,j,m} $}  ;
		\end{tikzpicture}
	\end{equation*}
	\begin{equation*}\subeqn\label{dumb}
		\begin{tikzpicture}[very thick,scale=.8,baseline]
			\draw[wei] (-3,0)  +(1,-1) -- +(-1,1);
			\draw (-3,0)  +(0,-1) .. controls (-4,0) ..  +(0,1);
			\draw (-3,0) +(-1,-1) -- +(1,1);
			\node at (-1,0) {=};
			\draw[wei] (1,0)  +(1,-1) -- +(-1,1);
			\draw (1,0)  +(0,-1) .. controls (2,0) ..  +(0,1);
			\draw (1,0) +(-1,-1) -- +(1,1);    \end{tikzpicture}\qquad \qquad
		\begin{tikzpicture}[very thick,scale=.8,baseline]
			\draw(-3,0) +(-1,-1) -- +(1,1);
			\draw[wei](-3,0) +(1,-1) -- +(-1,1);
			\fill (-3.5,-.5) circle (3pt);
			\node at (-1,0) {=};
			\draw(1,0) +(-1,-1) -- +(1,1);
			\draw[wei](1,0) +(1,-1) -- +(-1,1);
			\fill (1.5,.5) circle (3pt);
		\end{tikzpicture}
	\end{equation*}
	
	For the relations (\ref{cost}) and (\ref{dumb}), we also include their mirror
	images. We also include isotopy through all triple points not shown as relations.

\end{Definition}
Given $ \Bv \in \Z_{\geq 0}^I $, we let
$\walg^\vartheta_{\Bv}$ be the subalgebra containing $ v_i $ black strands labelled $ i $, for $ i \in I $.
Since the relations (\ref{dots-1}--\ref{dumb}) are homogeneous with respect to the grading induced by the degree of KLRW diagrams given in Definition \ref{def:degree},
this makes $\walg^\vartheta_{\Bv}$ into a graded algebra.

We'll also be interested in the so-called steadied quotients of these algebras.
A loading is  {\bf unsteady} if there is a group of black strands we can move right
arbitrarily far without changing the equivalence class of the
loading.

The {\bf steadied quotient} $T^{\vartheta}_{\Bv}$ is the quotient of
$\tilde{T}^{\vartheta}_{\Bv}$ by the 2-sided ideal generated by $e_{\boldsymbol{\ell}}$, as $\boldsymbol{\ell}$ ranges over all the unsteady
loadings (\cite[Def. 2.22]{WebwKLR}).

\begin{Remark}
	A reader comparing with the definition of unsteady in \cite{WebwKLR}
	might have trouble seeing why this is equivalent.  In this paper, we
	are only interested in the special case discussed in  \cite[\S
	3.1]{WebwKLR}; that is, we are using the charge
	$c$ which assigns $c(i)=1+i$ for all old vertices and $i-\sum v_i$ to
	$\infty$.  The equivalence in this case exactly follows the proof of  \cite[Th. 3.6]{WebwKLR}.
\end{Remark}

\subsection{Flavoured KLRW algebras}
\label{sec:metric-weighted-klr}

We now turn to introducing flavoured KLRW algebras, which can be
thought of as variations on the weighted KLRW algebras, that can more
aptly describe the representation theory of Coulomb branch algebras $\CB(\Bv,\Bw)$.  In this and the following sections, we will also explain the parallels and differences between flavoured KLRW algebras and weighted KLRW algebras. In particular, this will allow us to appeal to results for weighted KLRW algebras which have been developed in \cite{WebwKLR}.

\begin{Definition} \label{def:Flavor}
	A {\bf flavour} on the quiver $\Gamma^{\Bw}$ is a map $\varphi\colon \edge(\Gamma^{\Bw})
	\to \C, e \mapsto \varphi_e$.  A
	$\varphi$--\textbf{flavoured sequence} is a triple $(\Bi,\Ba, <)$ consisting of:
	\begin{itemize}
		\item An ordered $n$-tuple $\Bi=(i_1,\dots, i_n) \in I^n$.
		\item An ordered $n$-tuple $\Ba=(a_1,\dots,a_n)\in \C^n$ of complex
		numbers; we call these the {\bf longitudes} of the corresponding
		vertices in $\Bi$.
		\item A total order $<$ on $\Sall$ (as defined just below \eqref{eq:Gplus}) extending
		the usual comparison order on $[1,n]$.  As before, we refer
		to elements of $\Sc$ as {\bf corporeal}, elements of $\Sr$ as {\bf red} and elements of $\Sg$ as {\bf ghostly}.  	\end{itemize}
	A red/ghostly element $g=(x,e)\in \Sgr$ is endowed with the longitude $a_g=a_x+\varphi_e$ if $x \in \Sc$, and a red element is endowed with the longitude $a_g=\varphi_e$ if $x=\star$.
	We require the following properties to be satisfied:
	\renewcommand{\theenumi}{\roman{enumi}}
	\begin{enumerate}
		\item The {\bf real longitudes} $\Re(a_g)$, for $g\in \Sall$, are weakly increasing with
		respect to $<$.
		\item If $g\in \Sgr, m\in \Sc$ and $\Re(a_g)=\Re(a_m)$, then
		$g<m$.
	\end{enumerate}
\end{Definition}
\begin{Remark}
	Note that we had previously used ``flavour'' to refer to an element of the Lie algebra $\mathfrak{f}=\operatorname{Lie}(F)$, which gives a choice of quantization parameters for $\CB(\Bv,\Bw)$; as discussed in Section \ref{se:quiverdata}, in the case of a quiver gauge theory corresponding to the data $\Gamma,\Bv,\Bw$, we have  $F =  (\Cx)^{\edge(\Gamma^\Bw)}$, so a flavour in the sense defined above is indeed an element of $\mathfrak{f}$.
\end{Remark}

For the remainder of the section, fix a flavour $\varphi$.  A pair of flavoured sequences $(\Bi,\Ba,<)$, $(\Bi',\Ba',<')$ corresponding to $\varphi$ are {\bf
	equivalent} if there is a permutation $\sigma\in S_n$ such that:
\begin{enumerate}
	\item For all $m\in \Sc$, we have $i_{m}=i'_{\sigma(m)}$.
	\item For any corporeal $m\in \Sc$, and
	$(k,e)\in \Sgr$ such that $t(e)=i_m$, we have that $m < (k,e)$ if and
	only if $\sigma(m) <' (\sigma(k),e)$.
	\item For any corporeals $k,m\in \Sc$ with $i_k=i_m$, we have
	that $\Re(a_k)<\Re(a_m)$ if and only if $\Re(a'_{\sigma(k)})<\Re(a'_{\sigma(m)})$.
\end{enumerate}

We'll sometimes want to think about the elements of a flavoured
sequence one vertex at a time.  Fixing a flavoured sequence $(\Bi,\Ba,<)$ we'll refer to the set $\{k\in \Sc \hspace{1mm}|\hspace{1mm}
i_k=i\}$ as the {\bf corporeals with label $i$}.  Suppose there are
$v_i$ corporeals with label $i$; this gives a dimension vector $\Bv
\in \Z_{\geq}^{\vertex}$.  The longitudes $\Ba $ can then be organized into a point of $ \ft := \prod_{i \in \vertex} \C^{v_i} $.

On $ \ft $, we have an action of the Weyl group $ W = \prod S_{v_i} $.  A point $ \gamma \in \ft / W $ will be regarded as a tuple of multisets $ \gamma_i $ of complex numbers, with $ \gamma_i $ of size $ v_i $.  Conversely, we can produce flavoured sequences from points of $ \ft/W $.
\begin{Lemma}\label{lem:sequence-from-flavours}
	Let $ \gamma \in \ft / W $.  There is a $\varphi$--flavoured sequence, unique up to equivalence, for which $ \gamma_i $ is the multiset of
	longitudes $a_k$ such that $i_k=i$.
\end{Lemma}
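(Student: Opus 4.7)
The plan is to handle existence by explicit construction and uniqueness by directly comparing two given sequences. For existence, let $n = \sum_i v_i$ and pick any listing $\Bi = (i_1, \ldots, i_n), \Ba = (a_1, \ldots, a_n)$ such that $\{a_k : i_k = i\} = \gamma_i$ as multisets for every $i$; after permuting indices, we arrange that $\Re(a_1) \le \cdots \le \Re(a_n)$. Using $\varphi$, compute the (real) longitudes of all ghost and red elements in $\Sgr$. Define $<$ on $\Sall$ by: the order on $\Sc = [1,n]$ is the usual one; two elements with distinct real longitudes are ordered by real longitude; when a ghost/red and a corporeal share a real longitude, put the former first (per condition (ii)); any remaining ties among $\Sgr$ are broken arbitrarily. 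Conditions (i) and (ii) are immediate from this construction.

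For uniqueness, suppose $(\Bi, \Ba, <)$ and $(\Bi', \Ba', <')$ are two such sequences producing the same multisets $\gamma_i$. Within each vertex $i$, list the corporeals of that label in each sequence by increasing index; their longitudes form the multiset $\gamma_i$ in both cases, necessarily in the same order of real parts by condition (i). Choose a bijection $\sigma_i$ between these two lists that matches longitudes, subject to the constraint that when $k < k'$ are corporeals of label $i$ with $a_k = a_{k'}$, we have $\sigma_i(k) <' \sigma_i(k')$; this is possible because each label's longitudes are the same multiset with the same real-part order on both sides. Assembling the $\sigma_i$ into a permutation $\sigma \in S_n$ yields $i_m = i'_{\sigma(m)}$ and $a_m = a'_{\sigma(m)}$ for every $m$. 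Condition (1) of equivalence is built into the construction, and condition (3) is automatic because longitudes are preserved.

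The subtle point is condition (2): for any corporeal $m$ and any $(k,e) \in \Sgr$ with $t(e) = i_m$, we must show $m < (k,e)$ iff $\sigma(m) <' (\sigma(k),e)$. Since $a_k = a'_{\sigma(k)}$ (and red longitudes agree tautologically), $\Re(a_{(k,e)}) = \Re(a'_{(\sigma(k),e)})$, while $\Re(a_m) = \Re(a'_{\sigma(m)})$. By conditions (i) and (ii) of Definition~\ref{def:Flavor}, the relative order of a corporeal and a ghostly/red element is forced by the comparison of their real longitudes: strict inequality dictates the order, and equality places the ghost/red first. Applying this on both sides gives the same answer, so condition (2) holds and $\sigma$ is the required equivalence.

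The main obstacle is precisely the verification of condition (2); the key observation making it work is that the defining conditions (i)--(ii) remove all genuine freedom from cross-type corporeal/ghost comparisons, so the only arbitrary choices—comparisons within $\Sgr$ and between corporeals of different labels at equal real longitude—are exactly those invisible to the equivalence relation.
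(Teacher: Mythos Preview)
Your proof is correct and follows essentially the same approach as the paper's: construct the sequence by listing the corporeals in weakly increasing real longitude and placing ghosts/reds according to rules (i)--(ii), then argue uniqueness by observing that the only freedom lies in permutations invisible to the equivalence relation. Your uniqueness argument is in fact more thorough than the paper's, which simply asserts the equivalence is ``manifest''; you explicitly build $\sigma$ matching longitudes vertex-by-vertex and verify condition (2) via the key observation that rules (i)--(ii) rigidly determine every corporeal--ghost/red comparison from real longitudes alone.
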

\begin{proof}
	We have fixed the multiset of corporeal longitudes $(a_1,\dots, a_n)$ for each vertex $i$, and
	the equation $a_g=a_k+\varphi_e$ fixes the longitudes on ghosts
	associated to the edge $ e$. Choose an order on the union of the
	multisets $\gamma_i$ so that the real parts are weakly
	increasing, and denote the resulting list $\Ba$.  Define $\Bi$ so that the $k$th element in order comes
	from the set $\gamma_{i_k}$.

	We now have the associated set $\Sall$, and we let $<$ be any order on
	this set so that the real longitudes are weakly increasing compatible
	with property (ii);
	that is, amongst elements of a fixed
	real longitude, we put the ghostly/red elements first and then corporeals, using any order within each group.  Any two
	such orders are related by a permutation that only permutes pairs of
	corporeals with the same real longitude, or elements of $\Sgr$ with the same real
	longitude.  This is manifestly an equivalence.
\end{proof}
\newcommand{\va}{\ensuremath{\alpha}}
\newcommand{\vb}{\ensuremath{\beta}}
\begin{Example}\label{ex:Kronecker}
	Let $\Gamma$ be the Kronecker quiver with cyclic
	orientation and $ \Bw = 0 $.  We label vertices by $0$ and $1$, and edges by $e$ and $f$:
	$$
	\begin{tikzcd}
		\va  \arrow[bend right = 30,swap]{r}{f} & \vb \arrow[bend right = 30,swap]{l}{e}
	\end{tikzcd}
	$$
	Suppose we choose constant flavour $\varphi_e = \varphi_f = 1$.   Let's consider possible flavoured sequences corresponding to $\Bi = (\va,{\vb})$ or $\Bi = ({\vb},\va)$.  
	Note that for either choice of $\Bi$ there is a unique ghostly element corresponding to each edge, so we may label the ghostlies by $e$ and $f$. Let $a$ be the longitude associated to vertex $\va$ (resp.~$b$ the longitude for ${\vb}$).  Then the longitude of the ghost labelled ${e}$ is $a+1$ (resp.~of ${f}$ is $b+1)$.  Thus, we can have the following
	patterns of sequence and longitude:
	\[
	\begin{array}{|c|c|c|}\hline
		\text{Sequence}&\text{Longitudes}&\text{Inequalities}\\\hline
		(\va,e,{\vb},f) & (a,a+1,b,b+1)&    \Re(a+1)\leq    \Re(b)\\
		(\va,{\vb},e,f) & (a,b,a+1,b+1)&\Re(a)\leq  \Re(b) < \Re(a+1)\\
		(\va,{\vb},f,e) & (a,b,a+1,b+1)&\Re(a)=  \Re(b) \\
		({\vb},\va,e,f) & (a,b,a+1,b+1)&\Re(a)=  \Re(b) \\
		({\vb},\va,f,e) & (b,a,b+1,a+1)&\Re(b)\leq  \Re(a) < \Re(b+1)\\
		({\vb},f,\va,e) & (b,b+1,a,a+1)& \Re(b+1)\leq  \Re(a)\\\hline
	\end{array}
	\]
	Any other sequence cannot be flavoured.
\end{Example}

\begin{Definition}\label{def:flavoured-KLR-diagram}
	A {\bf flavoured KLRW diagram} is a collection of finitely many
	oriented smooth curves (which, as before, we call ``strands'') whose projection to the $y$-axis must be a
	diffeomorphism to $[0,1]$, labeled with vertices in
	$ \vertex $.  Each strand must have one endpoint on $y=0$ and
	one on $y=1$, at distinct points from the other strands.
	
	The
	strands are divided into three sets: the {\bf corporeal} (which are drawn
	as solid black lines), the {\bf red} (which are drawn
	as solid red lines) and the {\bf ghostly} (which are drawn as dashed black lines). 
	These must satisfy the usual genericity property of avoiding
	tangencies and triple points between any set of strands.
	
	In addition, a flavoured KLRW diagram carries the data of $\varphi$-flavoured sequences $(\Bi,\Ba,<)$ and
	$(\Bi',\Ba',<')$ corresponding to the lines $y=0$ and $y=1$.  These
	induce bijections between the set of strands and the sets $\Sall$ and
	$\Sall'$ respectively, by matching the order $<$ with the left to right order
	of strands at $y=0$, and matching $<'$ with the left-to-right order at
	$y=1$. We require that:
	\begin{enumerate}
		\item These bijections are compatible with the division of
		$\Sall,\Sall'$ into $\Sc,\Sg,\Sr$ and the set of strands into corporeal,
		ghostly and red subsets.  In particular, the bijection $\Sall
		\to \Sall'$ induces a permutation $\sigma\in S_n$ of corporeal
		strand which respects their labels: $i_{m}=i'_{\sigma(m)}$.
		\item The bijection $\Sgr\to \Sgr'$ between ghostly/red elements in the flavoured
		sequence is given by $(k,e)\mapsto (\sigma(k),e)$, that is, it is
		induced by the bijection on corporeals.
		\item The longitudes satisfy $a_{m}-a'_{\sigma(m)}\in \Z$, that is,
		the difference between the
		longitudes at the top and bottom of each strand lies in $\Z$; if
		this holds for corporeal strands, then it automatically follows
		for ghostly/red strands.
	\end{enumerate}
	We consider these diagrams up to isotopy which preserves all these
	conditions.  Note that unlike in the weighted case, we allow the
	points at $y=0$ and $y=1$ to move in these isotopies, as long as their
	order is preserved.  
	Corporeal strands are allowed to carry a finite number of dots.  
\end{Definition}

Let us now describe our conventions for drawing flavoured KLRW diagrams.  Strands corresponding to corporeals are drawn as solid black lines,  strands corresponding to ghostly elements are drawn as dashed black lines, and strands corresponding to red elements as solid red lines.  At the top and bottom of each diagram, we include two rows of information:  
\begin{enumerate}
	\item  In the first row, we write the corresponding vertex $i_k$ for $k\in \Sc$, the edge $e$ for $(k,e)\in \Sg$ and the tail vertex $t(e)$ for $(\star,e)\in \Sr$.  
	\item In the second row we write the longitude.  
\end{enumerate}
See equation (\ref{eq: example flavoured KLRW diagram}) for an example of these conventions.

Note that any corporeal (or ghostly) strand does not have a single well-defined longitude, but rather two: the longitudes attached to its top and bottom. Red strands, on the other hand do have a single well-defined longitude. 
However, the longitudes at the top and bottom of a corporeal or
ghostly strand  can only differ by an integer, so they give a well-defined element of
$\C/\Z$.  In particular, for any pair of strands the difference
between the longitudes at the top of the two strands is
integral if and only if the same is true of the longitudes at the
bottom of the strands; we say a set of strands where all pairs have this property has
{\bf integral difference}.

\begin{Example}\label{ex:Kronecker2}
	Let us return to the example of the Kronecker quiver, with the same conventions as above, but the dimension vector \[v_{\va}=2\qquad v_{\vb}=1\qquad w_{\va}=2\qquad w_\vb=1.\]
	$$
	\begin{tikzcd}
		\va  \arrow[bend right = 15,swap]{rr}{f} \arrow[bend right = 10,swap]{rdd}{r} \arrow[bend left = 10]{rdd}{r'} && \vb \arrow[bend right = 15,swap]{ll}{e} \arrow{ldd}{s}\\ \\
		& \infty &
	\end{tikzcd}
	$$  
	We fix a flavour $\varphi$ given by $e,f \mapsto 1$ and $r\mapsto -4, r' \mapsto 0$ and $s \mapsto 2$.  We define two $\varphi$-flavoured sequences $(\Bi,\Ba,<)$ and $(\Bi',\Ba',<')$ as follows.  First we set:
	\begin{align*}
		\Bi=(\va,\va,\vb), \; \Ba=(-6,-1,0) \\
		\Bi'=(\va,\vb,\va), \; \Ba'=(-3,-2,3)
	\end{align*}
	From this we obtain the corresponding sets of ghostly/red elements:
	\begin{align*}
		\Sg&=\{ (1,e),(2,e),(3,f) \} &\Sr&= \{ (\star,r), (\star,r'),(\star,s)\} \\
		\Sg'&=\{ (1,e),(2,f),(3,e) \} &\Sr'&= \{ (\star,r), (\star,r'),(\star,s)\}
	\end{align*}
	To ease notation, for $(x,e) \in \Sgr$ we write $e_x$ if $x \in [1,3]$ and simply $e$ if $x=\star$ (and similarly for elements of $G'$).  We now define $<$ and $<'$ as follows:
	\begin{align*}
		1<e_1<r<2<3<r'<e_2<f_3<s \\
		r <' 1 <' 2<' e_1<'f_2<'r'<'s<'3<'e_3
	\end{align*} Here is an example of a flavoured KLRW diagram where the flavour on the top is $(\Bi,\Ba,<)$ and the flavour on the bottom is $(\Bi',\Ba',<')$:
	\begin{equation}
		\label{eq: example flavoured KLRW diagram}
		\tikz[baseline, yscale=1.2,xscale=1.5]{
			\draw[very thick] (-2,-1) to(-2.7,1);
			\draw[dashed,very thick] (-.6,-1) to(-2,1);        
			\draw[very thick] (-1.3,-1)to node [pos=.45,circle,fill=black,inner sep=2pt] {} (1.3,1);
			\draw[very thick,dashed] (0,-1)to (2,1);
			\draw[very thick] (2,-1)--(-.6,1);
			\draw[very thick,dashed] (2.7,-1)to (.6,1);
			\draw[wei] (-2.7,-1) to (-1.3,1);
			\draw[wei] (1.3,-1)to (2.7,1); 
			\draw[wei] (.6,-1)to (0,1); 
			\draw (-2.7,0) +(0.0,-1.3) node {\small$\va$};
			\draw (-2.7,0) +(0.0,-1.7) node {\small$-4$};
			\draw (-2.7,0) +(0.0,1.7) node {\small$\va$};
			\draw (-2.7,0) +(0.0,1.3) node {\small$-6$};
			\draw (-2,0) +(0.0,-1.3) node {\small$\va$};
			\draw (-2,0) +(0.0,-1.7) node {\small$-3$};
			\draw (-2,0) +(0.0,1.7) node {\small$e$};
			\draw (-2,0) +(0.0,1.3) node {\small$-5$};
			\draw (-1.3,0) +(0.0,-1.3) node {\small$\vb$};
			\draw (-1.3,0) +(0.0,-1.7) node {\small$-2$};
			\draw (-1.3,0) +(0.0,1.7) node {\small$\va$};
			\draw (-1.3,0) +(0.0,1.3) node {\small$-4$};
			\draw (-.6,0) +(0.0,-1.3) node {\small$e$};
			\draw (-.6,0) +(0.0,-1.7) node {\small$-2$};
			\draw (-.6,0) +(0.0,1.7) node {\small$\va$};
			\draw (-.6,0) +(0.0,1.3) node {\small$-1$};
			\draw (0,0) +(0.0,-1.3) node {\small$f$};
			\draw (0,0) +(0.0,-1.7) node {\small$-1$};
			\draw (1.3,0) +(0.0,1.7) node {\small$\vb$};
			\draw (1.3,0) +(0.0,1.3) node {\small$0$};
			\draw (.6,0) +(0.0,-1.3) node {\small$\va$};
			\draw (.6,0) +(0.0,-1.7) node {\small$0$};
			\draw (0,0) +(0.0,1.7) node {\small$\va$};
			\draw (0,0) +(0.0,1.3) node {\small$0$};
			\draw (1.3,0) +(0.0,-1.3) node {\small$\vb$};
			\draw (1.3,0) +(0.0,-1.7) node {\small$2$};
			\draw (.6,0) +(0.0,1.7) node {\small$e$};
			\draw (.6,0) +(0.0,1.3) node {\small$0$};
			\draw (2,0) +(0.0,-1.3) node {\small$\va$};
			\draw (2,0) +(0.0,-1.7) node {\small$3$};
			\draw (2,0) +(0.0,1.7) node {\small$f$};
			\draw (2,0) +(0.0,1.3) node {\small$1$};
			\draw (2.7,0) +(0.0,-1.3) node {\small$e$};
			\draw (2.7,0) +(0.0,-1.7) node {\small$4$};
			\draw (2.7,0) +(0.0,1.7) node {\small$\vb$};
			\draw (2.7,0) +(0.0,1.3) node {\small$2$};
		}
	\end{equation}
	Focusing for instance on the top of the diagram, from the first row we can read off $\Bi$ and the total order $<$ on $\Sall$, and from the second row we can read off the longitudes $\Ba$.  Note that the longitudes of ghostly/red elements are determined by $\Ba$ and $\varphi$.
\end{Example}

\begin{Definition}\label{def:flavoured-KLR}
	The {\bf flavoured KLRW algebra} $\fKLR^{\varphi} = \fKLR^{\varphi}(\Gamma^{\Bw})$ is the algebra given by the $\C$-span
	of the $\varphi$-flavoured KLRW diagrams, modulo isotopy preserving genericity
	and  the local relations
	(\ref{dots-1}--\ref{dumb}) if  the set of strands involved in the
	relation have integral difference.  If any pair of strands involved
	does not have integral difference, then
	we can simply isotope through a triple point or tangency.  As usual, when we
	multiply diagrams, we must be able to match the flavoured sequences
	at the top of one diagram and the bottom of the other, or the
	product is 0 by convention.  
\end{Definition}

Fix a dimension vector $ \Bv \in \Z_{\geq}^{\vertex} $, which gives the number of corporeal strands with each label, and let $ \ft = \prod_i \C^{v_i} $ as above.  Let $ \mathsf S \subset \ft $ be any set.  We let
$\fKLR^{\varphi}_{\mathsf{S}}$ be the subalgebra where we only
allow flavoured sequences corresponding to elements of $\mathsf{S}$
at the top and bottom of diagrams. We will be particularly interested in the case where $ \mathsf S = \mathscr S $ is an orbit of the extended affine Weyl group
$\widehat{W}=\prod_i S_{v_i}\ltimes \Z^{v_i}$.

We define a grading on the flavoured KLRW algebra analogous to that
on the weighted KLRW algebra: we give a crossing or dot the same
grading it would have in the weighted KLRW algebra, except that crossings of
strands which don't have integral difference are given degree
0.

\begin{Remark}
	In \cite{KTWWYO}, we specialized to a bipartite quiver (with the two
	sets of vertices called {\bf even} and {\bf odd}), and chose edge
	orientations to point from even to odd.  In that paper, we defined the metric KLRW algebra which is a special case of the flavoured KLRW algebra defined here.  
	
	The most straightforward way to make this connection precise is
	flavour every old edge in
	$\Gamma$ with $1/2$, and each new edge with 0 and assume that every odd/even corporeal has
	longitude at top or bottom given by $k/2$, where $k$ is an integer of the correct
	parity.  In the conventions of \cite{KTWWYO}, $k$ would have been
	the corresponding longitude.  Similarly, assume that the longitudes of red strands are of the form $r/2$, where $r$ is an integer of the same parity as the corresponding label.  Up to this factor of 2, we obtain a
	metric longitude in the sense of \cite[Def. 3.21]{KTWWYO} from a
	flavoured sequence in the special case described here, and sets $R_i$ defined by the longitudes on red strands with label $i$.  This defines 
	an
	isomorphism $ \fKLR^\varphi_{\mathcal S} \cong \mathscr T^{\mathbf
		R}$, where $\mathcal S  $ consists of flavoured sequences with the
	parity convention above, between the flavoured KLRW algebra and the
	metric KLRW algebra.

	This use of half-integers here is in contrast with our usual
	conventions in this paper.   We can apply Lemma
	\ref{lem:real-reduction}, using the cocycle $ \eta $ with value $1/2$ on even vertices and $0$ on odd vertices, to show that the metric
	KLRW is also equivalent to the flavoured KLRW algebra where all edges
	are given weight 0, and all longitudes are integral.  
\end{Remark}

\begin{Definition} \label{def:idem}
	Let
	$ e(\Bi,\Ba,<) $ denote the idempotent given by the straight-line
	diagram with the flavoured sequence $(\Bi,\Ba,<) $.  Given $\gamma\in
	\ft/W$, let $e(\gamma)= e(\Bi,\Ba,<) $ for the flavoured
	sequence associated to $\gamma$ by Lemma
	\ref{lem:sequence-from-flavours}.
\end{Definition}

As in \cite[Prop.~2.15]{WebwKLR}, there is a natural symmetry in the
definition of flavoured KLRW algebras.  We may view a flavour $\varphi$
as a 1-cocycle on the graph $\Gamma^\Bw$.  Let $\eta:
\vertex \sqcup \{ \infty \} \rightarrow \C$ be a 0-cocycle with $ \eta_\infty = 0 $.  Then we may define a cohomologous 1-cocycle
$\varphi- d \eta$, by $(\varphi - d\eta)_e = \varphi_e -
\eta_{h(e)}+\eta_{t(e)}$.  Given an orbit $\mathscr S \subset \prod_i
\C^{v_i} $, we may define a new orbit $\mathscr S + \eta$ by
simultaneously translating the $\C^{v_i}$ components by $\eta_i$.

\begin{Lemma}
	\label{lemma: cohomologous data}
	With notation as above, there is an isomorphism
	\[\fKLR^\varphi_{\mathscr{S}}\cong \fKLR^{\varphi-d
		\eta}_{\mathscr{S}+\eta}\] defined by shifting longitudes and
	reordering as necessary.
\end{Lemma}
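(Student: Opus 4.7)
My plan is to define $\Phi\colon \fKLR^\varphi_{\mathscr{S}}\to \fKLR^{\varphi-d\eta}_{\mathscr{S}+\eta}$ by a uniform label-dependent shift: on a given KLRW diagram, translate every corporeal or red strand of label $i\in \vertex$ to the right by $\eta_i$, and translate every ghostly strand for an edge $e\in\edge(\Gamma^\Bw)$ by $\eta_{t(e)}$ (using $\eta_\infty=0$ for red strands).

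First I would verify compatibility of this shift with the new flavour. For a ghostly element $(k,e)\in \Sg$ with $i_k=h(e)$, the original longitude $a_k+\varphi_e$ is replaced by
\[
(a_k+\eta_{h(e)}) + \bigl(\varphi_e-\eta_{h(e)}+\eta_{t(e)}\bigr) = a_k+\varphi_e+\eta_{t(e)},
\]
which is exactly the original longitude shifted by $\eta_{t(e)}$; the analogous computation for red strands is identical with $\eta_\infty=0$. Thus $\Phi$ sends $\varphi$-flavoured sequences in $\mathscr{S}$ to $(\varphi-d\eta)$-flavoured sequences in $\mathscr{S}+\eta$: the label sequence $\Bi$ is unchanged, the shifted longitudes lie in $\mathscr{S}+\eta$ by definition, and the total order $<$ is replaced by the unique (up to equivalence of flavoured sequences) order $<'$ making the new real longitudes weakly increasing and satisfying Definition~\ref{def:Flavor}(ii).

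Second, I would extend $\Phi$ to diagrams by applying the shift strand-by-strand and connecting shifted endpoints with generic strands realizing the combinatorics of the original. The local relations (\ref{dots-1})--(\ref{dumb}) depend only on strand labels and relative orderings at crossings, both manifestly preserved by the shift. The key point for the integrality condition (which governs when relations are genuine as opposed to pure isotopy) is that it is preserved for the interactions that have nontrivial relations: same-label corporeal pairs are shifted by the same amount, and a ghost-corporeal pair sees its ghost shifted by $\eta_{t(e)}$ and its interacting corporeal (which also carries label $t(e)$) shifted by $\eta_{t(e)}$, so their difference is unchanged. Between different-label corporeals the integrality status may flip, but the only nontrivial relations for such pairs --- (\ref{dots-1}) and the second half of (\ref{strand-bigon}) --- equate the two sides of an isotopy through a tangency, so they hold automatically in either algebra. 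Composition of diagrams is compatible with $\Phi$ since matching flavoured sequences at the top of one factor and the bottom of the next are shifted identically, so $\Phi$ is a well-defined algebra homomorphism.

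The main bookkeeping issue is the ``reordering'' clause: when $\eta_i-\eta_j$ is nonzero, a pair of label-$i$ and label-$j$ corporeals may swap positions at $y=0$ or $y=1$, so that the natural order on the image need not coincide with the image of $<$, and a comparable phenomenon can occur between corporeals and ghosts or reds. I would handle this by incorporating the implied reordering into the definition of $\Phi$ on idempotents, so that $e(\Bi,\Ba,<)\mapsto e(\Bi,\Ba+\eta|_\Bi,<')$ with $<'$ the induced order; the crossings produced by the reordering are between strands whose integer-difference structure is unchanged, so they transport under $\Phi$. The inverse morphism is constructed identically with $-\eta$ in place of $\eta$, and the two compositions $\Phi_\eta\Phi_{-\eta}$ and $\Phi_{-\eta}\Phi_\eta$ collapse to the identity on idempotents, establishing the isomorphism.
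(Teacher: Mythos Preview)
Your proposal is correct and follows essentially the same approach as the paper's proof: both define the isomorphism by shifting corporeal strands of label $i$ by $\eta_i$ and ghostly/red strands for an edge $e$ by $\eta_{t(e)}$, and both identify the key point that a ghost for $e$ and the corporeal of label $t(e)$ with which it interacts are shifted by the same amount, so the relevant crossings are preserved. The paper's proof is considerably terser (it cites the analogous result for weighted KLRW algebras and records only the shift rule and this one observation), whereas you carry out more of the verification explicitly --- the integrality bookkeeping, the case of different-label corporeal crossings, and the reordering of idempotents --- but there is no substantive difference in the argument.
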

\begin{proof}
	As in \cite[Prop.~2.15]{WebwKLR}\footnote{Note that the published version of this paper has a sign error, and should read ``$\vartheta-d\eta$'', not ``$\vartheta+d\eta$.''}, under this isomorphism a corporeal strand with label $i$ has its longitudes at top and bottom both shifted by $\eta_i$, while a ghostly/red strand labelled by an edge $e$ has its longitude shifted by $\eta_{t(e)}$.  Since all corporeal strands labelled by $t(e)$ also have their longitudes shifted by $\eta_{t(e)}$,  this shifting preserves all crossings between ghostly/red strands labelled $e$ and corporeal strands labelled $t(e)$.
\end{proof}

\begin{Definition}\label{def:sfKLR}
	By analogy with loadings, we call a flavoured sequence $(\Bi,\Ba, <)$ {\bf unsteady
	} if, for some $0<k<n $, the last $k$
	elements of $\Sall$ consists of a group of corporeal elements and all their
	ghosts.  Importantly, this group should not contain any ghost of one of the corporeal strands which doesn't lie in it, nor any red strands. 
	We define $\sfKLR^{\varphi}_{\mathscr{S}}$, the {\bf steadied quotient of the
		algebra} $\fKLR^{\varphi}_{\mathscr{S}}$, to be the quotient of $\fKLR^{\varphi}_{\mathscr{S}}$ by the
	two-sided ideal generated by all the idempotents for unsteady
	flavoured sequences.
\end{Definition}

\begin{Example}
	Consider the situation of Example \ref{ex:Kronecker2}, and assume $w_\va=1,w_\vb=0$, so there is one element of $\Sr$, which we denote $r$.  In this case, $(\vb,f,r,\va,e)$ is unsteady  since the last 2 entries are a strand and its only ghost; similarly,  $(r,\vb,\va,f,e)$ is unsteady because of its last 4 entries.  On the other hand, $(\vb,r,\va,f,e)$ is not unsteady.  
\end{Example}

\subsection{Reduction to the integral case}
\begin{Definition}\label{def:GammaS}
	For a given orbit $\mathscr{S}\subset \ft$, we let $\GammaInt$ be the
	subgraph of $\Gamma\times (\C/\Z)$ consisting of the set $\widetilde{\vertex}$ of pairs $(i,\redu{z})$,
	for $z\in\C$  which appear  as a coordinate in the factor $\C^{v_i}$
	for some element of $\mathscr{S}$.  This has an adjacency $(i,\redu{z})\to (j,\redu{w})$ for
	each edge $e=i \to j$ with $\varphi_e\equiv z-w\pmod \Z$.
\end{Definition}

We have an induced dimension vector $\vInt:\mathsf{E}(\GammaInt) \to \Z_{\geq 0}$ defined as follows.  Let $x=(x_i)\in \ft$ be any element of $\mathscr{S}$.  Then $\tilde v_{i,\redu{z}}$ is the number of entries in $x_i$ whose class in $\C/\Z$ is equal to $\redu{z}$.  Note that we have a canonical isomorphism $\C^{v_i} \cong \prod_{\redu{z}}\C^{\tilde{v}_{i,\redu{z}}}$, identifying the coordinates $(i,k)$ such that $x_{i,k}\equiv z\pmod \Z$ with the coordinates of $\C^{\tilde{v}_{i,\redu{z}}}$.  Thus, we can naturally consider $x$ as an element of $\prod_{i,\redu{z}} \C^{\tilde{v}_{i,\redu{z}}}$.  Let $\tilde{\mathscr{S}}$ be the orbit under the Weyl group of $\tilde{\Gamma}$ of $x$; this is the same as the elements of $\mathscr{S}$ whose projection to $\C^{\tilde{v}_{i,\redu{z}}}$ lies in $(\Z+z)^{\tilde{v}_{i,\redu{z}}}$.

We will also need an induced vector $\wInt$; this can be read off by adding the Crawley-Boevey vertex $(\infty,0)$ to $\GammaInt$ and applying the same rules as above to new edges.  Thus $\tilde w_{i,\redu{z}}$ is the number of new edges with flavour lying in the coset $\redu{z}$. Note that unlike $\vInt$, the sum of the entries of $\wInt$ might be less than that for $\Bw$, since there might be flavours on new edges not congruent to any coordinate of an element of $\mathscr{S}$; in fact, for a generic orbit $\mathscr{S}$, we will have $\wInt=0$.

Finally, we also have an induced flavour $\varphiInt$ on $\GammaInt$ by pulling back $\varphi$ by the projection map $ \GammaInt \to \Gamma$, that is,  the flavour of $(i,\redu{z})\to (j,\redu{w})$ is equal to $\varphi_e$, where $e=i \to j$.

\begin{Example}
   Consider the Kronecker quiver with conventions as in Example \ref{ex:Kronecker2}, with  $v_{\va}=5$, $v_{\vb}=6$ and $\mathscr{S}$ is the orbit containing the elements $(0,1/3,1/2,2/3,2/3)\in \C^{v_{\va}}$ and $(0,1/6,1/3,1/3,1/2,2/3)\in \C^{v_{\beta}}$.  If we choose the flavours \[\varphi_e=1/3\qquad \varphi_f=0\qquad \varphi_{r}=0\qquad \varphi_{r'}=\sqrt{2} \qquad \varphi_s=1/2\] then every component of $\GammaInt$ lies in a union of 6-cycles obtained as 3-fold covers of the Kronecker quiver.  In this case, we obtain one full 6-cycle, and 3 vertices from another, with the Crawley-Boevey graph drawn below.  Note that since $(\va,\sqrt{2})$ is not a vertex ($\sqrt{2}$ is not a coordinate in the correct orbit), $r'$ does not contribute to $\tilde{w}$.
\[\begin{tikzcd}[ampersand replacement=\&,column sep=2.25em]
	\&\& {(\infty,0)} \\
	{(\vb,\redu{2/3})} \& {(\va,\redu{0})} \& {(\vb,\redu{0})} \&  {(\vb,\redu{1/2})} \&\\
	{(\va,\redu{2/3})} \& {(\vb,\redu{1/3})} \& {(\va,\redu{1/3})} \& {(\va,\redu{1/2})} \&{(\vb,\redu{1/6})}
	\arrow[from=2-2, to=2-3]
	\arrow[from=2-2, to=1-3]
	\arrow[from=3-5, to=3-4]
	\arrow[from=3-4, to=2-4]
	\arrow[from=3-2, to=3-1]
	\arrow[from=3-1, to=2-1]
	\arrow[from=2-1, to=2-2]
	\arrow[from=3-3, to=3-2]
	\arrow[from=2-3, to=3-3]
	\arrow[from=2-4, to=1-3]
\end{tikzcd}\]
The non-zero values of the dimension vectors are:
\[v_{\va,\redu{0}}=1\qquad v_{\va,\redu{1/3}}=1\qquad v_{\va,\redu{1/2}}=1\qquad v_{\va,\redu{2/3}}=2\qquad v_{\vb,\redu{0}}=1\qquad v_{\vb,\redu{1/6}}=1  \]
\[v_{\vb,\redu{1/3}}=2\qquad v_{\vb,\redu{1/2}}=1\qquad v_{\vb,\redu{2/3}}=1\qquad w_{\va,\redu{0}}=1\qquad w_{\vb,\redu{1/2}}=1\]
\end{Example}

\begin{Lemma}\label{lem:real-reduction}
	Let $ \mathscr S \subset \prod_i \C^{v_i} $ be an orbit.  Let $ \GammaInt, \varphiInt, \vInt, \wInt $ be as above. 
	\begin{enumerate}
		\item We have an isomorphism of algebras $\fKLR^\varphi_{\mathscr{S}}(\Gamma^\Bw) \cong \fKLR^{\varphiInt}_{\mathscr{\tilde{S}}}(\GammaInt^{\wInt})$.
		
		\item
		There is an isomorphism of algebras
		$$
		\fKLR^\varphi_{\mathscr{S}}(\Gamma^\Bw) \ \cong  \ \fKLR^{\varphi'}_{\mathscr{S}'}(\GammaInt^{\wInt}),
		$$
		where $\varphi'$ is an integral flavour on $\GammaInt$ such that $\varphi' = \varphiInt - d\eta$ for some $0$--cocycle $\eta$ on $\GammaInt$, and where $\mathscr{S}'=\prod_{i,\redu{z}} \Z^{\tilde v_{i,\redu{z}}}$.
	\end{enumerate}
\end{Lemma}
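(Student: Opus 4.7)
My plan is to prove (1) by constructing the isomorphism on the level of diagrams, using the fact that, by Definition \ref{def:flavoured-KLR}, any two strands whose longitudes do not differ by an integer satisfy no nontrivial relations, only isotopy through triple points and tangencies. Once (1) is established, part (2) will follow immediately by applying Lemma \ref{lemma: cohomologous data} with an appropriate $0$-cochain $\eta$ on $\widetilde{\vertex}$.

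For (1), observe that in a flavoured sequence $(\Bi,\Ba,<)$ with longitudes in $\mathscr{S}$, every corporeal element $m\in \Sc$ with label $i_m = i$ has a well-defined longitude coset $\redu{a_m}\in \C/\Z$. Define the map $\Phi$ sending a flavoured KLRW diagram for $\Gamma^\Bw$ to the diagram for $\GammaInt^{\wInt}$ obtained by relabelling each corporeal strand with label $i$ and longitude coset $\redu z$ by the vertex $(i,\redu z)\in \widetilde{\vertex}$, and relabelling a ghostly strand attached via edge $e\colon i\to j$ (and hence coming from a corporeal strand at some coset $\redu z$) by the edge $(i,\redu z)\to(j,\redu{z-\varphi_e})$ of $\GammaInt$, which exists by Definition \ref{def:GammaS}. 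The red strands are handled similarly via the new edges ending at $(\infty,0)$; the construction of $\wInt$ ensures that each red strand in $\Gamma^\Bw$ whose longitude class equals some vertex of $\widetilde{\vertex}$ becomes a red strand in $\GammaInt^{\wInt}$, and the (rare) red strands whose classes do not appear in $\widetilde{\vertex}$ can be absorbed into the datum by noting that they have no integral-difference interactions with any corporeal and so contribute only trivially. The map $\Phi$ is manifestly a bijection on diagrams, and it takes flavoured sequences in $\mathscr{S}$ bijectively to flavoured sequences in $\tilde{\mathscr{S}}$. Relations between strands of non-integral difference amount to isotopies on both sides. A relation (\ref{dots-1})--(\ref{dumb}) involving strands of integral difference in $\fKLR^{\varphi}_\mathscr{S}$ becomes, under $\Phi$, a relation involving strands whose labels all lie over a single connected pattern in $\GammaInt$ (and in particular with integral difference in $\fKLR^{\varphiInt}_{\tilde{\mathscr{S}}}$), because the adjacencies of $\GammaInt$ are defined precisely so that a ghost coming from an edge $e$ is integrally close to a corporeal of label $j$ if and only if the cosets match up via $\varphi_e$. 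Hence $\Phi$ is an algebra isomorphism.

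For (2), fix for each $\redu z\in \C/\Z$ appearing in $\widetilde{\vertex}$ a representative $\tilde z\in \C$, and define $\eta\colon \widetilde{\vertex}\sqcup\{(\infty,0)\}\to \C$ by $\eta_{(i,\redu z)}=-\tilde z$ and $\eta_{(\infty,0)}=0$. For any edge $e'\colon(i,\redu z)\to(j,\redu w)$ of $\GammaInt$, the underlying edge $e$ of $\Gamma$ has $\varphi_e\equiv z-w\pmod{\Z}$, so
\[(\varphiInt - d\eta)_{e'} = \varphi_e - \eta_{(j,\redu w)} + \eta_{(i,\redu z)} = \varphi_e + \tilde w - \tilde z \in \Z,\]
and similarly for the edges into $(\infty,0)$ we obtain integral values. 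Translating the orbit $\tilde{\mathscr{S}}$ by $\eta$ shifts the $(i,\redu z)$-coordinates (which all lie in $\tilde z + \Z$) into $\Z$, and since the extended affine Weyl group of $\GammaInt^{\wInt}$ contains all integer translations in each factor, this shifted orbit is precisely $\mathscr{S}' = \prod_{i,\redu z}\Z^{\tilde v_{i,\redu z}}$. Applying Lemma \ref{lemma: cohomologous data} to $\fKLR^{\varphiInt}_{\tilde{\mathscr{S}}}(\GammaInt^{\wInt})$ with this $\eta$ and composing with the isomorphism from (1) yields the claim.

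The main obstacle will be the bookkeeping in verifying that $\Phi$ in part (1) respects the relations; in particular, checking the triple-point relations \eqref{eq:triple-point1}--\eqref{eq:triple-point2} where one must confirm that the three strands involved correspond to a consistent choice of vertex/edge data in $\GammaInt$, and that crossings which become degree zero on one side (due to non-integral difference) match with the isotopy relations on the other side. Once the dictionary between strand labels and $\GammaInt$-data is pinned down, each local relation is straightforward, but there are many cases to consider.
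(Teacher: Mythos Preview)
Your proof is correct and follows the same approach as the paper: for (1), relabel each corporeal strand with label $i$ and longitude $a$ by the vertex $(i,\redu{a})$ of $\GammaInt$ (with the induced relabelling of ghosts by edges of $\GammaInt$), observing that integral-difference interactions match on both sides; for (2), choose a $0$-cochain $\eta$ given by representatives of the cosets and apply Lemma~\ref{lemma: cohomologous data}. Your treatment is in fact slightly more explicit than the paper's about how ghosts and the ``extra'' red strands are handled, and your sign convention $\eta_{(i,\redu z)}=-\tilde z$ is the one that actually lands both $\varphi'$ in $\Z$ and the shifted orbit in $\prod_{i,\redu z}\Z^{\tilde v_{i,\redu z}}$.
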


\begin{proof}\hfill
	\begin{enumerate}[wide]
		\item If we have a flavoured sequence with longitude in the orbit $\mathscr{S}\subset \prod_i \C^{v_i}$, we
		can canonically lift this to a flavoured sequence in
		$\GammaInt$, by giving an element of $\Sall$ with label $i$ and
		longitude $a$ the label $(i,\redu{a})\in
		\widetilde{\vertex}$, and leaving the longitude unchanged.
		This defines a homomorphism $\fKLR^\varphi_{\mathscr{S}}(\Gamma) \to
		\fKLR^{\varphiInt}_{\tilde{\mathscr{S}}}(\GammaInt)$.  On the other hand, if we have a diagram in $\fKLR^{\varphiInt}_{\tilde{\mathscr{S}}}(\GammaInt)$, then by assumption, any strand with label  $(i,\redu{a})$ has longitude in $\redu{a}$, so we can define an inverse map just turning the label to $i$, keeping the longitude the same.  
		\item A choice of flavour defines a 1-cocycle on the graph
		$\Gamma^{\Bw}$, which defines a class $\beta\in
		H^1(\GammaInt;\C/\Z)$. 
		The equation  $\varphi_e \equiv z-w
		\pmod \Z$ exactly guarantees that $\beta$ is the coboundary of the
		$\C/\Z$-valued
		0-cocyle sending $(i,\redu{z})\mapsto -\redu{z}$.  In other words $\beta$ is trivial; one can understand $\GammaInt$ as the unique minimal cover of $\Gamma$ with this property.
		Choose a $\C$-valued $0$-cycle $\eta$ on $\GammaInt$ with the property that for
		each vertex $ (i, \xi) $, we have that $ [\eta(i, \xi)] = \xi $.  In this case, we have that 
		$\varphi' := \varphiInt-d\eta$ is integer valued, and we also achieve $\mathscr{S}' = \tilde{\mathscr{S}} + \eta$.
		
		By part (1) together with Lemma \ref{lemma: cohomologous data}, we have isomorphisms:
		\[\fKLR^\varphi_{\mathscr{S}}(\Gamma) \cong
		\fKLR^{\varphiInt}_{\tilde{\mathscr{S}}}(\GammaInt)\cong
		\fKLR^{\varphiInt-d\eta}_{\mathscr{S}+\eta}(\GammaInt) \cong
		\fKLR^{\varphi'}_{\mathscr{S}'}(\GammaInt).\qedhere\]
	\end{enumerate}
\end{proof}

\subsection{Connecting flavoured KLRW and weighted KLRW algebras}
\label{sec:connecting}

As promised, we will lay out here the parallels between the weighted and flavoured approaches.  We have a close analogy based on equating:
\bigskip

\centerline{\begin{tabular}{|c|c|}\hline
		Weighted KLRW & Flavoured KLRW\\ \hline
		weighting & flavour\\
		loading/loaded sequence & flavoured sequence\\
		weighted KLRW diagram & flavoured KLRW diagram \\
		positions of strands & longitudes\\\hline
\end{tabular}}
\bigskip

The key differences here are:
\begin{enumerate}
	\item Loaded sequences are necessarily valued in the real numbers, and exactly  match the $x$-values of the relevant KLRW diagram.  In particular, any generic horizontal slice of a weighted KLRW diagram gives a loading, and so we must be able to deform these continuously.  Small deformations of a weighted KLRW diagram genuinely change the underlying loadings.
	\item Flavoured sequences can be valued in the complex numbers or even in more general sets (see Definition \ref{def:A-flavour}).  A flavoured KLRW diagram only has well-defined longitudes at the top and bottom, and a slice in the middle has no fixed flavoured sequence, and might correspond to an order that is not compatible with any flavoured sequence.  Integrality plays an important role in flavoured KLRW algebras, since the relations depend on whether strands have integral difference; there is no corresponding notion for weighted KLRW algebras.   
\end{enumerate}
Philosophically, weighted KLRW algebras capture the behaviour of flavoured KLRW algebras in the case where all flavours on edges and all longitudes are integers.  Thus, we assume this integrality for the remainder of this section, and let $\fKLR^{\varphi}_{\Bv}=\fKLR^{\varphi}_{\mathscr{S}}$ where $\mathscr{S}=\prod_i\Z^{v_i}\subset
\prod_i\C^{v_i}$.  We'll compare with the weighted KLRW algebra $\tilde{T}_{\Bv}^{\vartheta}$ for the same Dynkin
diagram, with weights $\vartheta_e=\varphi_e-1/2$.

In this case, we make a precise connection between weighted and flavoured KLRW algebras.  This begins with a precise correspondence between flavoured and loaded sequences.  The underlying idea is simply think of the flavoured sequence as a loaded sequence, but we need to perturb this definition a small amount.
Given
flavoured sequence $(\Bi,\Ba,<)$ with integral longitudes, we consider the loaded sequence $\boldsymbol{\ell}(\Bi,\Ba,<)=(\Bi,\boldsymbol{\ell},<')$ where \[\lx_k=a_k+k\epsilon\qquad  \text{for some } 0<\epsilon
\ll\frac{1}{2n}\] 
and $<'$ is the order induced by the function $\mathfrak{a}_{\boldsymbol{\ell}}\colon \Sall\to \R$.  Note that this order is independent of $\epsilon$ given our upper bound on it.
The $x$ values of ghostly/red points is given by  $\mathfrak{a}_{\boldsymbol{\ell}}(k,e)=\lx_k+\vartheta_e=\lx_k+\varphi_e-1/2$,  which is the longitude of $(k,e)$ minus $\frac{1}{2}-k\epsilon$.  
\begin{Lemma}\label{lem:same-order}
	The flavoured sequence $(\Bi,\Ba,<)$ is equivalent to the sequence $(\Bi,\Ba,<')$ using the order from the loaded sequence $\boldsymbol{\ell}(\Bi,\Ba,<)$.
\end{Lemma}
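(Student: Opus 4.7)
The plan is to take $\sigma = \mathrm{id}$ as the permutation implementing the equivalence, and verify the three conditions of the definition of equivalence of flavoured sequences. Conditions (1) and (3) are immediate, since $\Bi$ and $\Ba$ are literally the same for both sequences; only condition (2) requires argument. That is, for any corporeal $m \in \Sc$ and any $(k,e) \in \Sgr$ with $t(e) = i_m$ (and similarly for red $(\star,e)$), I must show
\[ m < (k,e) \iff m <' (k,e). \]

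First I unpack the left-hand side. Since all longitudes are integers, $a_m \in \Z$ and the longitude $a_{(k,e)} = a_k + \varphi_e \in \Z$. Property (i) of Definition \ref{def:Flavor} forces $a_m \leq a_k + \varphi_e$ whenever $m < (k,e)$, and property (ii) pushes the ghostly/red element before the corporeal in case of equality; hence
\[ m < (k,e) \iff a_m < a_k + \varphi_e \iff a_m - a_k - \varphi_e \leq -1. \]

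Next I compare under $<'$. Direct computation using $\vartheta_e = \varphi_e - 1/2$ gives
\[ \mathfrak{a}_{\boldsymbol{\ell}}(m) - \mathfrak{a}_{\boldsymbol{\ell}}(k,e) = (a_m + m\epsilon) - (a_k + k\epsilon + \varphi_e - \tfrac{1}{2}) = (a_m - a_k - \varphi_e) + (m-k)\epsilon + \tfrac{1}{2}. \]
Because $|(m-k)\epsilon| \leq n\epsilon < 1/2$, the non-integer tail $(m-k)\epsilon + \tfrac{1}{2}$ lies in the open interval $(0,1)$. Since the first summand is an integer, the whole expression is negative if and only if that integer is $\leq -1$, i.e.\ exactly the condition for $m < (k,e)$ above. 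The red case is identical with $a_k + k\epsilon$ replaced by $0$.

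This is a short direct check rather than a deep argument, so there is no real obstacle; the only thing to be careful about is to also verify that $(\Bi,\Ba,<')$ is in fact a valid flavoured sequence, i.e.\ satisfies (i) and (ii). These reduce to the same sort of $\mathfrak{a}_{\boldsymbol{\ell}}$-difference computations applied to ghost--ghost and corporeal--corporeal pairs, where the perturbation $k\epsilon$ plays the role of a tie-breaker, and the resulting order is weakly increasing in real longitude with ghostly/red elements placed before corporeal ones at coinciding real longitudes.
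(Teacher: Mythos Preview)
Your proposal is correct and follows essentially the same approach as the paper: both compute $\mathfrak{a}_{\boldsymbol{\ell}}(m)-\mathfrak{a}_{\boldsymbol{\ell}}(k,e)=(a_m-a_k-\varphi_e)+(m-k)\epsilon+\tfrac12$ and use integrality together with $|(m-k)\epsilon|<\tfrac12$ to conclude. Your write-up is slightly more explicit in framing the argument via $\sigma=\mathrm{id}$ and the equivalence conditions, and in noting that one should also check that $(\Bi,\Ba,<')$ is itself a valid flavoured sequence; the paper's case~(1) on corporeal--corporeal pairs is effectively verifying that $<'$ extends the usual order on $[1,n]$, which is part of that check.
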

\begin{proof}
	Consider two elements $x,y\in \Sall$, then we need to check that the relative order is the same for the flavoured and loaded sequences whenever:
	\begin{enumerate}
		\item $x=k,y=m\in \Sc$ with $k<m$: in this case, $a_k\leq a_{m}$, so 
		\[\lx_k=a_k+k\epsilon \leq a_m+k\epsilon < a_m+m\epsilon=\lx_m.\] 
		\item $x=k\in \Sc,y=(m,e)\in \Sg$: in this case, we have $x>y$ if $ a_k\geq a_m+\varphi_e$ and $x<y$ if $a_k<a_m+\varphi_e$.  On the other hand, we have \[\mathfrak{a}_{\boldsymbol{\ell}}(k) -\mathfrak{a}_{\boldsymbol{\ell}}(m,e)=\lx_k-\lx_m-\vartheta_e=a_k-a_m-\varphi_e+(k-m)\epsilon+\frac{1}{2}.\] Since $a_k-a_m-\varphi_e$ is an integer, for  $\epsilon$ sufficiently small, this is positive if $a_k\geq a_m+\varphi_e$ and negative otherwise.  
	\end{enumerate}
\end{proof}

This shows that every flavoured sequence has a loaded sequence which gives the same order.  However, the opposite is not true:  there can be loaded sequences not equivalent to those coming from any flavoured sequence, due to the integrality requirements.  The existence of non-parity idempotents in \cite{KTWWYO} is an example of this phenomenon.  

Consider diagrams which interpolate between flavoured and weighted
diagrams: they should obey all the requirements of a flavoured diagram,
but only have a choice of flavoured sequence at $y=1$, whereas at
$y=0$, they satisfy the weighted condition that the distance between
strands and ghosts is exactly given by the weights.  The result is a
$\fKLR^{\varphi}_{\Bv}$-$\tilde{T}_{\Bv}^{\vartheta}$ bimodule $\tFW$  over
the weighted and flavoured KLRW algebras.  This is the special case of the bimodule relating KLRW algebras flavoured by different sets given in Definition \ref{def:bimodule}; in particular, Lemma \ref{lem:bimodule} carefully verifies that this bimodule structure is well-defined.

In this bimodule, we can form a straight-line diagram in $\tFW$ joining the loading
$\boldsymbol{\ell}(\Bi,\Ba,<)$, and the flavoured sequence $(\Bi,\Ba,<)$ at
the bottom.
\begin{Theorem}\label{thm:flavour-weight}
	Let $(\Bi,\Ba,<)$ be a flavoured sequence, and set $\boldsymbol{\ell}=\boldsymbol{\ell} (\Bi,\Ba,<)$.  We
	have an isomorphism $e(\Bi,\Ba,<) \tFW\cong e(\boldsymbol{\ell})\tilde{T}_{\Bv}^{\vartheta}$.  The
	functor $\tFW\otimes_{\tilde{T}_{\Bv}^{\vartheta}}-$ realizes the category of modules over the flavoured KLRW
	algebra $\fKLR^{\varphi}_{\Bv}$ as a quotient of the category of modules
	over the weighted KLRW algebra $\tilde{T}_{\Bv}^{\vartheta}$, by the subcategory
	of modules killed by all loadings that correspond to an integral flavoured sequence.    
\end{Theorem}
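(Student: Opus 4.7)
The plan is to build the isomorphism $e(\Bi,\Ba,<)\tFW \cong e(\boldsymbol{\ell})\tilde{T}_{\Bv}^{\vartheta}$ directly on diagrams, and then deduce the module-category statement by a standard idempotent-truncation argument. The central observation is Lemma \ref{lem:same-order}: the order on $\Sall$ induced by the loading $\boldsymbol{\ell} = \boldsymbol{\ell}(\Bi,\Ba,<)$ coincides with the order $<$ from the flavoured sequence, so diagrams on the two sides carry the same combinatorial skeleton and differ only by how their top endpoints are positioned.

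For the first claim, I would exhibit two mutually inverse right-$\tilde{T}_{\Bv}^{\vartheta}$-module maps. In one direction, left multiplication by the straight-line diagram $s(\Bi,\Ba,<) \in e(\Bi,\Ba,<)\tFW e(\boldsymbol{\ell})$ (whose strands have flavoured top endpoints at $a_k$, weighted bottom endpoints at $\lx_k$, and no crossings) sends $x \in e(\boldsymbol{\ell})\tilde{T}_{\Bv}^{\vartheta}$ to $s\cdot x$; the diagram $s$ is well-defined precisely because the two orders agree. In the other direction, I would reinterpret a mixed diagram $d \in e(\Bi,\Ba,<)\tFW$ by applying a small isotopy to its top portion, translating each corporeal endpoint from $a_k$ to $\lx_k = a_k + k\epsilon$ and each ghost endpoint by the same amount, so that a ghost sits at exactly $\lx_k + \vartheta_e$ relative to its corporeal. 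Lemma \ref{lem:same-order} guarantees this isotopy avoids tangencies and triple points, so it produces a genuine weighted KLRW diagram in $e(\boldsymbol{\ell})\tilde{T}_{\Bv}^{\vartheta}$. A direct check on basis diagrams confirms that the two maps are mutually inverse.

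For the second claim, let $e \in \tilde{T}_{\Bv}^{\vartheta}$ be the sum, over equivalence classes of flavoured sequences, of the idempotents $e(\boldsymbol{\ell}(\Bi,\Ba,<))$. The first claim packages into a right $\tilde{T}_{\Bv}^{\vartheta}$-module isomorphism $\tFW \cong e\tilde{T}_{\Bv}^{\vartheta}$ (summing over flavoured top sequences), together with an algebra isomorphism $\fKLR^{\varphi}_{\Bv} \cong e\tilde{T}_{\Bv}^{\vartheta}e$ obtained by cutting by $e$ on both sides. The functor $\tFW \otimes_{\tilde{T}_{\Bv}^{\vartheta}} -$ is then identified with the idempotent truncation $M \mapsto eM$, which is the standard Serre-quotient functor: it is exact, its kernel is exactly the subcategory of modules with $eM = 0$ (i.e.\ those killed by every idempotent $e(\boldsymbol{\ell}(\Bi,\Ba,<))$), and it realizes the quotient as modules over $e\tilde{T}_{\Bv}^{\vartheta}e \cong \fKLR^{\varphi}_{\Bv}$.

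The main technical obstacle will be verifying that the diagram-level bijection descends to a homomorphism of the quotients by the defining relations. Because we are in the integral case, every pair of strands has integral difference, so the flavoured relations (\ref{dots-1}--\ref{dumb}) coincide with the weighted ones, and the check reduces to confirming that the top-endpoint reparametrization is compatible with stacking and with each local relation; this is a finite case-by-case verification whose core content is again Lemma \ref{lem:same-order}. The identification of the kernel rests on the observation that a loading outside the image of a flavoured sequence is exactly one whose ordering on $\Sall$ swaps a corporeal with a ghostly or red element having the same real longitude class, which is forbidden by property (ii) of Definition \ref{def:Flavor} --- such idempotents sit outside $e\tilde{T}_{\Bv}^{\vartheta}e$ and correspond precisely to the subcategory being quotiented out.
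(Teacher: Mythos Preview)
Your proposal is correct and follows essentially the same approach as the paper: both use the straight-line diagram joining the flavoured sequence $(\Bi,\Ba,<)$ at top to the loading $\boldsymbol{\ell}$ at bottom (well-defined by Lemma~\ref{lem:same-order}) to factor any element of $e(\Bi,\Ba,<)\tFW$ and establish the isomorphism, then deduce the quotient-functor statement by recognizing $\tFW$ as a projective right $\tilde{T}_{\Bv}^{\vartheta}$-module with endomorphism ring $\fKLR^{\varphi}_{\Bv}$. Your write-up is in fact more explicit than the paper's (you spell out the inverse isotopy and the idempotent-truncation packaging), but the underlying argument is the same.
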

\begin{proof}
	Any diagram in $e(\Bi,\Ba,<) \tFW$ can be factored into the straight line
	diagram joining $(\Bi,\Ba,<)$ to $\boldsymbol{\ell}$ at
	the bottom and an element of the weighted KLRW algebra at the top.
	This is gives the isomorphism $e(\Bi,\Ba,<) \tFW\cong e(\boldsymbol{\ell})\tilde{T}_{\Bv}^{\vartheta}$.
	
	This shows that as a right module over the weighted KLRW algebra, $\tFW$
	is a projective module with endomorphisms given by the flavoured KLRW
	algebra $\fKLR^{\varphi}_{\Bv}$.  The result follows.
\end{proof}

The failure of this functor to be a Morita equivalence is a ``degenerate'' property, which  happens for a relatively small set of flavours $ \varphi $.  These are analogous to aspherical parameters for Cherednik algebras or singular central characters of $U(\mathfrak{gl}_n)$ (this analogy can be made precise by realizing these algebras as Coulomb branches).

\begin{Remark}
	This result can be extended to the non-integral case, using the
	isomorphism of Lemma \ref{lem:real-reduction} so that
	$\fKLR^{\varphi}_{\mathscr{S}}$ is isomorphic to a flavoured KLRW
	algebra (possibly for a different graph) with integral flavours and longitudes.
\end{Remark}

We can construct a steadied quotient
$\FW=\sfKLR^{\varphi}_{\Bv}\otimes_{\fKLR^{\varphi}_{\Bv}} \tFW\otimes_{\tilde{T}_{\Bv}^{\vartheta}}  {T}_{\Bv}^{\vartheta}$ of $\tFW$ as well.

\begin{Proposition} \label{prop:steady-weighted} For any flavoured sequence $(\Bi,\Ba,<)$ with $\boldsymbol{\ell}=\boldsymbol{\ell} (\Bi,\Ba,<)$, we
	have an isomorphism $e(\Bi,\Ba,<) \FW\cong e(\boldsymbol{\ell}) {T}_{\Bv}^{\vartheta}$.  The
	functor $\FW\otimes-$ realizes the modules over the steadied flavoured KLRW
	algebra $\sfKLR^{\varphi}_{\Bv}$ as a quotient of modules
	over the steadied weighted KLRW algebra ${T}_{\Bv}^{\vartheta}$, by the subcategory
	of modules killed by all loadings that carry an integral flavouring.
\end{Proposition}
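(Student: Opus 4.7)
My plan is to reduce the statement to Theorem \ref{thm:flavour-weight} (the non-steadied analogue), by carefully tracking the two steadying quotients through the bimodule $\tFW$. Let $J \subset \fKLR^{\varphi}_{\Bv}$ denote the two-sided ideal generated by the idempotents $e(\Bi',\Ba',<')$ for $(\Bi',\Ba',<')$ an unsteady flavoured sequence, and let $J' \subset \tilde{T}^\vartheta_{\Bv}$ denote the two-sided ideal generated by $e(\boldsymbol{\ell}')$ for $\boldsymbol{\ell}'$ an unsteady loading. By definition of the steadied quotients we may rewrite
$$\FW \;\cong\; \tFW \big/ \bigl( J \cdot \tFW + \tFW \cdot J' \bigr).$$
Thus the task is to compute $e(\Bi,\Ba,<) \tFW$ modulo $e(\Bi,\Ba,<)(J\cdot \tFW)$ and $e(\Bi,\Ba,<)\tFW \cdot J'$, and to match this with $e(\boldsymbol{\ell})T^\vartheta_{\Bv}=e(\boldsymbol{\ell})\tilde T^\vartheta_{\Bv}/e(\boldsymbol{\ell})\tilde T^\vartheta_{\Bv}\cdot J'$.

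The key lemma I would prove first is the matching of unsteady data: a flavoured sequence $(\Bi',\Ba',<')$ with integral longitudes is unsteady if and only if the associated loading $\boldsymbol{\ell}(\Bi',\Ba',<')$ is unsteady. If the last $k$ elements of $\Sall$ in the order $<'$ consist of some corporeal elements together with all their ghosts and no red points, then (by construction of the loading and Lemma \ref{lem:same-order}) the last $k$ loaded strands form a self-contained block whose longitudes can be simultaneously translated by a very large positive integer without changing the equivalence class of the loading in the sense of Definition \ref{def:equivalent}, since no red or foreign ghost lies in or to the right of the block. The converse is immediate by reversing this reasoning.

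Next I would establish the idempotent isomorphism. Theorem \ref{thm:flavour-weight} already gives $e(\Bi,\Ba,<) \tFW \cong e(\boldsymbol{\ell}) \tilde{T}^\vartheta_{\Bv}$ as right $\tilde{T}^\vartheta_{\Bv}$-modules, so quotienting on the right by $J'$ identifies $e(\Bi,\Ba,<) \tFW / (e(\Bi,\Ba,<)\tFW \cdot J')$ with $e(\boldsymbol{\ell})T^\vartheta_{\Bv}$. It remains to show
$$ e(\Bi,\Ba,<)\cdot J \cdot \tFW \;\subseteq\; e(\Bi,\Ba,<) \tFW \cdot J',$$
so that the additional relation imposed on the left by passing to $\sfKLR^{\varphi}_{\Bv}$ introduces nothing new. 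An element of the left side is a linear combination of diagrams factoring through an unsteady flavoured idempotent $e(\Bi'',\Ba'',<'')$ at some intermediate horizontal slice; by inserting the straight-line bimodule element joining $(\Bi'',\Ba'',<'')$ to $\boldsymbol{\ell}(\Bi'',\Ba'',<'')$ we may rewrite the diagram as a factorization through the loaded idempotent $e(\boldsymbol{\ell}(\Bi'',\Ba'',<''))$, which by the key lemma is unsteady and hence lies in $J'$. The module-category statement then follows from the idempotent isomorphism exactly as in the proof of Theorem \ref{thm:flavour-weight}: $\FW$ is a projective right $T^\vartheta_{\Bv}$-module with $\End_{T^\vartheta_{\Bv}}(\FW) = \sfKLR^\varphi_{\Bv}$, and the condition that a $T^\vartheta_{\Bv}$-module $M$ satisfy $\FW \otimes_{T^\vartheta_{\Bv}} M = 0$ is exactly that $e(\boldsymbol{\ell})M = 0$ for every loading $\boldsymbol{\ell}$ of the form $\boldsymbol{\ell}(\Bi,\Ba,<)$.

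The main obstacle will be the careful bookkeeping in the containment above. Because the assignment $(\Bi,\Ba,<)\mapsto \boldsymbol{\ell}(\Bi,\Ba,<)$ is far from surjective — many weighted-side loadings do not come from any integer-longitude flavoured sequence — one has to confirm that these extraneous loadings do not contribute unwanted relations, and in particular that the straight-line trick used to pass through an intermediate loaded idempotent is itself compatible with the steadying procedure on the weighted side. Once this matching is verified, the rest is formal consequences of Theorem \ref{thm:flavour-weight}.
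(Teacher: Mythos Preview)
Your approach is essentially the same as the paper's: you reduce to Theorem \ref{thm:flavour-weight}, identify the two steadying ideals, and prove the key containment $e(\Bi,\Ba,<)\cdot J\cdot\tFW\subseteq e(\Bi,\Ba,<)\tFW\cdot J'$ by factoring an unsteady flavoured idempotent through the straight-line diagram to the associated loaded idempotent, which is itself unsteady. The paper only uses (and only states) the forward direction of your ``key lemma''---that if $(\Bi',\Ba',<')$ is unsteady then so is $\boldsymbol{\ell}(\Bi',\Ba',<')$---and your worries in the last paragraph about extraneous loadings are not actually obstacles, since the containment argument never touches them.
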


\begin{proof}
	Let $I_1\subset \fKLR^{\varphi}_{\Bv}$ and $I_2\subset
	\tilde{T}_{\Bv}^{\vartheta}$ be the kernels of the maps to the steadied quotients.
	Consider the module $e(\Bi,\Ba,<) \FW$.  This is by
	definition, the quotient of $e(\Bi,\Ba,<)
	\tFW=e(\boldsymbol{\ell})\tilde{T}_{\Bv}^{\vartheta}$ by the submodule $e(\Bi,\Ba,<)I_1 \tFW+e(\Bi,\Ba,<)
	\tFW I_2$.  Of course, $e(\boldsymbol{\ell}) {T}_{\Bv}^{\vartheta}$ is the
	quotient by $e(\Bi,\Ba,<)   \tFW I_2\cong e(\boldsymbol{\ell})\tilde{T}_{\Bv}^{\vartheta}
	I_2$.  Thus, we only need to prove that $e(\Bi,\Ba,<)I_1 \tFW\subset e(\Bi,\Ba,<)
	\tFW I_2$.  The submodule $e(\Bi,\Ba,<)I_1 \tFW$ is spanned by
	diagrams of the form $aeb$ where $e\in
	\fKLR^{\varphi}_{\Bv}$ is an unsteady idempotent.   If
	$\boldsymbol{\ell}'$ is the corresponding loading, then by our proof above, we can
	write $eb=me(\boldsymbol{\ell}')b'$ where $m$ is the straight line diagram joining
	$e$ to $e(\boldsymbol{\ell})$ and $b'$ is the image of $b$ under the isomorphism
	$e\tFW\cong e(\boldsymbol{\ell}) {T}_{\Bv}^{\vartheta}$.  Since $\boldsymbol{\ell}'$ is also
	unsteady, $e(\boldsymbol{\ell}')b'\in I_2$ and
	$aeb=ame(\boldsymbol{\ell}')b'\in  e(\Bi,\Ba,<)
	\tFW I_2.$
	
	This shows that $e(\Bi,\Ba,<) \FW\cong e(\boldsymbol{\ell})
	{T}_{\Bv}^{\vartheta}$.  The rest of the proof is identical to
	that of Theorem \ref{thm:flavour-weight}.
\end{proof}

\nc{\iH}{H}

\subsection{Connecting flavoured KLRW algebras and cyclotomic KLR algebras}

Choose a large integer $\iH\gg 0$.  For any sequence $\Bi\in \vertex^n$ with $ v_i $ occurrences of $ i $, we have a corresponding flavoured sequence with $(\Bi,\mathbf{h},<)$ where $\mathbf{h}=(\iH, 2\iH, \dots, n \iH)$.  We let $e(\Bi,\iH)$ be the idempotent corresponding to this flavoured sequence (see Definition \ref{def:idem}).  Similarly $ e(\Bi, -\iH) $ denotes the idempotent defined as above, except using $ -H $.

We'll need to consider the usual KLR algebra \cite{KLII,Rou2KM}; the precise presentation we want is given by \cite[Def. 2.5]{Webmerged}, with $Q_{ij}(u,v)=(u-v)^{\# j\to i}(v-u)^{\# i\to j}$.
This
is another diagrammatic algebra, which we can describe as a
special case of the KLRW algebra we defined above (as suggested by the name, this is the
opposite of the historical order these were introduced).  In this
special case:
\begin{itemize}
	\item we take all weights to be $0$, that is, all ghosts coincide with the corresponding
	corporeal strand.
	\item we have no red strands, that is, no edges connecting to the Crawley-Boevey vertex.  
\end{itemize}
We let $R_{\Bv}$ denote this algebra in the case where there are $v_i$ strands of label $i$.
The cyclotomic quotient $R^\Bw_{\Bv}$ of $R_{\Bv}$ is the quotient of this algebra by the two-sided ideal generated by $w_i$ dots on the right-most strand.   By \cite[Th. 4.18]{Webmerged}, this is the same as $T^0_\Bv$, where we include $w_i$ red strands labelled $ i$ all of whom have weight $0 $.

For any fixed integral flavour $\varphi$, let $e_{\iH}=\sum_{\Bi} e(\Bi,-\iH)\in \fKLR^\varphi_{\Bv}$ be the sum
of these idempotents.  By Proposition \ref{prop:steady-weighted} and \cite[Th. 3.6]{WebwKLR}, we have that:
\begin{Proposition}
	\label{prop: cyclotomic KLR from fKLR}
	The algebra	$e_{\iH}{}\sfKLR^{\varphi}_{\Bv}e_{\iH}$ is isomorphic to the cyclotomic KLR algebra $R^\Bw_{\Bv}$ for the quiver $\Gamma$.
\end{Proposition}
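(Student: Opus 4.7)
The plan is to cash in the two cited ingredients in sequence, using the bimodule $\FW$ to transport the statement across the flavoured/weighted divide. First I would spell out the images of the idempotents: the idempotent $e(\Bi,-\iH) \in \fKLR^\varphi_{\Bv}$ corresponds to the flavoured sequence $(\Bi,(-\iH,-2\iH,\dots,-n\iH),<)$, and by Lemma \ref{lem:same-order} and Proposition \ref{prop:steady-weighted}, we have an isomorphism $e(\Bi,-\iH)\,\FW \cong e(\boldsymbol{\ell}(\Bi,-\iH,<))\,T^\vartheta_{\Bv}$ in which the associated loading $\boldsymbol{\ell}(\Bi,-\iH,<)$ places the corporeal strands at the real positions $-k\iH+k\epsilon$ for $k=1,\dots,n$.

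Summing over sequences $\Bi$ and applying Proposition \ref{prop:steady-weighted} to endomorphism algebras, I would obtain an isomorphism
\[
e_{\iH}\,\sfKLR^{\varphi}_{\Bv}\,e_{\iH} \;\cong\; e'_{\iH}\,T^{\vartheta}_{\Bv}\,e'_{\iH},
\]
where $e'_{\iH}=\sum_{\Bi} e(\boldsymbol{\ell}(\Bi,-\iH,<))$ is the idempotent in the steadied weighted KLRW algebra given by the ``far-left'' loadings. For $\iH \gg \max_e|\vartheta_e|$ these loadings are exactly the widely separated configurations (strands and all their ghosts grouped tightly together, then successive groups pushed far apart) that play a privileged role in \cite{WebwKLR}.

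At this point I would invoke \cite[Th.~3.6]{WebwKLR}, which identifies precisely such a corner of the steadied weighted KLRW algebra with the cyclotomic KLR algebra $R^{\Bw}_{\Bv}$, yielding the stated isomorphism $e_{\iH}\,\sfKLR^{\varphi}_{\Bv}\,e_{\iH} \cong R^{\Bw}_{\Bv}$.

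The main obstacle, and really the only thing requiring honest verification, is matching conventions: one must check that the idempotent $e'_{\iH}$ produced by Lemma \ref{lem:same-order} (with the small perturbation $\epsilon$ and the sign choice $-\iH$) is indeed equivalent, in the sense of Definition \ref{def:equivalent}, to the idempotent appearing in \cite[Th.~3.6]{WebwKLR}. Specifically, one needs that the ``far to the left'' side is the correct one relative to the steadying convention (recalling that unsteady means some subset of strands can be pushed arbitrarily to the \emph{right} without changing the equivalence class of the loading), and that the separation $\iH$ is sufficient to make the distinct corporeal strands together with all their ghosts occupy disjoint intervals on the real line so that the weighted KLRW relations (\ref{dots-1}--\ref{dumb}) reduce locally to the usual cyclotomic KLR relations within each group.
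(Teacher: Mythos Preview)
Your proposal is correct and follows essentially the same approach as the paper: the paper's proof is a one-line citation of Proposition \ref{prop:steady-weighted} and \cite[Th.~3.6]{WebwKLR}, and you have unpacked precisely what that citation means. Your discussion of the convention-matching (that the $-\iH$ sign pushes strands far left relative to the red strands, consistent with the steadying convention, and that $\iH\gg\max_e|\vartheta_e|$ makes the loadings equivalent to those in \cite[Th.~3.6]{WebwKLR}) is the only real content beyond the citations, and you have identified it correctly.
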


Thus, as usual, $M\mapsto e_{\iH}M$ is a quotient functor,
realizing $ R^\Bw_{\Bv}\mmod$ as a quotient of
$\sfKLR^{\varphi}_{\Bv}\mmod$ by the modules killed by
$e_{\iH}$.  Note that this result is independent of $\varphi$,
so this captures a part of the category insensitive to this flavour
beyond its integrality.  

Let $ \mathscr S $ be a fixed orbit.  The algebra $\fKLR^\varphi_{\mathscr{S}}(\Gamma^\Bw)$ is non-zero, but often it will have trivial steady quotient.  We can precisely describe when this is the case by considering the perspective of categorification.  As in Definition \ref{def:GammaS}, we use $ \mathscr S $ to define a new quiver $ \tilde \Gamma $.  Let $\lambda$ be a highest weight of $\fg_{\tilde{\Gamma}}$ such that $\al_{i}^{\vee}(\lambda)=\tilde{w}_i$.  Let $\mu =\la-\sum_{j\in \tilde{\Gamma}}\tilde{v}_j\al_j$.  One consequence of the categorification theorem for cyclotomic KLR algebras (\cite[Thm. 3.21]{Webmerged}) is that the cyclotomic KLR algebra $R^{\tilde{\Bw}}_{\tilde{\Bv}}$ is non-zero if and only if the $\mu$-weight space of the simple $\fg_{\tilde{\Gamma}}$-module $V(\la)$ is non-zero.	
Thus, reducing the integral case with Lemma \ref{lem:real-reduction} and applying  Proposition \ref{prop: cyclotomic KLR from fKLR}, we find: 
\begin{Corollary}\label{cor:weight-nonzero}
	The algebra $\sfKLR^\varphi_{\mathscr{S}}(\Gamma^\Bw)$ is non-zero if the $\mu$-weight space of $V(\la)$  is non-zero.	
\end{Corollary}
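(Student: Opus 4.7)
The strategy is essentially spelled out in the discussion preceding the corollary, so the plan is to assemble three ingredients into a short chain of implications. First, I would invoke Lemma \ref{lem:real-reduction} to reduce to the integral situation: the isomorphism $\fKLR^\varphi_{\mathscr{S}}(\Gamma^\Bw)\cong\fKLR^{\varphi'}_{\mathscr{S}'}(\GammaInt^{\wInt})$ with $\mathscr{S}'=\prod_{i,\redu{z}}\Z^{\tilde{v}_{i,\redu{z}}}$ and integral $\varphi'$ is built from a relabelling of strands and a cohomologous shift of longitudes, neither of which alter the order structure on $\Sall$ used in Definition \ref{def:sfKLR}. Hence the notion of an unsteady flavoured sequence is transported across the isomorphism, so the unsteady ideals correspond and we obtain an induced isomorphism $\sfKLR^\varphi_{\mathscr{S}}(\Gamma^\Bw)\cong\sfKLR^{\varphi'}_{\vInt}(\GammaInt^{\wInt})$.

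Second, I would apply Proposition \ref{prop: cyclotomic KLR from fKLR} to the right-hand side: for $\iH\gg 0$, the idempotent truncation $e_\iH\sfKLR^{\varphi'}_{\vInt}(\GammaInt^{\wInt})e_\iH$ is isomorphic to the cyclotomic KLR algebra $R^{\wInt}_{\vInt}$ for the quiver $\GammaInt$. This proposition is independent of the choice of integral flavour, so it applies to $\varphi'$ without further work.

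Finally, I would invoke the categorification theorem for cyclotomic KLR algebras, \cite[Thm.~3.21]{Webmerged}: $R^{\wInt}_{\vInt}$ is a non-zero algebra precisely when the $\mu$-weight space of the irreducible $\fg_{\tilde{\Gamma}}$-representation $V(\lambda)$ is non-zero, where $\lambda=\sum\tilde{w}_i\varpi_i$ and $\mu=\lambda-\sum\tilde{v}_j\alpha_j$. Thus the hypothesis gives $R^{\wInt}_{\vInt}\neq 0$, whence $e_\iH\sfKLR^{\varphi'}_{\vInt}(\GammaInt^{\wInt})e_\iH\neq 0$, which forces $\sfKLR^{\varphi'}_{\vInt}(\GammaInt^{\wInt})\neq 0$, and hence $\sfKLR^\varphi_{\mathscr{S}}(\Gamma^\Bw)\neq 0$.

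The only potentially subtle point — and the one I would check carefully — is the descent of Lemma \ref{lem:real-reduction} to steadied quotients in the first step; everything else is essentially a citation. Since the isomorphism of that lemma is induced by a label/longitude change that is order-preserving on $\Sall$, and since ``unsteady'' in Definition \ref{def:sfKLR} is a purely order-theoretic condition on the positions of corporeal strands together with their ghosts and the red strands (which are likewise transported), this verification is routine, and the corollary follows.
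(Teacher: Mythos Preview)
Your proposal is correct and follows essentially the same approach as the paper: the paper's proof is just the one-line remark ``reducing the integral case with Lemma~\ref{lem:real-reduction} and applying Proposition~\ref{prop: cyclotomic KLR from fKLR}'' together with the preceding invocation of \cite[Thm.~3.21]{Webmerged}. Your added verification that the isomorphism of Lemma~\ref{lem:real-reduction} descends to steadied quotients (because the relabelling and longitude shift preserve the order on $\Sall$ and hence the unsteady condition) is a worthwhile detail that the paper leaves implicit.
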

\begin{Remark}
	The ``only if'' direction of this theorem is also true, but it's a bit outside the scope of this paper to prove.  The most straightforward approach is to consider the deformation of the steadied flavoured KLRW algebra analogous to the approach \cite{Webunfurl}.  This allows us to reduce to the case of a generic flavor, where the idempotent $e_{ \iH}$ will induce a Morita equivalence to the cyclotomic KLR algebra.   
\end{Remark}
While we won't prove the full ``only if'' direction, we do require one weaker version of it.  
\begin{Lemma}\label{lem:disconnected vertex}
	If there is a component of $\tilde{\Gamma}$ that does not contain a vertex $(i,[z])$ with $\tilde{w}_{i,[z]}>0$, then  $\sfKLR^\varphi_{\mathscr{S}}(\Gamma^\Bw)=0$.  
\end{Lemma}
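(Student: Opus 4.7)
The strategy is to show, for every flavoured sequence $\pi$ associated to $\mathscr{S}$, that the idempotent $e(\pi)$ lies in the two-sided ideal $J \subset \fKLR^{\varphi}_{\mathscr{S}}$ generated by unsteady idempotents, forcing $\sfKLR^{\varphi}_{\mathscr{S}} = 0$.  First I would apply Lemma~\ref{lem:real-reduction}(1) to transport the problem to the lifted quiver $\GammaInt$ with its integral flavour $\varphiInt$, and fix a component $C$ of $\GammaInt$ for which every vertex $(i,[z]) \in C$ has $\tilde w_{i,[z]} = 0$ (so no Crawley-Boevey edge points into $C$).  Given $\pi = (\Bi,\Ba,<)$, let $T \subseteq \Sc$ denote the set of corporeal strands whose labels lie in $C$; since $\tilde v_{i,[z]} > 0$ at every vertex of $\GammaInt$ by construction, $T$ is non-empty.

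Next I would construct a new flavoured sequence $\pi' = (\Bi,\Ba',<')$ by adding a single large positive integer $N$ to each $a_k$ with $k \in T$ and leaving the remaining longitudes fixed (ghost longitudes follow their corporeals automatically).  For $N$ sufficiently large, the last elements of $\Sall'$ under $<'$ are exactly $T$ together with all of its ghosts, and no red strand appears among them: reds attach only at vertices outside $C$, and $C$ being a component of $\GammaInt$ ensures that every ghost attached to a strand in $T$ is labelled by an edge lying entirely inside $C$.  Thus $\pi'$ satisfies the unsteady condition of Definition~\ref{def:sfKLR}.  The central step is then to realize the transition from $\pi$ to $\pi'$ by mutually inverse diagrams: I would construct $D \in e(\pi')\fKLR e(\pi)$ by sliding $T$ and its ghosts rigidly to the right, and $D' \in e(\pi)\fKLR e(\pi')$ by the reverse motion, keeping all outside strands stationary.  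The only crossings that occur involve one ``inside'' strand (a member of $T$ or one of its ghosts) and one ``outside'' strand (a corporeal outside $T$, a ghost for an edge with head outside $C$, or a red strand).  Because $C$ is a component of $\GammaInt^{\wInt}$, no edge of the quiver connects a vertex in $C$ to a vertex outside $C$, so every such pair has the label mismatch required to trivialize the relevant bigon relation.

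Spelling this out: corporeal-corporeal bigons with distinct labels collapse by (\ref{strand-bigon}); corporeal-red bigons with distinct labels collapse by (\ref{cost}); corporeal-ghost bigons where the corporeal label is not $t(e)$ for the ghost's edge $e$ collapse by (\ref{ghost-bigon1})--(\ref{ghost-bigon1a}); bigons between two ghosts, or between a ghost and a red strand, involve no local relation and are removed by isotopy.  Hence $D'D$ reduces to $e(\pi)$, and since $D = e(\pi')D$ we obtain
\[
 e(\pi) \;=\; D'D \;=\; D' \, e(\pi') \, D \;\in\; J.
\]
As this holds for every flavoured sequence $\pi$, the steadied quotient vanishes.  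The main obstacle is the clean verification of step three: I need to choose the slides $D,D'$ generically so that they avoid triple points and tangencies, and to check that no exotic bigon arises because an ``outside'' ghost has drifted close to an ``inside'' corporeal (forcing a careful choice of slide); the edge case $T = \Sc$ (which forces $\GammaInt = C$ and $\wInt = 0$) is handled by observing that $\pi$ is already directly unsteady, so the argument degenerates to $e(\pi) \in J$ without any bigon bookkeeping.
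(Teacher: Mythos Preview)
Your proposal is correct and follows essentially the same route as the paper's proof: after reducing via Lemma~\ref{lem:real-reduction} so that there is a component $C$ of the quiver on which $\Bw$ vanishes, the paper likewise translates the longitudes of all corporeals with labels in $C$ by a large integer $H$ to produce an unsteady idempotent $e(\gamma_H)$, lets $\theta$ (your $D$) be the connecting diagram and $\theta'$ (your $D'$) its reflection, and checks $\theta'\theta = e(\gamma)$ using the trivial-bigon relations. Your relation-by-relation bookkeeping is a bit more explicit than the paper's terse citation of (\ref{strand-bigon})--(\ref{ghost-bigon1}); in particular you correctly observe that (\ref{cost}) is also needed for the red--corporeal crossings.
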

\begin{proof}
	As usual, we reduce to the integral case, so we can assume
	there is a component $C$ of $\Gamma$ where $\Bw$ vanishes.
	For a fixed $\gamma\in (\ft+\varphi)/W$, let $\gamma_H$ be the
	resulting element where we add an integer $H$ to each coordinate
	corresponding to a vertex in $C$.  Let $\theta$ denote the
	diagram with $e(\gamma)$ at the bottom and $e(\gamma_H)$ at
	the top, with strands joining terminals that correspond to the
	same coordinate.  For $H\gg 0$, this has the effect of moving
	all strands with labels in $C$ to the right, and all other
	strands to the left while introducing a minimal number of
	crossings.  Let $\theta'$ be the reflection of this diagram
	through a horizontal line.  For $H\gg 0$, the idempotent
	$e(\gamma_H)$ is unsteady, so $\theta$ and $\theta'$ are both
	zero in the steadied quotient.
	
	The relations (\ref{strand-bigon}--\ref{ghost-bigon1}) show
	that for any $H$, we have
	$\theta'\theta=e(\gamma)$ (this is where we use that $C$ has
	no vertex with $w_i>0$).  Thus, $e(\gamma)$ is zero in
	$\sfKLR^\varphi_{\mathscr{S}}(\Gamma^\Bw)$.  Since $ \gamma $ was arbitrary, all idempotents vanish in   $\sfKLR^\varphi_{\mathscr{S}}(\Gamma^\Bw)$ and thus the algebra must be 0.
\end{proof}

\section{Induction and restriction for flavoured KLRW algebras}

\subsection{Induction and restriction bimodules}
\label{sec:induct-restr}
We will now define induction and restriction functors for flavoured KLRW algebras, paralleling those for
other versions of KLRW algebras.  For this construction we will need a
more general version of flavoured KLRW algebras, where the longitudes
are allowed to take values in a more general set rather than $\C$. 
Let $\longi$ be a set equipped with:
\begin{enumerate}
	\item An action of a partially ordered abelian group
	$(A,+,\preceq)$. We denote the action of $a\in A$ on $x \in \longi$ by $x+a$.
	\item A partial pre-order $\preceq$ on $\longi$ compatible with the order on $A$,
	i.e. satisfying $x+a\preceq x+a'$ if $a\preceq a'$.  
	\item An equivalence relation $\sim$ on $\longi$ such that $\preceq$ defines a total order on each equivalence class.
	\item A basepoint $*\in \longi$.
\end{enumerate} 
\begin{Definition}\label{def:A-flavour}
	An $\longi$-flavouring of an oriented graph $\Gamma$ is an assignment of
	an element $\varphi_e\in A$ for each edge $e$ of $\Gamma^{\Bw}$.
	A $\longi$-flavoured sequence for a flavoured graph is a triple $(\Bi,\Ba,<)$
	consisting of an $n$-tuple
	$\Bi\in \vertex^n$, a choice of longitude $\Ba\in  \longi^n$ and
	an order $<$ on the set $\Sall$ (defined as before, see (\ref{eq:Gplus})).  We
	define the longitude of $g=(x,e)\in \Sgr$ to be $a_g=a_x+\varphi_e$ if $x \in \Sc$, and $a_g=*+\varphi_e$ if $x=\star$.  We require the properties:
	\renewcommand{\theenumi}{\roman{enumi}}
	\begin{enumerate}
		\item If $g,g'\in \Sall$ and $g>g'$, then $a_g\not\prec a_{g'}$.
		\item If $g\in \Sgr, m\in \Sc$ and $a_g\approx a_m$ (that is,
		$a_g\succeq a_m$ and $a_m\succeq a_g$), then
		$g<m$.
	\end{enumerate}
\end{Definition}

The cases where we want to apply this are:
\medskip

\centerline{\begin{tabular}{|c|c|c|c|c|}\hline
		$\longi$ & $A$ & $\preceq$ & $\sim$ & $*$ \\ \hline
		$\C$ & $\C$ & $z\preceq z'$ iff $\Re(z)\leq \Re(z')$& $z\sim z'$ iff $z-z'\in \Z$ & $0$\\
		$\R$ & $\R$ & $\leq$ & $a\sim b \, \,\forall a,b$ & 0 \\
		$\Z\times \C$& $\C$ &  lexicographic & $(m,z)\sim (m,z')$ iff $m=m'$ and $z-z'\in \Z$ & $(0,0)$\\\hline
\end{tabular}}
\medskip

\noindent By ``lexicographic,'' we mean that $(m,x) \succeq (n,y)$ if $m>n$ or if $m=n$ and
$\Re(x)\geq \Re(y)$.

The first of these cases, $\longi=\C$, recovers the definition of flavoured sequences from Definition \ref{def:Flavor}.  The second case $\longi = \R$ recovers the definition of loaded sequences.  The third case where $\longi = \Z\times \C$ will be the main case of interest in this section.

We define the $\longi$-flavoured KLRW diagrams and algebra exactly as in
Definitions \ref{def:flavoured-KLR-diagram} and \ref{def:flavoured-KLR}, with the top and bottom of the
diagram now having $\longi$-flavoured sequences.  That is:

\begin{Definition}\label{def:ell-flavoured-KLR-diagram}
	Fix a flavour $\varphi$.
	A {\bf  $\longi$-flavoured KLRW diagram (corresponding to $\varphi$)} is a collection of finitely many
	oriented smooth curves satisfying the conditions of Definition \ref{def:flavoured-KLR-diagram}.  The only change is that we must now label the top and bottom with the data of $\longi$-flavoured sequences $(\Bi,\Ba,<)$ and
	$(\Bi',\Ba',<')$, respectively. There is one important change to the compatibility conditions in  Definition \ref{def:flavoured-KLR-diagram}: instead of requiring integral difference between the labels at the top and bottom of a strand, we require them to be equivalent under $\sim$.
\end{Definition}

\begin{Definition}\label{def:ell-flavoured-KLR}
	The {\bf $\longi$-flavoured KLRW algebra} $\fKLR_{\longi}$
	is the algebra given by the $\C$-span
	of the $\longi$-flavoured KLRW diagrams, modulo isotopy preserving genericity
	and  the local relations
	(\ref{dots-1}--\ref{dumb}) if the strands involved in the relation
	all have longitudes in the same equivalence class under $\sim$; if any pair of strands involved
	have longitudes not equivalent under $\sim$, then
	we can simply isotope through a triple point or tangency.  As usual, when we
	multiply diagrams, we must be able to match the flavoured sequences
	at the top of one diagram and the bottom of the other, or the
	product is 0 by convention. 
\end{Definition}

\begin{Remark}\label{rem:general-k}
	One reason that this definition is useful, though we will not
	develop it in this paper, is to study the
	representation theory of Coulomb branches over fields $\mathbbm{k}$ of
	characteristic 0 other than $\C$.   The appropriate object
	describing this on the ``KLR side'' is a
	$\mathbbm{k}$-flavoured KLRW algebra for a preorder on $\mathbbm{k}$ refining the partial order where $a\leq b$ if and only if $b-a\in \Z_{\geq 0}$.  
\end{Remark}

As one would expect, the cases $\longi=\C$ and $\longi=\R$ discussed in the table above recover the flavoured and weighted KLRW algebras from Section \ref{sec:flavoured-KLRW}.  Since we have already covered these algebras in detail, we will concentrate on the case $\longi=\Z\times \C$.  
We denote the resulting flavoured KLRW algebra by ${}^{\Z}\fKLR^{\varphi}$.  Note that for any strand in a diagram in this algebra, the $ \Z$-components of longitude must be the same at the top and bottom of the diagram.  Thus, these strands are naturally labelled with elements of the product $\Z\times \vertex$.

Given a $ \Z \times \C $-flavoured sequence $(\Bi, \Ba, <)$, we let $v_i^{(p)}$ be the number of $k \in \Sc $ such that $ i_k = i $ and $a_k \in \{p\} \times \C $.  These quantities will be unchanged from the top to the bottom of a diagram, as they correspond to the number of strands with label $ (p,i) $.  Moreover, the top and bottom of a diagram will differ by the action of $\prod_{i,p} S_{v_i^{(p)}}\ltimes \Z^{v_i^{(p)}}$, which acts on the $\C$-component of the longitudes.  Thus it is natural to consider
${}^{\Z}\fKLR^{\varphi}_{\mathscr{S}^{(*)}}$, the subalgebra where
the longitudes of the form $\{p \} \times \C $, give points in a fixed orbit
$\mathscr{S}^{(p)}\subset \prod_{i}\C^{v_i^{(p)}}$.

We will also be interested in the
usual (longitude values in $ \C$) flavoured KLRW algebra $\fKLR^{\varphi}_{\mathscr{S}^{(p)}}$
corresponding to one fixed $ \mathscr{S}^{(p)}$.  The cases $p=0$ and $p\neq 0$ play slightly
different roles here, since only the former case keeps the
Crawley-Boevey vertex; thus $\Bw^{(0)}=\Bw$ and $\Bw^{(p)}=0$
otherwise.

We have a map
\begin{equation}
	\bigotimes _{p\in \Z}\fKLR^{\varphi}_{\mathscr{S}^{(p)}}\to
	{}^{\Z}\fKLR^{\varphi}_{\mathscr{S}^{(*)}}.\label{eq:horiz-comp}
\end{equation}
given by replacing a longitude $a$ appearing in a diagram in
$\fKLR^{\varphi}_{\mathscr{S}^{(p)}}$ with $(p,a)$, and then horizontally
composing the corresponding diagrams with $p$ increasing from left to right.

\begin{Lemma} \label{le:fTpfTZ}
	The map \eqref{eq:horiz-comp} is an isomorphism of algebras.
\end{Lemma}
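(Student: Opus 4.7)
The plan rests on two structural features of the lexicographic order on $\Z\times \C$: (i) in any $\Z\times \C$-flavoured sequence, all elements whose $\Z$-component equals $p$ appear (in order) strictly before all those with $\Z$-component $q$ whenever $p<q$, since any pair of longitudes $(p,a), (q,b)$ with $p<q$ satisfies $(p,a)\prec(q,b)$ strictly; and (ii) the equivalence $\sim$ relates two longitudes only when their $\Z$-components agree. Property (i) ensures that at both the top and bottom of any diagram $D$ in ${}^{\Z}\fKLR^{\varphi}_{\mathscr{S}^{(*)}}$, the endpoints partition into contiguous horizontal slabs indexed by $p$, and property (ii) combined with Definition \ref{def:ell-flavoured-KLR} says that any crossing, triple point, or tangency involving strands of different $\Z$-components is governed only by free isotopy.

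For surjectivity, given $D$, I would use property (ii) to isotope $D$ so that for each $p$ the strands with $\Z$-component $p$ stay inside a vertical slab $S_p\subset \R\times[0,1]$, arranged with $S_p$ strictly to the left of $S_q$ for $p<q$. Such an isotopy exists because any configuration involving strands of distinct $\Z$-components can be freely moved through tangencies and triple points, so the inter-slab braiding can be combed out without changing the element in ${}^{\Z}\fKLR^{\varphi}_{\mathscr{S}^{(*)}}$. After this rearrangement, $D$ is visibly the horizontal composition of its restrictions to the slabs $S_p$, each of which—upon forgetting the common $\Z$-label $p$ and keeping only the $\C$-component of the longitudes—is a diagram in $\fKLR^{\varphi}_{\mathscr{S}^{(p)}}$.

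For injectivity, I would construct an inverse by slabwise restriction, tensored over $p$, and verify that it descends to the quotient. The required relations fall into two kinds: (a) ambient isotopies that preserve genericity; these split into isotopies internal to individual slabs (corresponding to isotopies in a single tensor factor) together with cross-slab moves through triple points and tangencies (which are trivial on both sides); and (b) the local relations (\ref{dots-1}--\ref{dumb}), which by definition only fire when every pair of strands in the local picture has equivalent longitudes—forcing a common $\Z$-component—so each instance lives inside a single slab $S_p$ and matches precisely the same-named relation of $\fKLR^{\varphi}_{\mathscr{S}^{(p)}}$. The two maps are then manifestly mutually inverse on representing diagrams.

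The only step requiring real care is the disentangling isotopy in the surjectivity argument: one must choose it so that each strand's projection to the $y$-axis remains a diffeomorphism throughout, and so that it passes only through codimension-one degenerations (tangencies and triple points) in which at least two of the strands have different $\Z$-components. This is a straightforward transversality exercise, essentially identical to the standard isotopy arguments used for weighted KLRW diagrams; granted this, the map \eqref{eq:horiz-comp} is an isomorphism.
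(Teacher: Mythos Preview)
Your proposal is correct, and your surjectivity argument is exactly the paper's: both observe that strands with distinct $\Z$-components inherit the same relative order at top and bottom from the lexicographic order, so any diagram can be isotoped (freely, through cross-slab tangencies and triple points, since such strands are never $\sim$-equivalent) into a horizontal composition of slabwise pieces.

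For injectivity you take a genuinely different route. The paper dispatches injectivity in one line by comparing polynomial representations, citing the analogue of \cite[Cor.~4.15]{Webmerged}: since the polynomial representations of both sides are faithful and the map intertwines them, injectivity follows. Your approach instead builds an explicit inverse by restricting a diagram to its strands of each fixed $\Z$-component and tensoring; you then verify directly that this respects isotopies and the local relations (\ref{dots-1}--\ref{dumb}), the latter because those relations only fire among $\sim$-equivalent strands and hence within a single $\Z$-slab. Your method is more elementary and self-contained---it avoids importing the polynomial-representation machinery---while the paper's is shorter but relies on that external faithfulness result. The isotopy bookkeeping you flag at the end is real but routine, and the paper's approach simply sidesteps it.
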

\begin{proof}
	One can easily check that this map is injective by comparing
	polynomial representations;  this is parallel to the
	argument of \cite[Cor. 4.15]{Webmerged}.
	
	Thus we need only show that it is surjective.  To see this, note
	that at both the top and bottom of the diagram, strands are weakly
	ordered left to right by the first factor in their longitude.  Thus,
	two strands where these factors are different must be in the same
	order at the top and bottom of the diagram.  This implies that the
	resulting KLRW diagram can be isotoped to be the horizontal
	composition of diagrams only crossing strands with the same first
	factor.  That is, the map is surjective as well.
\end{proof}

Consider two sets $\longi, \longi'$ as above, with actions of $A,A'$ and a subset $R\subset \longi \times \longi'$;  we assume that this satisfies two conditions on the set ${}_xR=\{y\in \longi' \mid (x,y)\in R\}$ and $R_y=\{x\in \longi \mid (x,y)\in R\}$:
\begin{enumerate}
	\item The sets ${}_xR$ and $R_y$ are closed under the equivalence relations $\sim, \sim'$.
	\item The pre-order $\preceq$ induces a total order on ${}_xR$ and $R_y$ and on the set of equivalence classes in these sets: i.e. if $x_1\preceq x_2$, then $x'_1\preceq x_2'$ whenever $x_1\sim x_1'$ and $x_2\sim x_2'$.  
\end{enumerate}
\begin{Definition}\label{def:bimodule}
	Let $\rif(R)$ be the set of flavoured KLR diagrams equipped with a $\longi$-flavoured sequence at the top and a $\longi'$-flavoured sequence at the bottom, modulo the flavoured KLR relations (\ref{dots-1}--\ref{dumb}) in the case where the labels $T$ at the top of the strands involved  and labels $B$ at the bottom of all strands involved satisfy $T\times B\subset R$, and the isotopy relations otherwise.  
\end{Definition}
\begin{Lemma}\label{lem:bimodule}
	The vector space $\rif(R)$ is a $\fKLR_{\longi}\operatorname{-}\fKLR_{\longi'}$ bimodule.
\end{Lemma}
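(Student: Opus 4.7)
The plan is to construct commuting left $\fKLR_{\longi}$- and right $\fKLR_{\longi'}$-actions on $\rif(R)$ by vertical stacking of diagrams, and then verify that these actions descend to the quotients by the KLR relations. On the level of diagrams, the action is straightforward: for $d\in \fKLR_{\longi}$ and $r\in \rif(R)$, define $d\cdot r$ to be the vertical composition (stacking $d$ on top of $r$) whenever the bottom $\longi$-flavoured sequence of $d$ matches the top $\longi$-flavoured sequence of $r$, and zero otherwise; symmetrically for the right action of $\fKLR_{\longi'}$. Associativity of stacking makes these actions commute, yielding a bimodule structure on the free span of diagrams.

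The content of the lemma is that the defining relations of $\fKLR_{\longi}$ and $\fKLR_{\longi'}$ are preserved. Each relation is either (i) an isotopy preserving the genericity of the diagram, or (ii) a local KLR relation from (\ref{dots-1})--(\ref{dumb}) applied to a patch where the longitudes involved lie in a single $\sim$-equivalence class (respectively $\sim'$-equivalence class). Case (i) is immediate, since $\rif(R)$ permits the same isotopies; if in the composite diagram the local labels fail $T\times B\subset R$ then $\rif(R)$ allows free isotopy through the triple point, while if $T\times B\subset R$ then the isotopy is a consequence of the (now applicable) local relations.

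For case (ii), we must check that the local KLR relation is valid in $\rif(R)$ whenever it is applied in $\fKLR_{\longi}$; concretely, if $T$ is the set of top $\longi$-labels and $B$ the set of bottom $\longi'$-labels of the strands in the patch (as seen in the composite), we need $T\times B\subset R$. The conditions imposed on $R$ are designed for exactly this purpose: condition (1), that ${}_xR$ is closed under $\sim'$ and $R_y$ is closed under $\sim$, implies that $T\times B\subset R$ is an all-or-nothing condition whenever $T$ is a single $\sim$-class and $B$ is a single $\sim'$-class. The strands in a local KLR patch have their top labels in one $\sim$-class (this is the hypothesis under which the $\fKLR_{\longi}$ relation is applied), and their bottom labels in a single $\sim'$-class because they are the images, under a valid composite diagram, of strands that must satisfy the ordering and genericity constraints of the flavoured sequence in $\rif(R)$; condition (2), totality of $\preceq$ on equivalence classes, is what forces this coherence between top and bottom.

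The main obstacle is the bookkeeping in the last paragraph: namely, showing that in a composite diagram, a patch of strands whose $\longi$-top labels lie in a single $\sim$-class necessarily has bottom $\longi'$-labels in a single $\sim'$-class, so that closure under $R$ propagates across the whole patch. This can be handled by an induction on the number of crossings in $r$, using the fact that a single crossing either permutes two strands in $r$ without changing the set of endpoint labels, or involves a ghost/red strand whose longitude is rigidly tied to a corporeal strand by $\varphi$. The remaining verifications (compatibility with mirror images of (\ref{cost}) and (\ref{dumb}), and the case of the right action) are symmetric.
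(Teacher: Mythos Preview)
Your overall structure (stacking for the action, then checking relations descend) matches the paper, but there are two genuine gaps in the verification.

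\textbf{The ``interesting'' case.} You assert that if the top labels $x_1,x_2,x_3$ lie in a single $\sim$-class, then the bottom labels $y_1,y_2,y_3$ lie in a single $\sim'$-class, and propose an induction on crossings to prove this. This is false. In the paper's main example $\longi=\C$, $\longi'=\Z\times\C$, $R=\{(y,(m,x))\mid y-x\in\Z\}$, two top labels $y_1,y_2\in\C$ with $y_1-y_2\in\Z$ can connect to bottom labels $(m_1,x_1),(m_2,x_2)$ with $x_1-x_2\in\Z$ but $m_1\neq m_2$, so the bottom labels are not $\sim'$-equivalent. The paper's proof does not attempt to show the $y_j$ are equivalent; instead it observes directly that since each $R_{y_j}$ is closed under $\sim$ and $x_j\in R_{y_j}$, all the $x_i$ lie in every $R_{y_j}$, giving $T\times B\subset R$ immediately.

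\textbf{The ``boring'' case.} You write that if $T\times B\subset R$ then ``the isotopy is a consequence of the (now applicable) local relations.'' This is also false: the KLR relations such as (\ref{dots-2}), (\ref{strand-bigon}) for same-label strands, (\ref{ghost-bigon2}), and (\ref{eq:triple-point1})--(\ref{eq:triple-point2}) carry correction terms and do not reduce to free isotopy. The paper handles this by a different mechanism: when all $(x_i,y_j)\in R$, condition~(2) forces the $x_i$ to be totally ordered in $R_{y_1}$ with their $\sim$-classes totally ordered, so one cannot have $x_1\sim x_3\not\sim x_2$ for $x_1\prec x_2\prec x_3$. This rules out the local configurations in which a boring relation in $\fKLR_{\longi}$ would conflict with a nontrivial KLR relation in $\rif(R)$. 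Your argument never invokes condition~(2) in this way, and without it the step fails.
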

\begin{proof}
	Obviously, the composition of a diagram in $\rif(R)$ with a flavoured KLRW diagram on the left or right gives a new flavoured KLRW diagram.  Furthermore, this clearly preserves the relations  in $\rif(R)$, by locality.  Thus, the only subtle point is why attaching a relation in $\fKLR_{\longi}$ gives a relation in $\rif(R)$.  
	
	First, assume we have an ``interesting'' relation in $\fKLR_{\longi}$, i.e. one involving strands whose labels are equivalent under $\sim$.  This attaches to two or three terminals at the top of the diagram, whose labels $x_1,x_2,x_3$ are all equivalent.  The labels $y_1,y_2,y_3$ on the other end of these strands may not be equivalent under $\sim'$, but since $R_{y_i}$ is closed under $\sim$, we have that $(x_i,y_j)\in R$, and so this is also a relation in $\rif(R)$.  
	
	Now, assume we have a ``boring'' relation in $\fKLR_{\longi}$, i.e. at least one of the labels $x_i$ is not equivalent to the others.  As above, let $y_j$ be the labels on the other end of these strands.  If one of $(x_i,y_j)\notin R$, then the same boring relation holds in $\rif(R)$.  Thus, assume that all $(x_i,y_j)\in R$.  Thus, $x_i\in R_{y_j}$ for all $i$ and $j$.  By assumption, this means that the elements $x_1\prec x_2\prec x_3$ are totally ordered,  and we must have that $x_1$ or $x_3$ is not equivalent to $x_2$, i.e. we can't have $x_1\sim x_3\not\sim x_2$.  We thus can't have a crossing between the corresponding strands, and thus the relation cannot appear in this case.
\end{proof}
\begin{Remark}
	The bimodule $\tFW$ from Section \ref{sec:connecting} is an example of such a  bimodule $\rif(R)$, where $\longi=\C$ and $\longi' = \R$ are as in the table above, and where $R=\{(x,y)| x\in \Z, y\in \R\}$.    
\end{Remark}

We'll principally be interested in this bimodule in the case where $\longi=\C$, and $\longi'=\Z\times \C$, and the relation $R=\{(y,(m,x)) | y-x\in\Z\}$.  
This defines a $\fKLR^{\varphi}$-${}^{\Z}\fKLR^{\varphi}$
bimodule $\rif$ given by the set of flavoured KLRW diagrams with longitudes
at top given by elements of $\C$ and at the bottom by
elements of $\Z\times \C$.   We will abuse notation,
and use $\rif$ to denote the same bimodule, with the right action
transferred by the homomorphism of \eqref{eq:horiz-comp} to one of
$\bigotimes _{p\in \Z}\fKLR^{\varphi}_{\mathscr{S}^{(p)}}$.  We can also
define a ${}^{\Z}\fKLR^{\varphi}$-$\fKLR^{\varphi}$ bimodule
$\corif$ by swapping the role of top and bottom.

\begin{Definition}\label{def:flavour-res-ind}
	The functor of {\bf restriction} associated to $\Bv^{(*)}$ is the
	functor \[\Res=\Hom(\rif,-)=\corif\otimes -\colon \fKLR^{\varphi}_{\mathscr{S}}\mmod \to
	\bigotimes _{p\in \Z}\fKLR^{\varphi}_{\mathscr{S}^{(p)}}\mmod.\]
	The functor of {\bf induction} associated to $\Bv^{(*)}$ is the left
	adjoint functor \[\Ind=\rif\otimes -\colon \bigotimes _{p\in \Z}\fKLR^{\varphi}_{\mathscr{S}^{(p)}}\mmod \to \fKLR^{\varphi}_{\mathscr{S}}\mmod,\] and that
	of {\bf coinduction} is the right adjoint $\Coind=\Hom(\corif,-)$.
\end{Definition}
While this bimodule is canonical, we can describe it in terms of an
algebra homomorphism, at the price of making some non-canonical
choices.  Fix a finite subset
$\mathsf{S}^{(p)}\subset \mathscr{S}^{(p)}$ and fix an integer $ H \gg 0 $.  Consider the map $\sqcup_{p} \{p\} \times \mathsf{S}^{(p)} \to \C$ given by $(p,x)\mapsto Hp+x$.  For $H$ sufficiently large, this map is order preserving;
note that this would never be the case on all of $\Z\times \C$, hence
the need to choose finite subsets.  With the above choices fixed, we have a
homomorphism $\wp$ to $\fKLR^{\varphi}_{\mathscr{S}}$ from the subalgebra of ${}^{\Z}\fKLR^{\vartheta}$ where
we fix all longitudes to live in $\sqcup_p \{ p \} \times
\mathsf{S}^{(p)}$, and $\rif$ matches the induced bimodule of this
homomorphism.  We can choose $\mathsf{S}^{(p)}$ so that this
subalgebra is Morita equivalent to ${}^{\Z}\fKLR^{\varphi}$.  Thus,
using this Morita equivalence, we can define the functors of
Definition \ref{def:flavour-res-ind} as the usual
induction/restriction/coinduction functors of a ring homomorphism.

\subsection{Comparison with other constructions}

Let us restrict to the integral case throughout this section.  Note that, as usual, we can reduce the general case to the integral case using Lemma \ref{lem:real-reduction}.

\begin{Lemma}\label{lem:weighted-res}
	Under the quotient functor of Theorem
	\ref{thm:flavour-weight}, the induction/restriction functors for
	flavoured KLRW algebras match the induction/restriction
	functors for weighted KLRW algebras defined in  \cite[Def. 2.17]{WebwKLR}:
	\[\tikz[->,thick]{
		\matrix[row sep=12mm,column sep=25mm,ampersand replacement=\&]{
			\node (d) {$\fKLR^{\varphi}_{\Bv}\mmod$}; \& \node (e)
			{$\displaystyle \bigotimes_{k\in
					\Z}\fKLR^{\varphi}_{\Bv^{(k)}}\mmod$}; \\
			\node (a) {$\tilde{T}^{\vartheta}_{\Bv}\mmod$}; \& \node (b)
			{$\displaystyle \bigotimes_{k\in
					\Z}\tilde{T}^{\vartheta}_{\Bv^{(k)}}\mmod$}; \\
		};
		\draw[<->] (a) -- (b) node[above,midway]{$\operatorname{Res}$} node[below,midway]{$\operatorname{Ind}$};
		\draw (a) -- (d) node[left,midway]{$\FW\otimes-$} ;
		\draw (b) -- (e) node[right,midway]{${}^{\Z}\FW\otimes -$};
		\draw[<->] (d) -- (e) node[above,midway]{$\operatorname{Res}$} node[below,midway]{$\operatorname{Ind}$};
	}\]
\end{Lemma}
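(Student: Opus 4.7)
The plan is to realize all four functors in the square as tensor products with bimodules of diagrams and then exhibit natural bimodule isomorphisms between the two composite bimodules.  Recall that restriction for weighted KLRW algebras in \cite[Def. 2.17]{WebwKLR} is realized by a bimodule ${}^{\Z}\tFW^{wt}$ whose elements are (weighted) KLRW diagrams whose bottom loading has its corporeal strands partitioned into groups separated by arbitrarily large gaps, indexed by $p\in \Z$; explicitly, this is obtained by choosing a sufficiently large $H$ and forcing the $p$th group to sit in a window near the real number $pH$.  Analogously, by the Morita-equivalent description of $\rif$ discussed just below Definition \ref{def:flavour-res-ind}, the flavoured restriction bimodule $\corif$ can be realized as flavoured diagrams whose bottom $\Z\times\C$-longitudes are embedded in $\C$ by $(p,x)\mapsto pH+x$ for $H\gg 0$, with the image lying in fixed finite sets $\mathsf S^{(p)}$.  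Under this realization and that of Lemma \ref{le:fTpfTZ}, the right action of $\bigotimes_p \fKLR^\varphi_{\Bv^{(p)}}$ (resp.\ $\bigotimes_p \tilde T^\vartheta_{\Bv^{(p)}}$) is horizontal juxtaposition.

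With both restriction bimodules modelled by diagrams in this way, I would first show that the bimodule equivalence $\FW$ of Theorem \ref{thm:flavour-weight} commutes with horizontal composition.  Concretely, a straight-line diagram joining a flavoured sequence $(\Bi,\Ba,<)$ to the loading $\boldsymbol{\ell}(\Bi,\Ba,<)$ can be built as the horizontal composition of such straight-line diagrams on each group of strands with longitudes in $\{p\}\times\C$, because $H$ is chosen so large that no crossings occur between the groups.  This identifies
\[
\FW \otimes_{\tilde T^{\vartheta}_{\Bv}} {}^{\Z}\tFW^{wt} \;\cong\; \corif \otimes_{\fKLR^{\varphi}_{\Bv}} \Big(\bigotimes_p \FW^{(p)}\Big)
\]
where on the right side we use the Morita-equivalent presentation of $\corif$ mentioned above, and $\FW^{(p)}$ is the bimodule from Theorem \ref{thm:flavour-weight} applied to the $p$th factor.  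Since both sides are presented as spans of stacked diagrams subject to the flavoured/weighted relations, the isomorphism is built by taking a weighted diagram and reading off the flavoured sequence at its bottom via $\boldsymbol{\ell}$, then splitting into horizontal blocks.

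For induction, I would either dualise this argument or use that induction is the left adjoint of restriction and that left adjoints are determined up to canonical isomorphism; Lemma \ref{le:fTpfTZ} together with the explicit description of $\rif$ as diagrams with $\Z\times\C$-longitudes at the bottom makes the analogous diagrammatic identification straightforward.

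The main obstacle will be verifying carefully that the identification of bimodules respects the two kinds of relations.  The subtle point is that in $\tilde T^{\vartheta}$ all relations (\ref{dots-1})--(\ref{dumb}) hold, whereas in $\fKLR^{\varphi}$ only those involving strands with integral difference hold, and the boring isotopy relations replace the rest.  The choice $\boldsymbol{\ell}_k=a_k+k\epsilon$ from Section \ref{sec:connecting} was made precisely so that Lemma \ref{lem:same-order} holds and small isotopies in the weighted setting correspond to the allowed isotopies in the flavoured setting; I expect that once we choose $H$ large enough compared to $\epsilon$, no crossings between strands of different $p$-levels ever carry integral difference, so the ``boring'' relations on the flavoured side match exactly the isotopy of far-apart strands on the weighted side, and the isomorphism is well-defined.
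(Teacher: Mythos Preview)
The paper does not give a formal proof of this lemma; immediately after the statement it only remarks that the weighted-KLRW restriction functors can themselves be realized as a bimodule $\rif(R)$ in the sense of Definition~\ref{def:bimodule} (with $\longi=\R$, $\longi'=\Z\times\R$, and $R$ the full relation), from which the compatibility is meant to be evident.  Your proposal is already more detailed than what the paper offers, and the core idea---realize all four functors as tensoring with diagram bimodules and then match them---is the right one and exactly in the spirit of the paper's hint.

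Two points in your write-up need repair.  First, the displayed bimodule identity does not type-check: $\tFW$ is a $\fKLR^{\varphi}_{\Bv}\text{-}\tilde T^{\vartheta}_{\Bv}$-bimodule and the weighted restriction bimodule is a $(\bigotimes_p \tilde T^{\vartheta}_{\Bv^{(p)}})\text{-}\tilde T^{\vartheta}_{\Bv}$-bimodule, so $\tFW\otimes_{\tilde T^{\vartheta}_{\Bv}}{}^{\Z}\tFW^{wt}$ is ill-formed.  The correct identity to prove is
\[
\corif\;\otimes_{\fKLR^{\varphi}_{\Bv}}\;\tFW\ \cong\ {}^{\Z}\tFW\;\otimes_{\bigotimes_p\tilde T^{\vartheta}_{\Bv^{(p)}}}\;\corif^{wt},
\]
both sides being $(\bigotimes_p\fKLR^{\varphi}_{\Bv^{(p)}})\text{-}\tilde T^{\vartheta}_{\Bv}$-bimodules.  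Second, your final paragraph is confused about where the ``boring'' relations live.  This section is explicitly in the integral case, so in $\fKLR^{\varphi}_{\Bv}$ every pair of strands has integral difference and all the KLRW relations (\ref{dots-1})--(\ref{dumb}) hold; there is no discrepancy with $\tilde T^{\vartheta}_{\Bv}$ on that front.  The place where ``boring'' relations genuinely enter is in ${}^{\Z}\fKLR^{\varphi}$, where strands whose $\Z$-components differ are \emph{not} equivalent under $\sim$---this is precisely what powers Lemma~\ref{le:fTpfTZ}, and on the weighted side corresponds to the fact that strands in different $H$-separated blocks never cross.  Once you keep straight which algebra you are working in, the relations match automatically and your bimodule comparison goes through.
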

In fact, we could define the restriction functors for weighted KLRW algebras using a bimodule $\rif(R)$ from Definition \ref{def:bimodule}: take $\longi=\R$ which gives weighted KLRW algebras, take $\longi'=\Z\times \R$ equipped with lexicographic order and the equivalence relation $(m,x)\sim (m',x')$ iff $m=m'$, and take the full relation $R=\R\times (\Z\times \R)$.

Let us consider the interaction between these functors and steadied
quotients.  Note that an idempotent is unsteady  if and only if it is in the image of the ring
homomorphism discussed above with $\Bv^{(p)}\neq 0$ for some
positive integer $p$.  In particular, any module $M$ over
$\fKLR^{\varphi}_{\Bv}$ that factors through the steadied
quotient $\sfKLR^{\varphi}_{\Bv}$ is killed by $\Res$
if $\Bv^{(p)}\neq 0$ for some
positive integer $p$.

We can think about this a bit more systematically by considering the
tensor product $\sfKLR^{\varphi}_{\Bv}\otimes \rif$.
The right action on this tensor product obviously factors through
the steadied quotient of ${}^{\Z}\fKLR^{\varphi}$ (repeating
Definition \ref{def:sfKLR} verbatim).  Since all strands with labels
$(p,i)$ with $p>0$ form a group that unsteadies the sequence, we must
have no such strands in the steadied quotient.  Similarly, if the
strands with label $(0,i)$ considered on their own give an unsteady
sequence, the same is true of the sequence as a whole.  From these
observations, it's easy to check that:
\begin{Lemma} \label{le:ResfTO}
	Given  $\Bv^{(0)}, \Bv^{(-1)},
	\cdots, \Bv^{(-m)}$ with $\Bv=\Bv^{(-m)}+\cdots+ \Bv^{(0)}$, we have an
	isomorphism of algebras between the  steadied quotients of the
	algebras 
	${}^{\Z}\fKLR^{\varphi}$ and
	$\fKLR^{\varphi}_{\Bv^{(-m)}}\otimes
	\fKLR^{\varphi}_{\Bv^{(-m+1)}} \otimes \cdots \otimes \fKLR^{\varphi}_{\Bv^{(0)}} $.  Thus, in this case, we obtain a well-defined functor
	\[\Res=\Hom(\rif,-)=\corif\otimes -\colon \sfKLR^{\varphi}_{\Bv}\mmod \to \fKLR^{\varphi}_{\Bv^{(-m)}}\otimes  \fKLR^{\varphi}_{\Bv^{(-m+1)}}\otimes \cdots \otimes \,{}_{-}\hspace{-.2mm}{\sfKLR}^{\varphi}_{\Bv^{(0)}}\mmod.\]
\end{Lemma}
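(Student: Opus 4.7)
The plan is to derive both assertions from Lemma \ref{le:fTpfTZ}, which already identifies the block of ${}^{\Z}\fKLR^{\varphi}$ corresponding to the splitting $\Bv^{(*)}$ with the tensor product $\bigotimes_{p \le 0} \fKLR^{\varphi}_{\Bv^{(p)}}$ via horizontal concatenation. Given this, only two things remain: to match the two-sided ideal of unsteady idempotents in ${}^{\Z}\fKLR^{\varphi}$ with the ideal $\bigotimes_{p<0} 1 \otimes I_{\text{unst}}$ of the tensor product (yielding the algebra isomorphism upon taking quotients), and to verify that the right action of ${}^{\Z}\fKLR^{\varphi}$ on $\corif$ descends to the steadied quotient once the source algebra has been steadied.

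The key observation for the ideal comparison is that under the lexicographic order on $\Z\times \C$, every strand at layer $p<0$ lies strictly to the left of every strand at layer $0$, while every red strand lives at layer $0$ (because red longitudes are built from the basepoint $* = (0,0)$). Thus in a flavoured sequence for our block, an unsteady rightmost group — the last $k<n$ elements consisting of corporeals together with all their ghosts and no reds — cannot reach into any $p<0$ layer without also dragging along every $p=0$ strand, including all the reds. So every unsteady group is contained entirely in the $p=0$ sublayer and is precisely an unsteady rightmost group for the $p=0$ sequence taken on its own. This matches the two ideals and yields
\[
{}^{\Z}\sfKLR^{\varphi}_{\Bv^{(*)}} \;\cong\; \bigotimes_{p<0}\fKLR^{\varphi}_{\Bv^{(p)}} \otimes \sfKLR^{\varphi}_{\Bv^{(0)}}.
\]

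For the functor, I will use the presentation of $\rif$ (and dually $\corif$) as the bimodule induced by the horizontal-composition homomorphism $\wp$ of Section \ref{sec:induct-restr}. The image under $\wp$ of an element of the form $1\otimes\cdots\otimes 1\otimes e$ is the diagram in $\fKLR^{\varphi}_{\Bv}$ obtained by placing identity strands for the $p<0$ factors on the far left of the unsteady diagram $e$ for the $p=0$ factor; the rightmost group witnessing the unsteadiness of $e$ persists as the rightmost group of the concatenation. Hence $\wp$ sends the unsteady ideal of the tensor product into that of $\fKLR^{\varphi}_{\Bv}$. Consequently the right $\fKLR^{\varphi}_{\Bv}$-action on $\rif$ factors through $\sfKLR^{\varphi}_{\Bv}$ modulo the pullback of the unsteady ideal from ${}^{\Z}\fKLR^{\varphi}$, and combining with the algebra isomorphism above produces the asserted functor $\Res$.

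The only point requiring care is the rightmost-group argument in degenerate cases, most notably when $\Bw = 0$ or when $\Bv^{(0)} = 0$, where no reds are available to anchor the $p=0$ layer; in each of these cases either Lemma \ref{lem:disconnected vertex} or Corollary \ref{cor:weight-nonzero} forces $\sfKLR^{\varphi}_{\Bv^{(0)}}$ to vanish and the proposed isomorphism reduces to $0 = 0$. The mild genericity needed on the finite sets $\mathsf{S}^{(p)}$ used in the definition of $\wp$ can be arranged exactly as at the end of Section \ref{sec:induct-restr}, so these choices do not affect the final functor.
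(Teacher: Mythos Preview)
Your argument is correct and is essentially the paper's own reasoning written out in full. The paper does not give a separate proof: it records, just before the lemma, that (i) any strands at layers $p>0$ already unsteady the sequence, and (ii) if the layer-$0$ block is unsteady on its own then so is the full $\Z\times\C$-sequence, and then declares the lemma ``easy to check.'' Your proof supplies the missing converse to (ii) --- an unsteady rightmost group cannot reach into a negative layer without swallowing the reds at layer $0$ --- and then passes to the bimodule via $\wp$, which is exactly how the paper realizes $\rif$ at the end of Section~\ref{sec:induct-restr}.

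One small correction to your edge-case discussion: when $\Bv^{(0)}=0$ but $\Bw\neq 0$, the algebra $\sfKLR^{\varphi}_{\Bv^{(0)}}$ does \emph{not} vanish; it is the one-idempotent algebra on the diagram consisting only of red strands (there is no $0<k<n$ available to witness unsteadiness when $n=0$), so neither Lemma~\ref{lem:disconnected vertex} nor Corollary~\ref{cor:weight-nonzero} applies. Fortunately you do not need a separate argument here: the reds still sit at layer $0$ and are therefore the rightmost elements of $\Sall$, so no unsteady group exists on either side, both unsteady ideals are zero, and the isomorphism from Lemma~\ref{le:fTpfTZ} descends trivially. Your main rightmost-group argument already covers this case without modification.
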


Note that this restriction functor commutes with inflation from
steadied quotients to the full algebra.  The same is not true of its
left and right adjoints.

\subsection{Categorical actions}
\label{sec:categorical-actions}

In this subsection, we assume that $\Gamma$ has no edge loops, and
continue to only consider the integral case.

We now focus on the functor from Lemma \ref{le:ResfTO} in the case where  $m=1$, and $\Bv^{(-1)}$ is a
multiple of a unit vector, i.e. supported on a single vertex $i$, with
some multiplicity $k$.  Let $\operatorname{NH}_k $ be the nilHecke algebra on $k$ strands; this is the algebra given by KLR diagrams (without red strands, ghosts or longitudes) with $k$ strands that have the same label, satisfying the relations (\ref{dots-1}--\ref{strand-bigon}; see \cite[\S 2.2(3)]{KLI}. 
\begin{Lemma}
    For any idempotent $e((i,\dots, i),\Ba,<)$, we have an isomorphism \[e((i,\dots, i),\Ba,<)\fKLR^{\varphi}_{\Bv^{(-1)}}e((i,\dots, i),\Ba,<)\cong \operatorname{NH}_k, \] and this induces a Morita equivalence between $\fKLR^{\varphi}_{\Bv^{(-1)}}$ and $\operatorname{NH}_k$. 
\end{Lemma}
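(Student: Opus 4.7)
The plan has three movements: identifying the relevant diagrammatic relations, proving the endomorphism isomorphism via a faithful representation, and upgrading to a Morita equivalence via transport diagrams. For the first, note that in $\fKLR^\varphi_{\Bv^{(-1)}}$, every corporeal strand carries label $i$ (since $\Bv^{(-1)} = ke_i$); there are no red strands (since we are in the tensor factor from Proposition~\ref{prop:relate} corresponding to $p = -1$, where $\Bw = 0$); and since $\Gamma$ has no edge loops, every ghost strand is labelled by an edge $\delta$ with $t(\delta) \neq i$, so no corporeal strand of the matching label $t(\delta)$ is ever present. Consequently every relation among (\ref{ghost-bigon1})--(\ref{eq:triple-point2}) and (\ref{cost})--(\ref{dumb}) is vacuous, ghost--ghost triple points may be isotoped through by the standing convention, and the only active local relations are (\ref{dots-2}), the first identity of (\ref{strand-bigon}), and the single-label braid relation (obtained by isotoping through a triple point not appearing in the listed relations). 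These are exactly the defining relations of $\operatorname{NH}_k$.

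I would then fix $e = e((i,\ldots,i),\Ba,<)$ and define $\Psi \colon \operatorname{NH}_k \to e\,\fKLR^\varphi_{\Bv^{(-1)}}\,e$ by sending the $m$th dot generator to a dot on the $m$th corporeal strand of $e$, and the crossing $\tau_m$ to the elementary crossing of the $m$th and $(m{+}1)$st corporeals (with ghosts drawn inertly alongside). The observations above make $\Psi$ a well-defined algebra homomorphism. Surjectivity is by an isotopy-and-straightening argument: any diagram in $e\,\fKLR^\varphi_{\Bv^{(-1)}}\,e$ reduces, modulo the active relations, to a composition of dots and corporeal crossings. For injectivity I would exhibit a common faithful polynomial representation on $\C[x_1,\ldots,x_k]$, with dots acting by multiplication and crossings by divided differences; for $\operatorname{NH}_k$ this is the standard polynomial representation, and for $e\,\fKLR^\varphi_{\Bv^{(-1)}}\,e$ it can be extracted from the polynomial representation of $\CB(\Bv^{(-1)}, 0)$ via the equivalence of Theorem~\ref{th:CBfKLR}.

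For the Morita equivalence, the goal is to prove $\fKLR^\varphi_{\Bv^{(-1)}}\,e\,\fKLR^\varphi_{\Bv^{(-1)}} = \fKLR^\varphi_{\Bv^{(-1)}}$. Given any other idempotent $e' = e((i,\ldots,i),\Ba',<')$, I build a transport diagram $x \in e'\,\fKLR^\varphi_{\Bv^{(-1)}}\,e$ whose strands implement the order-respecting bijection between the two flavoured sequences via a minimum-crossing diagram, using dots to absorb integer longitude differences along each strand. Symmetrically I build $y \in e\,\fKLR^\varphi_{\Bv^{(-1)}}\,e'$. The composite $xy$ lies in $e'\,\fKLR^\varphi_{\Bv^{(-1)}}\,e' \cong \operatorname{NH}_k$, and the key identity $\tau_m x_{m+1}\tau_m = \tau_m$ (which survives the bigon annihilation $\tau_m^2 = 0$) lets me place dots on $x$ and $y$ so that $xy = e'$ as elements of $\operatorname{NH}_k$. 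This exhibits $e' = x\,e\,y$ as an element of $\fKLR^\varphi_{\Bv^{(-1)}}\,e\,\fKLR^\varphi_{\Bv^{(-1)}}$.

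The main obstacle lies in this last step: the naive transport, absent dots, has all same-label bigons killed by (\ref{strand-bigon}), so obtaining $xy = e'$ requires a careful choice of dot placements. This is the same mechanism that underlies the classical identification $\operatorname{NH}_k \cong \operatorname{Mat}_{k!}(\C[x_1,\ldots,x_k]^{S_k})$, and I expect to execute the detailed verification either by an induction on $k$ or by writing explicit matrix units in terms of the longest-element crossing $\tau_{w_0}$ and a suitable Schubert-like polynomial, using only the nilHecke relations established in the second paragraph.
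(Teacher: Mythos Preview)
Your first two movements are essentially correct, and your treatment of ghosts is in fact more careful than the paper's (which asserts ``there are no ghosts'' when what is really meant is that any ghosts present interact trivially). The paper's proof of the isomorphism is more direct than yours: since the only active relations are the nilHecke ones, the map that simply forgets the longitude labels is a manifest bijection of diagrams compatible with all relations; its inverse is just relabelling, so no faithful representation is needed for injectivity. (Your appeal to Theorem~\ref{th:CBfKLR} is also a forward reference in the paper's logical order; if you want a polynomial representation of $\fKLR^\varphi$ at this point, extract it from the weighted KLRW side via Theorem~\ref{thm:flavour-weight} instead.)

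Your third movement is substantially more work than necessary, and the ``main obstacle'' you identify does not actually arise. We are in the integral case (stated at the start of Section~\ref{sec:categorical-actions}). Both flavoured sequences have their corporeals ordered $1<2<\cdots<k$ (since $<$ and $<'$ extend the standard order on $[1,k]$), and since all longitudes are integers the compatibility $a_m - a'_m\in\Z$ is automatic. Hence the diagram of $k$ straight corporeal strands, connecting the $m$th corporeal at top to the $m$th at bottom, is a legitimate flavoured KLRW diagram with \emph{no} corporeal--corporeal crossings; any ghost--corporeal crossings it contains are removed for free by your own first-movement analysis. Composing this diagram with its reflection therefore yields $e$ and $e'$ on the nose: no bigons, no dots, no appeal to the matrix-algebra structure of $\operatorname{NH}_k$. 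This is exactly the paper's argument. (Incidentally, dots do not ``absorb'' longitude differences in flavoured KLRW diagrams: by Definition~\ref{def:flavoured-KLR-diagram}(3) the longitudes at top and bottom of a strand may already differ by any integer without any dot being present.)
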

 This result depends on the lack of edge loops;  if $\Gamma$ were to have an edge loop, $\fKLR^{\varphi}_{\Bv^{(-1)}}$ would be much more complicated.  
\begin{proof}
In this case, all strands have the same label and there are no ghosts or red strands,  so only relations (\ref{dots-1}--\ref{strand-bigon} are relevant. Thus, simply forgetting labels and longitudes gives the desired map to $\operatorname{NH}_k $ and adding them back its inverse.  For any pair $((i,\dots, i),\Ba,<)$ and $(i,\dots, i),\Ba',<')$, we can simply label the identity diagram in $\operatorname{NH}_k $ of $k$ vertical lines with $\Ba$ in the order $<$ at the top and $\Ba'$ in the order $<'$ at the bottom, and obtain an isomorphism between these idempotents, showing that $e((i,\dots, i),\Ba,<)$ generates $\fKLR^{\varphi}_{\Bv^{(-1)}}$ as a 2-sided ideal.  Thus we have the desired Morita equivalence.  
\end{proof}
The core result that makes all of higher representation
theory work is that $\operatorname{NH}_k$ is isomorphic to the rank
$n!$ matrix algebra over the symmetric polynomials
$\operatorname{Sym}_k$ in $k$-variables.

Thus, the ring $\operatorname{NH}_k\otimes {}_{-}\hspace{-.2mm}{\sfKLR}^{\varphi}_{\Bv^{(0)}}$ is Morita equivalent to
$\operatorname{Sym}_k\otimes  {\sfKLR}^{\varphi}_{\Bv^{(0)}}$.
\begin{Definition}\label{def:divided-power}
	The divided power functor
	$\EuScript{E}_i^{(k)}\colon {\sfKLR}^{\varphi}_{\Bv}\mmod \to
	{\sfKLR}^{\varphi}_{\Bv^{(0)}}\mmod$ is the composition of the restriction functor
	$ {\sfKLR}^{\varphi}_{\Bv}\mmod\to \operatorname{NH}_k\otimes \sfKLR^{\varphi}_{\Bv^{(0)}}\mmod$, followed by this Morita equivalence
	and forgetting the action of $\operatorname{Sym}_k$.  Let $\EuScript{F}_i^{(k)}$ be the left adjoint of $\EuScript{E}_i^{(k)}$.  
\end{Definition}

Exactly as in Lemma \ref{lem:weighted-res}, these functors match under the  quotient functor of Theorem
\ref{thm:flavour-weight} with the categorical Lie algebra action of \cite[Th 3.1]{WebwKLR}.  Thus we have that:
\begin{Proposition}\label{prop:flavoured-cat-action}
	The functors $\EuScript{E}_i$ and $\EuScript{F}_i$ for $i\in \vertex$ give a categorical $ \fg_\Gamma$-action sending the weight $\mu=\sum w_i\varpi_i-v_i\alpha_i$ to the category ${
		\sfKLR}^{\varphi}_{\Bv}\mmod$.
\end{Proposition}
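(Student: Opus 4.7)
The plan is to deduce this result by transporting the categorical $\fg_\Gamma$-action already known to exist on modules over the weighted KLRW algebras $T^\vartheta_{\Bv}$. First I would reduce to the integral case using Lemma \ref{lem:real-reduction}: for any orbit $\mathscr{S}$ we obtain an isomorphism $\sfKLR^{\varphi}_{\mathscr{S}}\cong \sfKLR^{\varphi'}_{\mathscr{S}'}$ for an integral flavour $\varphi'$ on the enlarged quiver $\tilde{\Gamma}^{\tilde{\Bw}}$, and a categorical $\fg_{\tilde{\Gamma}}$-action on the integral case restricts to a $\fg_\Gamma$-action along the natural map $\fg_\Gamma\to \fg_{\tilde\Gamma}$ (components of $\tilde\Gamma$ disconnected from the Crawley-Boevey vertex contribute trivially by Lemma \ref{lem:disconnected vertex}).

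Next, with $\varphi$ integral and $\vartheta_e:=\varphi_e-\tfrac12$, Proposition \ref{prop:steady-weighted} gives a quotient functor $\Phi := \FW\otimes_{T^\vartheta_\Bv}- \colon T^\vartheta_\Bv\mmod \to \sfKLR^\varphi_\Bv\mmod$ whose kernel is the subcategory of modules killed by every idempotent $e(\boldsymbol{\ell})$ for which $\boldsymbol{\ell}$ comes from an integral flavoured sequence. By \cite[Thm.~3.1]{WebwKLR}, the categories $\bigoplus_\Bv T^\vartheta_\Bv\mmod$ carry a categorical $\fg_\Gamma$-action via divided power functors $\tilde{\EuScript E}_i^{(k)},\tilde{\EuScript F}_i^{(k)}$ built from restriction/induction along the inclusion of a single-vertex block, Morita-equivalent to $\operatorname{NH}_k$. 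By Lemma \ref{lem:weighted-res}, the induction and restriction bimodules for the flavoured and weighted algebras are intertwined by $\Phi$; since the divided power functors $\EuScript E_i^{(k)},\EuScript F_i^{(k)}$ of Definition \ref{def:divided-power} are defined in exactly the same way (compose with restriction, apply the $\operatorname{NH}_k\simeq \mathrm{Mat}(\Sym_k)$ Morita equivalence, forget the $\Sym_k$-action), we get natural isomorphisms
\[
\Phi \circ \tilde{\EuScript E}_i^{(k)} \cong \EuScript E_i^{(k)} \circ \Phi, \qquad \Phi \circ \tilde{\EuScript F}_i^{(k)} \cong \EuScript F_i^{(k)} \circ \Phi.
\]

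It then remains to transport the categorical action structure through $\Phi$. A categorical $\fg_\Gamma$-action in the sense of Rouquier/Khovanov-Lauda consists of the functors $\EuScript E_i,\EuScript F_i$ together with natural transformations (adjunction units/counits, the $\mathsf{x}$-dots and $\mathsf{T}$-crossings giving an action of the KLR/affine-nilHecke 2-category, and the $\mathfrak{sl}_2$-commutator isomorphisms $\EuScript E_i\EuScript F_j \oplus \delta_{ij}\mathrm{Id}^{\oplus\langle\mu,\alpha_i^\vee\rangle}\cong \EuScript F_j\EuScript E_i\oplus\cdots$ on appropriate weight spaces). All of these natural transformations between compositions of $\tilde{\EuScript E}_i,\tilde{\EuScript F}_i$ are built from bimodule homomorphisms; applying $\Phi$ (equivalently, tensoring with $\FW$ on the appropriate side) to these bimodule maps produces the required natural transformations between compositions of $\EuScript E_i,\EuScript F_i$. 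Each axiom (KLR 2-category relations, $\mathfrak{sl}_2$-relations) is the statement that a particular pair of natural transformations between bimodules agree; since $\Phi$ is an exact quotient functor and the relevant bimodules for the flavoured algebra are obtained from those for the weighted algebra by tensoring with $\FW$ on both sides, the weighted relations descend directly to flavoured ones.

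The main obstacle I anticipate is verifying the $\mathfrak{sl}_2$-commutator isomorphism precisely on each weight space, since one must check that tensoring with $\FW$ preserves the short exact sequences expressing $\EuScript E_i \EuScript F_i$ as an extension of $\EuScript F_i\EuScript E_i$ by $\mathrm{Id}^{\oplus \langle\mu,\alpha_i^\vee\rangle}$ (or the other way around, depending on the sign of the weight); this reduces to flatness of $\FW$ as a right $T^\vartheta_\Bv$-module, which follows from the explicit description $e(\boldsymbol{\ell})\FW \cong e(\boldsymbol{\ell}) T^\vartheta_\Bv$ from Proposition \ref{prop:steady-weighted} showing it is a direct summand of the regular module. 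A secondary point is that one must confirm that the weight decomposition assigning $\sfKLR^\varphi_\Bv\mmod$ to the weight $\mu = \sum w_i\varpi_i - \sum v_i\alpha_i$ is respected by the quotient, which is immediate from the fact that $\Phi$ preserves the dimension vector $\Bv$.
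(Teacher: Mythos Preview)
Your core argument (paragraphs two through four) is correct and is exactly the paper's approach: use the quotient functor $\FW\otimes -$ from weighted to flavoured modules (Proposition~\ref{prop:steady-weighted}), invoke the categorical action on weighted KLRW modules from \cite[Th.~3.1]{WebwKLR}, and use Lemma~\ref{lem:weighted-res} to see that the divided-power functors match under the quotient. The paper's proof is a single sentence to this effect; your added detail about transporting the 2-categorical structure and checking flatness of $\FW$ via its description as a summand of the regular module is a sound elaboration.

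One correction: your first paragraph is unnecessary and contains an error. The paper explicitly restricts to the integral case at the start of Section~\ref{sec:categorical-actions}, so no reduction is needed here (the non-integral situation is handled separately in Remark~\ref{rem:non-int-action}). More importantly, there is in general no Lie algebra map $\fg_\Gamma\to\fg_{\tilde\Gamma}$: a single vertex $i$ of $\Gamma$ may lift to several vertices $(i,[z])$ of $\tilde\Gamma$, and sending $e_i\mapsto\sum_{[z]} e_{(i,[z])}$ does not define a homomorphism. In the non-integral case one obtains a $\fg_{\tilde\Gamma}$-action, not a $\fg_\Gamma$-action, which is precisely the point of Remark~\ref{rem:non-int-action} and Proposition~\ref{prop:non-integral irrep}. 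Drop that paragraph and your argument is complete.
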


It is natural to ask which representation of $\fg_\Gamma$ is categorified by Proposition \ref{prop:flavoured-cat-action}.  In the papers \cite{KTWWY,KTWWYO}, we showed that for bipartite simply-laced Kac-Moody types this representation is described by the product monomial crystal.  In \cite{Gibson}, it is shown that in finite type A this representation can be identified with a generalized Schur module, and can be described via a generalized Demazure module in all finite types.  In general, we will now explain that this representation always surjects onto the irreducible representation with highest weight $ \lambda = \sum_i w_i \varpi_i $, and can sometime be identified with a tensor product of fundamental representations.  

We always have an equivariant map $\bigoplus {
	\sfKLR}^{\varphi}_{\Bv}\mmod\to \bigoplus R^{\Bw}_{\Bv}\mmod$
induced by the quotient functor $M\mapsto e_{\iH} M$ (see Proposition \ref{prop: cyclotomic KLR from fKLR}), and thus an equivariant map $K_{\C}(\bigoplus_{\Bv} {
	\sfKLR}^{\varphi}_{\Bv}\mmod)\to
K_{\C}(\bigoplus_{\Bv}R^{\Bw}_{\Bw}\mmod)$.  This latter Grothendieck
group is an irreducible representation of $\mathfrak{g}_{\Gamma}$
with highest weight $\la$ by a special case of
\cite[Th. B]{Webmerged}.  This map is typically not an isomorphism,
but can be in extremely degenerate cases, such as when all
$\varphi_e=0$.  

For the next result, we will need to assume that $\varphi_e=0$ for all old edges.  For $\Gamma$ without a cycle, all
cases can be reduced to this one by adding a 1-coboundary, using
Lemma \ref{lem:real-reduction}(2).   

\begin{Lemma} Let $ \lambda_1, \dots, \lambda_n $ be fundamental weights with $ \sum \lambda_k = \lambda $.  Suppose that $\varphi_e=0$ for all old edges and $|\varphi_e - \varphi_{e'}|>\sum v_i$ for all new edges $ e, e'$. Then,
	$K_{\C}(\bigoplus {
		\sfKLR}^{\varphi}_{\Bv}\mmod)\cong V(\lambda_1) \otimes \cdots \otimes V(\lambda_n) $ as $ \fg_\Gamma$-modules.
\end{Lemma}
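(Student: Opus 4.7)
My plan is to reduce the statement to the tensor product categorification theorem [Th.~B]{Webmerged} via the chain of functors already established in the paper.

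First, I would apply Proposition \ref{prop:steady-weighted} to relate $\sfKLR^{\varphi}_{\Bv}$ to the steadied weighted KLRW algebra $T^{\vartheta}_{\Bv}$ with weights $\vartheta_e=\varphi_e-1/2$. In the case at hand, since $\varphi_e=0$ for old edges and longitudes are integral, every loading of the form $\boldsymbol{\ell}(\Bi,\Ba,<)$ coming from a flavoured sequence is generic, and conversely every loading appearing in $T^{\vartheta}_{\Bv}$ is equivalent to one coming from an integral flavoured sequence (as the half-integer shifts from $\vartheta_e=-1/2$ only affect ghost positions, not the equivalence class of the underlying corporeal loading). This is the key step where the generic choice of $\varphi$ enters: it ensures that the quotient functor of Proposition \ref{prop:steady-weighted} kills no nonzero idempotents, and thus induces an equivalence $\sfKLR^{\varphi}_{\Bv}\mmod\cong T^{\vartheta}_{\Bv}\mmod$ preserving the $\fg_\Gamma$-action.

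Second, I would use the hypothesis $|\varphi_e-\varphi_{e'}|>\sum v_i$ on new edges. Order the red strands by $\varphi_{e_1}<\cdots<\varphi_{e_n}$, and note that a corporeal strand with label $i$ has a ghost displaced by at most $|\vartheta_e|\le\tfrac{1}{2}$ for any old edge $e$. Combined with the steadiness condition, this forces every black strand to lie in a neighbourhood of some red strand of width less than the gap $\sum v_i$. Consequently, any loading in the steadied weighted algebra assigns each black strand to a well-defined red strand, and the algebra is Morita equivalent to the merged KLRW algebra $T^{\underline{\lambda}}_\mu$ from \cite[Def.~4.4]{Webmerged}, with the sequence $\ul=(\lambda_1,\ldots,\lambda_n)$ determined by reading off the labels of the red strands in increasing order of $\varphi_e$. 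This reduction mirrors the proof of \cite[Th.~3.6]{WebwKLR}.

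Third, I would invoke \cite[Th.~B]{Webmerged}, which states that $\bigoplus_\mu K_\C(T^{\ul}_\mu\mmod)\cong V(\lambda_1)\otimes\cdots\otimes V(\lambda_n)$ as $\fg_\Gamma$-representations, with $\EuScript{E}_i,\EuScript{F}_i$ acting as the categorified Chevalley generators. To finish, I would verify that the Morita equivalence in the previous step intertwines the categorical $\fg_\Gamma$-action of Proposition \ref{prop:flavoured-cat-action} with the one on $T^{\ul}_\mu\mmod$, which is immediate from Lemma \ref{lem:weighted-res} together with the observation that the divided power functors of Definition \ref{def:divided-power} are defined identically in both settings.

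The hardest step will be the second one: making rigorous the claim that the large-gap condition on the new edges forces a decomposition of the algebra matching the tensor product structure. The subtlety is that although black strands cannot migrate across the large gaps, one must still track the ordering of their ghosts carefully to match the conventions of \cite{Webmerged}; in particular, one must check that the idempotents of the steadied weighted algebra correspond bijectively to those of $T^{\ul}_\mu$, and that this correspondence is compatible with all the local relations (\ref{dots-1}--\ref{dumb}). The absence of edge loops is needed here so that the nilHecke subalgebras governing the categorical action behave as expected.
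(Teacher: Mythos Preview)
Your overall strategy matches the paper's exactly: combine Proposition~\ref{prop:steady-weighted}, \cite[Th.~3.6]{WebwKLR}, and the tensor product categorification theorem from \cite{Webmerged}. But you have swapped the roles of the two hypotheses, and the justifications you give for steps~1 and~2 are both incorrect.

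In step~1, you claim that every loading in $T^{\vartheta}_{\Bv}$ is equivalent to one coming from an integral flavoured sequence because ``the half-integer shifts from $\vartheta_e=-1/2$ only affect ghost positions.'' This does not address the real obstruction: given a loading, one must find \emph{integer} longitudes for the corporeal strands realizing the same relative order with respect to the red strands, and there may simply not be enough integers in a given interval between two reds to do this (this is precisely the ``non-parity idempotent'' phenomenon noted right after Lemma~\ref{lem:same-order}). What actually makes step~1 work is the gap condition on new edges: since $|\varphi_e-\varphi_{e'}|>\sum v_i$, between any two red longitudes there are more than $\sum v_i$ integers available, so all $\sum v_i$ corporeal strands can be assigned distinct integral longitudes in any desired order. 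The paper's proof makes exactly this argument, and it is the crux of the lemma.

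In step~2, your claim that steadiness ``forces every black strand to lie in a neighbourhood of some red strand'' is false: steadiness only forbids a block of black strands together with all their ghosts from sliding indefinitely to the right; black strands can sit arbitrarily far to the left of all reds in a steady loading. The identification of $T^{\vartheta}_{\Bv}$ with $T^{\boldsymbol{\lambda}}$ is in the paper simply a citation of \cite[Th.~3.6]{WebwKLR}, with no further argument about confinement. So the step you flagged as hardest is the routine one, while the step you treated as routine is where the gap hypothesis actually does its work.
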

\begin{proof}
	By \cite[Th. 3.6]{WebwKLR}, in this case, the weighted KLRW algebra is  the KLRW algebra $T^{\boldsymbol{\la}}$, so ${
		\sfKLR}^{\varphi}_{\Bv}\mmod$ is a quotient of
	$T^{\boldsymbol{\la}}\mmod$.  The kernel of this quotient is the
	modules killed by all idempotents which cannot be realized as a
	flavoured sequence.  
	
	Consider an idempotent in $T^{\boldsymbol{\la}}$; the potential difficulty of realizing this with a flavoured sequence is that if we want to make sure that two strands are in the correct order, they may need to have different longitudes, and there might not be enough different longitudes between the values $\varphi_e$ and $\varphi_{e'}$ for two new edges.
	
	However,   the condition $|\varphi_e - \varphi_{e'}|>\sum v_i$ guarantees that there are enough different possible longitudes between any two of these values to accommodate all corporeal strands, all with different longitudes, and thus having any order we desire.  This shows we have a Morita equivalence and completes the proof, since $K_{\operatorname{CC}}(T^{\boldsymbol{\la}}) \cong V(\la_1)\otimes \cdots \otimes V(\la_n)$ by \cite[Theorem 4.38]{Webmerged}.
\end{proof}

\begin{Remark}\label{rem:non-int-action} As we've done several times now, we can consider the
	non-integral case using the usual trick of Lemma
	\ref{lem:real-reduction}.   In this case, we'll apply the theorems
	above for the quiver $\GammaInt$.  In particular, it might be that
	$\Gamma$ has edge loops and $\GammaInt$ does not. For example, when
	$\Gamma$ is the Jordan quiver and the flavour on its edge is not
	integral, $\GammaInt$ will have no edge loops; it will be a finite
	subset of a union of cycles or infinite linear quivers, depending on
	whether the label is rational or irrational.  This case has been extensively considered in
	\cite{WebRou}; in particular, its Grothendieck group is calculated
	in  \cite[Th. B]{WebRou} as a higher level Fock space.  
\end{Remark}

\section{Parabolic restriction and flavoured KLRW algebras}

In this section, we'll apply the
theory developed in Section \ref{sec:functors} to the
case of a quiver gauge theory.

We begin by explaining how flavoured KLRW algebras appear in relation to the
Coulomb branches of quiver gauge theories.   In Theorem \ref{thm:wKLR} we show that $\CB(\Bv,\Bw)\dash\GTc_\mathscr{S}$ is equivalent to a category of modules over a flavoured KLRW algebra, and we show in Theorem \ref{th:KLR-restriction} that this equivalence is compatible with the induction and restriction functors.  In Section \ref{sec:catO} we define (divided powers of) restriction and induction functors on category $\cO$ over Coulomb branch algebras, and show in  Theorem \ref{thm:divpowers} that these induce categorical Lie algebra action, in the quiver case.

\subsection{Relation between Coulomb branches and flavoured KLRW algebras}
As before, we fix a quiver $ \Gamma $, dimension vectors $ \Bw, \Bv \in \Z_{\geq}^{\vertex} $ and write $ \Gamma^\Bw $ for the Crawley-Boevey quiver. Recall that we are considering a quiver gauge theory with \[G=\prod
GL(v_i)\qquad
F = (\Cx)^{\edge(\Gamma^\Bw)} \quad N =\bigoplus_{e \in \edge(\Gamma^\Bw)}\Hom(\C^{v_{t(e)}},\C^{v_{h(e)}}) \]

\begin{Remark}
	Note that there is some
	redundancy here.  The map from $\tG$ into $GL(N)$ is usually not injective,
	Thus, any flavours
	that agree up to a coweight into the kernel give isomorphic
	specializations of $\CB(G,N)$; these are precisely the weightings
	which are cohomologous when thought of as 1-cocycles.
	
	Similarly any coweights conjugate in the normalizer of $G$ in
	$GL(N)$ (even if they are not conjugate in $\tG$) give isomorphic specializations
	of the Coulomb branch.  For example, we can permute the
	flavours on edges joining the same pair of vertices.
\end{Remark}

A point $ \varphi \in \mathfrak f $ is given by a map $ \edge(\Gamma^\Bw) \rightarrow \C $.  This is the same data as a flavour (Definition \ref{def:Flavor}).
For the remainder of this section, we will fix such a flavour $ \varphi\colon Z\to \C$ and study only the Gelfand-Tsetlin modules for $ \CB(G, N) $ where the center $Z$ acts by this character.  Of course, this is equivalent to
studying modules over the quotient algebra $ \CB_\varphi:=\CB_\varphi(G,N)$.

The image of the Gelfand-Tsetlin algebra in $\CB_{\varphi}$ is $ \C[\ft + \varphi]^W $.  Since $ \varphi $ is fixed and $ \tilde \ft = \ft \oplus \mathfrak f $, we will identify $ \ft + \varphi = \ft $.  Thus a Gelfand-Tsetlin weight for $ \CB_\varphi $ will be given by a point $ \gamma \in \ft/W = \prod_i
\C^{v_i}/ S_{v_i} $.  As in Section \ref{sec:metric-weighted-klr}, we will think of $ \gamma $ as a collection $(\gamma_i)$ of size $v_i$ multisets.  By Definition \ref{def:idem}, there is a corresponding idempotent $ e(\gamma)$ where
$\gamma_i$ gives the longitudes $a_k$ with label $i_k=i$.

Given $ \gamma \in \ft/W$, a lift $ \lambda \in \ft$ corresponds to choosing an ordering on each multiset.  Let $ W^\lambda \subset W $ be the stabilizer of $ \lambda $; this is simply the product of symmetric groups corresponding to repeated elements in each $ \gamma_i $.  Thus, $ W^\lambda \ne \{1\} $ if and only if there is repetition in one of the multisets $ \gamma_i $.  Equivalently, we
have consecutive corporeal elements of $\Sall$ in the flavoured sequence coming from $ \gamma $
with the same vertex and longitude. As in \cite[\S 5.2]{KTWWYO},
crossings of consecutive strands with the same longitude and label
generate a copy of the nilHecke algebra of $ W^\lambda $ in $e(\gamma) \fKLR e(\gamma)$.  We let $e'(\gamma)$ be a primitive idempotent in this
nilHecke algebra; for concreteness, we can take this to be the
projection to $W^\lambda$.

Consider the flavoured KLRW algebra $\fKLR^{\varphi}(\Gamma^\Bw)$
for the flavour $\varphi$.
Let $\fKLR^{\varphi}\wgmod$ be the
category of locally finite dimensional weakly gradable modules over $\fKLR^{\varphi}$, that is, the
modules with a filtration whose subquotients are gradable and where $e(\Bi,\Ba,<)M$ is finite dimensional for all such idempotents.  
We can also characterize these modules topologically:  $\widehat{\fKLR}^{\varphi}$ carries its {\bf grading topology}, the coarsest topology where the elements of
degree $\geq k$ for each $k$ define a basis of neighborhoods of the identity.
\begin{Lemma}
    A locally finite dimensional $\fKLR^{\varphi}$ module $M$ is weakly gradable if and only if as a discrete topological module, it carries a continuous action of the completion
$\widehat{\fKLR}^{\varphi}$ in the grading topology.  
\end{Lemma}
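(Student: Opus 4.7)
The plan is to unpack the grading topology: a neighborhood of $0$ in $\widehat{\fKLR}^{\varphi}$ contains $\fKLR^{\varphi}_{\geq k}$ for some $k$, so continuity of the action on a discrete $M$ is equivalent to the condition that for each $v \in M$ there exists $k_v$ with $\fKLR^{\varphi}_{\geq k_v} \cdot v = 0$. With this translation in hand, both implications reduce to matching ``weakly gradable'' with this ``high-degree annihilation'' property.

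For the forward direction, reduce along the filtration witnessing weak gradability to the case where $M = \bigoplus_{d \in \Z} M^d$ is itself gradable, compatibly with the grading on $\fKLR^{\varphi}$. Local finite-dimensionality forces each $e(\Bi,\Ba,<) M$ to be supported in only finitely many graded pieces. For homogeneous $v \in e(\Bi,\Ba,<)M^d$ and a homogeneous $a \in e(\Bi',\Ba',<') \fKLR^{\varphi} e(\Bi,\Ba,<)$ of degree $c$, the product $av$ lies in $e(\Bi',\Ba',<')M^{c+d}$, which vanishes once $c+d$ exits the finite support of that idempotent component. The crucial structural input, visible from the diagrammatics, is that only finitely many targets $e(\Bi',\Ba',<')$ receive nonzero contributions of bounded degree from $e(\Bi,\Ba,<)$, so a uniform degree bound $k_v$ exists and the action extends continuously.

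For the converse, assume the continuous extension exists. For each $v \in M$, let $k(v)$ be the smallest integer with $\fKLR^{\varphi}_{> k(v)} v = 0$, which is finite by continuity. Define the filtration by submodules $F_p M := \fKLR^{\varphi} \cdot \{v \in M : k(v) \leq p\}$; by continuity these exhaust $M$. Each subquotient $F_p M / F_{p-1} M$ is locally finite dimensional, and the $k$-function induces a bounded filtration on each idempotent component. Choosing vector space splittings within each finite-dimensional idempotent component then yields a $\Z$-grading on the subquotient compatible with the $\fKLR^{\varphi}$-action, witnessing weak gradability. The main obstacle here is that the raw subspace $\{v : k(v) \leq p\}$ is not itself a submodule, because $\fKLR^{\varphi}$ has negative-degree elements (same-label corporeal crossings contribute $-2$) which can cause the $k$-value of $a \cdot v$ to grow rather than shrink; controlling this negative-degree spread within each idempotent component via local finite-dimensionality, so that the induced filtration on each $e(\Bi,\Ba,<)$-component stabilizes, is the technical heart of the argument and is where the hypothesis of local finite-dimensionality is used essentially.
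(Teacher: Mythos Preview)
Your forward direction is essentially correct, though the phrase ``only finitely many targets receive nonzero contributions of bounded degree'' is imprecise; what you need is that there are only finitely many idempotents $e(\Bi,\Ba,<)$ up to isomorphism (equivalent flavoured sequences are linked by degree-zero straight-line diagrams), so the finitely many graded pieces $e_i M$ control all of them and give a global upper bound on the grading of a gradable locally finite $M$. Once you say that clearly, the continuity follows.

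The converse, however, has a genuine gap. You define $F_pM=\fKLR^{\varphi}\cdot\{v:k(v)\le p\}$ and assert that the subquotients are gradable because ``choosing vector space splittings within each finite-dimensional idempotent component yields a $\Z$-grading compatible with the action.'' But no mechanism is given for why such splittings are compatible with the algebra action. The function $k$ satisfies $k(av)\le k(v)-\deg(a)$, so negative-degree elements (which exist, e.g.\ same-label crossings contribute $-2$) can raise $k$ arbitrarily; you acknowledge this but do not resolve it. There is no reason the subspaces $\{v:k(v)\le p\}$ induce a grading on $F_pM/F_{p-1}M$, and local finite-dimensionality alone does not force one.

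The paper argues differently and avoids this difficulty entirely. Using again that there are finitely many idempotents up to isomorphism, the quotient of $\fKLR^{\varphi}$ by the ideal generated by elements of degree $\ge k$ is Morita equivalent to a finite-dimensional graded algebra. Any locally finite module with continuous $\widehat{\fKLR}^{\varphi}$-action factors through such a quotient, hence lives in an Artinian category. It therefore contains a simple submodule, and simple modules over a finite-dimensional graded algebra are gradable (a classical fact: the radical is graded, so the head is graded). Induction on a composition series then gives weak gradability. The missing idea in your attempt is precisely this reduction to simples via the Morita equivalence; trying to build the grading directly from the annihilation filtration does not work.
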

\begin{proof}
    We use several times here that $\fKLR^{\varphi}$ only has finitely many idempotents up to isomorphism.  In particular, this means that a locally finite dimensional weakly gradable module is killed by all elements of sufficiently high degree, so indeed it carries an action of this completion.  

    This also means that  the category of locally finite modules over this ring is Artinian and the quotient by the ideal generated by elements of degree $\geq k$ is Morita equivalent to a finite dimensional graded algebra.  Thus 
if $M$ is a module with such a continuous action, it contains a simple submodule, which is gradable by the Morita equivalence mentioned above.  Induction shows that $M$ is weakly gradable.  
\end{proof}

\begin{Definition}
	Let $ \widehat{\mathscr A}_{\varphi}$ be the category with objects $\ft/W$ and morphisms given by
	$$
	\Hom_{ \widehat{\mathscr A}_{\varphi}}(\gamma, \gamma') = \Hom(\Wei_\gamma, \Wei_{\gamma'}),
	$$
	where $\Wei_\gamma$ denotes the weight functor on the category of $ \CB_\varphi$-modules.
	
	Let $ \widehat{\mathscr T}_{\varphi}$ be the category with objects $\ft/W$ and morphisms given by
	$$
	\Hom_{\widehat{\mathscr T}_\varphi}(\gamma, \gamma') =  e'(\gamma') \cdot \widehat{\fKLR}^{\varphi} \cdot
	e'(\gamma).
	$$
\end{Definition}
Both of these categories have the property that $\Hom$ between $\gamma$ and $\gamma'$ is trivial unless $\gamma$ and $\gamma'$ are in the image of a single $\widehat{W}$ orbit in $\ft$.  

Thus, we can write $ \widehat{\mathscr A}_{\varphi}$ as a direct sum of subcategories $\widehat{\mathscr A}_{\mathscr{S}}$ where $\mathscr{S}$ ranges over $\widehat{W}$-orbits in $\ft$, and similarly with $\widehat{\mathscr T}_{\mathscr{S}}\subset \widehat{\mathscr T}_{\varphi}$.

Recall that in Definition \ref{def:Acat}, we defined a category $
\widehat{\mathscr A}_\Z(G,N) $ with objects $ \tilde{\ft}_\Z / W $.  We define
$ \widehat{\mathscr A}_{\Z, \varphi}(G,N) $ to be the full subcategory of
this category whose objects are the subset  $ (\ft_\Z + \varphi)/ W $.  Note that if $ \varphi $ is integral and we choose $ \mathscr S = \ft_\Z + \varphi $, then our two definitions coincide: $ \widehat{\mathscr A}_{\mathscr S} =  \widehat{\mathscr A}_{\Z, \varphi}(G,N) $.

In fact, understanding this integral case gives us all the tools we need understand all blocks.  Recall the construction of the quiver $\GammaInt$ in Definition \ref{def:GammaS}, together with its dimension vectors $\tilde{\Bv},\tilde{\Bw}$ and flavour $\varphi'$ used in Lemma \ref{lem:real-reduction}.  Moreover, we let $ G', N'$ be  the gauge group and matter representation associated to the quiver $\GammaInt$, with dimension vectors $\tilde{\Bv},\tilde{\Bw}$.

By  \cite[Cor. 4.7]{WebGT}, we have the following.
\begin{Lemma}\label{lem:A-integral}
	The subcategory $\widehat{\mathscr A}_{\mathscr{S}}$ is equivalent to $ \widehat{\mathscr A}_{\Z, \varphi'}(G',N')$.
\end{Lemma}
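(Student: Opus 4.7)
The plan is to mirror the strategy of Lemma \ref{lem:real-reduction}, establishing first a bijection on objects and then matching morphism spaces, with the crux being an application of \cite[Cor. 4.7]{WebGT} that shows natural transformations between weight functors only see the ``integral part'' of the data.

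For objects, I would construct the bijection explicitly. A point $\gamma \in \bar{\mathscr S}\subset \ft/W$ lifts to a tuple $\lambda = (\lambda_{i,k})$ with $\lambda_{i,k}\in\C$. Each coordinate lies in a unique coset $\redu{z}\in \C/\Z$, and this partition is $W$-invariant; the resulting multiplicities are exactly the dimension vector $\vInt$ defined in Definition \ref{def:GammaS}. The canonical splitting $\C^{v_i}\cong \prod_{\redu z}\C^{\tilde v_{i,\redu z}}$ then realizes $\lambda$ inside $\prod_{(i,\redu z)}\C^{\tilde v_{i,\redu z}}$, and shifting by the 0-cocycle $\eta$ (from Lemma \ref{lem:real-reduction}(2)) lands it in $\ft'_{\Z} + \varphi'$. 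Modding by the Weyl group $W' = \prod_{(i,\redu z)} S_{\tilde v_{i,\redu z}}$ gives the desired element of $(\ft'_{\Z}+\varphi')/W'$, and the remaining ambiguity in the lift $\lambda$ is exactly absorbed by $W'$.

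For morphisms, the starting point is the description of $\Hom(\Wei_\gamma, \Wei_{\gamma'})$ via the skew group algebra realization of \eqref{eq:Fhat} and \eqref{eq:A-localization}: after passing to $\CBs{\varphi}^B(G,N)$ through the Morita equivalence, morphism spaces are described by completions of the subspace of $\widehat{\HBG{\tilde T}}\otimes_{\HBG{\tilde T}}(\Hfrac\rtimes \widehat{W})$ consisting of elements $\sum a_w w$ with $w\nu = \nu'$. Now decompose $\widehat{W} = W \ltimes \ft_\Z$; translations by $\ft_\Z$ preserve every $\C/\Z$-coset, while a general $W$-element may mix coordinates. However, two coordinates $\lambda_{i,k}$ and $\lambda_{i,k'}$ lying in distinct $\C/\Z$-cosets can never be exchanged by a symmetric group element while preserving $\gamma,\gamma'$, so only those $w\in W$ preserving the coset decomposition contribute. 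This identifies the relevant piece of $\Hfrac\rtimes \widehat{W}$ with the analogous piece for the group $W' \ltimes \ft'_\Z$ attached to $\GammaInt$ and dimension vectors $\vInt, \wInt$. The shift by $\eta$ then converts $\varphiInt$ into the integral flavour $\varphi'$, exactly as in Lemma \ref{lemma: cohomologous data}.

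The main obstacle is verifying that the monopole operator structure on $\CB_\varphi(G,N)$ actually decomposes in this way after completion, which is the content of \cite[Cor.~4.7]{WebGT}. The key input is that in the monopole relations \eqref{eq: monopole relations}, any weight $\mu$ that evaluates on $\lambda\in \mathscr S$ to a non-integer contributes invertible factors to the Gelfand-Tsetlin subalgebra completion at $\mathfrak{m}_\gamma$, and hence such terms become absorbed in the completion. Consequently the surviving monopole operators at level of $\widehat{\HBG{T}}\otimes_{\HBG{T}} \CBs{\varphi}^B(T,N)$ are precisely those preserving the $\C/\Z$-coset structure, which match the monopole operators of $\CB_{\varphi'}(G',N')$ after applying the cocycle shift. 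Combining the object bijection, the morphism identification, and the compatibility of composition (which follows from the skew group algebra structure and the convolution product), we obtain the desired equivalence $\widehat{\mathscr A}_{\mathscr S}\cong \widehat{\mathscr A}_{\Z,\varphi'}(G',N')$.
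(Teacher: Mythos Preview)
Your proposal is correct and follows the same approach as the paper, which simply cites \cite[Cor.~4.7]{WebGT} for this lemma without further argument. You have essentially unpacked that citation: the object bijection via coset decomposition and the $\eta$-shift, the reduction of morphism spaces via the skew group algebra description, and the observation that non-integral weight evaluations become units in the completion are precisely the ingredients underlying that result.
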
  
See \cite[\S 4.2.3]{SWGT} for a discussion of how this result can be applied to understand all blocks of Gelfand-Tsetlin modules for $U(\mathfrak{gl}_3)$.

The following result is a direct generalization of \cite[Thm. 5.2]{KTWWYO}. The proof of this theorem will be given below.
\begin{Theorem}\label{thm:wKLR}
	We have an equivalence $ \widehat{\mathscr T}_{\varphi} \cong
	\widehat{\mathscr A}_{\varphi}$.  In particular, for any set $ \mathsf S \subset \ft $, there is an isomorphism
	\begin{equation} \label{eq:FSfKLR}
		\wtAlg(\mathsf S) \cong \fKLR_{\mathsf S}^\varphi
	\end{equation}
\end{Theorem}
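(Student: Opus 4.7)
The plan is to first reduce the statement to the case of integral flavour and integral weights (where the categories are indexed by $\tilde{\ft}_\Z/W$), and then to match both sides with the Steinberg-type geometric description of Corollary \ref{co:XA}.

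For the reduction, I would use the fact that both categories decompose into blocks indexed by $\widehat{W}$-orbits $\mathscr{S}\subset\ft$. On the Coulomb branch side, Lemma \ref{lem:A-integral} identifies the block $\widehat{\mathscr A}_{\mathscr{S}}$ with $\widehat{\mathscr A}_{\Z,\varphi'}(G',N')$, where $(G',N')$ is the quiver gauge theory built from $(\GammaInt,\vInt,\wInt)$ and $\varphi'$ is an integer-valued flavour on $\GammaInt$. On the flavoured KLRW side, Lemma \ref{lem:real-reduction} (applied with the same cocycle $\eta$ used to integralize $\varphiInt$) gives an isomorphism $\fKLR^{\varphi}_{\mathscr{S}}(\Gamma^{\Bw})\cong\fKLR^{\varphi'}_{\mathscr{S}'}(\GammaInt^{\wInt})$. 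Since the two reductions use the same quiver and the same cohomology class, it suffices to prove the equivalence $\widehat{\mathscr T}_{\Z,\varphi}\cong\widehat{\mathscr A}_{\Z,\varphi}$ in the case where all data is integral.

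In the integral case, I would invoke the equivalence $\mathsf{E}\colon\widehat{\mathscr X}(G,N)\cong\widehat{\mathscr A}_\Z(G,N)$ from Corollary \ref{co:XA}. By Proposition \ref{th:Morita} and the idempotent identities \eqref{eq:Stein-projection} and \eqref{eq:nat-projection}, the morphism spaces on both sides are computed from the Iwahori version by applying the symmetrizing idempotent $e'(\lambda)$; thus it is enough to construct an isomorphism between $\widehat{H}^{G}({}_\lambda\Stein_{\lambda'})$ and the corresponding idempotent piece of the flavoured KLRW algebra. For quiver gauge theories, I would construct an $L$-equivariant affine paving of ${}_\lambda\Stein_{\lambda'}$ whose cells are in bijection with flavoured KLRW diagrams with bottom loading determined by $\lambda$ and top loading by $\lambda'$, where corporeal strands record basis vectors of $N^{\lambda}_{\leq}\cap gN^{\lambda'}_{\leq}$, ghosts track the positions of edge-coordinates relative to their tail coordinates, and dots/crossings record the normal bundle contributions. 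The convolution product would then match diagram composition, and the defining local relations \eqref{dots-1}--\eqref{dumb} would follow from local Euler-class calculations and the Chevalley-type relations in the nilHecke subalgebra generated by crossings of strands with equal label and integral difference; strands without integral difference commute freely because the corresponding intersection of attracting loci splits geometrically.

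The main obstacle will be extending the geometric bijection and convolution computation of \cite[Thm.~5.2]{KTWWYO} (carried out there only for bipartite simply-laced $\Gamma$ and the metric KLRW algebra) to an arbitrary quiver, possibly with edge loops and without the bipartite constraint. In particular, the careful bookkeeping of the non-integral-difference case, where KLRW relations degenerate to pure isotopy, must be matched against the geometric phenomenon that the relevant pairs of basis vectors lie in distinct generalized weight spaces for $\tilde T$ and hence their crossings contribute only trivially to the convolution. A cleaner alternative would be to pass through the weighted KLRW algebra: use Theorem \ref{thm:flavour-weight} to identify $\fKLR^\varphi_{\mathsf S}$ with an idempotent truncation of the weighted algebra $\tilde T^\vartheta$, invoke the known identification \cite[Thm.~B]{WebGT} of weighted KLRW idempotent truncations with $F(\mathsf S)$ for the ordinary integral principal Galois order, and check that the idempotents on the two sides correspond under the equivalence of Lemma \ref{lem:same-order}. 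The last compatibility is what I expect to require the most delicate bookkeeping, particularly in reconciling unsteady-loading vanishing on the KLRW side with the unbounded-equivalence-class phenomenon on the Coulomb side.
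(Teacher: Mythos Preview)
Your overall strategy is correct and matches the paper's proof closely: reduce block by block to the integral case via Lemmas \ref{lem:A-integral} and \ref{lem:real-reduction}, invoke Corollary \ref{co:XA}, use the idempotent reductions \eqref{eq:Stein-projection}--\eqref{eq:nat-projection} to pass to the full-flag Steinberg variety, and then identify $e(\gamma')\,\widehat{\fKLR}^{\varphi}\,e(\gamma)$ with $\widehat{H}^{G}({}_\lambda\Stein_{\lambda'})$. The paper takes your second route (through the weighted KLRW algebra via Theorem \ref{thm:flavour-weight}), not the first direct-paving route.

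Two points of correction. First, the geometric input you want is \cite[Thm.~4.5]{WebwKLR}, which already gives the isomorphism $e(\boldsymbol{\ell}')\,\tilde T^{\vartheta}\,e(\boldsymbol{\ell})\cong H^{G}_*(X_{\boldsymbol{\ell}}\times_N X_{\boldsymbol{\ell}'})$ for weighted KLRW algebras; combined with Lemma \ref{le:XeqX} ($X_{\boldsymbol{\ell}}\cong X_\lambda$) this finishes the identification. The citation \cite[Thm.~B]{WebGT} is not the right one here. Second, your final worry about matching unsteady-loading vanishing with the unbounded-equivalence-class phenomenon is misplaced: that concerns the passage to steadied quotients and category $\cO$, which is the content of Theorem \ref{thm:O-steady}, not of Theorem \ref{thm:wKLR}. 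The present theorem is an equivalence of the \emph{full} categories $\widehat{\mathscr T}_\varphi\cong\widehat{\mathscr A}_\varphi$, so no such compatibility check is required.
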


From the theorem and Proposition \ref{thm:GTc-F}, we immediately conclude the following:
\begin{Corollary} \label{co:wKLR}
	For any orbit $\mathscr S \subset \ft $, there is an equivalence $$ \Wei : \CB(G,N)\dash\GTc_\mathscr{S}  \rightarrow {\fKLR}^{\varphi}_{\mathscr{S}}\wgmod$$
	such that for any $M \in \CB(G,N)\dash\GTc_\mathscr{S} $ and any $ \gamma \in \bar{ \mathscr S} $, we have
	$$
	\Wei_\gamma(M) = e'(\gamma) \Wei(M)
	$$
\end{Corollary}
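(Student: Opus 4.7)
The plan is a direct combination of the two preceding results. First I would apply Proposition~\ref{thm:GTc-F} with $\mathsf{S} = \mathscr{S}$: since $\mathscr{S}$ serves as its own complete set, the functor $\Wei_\mathscr{S} := \bigoplus_{\gamma \in \bar{\mathscr{S}}} \Wei_\gamma$ would yield an equivalence between $\CB(G,N)\dash\GTc_\mathscr{S}$ and the category of $\wtAlg(\mathscr{S})$-modules continuous in the discrete topology. Then I would invoke the algebra isomorphism $\wtAlg(\mathscr{S}) \cong \fKLR^\varphi_\mathscr{S}$ from the second assertion of Theorem~\ref{thm:wKLR} to transport this equivalence to modules over the flavoured KLRW algebra.

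Next I would need to check that the image of this composition lands in $\fKLR^\varphi_\mathscr{S}\wgmod$. Under the isomorphism of Theorem~\ref{thm:wKLR}, the weak operator topology on $\wtAlg(\mathscr{S})$ described in Section~\ref{se:FS} should match the grading topology on $\widehat{\fKLR}^\varphi_\mathscr{S}$, since both admit a basis of neighbourhoods of $0$ given by the two-sided ideals generated by elements of degree at least $k$. A module is continuous in the discrete topology precisely when every element is annihilated by some such open ideal, which, combined with local finite-dimensionality at each idempotent $e'(\gamma)$, is exactly the definition of weak gradability.

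For the weight space identification, I would argue as follows. Under the equivalence $\mathsf{E}: \widehat{\mathscr T}_\varphi \xrightarrow{\sim} \widehat{\mathscr A}_\varphi$ of Theorem~\ref{thm:wKLR}, the identity endomorphism of the object $\gamma$ in $\widehat{\mathscr T}_\varphi$, which by construction is the idempotent $e'(\gamma)$ viewed inside $e'(\gamma)\widehat{\fKLR}^\varphi e'(\gamma)$, must be sent to the identity natural transformation $1_{\Wei_\gamma}$ on the other side. Since $\Wei_\gamma(M)$ is by definition the image of $\Wei_\mathscr{S}(M)$ under $1_{\Wei_\gamma}$, the formula $\Wei_\gamma(M) = e'(\gamma)\Wei(M)$ then follows by transporting across the isomorphism. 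The essential content has already been established in Theorem~\ref{thm:wKLR}; the only substantive point remaining here is the matching of topologies addressed in the second paragraph, which I expect to be routine given the explicit description of $\widehat{\fKLR}^\varphi$ and its grading.
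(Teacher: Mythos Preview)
Your proposal is correct and follows essentially the same route as the paper, which simply records the corollary as ``immediate from Theorem~\ref{thm:wKLR} and Proposition~\ref{thm:GTc-F}''; you have supplied the details the paper omits, including the passage from discrete-continuous modules to weakly gradable ones (which the paper handles via the unnumbered lemma just before Corollary~\ref{co:wKLR}). One small terminological slip: a complete set is required to be \emph{finite} (Definition~\ref{de:CompleteSet}), so $\mathscr{S}$ is not literally its own complete set; but this does not matter, since Proposition~\ref{thm:GTc-F} applies to an arbitrary $\mathsf{S}$, and for $\mathsf{S}=\mathscr{S}$ the quotient category $\CB(G,N)\dash\GTc_{\mathscr{S}}$ coincides with the subcategory of modules supported on $\bar{\mathscr{S}}$.
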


Before beginning the proof of this theorem, we will examine the spaces $Y_\gamma, X_\lambda $ (from section \ref{se:recollect}) in the quiver case.  Assume for now that $\varphi$ is integral; Theorem \ref{thm:wKLR} holds for all $\varphi$, but we will reduce to the integral case in the proof, and for simplicity, we only state Lemmata \ref{le:gamma-flag} and \ref{le:XeqX} in the integral case. For any $ \gamma \in \prod_i \Z^{v_i}/S_{v_i} $, let $ |\gamma_i^{\le k}| = | \gamma_i \cap (-\infty, k]|$, i.e. the number of elements in the multiset $ \gamma_i $ which are at most $ k $.

\begin{Definition}
	A partial flag of type $ \gamma $ is a $ \Z$-indexed sequence of nested subspaces
	$$ \cdots \subseteq F_0 \subseteq F_1 \subseteq \cdots \subseteq \bigoplus_{i \in \vertex \sqcup \{ \infty \}} \C^{v_i} $$
	such that each $ F_k $ is compatible with the decomposition $ \bigoplus \C^{v_i} $ and for all $ k \in \Z, i \in I$ we have
	$$
	\dim F_k \cap \C^{v_i} = |\gamma_i^{\le k}|
	$$
\end{Definition}

Here we include $ \infty $ as the one-dimensional Crawley-Boevey vertex, with $ \C^{v_\infty} = \C_\infty $.  We assume that $\gamma_\infty = \{ 0\}$ which forces \[ \dim F_k \cap \C_\infty = \begin{cases}
	1 & \text{ if }k \ge 0\\	
	0 & \text{ if }k < 0\\	
\end{cases}
\]
Each partial flag of type $ \gamma $ gives rise to an ordinary partial flag (indexed by dimension) in each $ \C^{v_i} $.

A careful examination of the definitions of $ X_\lambda, Y_\gamma$ leads to the following result.
\begin{Lemma}\label{le:gamma-flag}
	Consider $ \gamma \in \prod_i \Z^{v_i}/S_{v_i} $ and let $
	\lambda \in \prod_i \Z^{v_i} $ a lift of $\gamma$.
	\begin{enumerate}
		\item		The partial flag variety $ G/ P_\lambda $ is isomorphic
		to the space of partial flags of type $ \gamma $.
		\item  Applying the second description of \eqref{Ygamma-def}:
		$$	Y_\gamma = \{ (F_\bullet,n) \in G/P_\lambda \times N \mid n_e(F_k) \subset F_{k + \varphi_e} \text{ for all $ e \in \edge(E^\Bw), k \in \Z $}\}$$
		\item The fibre of $ X_\lambda \rightarrow Y_\gamma $ over $ (F_\bullet, n) $ is given by choosing full flags in each $\C^{v_i} $, refining the partial flags coming from $ F_\bullet $.
	\end{enumerate}
\end{Lemma}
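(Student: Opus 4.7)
I would prove the three parts in turn; each is a direct unpacking of definitions in terms of the explicit quiver data, but part (2) requires some sign/convention bookkeeping.

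For part (1), write $G = \prod_i GL(v_i)$. Since $\lambda$ is the antidominant lift of $\gamma$, I can order each $\lambda^{(i)} = (\lambda^{(i)}_1 \leq \cdots \leq \lambda^{(i)}_{v_i})$ so that it lists the multiset $\gamma_i$ in weakly increasing order. The parabolic $P_\lambda \cap GL(v_i)$ is then precisely the stabilizer of the partial flag in $\C^{v_i}$ whose $k$-th step is $\mathrm{span}\{e_a : \lambda^{(i)}_a \leq k\}$, a subspace of dimension $|\gamma_i^{\leq k}|$. Taking the product over $i$ (and including the one-dimensional Crawley--Boevey vertex at $\infty$), the standard parabolic/flag dictionary identifies $G/P_\lambda$ with the space of partial flags of type $\gamma$.

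For part (2), I start from the second description of $Y_\gamma$ in \eqref{Ygamma-def} and, using $G$-equivariance, reduce to showing that for the standard partial flag $F_\bullet$ of type $\gamma$ constructed in (1), the subspace $N^\lambda_\leq \subseteq N$ consists of exactly those collections $n = (n_e)$ satisfying $n_e(F_k) \subseteq F_{k + \varphi_e}$ for every $e \in \edge(\Gamma^\Bw)$ and every $k \in \Z$. To see this, decompose each summand $\Hom(\C^{v_i}, \C^{v_j})$ (for an edge $e : i \to j$) into $\tilde{T}$-weight spaces spanned by matrix units $e_{ba}$; after the identification $\ft + \varphi = \ft$, the weight of $e_{ba}$ evaluated at $\lambda$ takes the form $\lambda^{(j)}_b - \lambda^{(i)}_a - \varphi_e$. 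Hence $e_{ba} \in N^\lambda_\leq$ precisely when $\lambda^{(j)}_b \leq \lambda^{(i)}_a + \varphi_e$, and a direct check against the explicit bases of $F_k \cap \C^{v_i}$ and $F_{k+\varphi_e} \cap \C^{v_j}$ shows that this is equivalent to the asserted flag-preservation condition holding for all $k$. The main obstacle here is purely bookkeeping: one must carefully fix the sign conventions by which the flavour torus $F = (\Cx)^{\edge(\Gamma^\Bw)}$ acts on $N$ so that the $+\varphi_e$ appearing on the flag side matches the weight pairing on the Coulomb side. Once this is pinned down, the translation is routine.

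For part (3), observe that $X_\lambda \to Y_\gamma$ is obtained by quotienting $G \times N^\lambda_\leq$ by $B_\lambda$ rather than by $P_\lambda$, so since $B_\lambda \subseteq P_\lambda$ is a Borel the morphism is a Zariski-locally trivial bundle with fibre $P_\lambda/B_\lambda$. Splitting over vertices gives $P_\lambda/B_\lambda = \prod_i (P_\lambda \cap GL(v_i)) / (B_\lambda \cap GL(v_i))$, which is the product of varieties of full flags in each $\C^{v_i}$ refining the partial flag cut out by $F_\bullet$; this is exactly the stated description of the fibre over a point $(F_\bullet, n)$.
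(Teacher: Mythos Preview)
Your proposal is correct and does exactly what the paper intends: the paper provides no proof at all, simply stating that ``a careful examination of the definitions of $X_\lambda, Y_\gamma$ leads to the following result,'' and you have carried out that examination in detail. Your treatment of the sign convention in part (2) is appropriate---the paper never fixes the sign of the flavour torus action on $N$, so flagging it as a bookkeeping issue to be pinned down is the right move.
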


From Lemma \ref{lem:sequence-from-flavours},  we can associate a $\varphi$-flavoured sequence to each $\gamma $ where the entries of each multiset $ \gamma_i $ give the longitudes $a_k$ with $i_k=i$.  Then given the flavoured sequence, we can construct a loading $ \boldsymbol{\ell} $ as explained in Section \ref{sec:connecting}.

To each loading $ \boldsymbol{\ell} $, in \cite[Def. 4.2]{WebwKLR}, the second author defined a space $ X_{\boldsymbol{\ell}}$ of $ \boldsymbol{\ell}$-loaded flags and compatible representations; these assign a subspace $F'_a$ to each real number $a$ whose dimension vector is given by summing the labels on real numbers $\leq a$ under the loading.  Assume that $\boldsymbol{\ell} $ is the loading  $\boldsymbol{\ell} (\gamma)$ defined in Section \ref{sec:connecting}, attached to the idempotent $e(\gamma)$.    Given an element of $X_{\boldsymbol{\ell}}$, we have subspaces $F_k=F'_{k+\nicefrac 12}$ which define a partial flag of type $\gamma$ since there are precisely $|\gamma_i^{\le k}|$ appearances of $i$ in this loading on real numbers $\leq a+\nicefrac 12$, which is an element of $Y_{\gamma}$  by Lemma \ref{lem:same-order}.  Since considering the spaces $F'_{k+\epsilon m}$ for different values of $m$ gives a refinement of this to a complete flag, we have that:
\begin{Lemma} \label{le:XeqX}
	There is an isomorphism $ X_{\boldsymbol{\ell}} \cong X_\lambda $.
\end{Lemma}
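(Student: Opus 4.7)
The plan is to exhibit a natural bijection between the two moduli problems and verify it extends to an isomorphism of algebraic varieties. I will build the map explicitly using the recipe already hinted at in the paragraph preceding the lemma, and then show the inverse is just as straightforward.

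First, I would recall from \cite[Def. 4.2]{WebwKLR} that a point of $X_{\boldsymbol{\ell}}$ consists of a family of subspaces $F'_a \subset \bigoplus_{i} \C^{v_i}$, indexed by $a \in \R$, with the dimension of $F'_a \cap \C^{v_i}$ equal to the number of real numbers $\leq a$ carrying the label $i$ in the loading $\boldsymbol{\ell}$, together with a compatible quiver representation. Recall that the loading $\boldsymbol{\ell} = \boldsymbol{\ell}(\gamma)$ places a corporeal point of label $i_k$ at position $\mathfrak{a}_{\boldsymbol{\ell}}(k) = a_k + k\epsilon$ for some $0 < \epsilon \ll 1/(2n)$, where $a_k \in \Z$ is the longitude. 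The key observation is that for this choice of $\epsilon$ the subspace $F'_{k+1/2}$ depends only on $k$ (and not on which representative $k + 1/2$ is chosen within its gap in the loading), whereas the subspaces $F'_{k + m\epsilon}$ for $m = 1, \dots, n$ interpolate between $F'_{k-1/2}$ and $F'_{k+1/2}$.

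Next, I would define the forward map $X_{\boldsymbol{\ell}} \to X_\lambda$ by $\{F'_a\}_{a \in \R} \mapsto (F_\bullet, n, \text{refinement})$ where: setting $F_k := F'_{k+1/2}$, the count of corporeal points with label $i$ at positions $\leq k + 1/2$ is exactly $|\gamma_i^{\le k}|$, so $(F_\bullet, n)$ defines a partial flag of type $\gamma$ together with compatible data, i.e.\ a point of $Y_\gamma$ in the sense of Lemma \ref{le:gamma-flag}(2). The compatibility $n_e(F_k) \subset F_{k + \varphi_e}$ follows from the corresponding compatibility built into the definition of $X_{\boldsymbol{\ell}}$, since in the integral case the ghost of a strand at position $a_k$ along the edge $e$ sits at position $a_k + \varphi_e - 1/2$, shifting by exactly $\varphi_e$ when we pass from $F'_{k+1/2}$ to $F'_{k + \varphi_e + 1/2}$. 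The refinement to a complete flag in each $\C^{v_i}$, which is the extra data carried by $X_\lambda$ over $Y_\gamma$, is produced by considering the intermediate subspaces $F'_{k + m\epsilon - 1/2}$ for $m = 1, \dots, n$: these strictly increase the dimension by one each time a corporeal point is crossed, and thus refine the partial flag of type $\gamma$ to a full flag whose associated order on the one-dimensional quotients matches $<$ on $\Sc$, which is exactly the data of the reduction to $B_\lambda$.

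For the inverse, given $(gB_\lambda, n) \in X_\lambda$, extract the full flag, index its steps by $\Sc$ in the order dictated by $\lambda$ (i.e.\ by $<$), and define $F'_a$ to be the span of those steps whose corresponding corporeal position under $\mathfrak{a}_{\boldsymbol{\ell}}$ is $\leq a$. Since $\mathfrak{a}_{\boldsymbol{\ell}}$ is strictly increasing on $\Sc$, this assignment is piecewise constant in $a$ with the correct jumps, so it satisfies the dimension conditions defining $X_{\boldsymbol{\ell}}$. Compatibility with the quiver representation $n$ again translates directly under the identification of the shift by $\varphi_e$ with the shift by $\vartheta_e + 1/2$ on ghost positions. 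The two constructions are mutually inverse essentially by construction, and both are algebraic morphisms, so they constitute an isomorphism of varieties.

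The main point to be careful about—though not really an obstacle—is checking that the genericity conventions match: the use of the perturbation $k \epsilon$ in defining $\boldsymbol{\ell}(\gamma)$ is precisely what ensures that no two corporeal or ghostly points collide, which mirrors the fact that $X_\lambda$ records a total refinement of the partial flag. By Lemma \ref{lem:same-order}, the order on $\Sall$ induced by $\mathfrak{a}_{\boldsymbol{\ell}}$ agrees with $<$, so the identification of the refinement data on both sides is unambiguous, completing the argument.
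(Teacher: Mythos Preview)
Your proposal is correct and follows essentially the same approach as the paper: the paper's justification is entirely contained in the paragraph immediately preceding the lemma (defining $F_k = F'_{k+1/2}$ to get the partial flag of type $\gamma$, and using the intermediate subspaces $F'_{k+\epsilon m}$ to produce the refinement to a full flag), and you have simply expanded these two steps with more care, including writing out the inverse explicitly.
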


\begin{proof}[Proof of Theorem \ref{thm:wKLR}]
	Let us fix a single orbit $\mathscr{S}$ and only consider the construction of the equivalence of this block.  By
	Lemma \ref{lem:real-reduction} and Lemma \ref{lem:A-integral}, we can reduce to the case where the flavour $\varphi$ is integral and $\mathscr{S}=\ft_\Z+\varphi$.  

	By Corollary \ref{co:XA}, this reduces to showing that for all $ \gamma, \gamma' \in \ft_\Z $,
	\begin{equation}
		e'(\gamma') \phantom{\cdot} \widehat{\fKLR}^{\varphi} 
		e'(\gamma)\cong \widehat{H}^{{G}}(
		{}_\gamma\pStein_{{\gamma'}}). \label{eq:f-Y}
	\end{equation}
	
	By (\ref{eq:Stein-projection}) applied with $G=L$, it suffices to establish an isomorphism
	\begin{equation}
		e(\gamma') \phantom{\cdot} \widehat{\fKLR}^{\varphi} 	e(\gamma)\cong \widehat{H}^{{G}}(
	{}_\lambda\Stein_{{\lambda'}}).\label{eq:f-X}
	\end{equation}
	where $ \lambda, \lambda' $ are antidominant lifts of $ \gamma, \gamma' $.
	
	Since by Theorem \ref{thm:flavour-weight},  the tensor product $\tFW\otimes_{\tilde{T}_{\Bv}^{\vartheta}}-$ is a quotient functor, we have that $\tFW$ is projective as a left module, and  $\Hom(\tFW,\tilde{T}^{\varphi}  e(\boldsymbol{\ell}) )\cong {\fKLR}^{\varphi}  e(\gamma)$ where $\boldsymbol{\ell}$ is the loading associated to $e(\gamma)$ in the weighted KLRW algebra.  This shows that $e(\gamma') 
	{\fKLR}^{\varphi}  e(\gamma) \cong e(\boldsymbol{\ell}')  \tilde{T}^{\varphi}  e(\boldsymbol{\ell})$ 
	since
	\begin{equation*}
		e(\boldsymbol{\ell}') \tilde{T}^{\varphi} e(\boldsymbol{\ell})\cong \Hom(\tFW\otimes_{\tilde{T}_{\Bv}^{\vartheta}}{\fKLR}^{\varphi}  e(\gamma) ,\tilde{T}^{\varphi}  e(\boldsymbol{\ell}) ) \cong \Hom({\fKLR}^{\varphi}  e(\gamma), {\fKLR}^{\varphi} e(\gamma'))\cong e(\gamma') 
		{\fKLR}^{\varphi}  e(\gamma) 
	\end{equation*} 
	On the other hand, \cite[Thm. 4.5]{WebwKLR} describes this part of the weighted KLRW algebra using equivariant homology of a fibre product.  The isomorphism for flavoured algebras can also be written directly using the same philosophy:
 \begin{enumerate}
     \item diagrams with no dots, no double crossings, and no pair of strands with a same label and longitudes that differ by $\Z$ crossing are sent to the homology class of preimage of the diagonal under the map ${}_\lambda\Stein_{{\lambda'}}\to G/B\times G/B$.
     \item dots are sent to the Chern classes of tautological line bundles on this preimage in ${}_\lambda\Stein_{{\lambda}}$.
     \item diagrams with a single crossing of a pair strands with a same label and longitudes that differ by $\Z$, and no other crossings are sent to the preimage of the diagonal in $G/P\times G/P$, where $P$ is the parabolic where we add in the simple root corresponding to the crossing.
 \end{enumerate}
 
 By Lemma \ref{le:XeqX}, this fibre product is isomorphic to $ {}_\lambda\Stein_{{\lambda'}}$.
	All of these isomorphisms are compatible with composition and convolution in homology, so this defines the desired equivalence of categories.
\end{proof}

\subsection{Functors in the quiver case} \label{se:FunctorQuiver}
Fix a coweight $ \xi : \Cx \rightarrow  T $ as in Section \ref{se:Quiverxi}.  We will now see how the functors $\res,\ind $ and $\coind$, as defined in Section \ref{sec:functors}, interact
with the equivalence of Theorem \ref{thm:wKLR}.

As discussed in
Section \ref{se:Quiverxi}, a
choice of $\xi$ gives us dimension vectors $\Bv^{(p)}$ for $p\in \Z$.  Let $ \gamma \in \ft / W = \prod_i \C^{v_i}/S_{v_i} $, as in the previous section.   Choosing $\nu \in \ft/W_L = \prod_p \prod_i \C^{v_i^{(p)}}/S_{v_i^{(p)}}$ lifting $\gamma$ means dividing each multiset $ \gamma_i $ into multisets $ \nu_i^{(p)} $ of sizes $ v_i^{(p)}$.  With this in mind, Definition \ref{def:xineg} translates into the following statement.
\begin{Lemma}
	$\nu$ is $ \xi$-negative if and only if
	\begin{enumerate}
		\item for all $  p, q  \in \Z, e \in \Gamma^{\Bw}, y \in \nu_{t(e)}^{(p)}, z \in \nu_{h(e)}^{(q)}$ we have
		\begin{gather*}
			\text{if $ p < q $, then $ \varphi_e + z - y \notin \Z_{>0} $} \\
			\text{ and if $ p > q $, then $ \varphi_e + z - y \notin \Z_{\leq 0}$}
		\end{gather*}
		\item For all $ i \in I $ and $ p \ne q \in \Z $, if $ y \in \nu^{(p)}_i, z \in \nu^{(q)}_i $, then $ y \ne z $.
	\end{enumerate}
\end{Lemma}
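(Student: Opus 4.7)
The plan is to translate each of the three defining clauses of $\xi$-negativity from Definition \ref{def:xineg} directly into the stated combinatorial conditions on the multisets $\nu_i^{(p)}$, by computing the relevant pairings $\langle \mu, \xi \rangle$ and $\langle \mu, \lambda \rangle$ on the weights of $\tilde T$ acting on $N$.

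First I would enumerate the weights: for each edge $e \in \edge(\Gamma^{\Bw})$ with $t(e) = i$ and $h(e) = j$, the summand $\Hom(\C^{v_i}, \C^{v_j})$ of $N$ contributes weights of the form $\epsilon_{j,l} - \epsilon_{i,k} + \phi_e$ indexed by pairs of basis vectors $(k,l) \in [1,v_i]\times [1,v_j]$, where the $\epsilon_{\bullet,\bullet}$ are the coordinate functions on $\ft$ and $\phi_e$ is the coordinate on the flavour torus corresponding to $e$. By the sign convention of Section \ref{se:Quiverxi}, if the $k$-th basis vector of $\C^{v_i}$ lies in degree $p$, then $\langle \epsilon_{i,k}, \xi \rangle = -p$, so $\langle \mu, \xi \rangle$ reduces to a signed difference of the degrees of $k$ and $l$. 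Under the identification $\ft + \varphi = \ft$ from Section \ref{sec:modul-coul-branch}, the pairing $\langle \mu, \lambda \rangle$ evaluates to $\varphi_e + z - y$, where $y \in \nu_{t(e)}^{(p)}$ is the longitude of $k$ and $z \in \nu_{h(e)}^{(q)}$ is the longitude of $l$.

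With these two formulas in hand, clauses (i) and (ii) of Definition \ref{def:xineg} become verbatim the two inequalities in condition (1) of the Lemma, once the sign of $\langle \mu, \xi \rangle$ is matched against the comparison of $p$ and $q$. For clause (iii), note that $W = \prod_i S_{v_i}$ acts on $\ft$ by permuting coordinates within each factor, while $W_L = \prod_{i,p} S_{v_i^{(p)}}$ permutes only coordinates within a single graded piece; the stabilizer $W^\lambda$ is generated by transpositions of pairs of equal coordinates, so $W^\lambda = W_L^\lambda$ holds if and only if no such equal pair lies in two distinct graded pieces $\nu_i^{(p)}, \nu_i^{(q)}$ (with $p \ne q$). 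This is exactly condition (2) of the Lemma.

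The whole argument is a mechanical translation; the only point that requires genuine care is the sign bookkeeping needed to match the inequality ``$p < q$'' versus ``$p > q$'' in condition (1) of the Lemma with the correct half of Definition \ref{def:xineg}, using the convention ``degree $k$ = weight $-k$'' fixed in Section \ref{se:Quiverxi}.
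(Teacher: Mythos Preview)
Your plan is correct and is exactly what the paper does: the paper offers no proof beyond the sentence ``Definition \ref{def:xineg} translates into the following statement,'' and your proposal simply spells out that translation. The identification of the weight $\mu=\epsilon_{h(e),l}-\epsilon_{t(e),k}+\phi_e$, the pairing $\langle\mu,\lambda\rangle=\varphi_e+z-y$, and the stabilizer argument for condition (2) are all exactly right.

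One caution on the point you flag as delicate. With the convention of Section \ref{se:Quiverxi} (degree $k$ equals weight $-k$), the pairing comes out to $\langle\mu,\xi\rangle=p-q$, where $p$ is the degree at the tail and $q$ at the head. Carrying clauses (i) and (ii) of Definition \ref{def:xineg} through this gives ``if $p>q$ then $\varphi_e+z-y\notin\Z_{>0}$'' and ``if $p<q$ then $\varphi_e+z-y\notin\Z_{\leq 0}$,'' which is the printed statement with the two cases swapped. The swapped version is the one that is internally consistent with the rest of the paper: in Section \ref{se:FunctorQuiver} the authors produce $\xi$-negative elements by adding $pH$ to each $\mathsf{S}^{(p)}$ for $H\gg 0$, and one checks directly that this satisfies the swapped conditions but violates the printed ones. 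So your instinct that the sign bookkeeping is the only real content is correct, and you should trust your computation over the printed inequalities.
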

(Here as usual, we adopt the convention that $ \nu^p_\infty = \{0\} $, if $ p = 0 $, and is empty otherwise.)

We now fix an orbit $\mathscr{S}\subset
\ft + \varphi$ for $\widehat{W}$, and $\mathscr{S}^L\subset
\mathscr{S}$ an orbit of $\widehat{W}_L$.  Since fixing $\mathscr{S}$ requires
choosing the multisets of the fractional parts in $
\prod (\C/\Z)^{v_i} / S_{v_i} $, $\mathscr{S}^L$ corresponds to a
division of these fractional parts into submultisets of the correct size.
If all $\gamma_i \in \Z^{v_i}/S_{v_i}$, then we have $
\mathscr{S}=\mathscr{S}^L$.  

The orbit
$\mathscr{S}^L$ is the product of orbits $\mathscr{S}^{(p)}$ of the
extended affine Weyl groups for the smaller gauge groups $L_p = \prod_i GL(\C^{v_i^{(p)}})$.

Now, we choose a set $\mathsf{S}\subset \mathscr{S}$ whose
corresponding flavoured sequences give a transversal of the equivalence
classes. This must be a complete set in the sense of Definition \ref{de:CompleteSet} since if $\gamma$ and $\gamma'$
give equivalent flavoured sequences, by Theorem \ref{thm:wKLR}, the straight line diagram between
them gives an isomorphism $\Wei_\gamma\cong \Wei_{\gamma'}$
as functors.

We
can choose similar sets $\mathsf{S}^{(p)}\subset \mathscr{S}^{(p)}$;
we will assume that these are chosen so that any point in the product
$\prod_{p\in \Z}\mathsf{S}^{(p)}$, interpreted as an element of
$\ft/W_L$, is $\xi$-negative.  To see that this $\xi$-negativity property can be achieved, suppose that 
$\mathsf{S}^{(p)}$ is to an arbitrary complete set of
$\mathscr{S}^{(p)}$.  Fix an integer $ H \gg 0 $.  Then modify $ \mathsf S^{(p)} $ by adding $pH$ to all the entries in each multiset $ \gamma_i^{(p)} \in \mathsf S^{(p)}$ for all $ p $.  The resulting elements of $ \prod_{p \in \Z} \mathsf S^{(p)} $ will then be $ \xi$-negative.

We define  $\mathsf{S}^L=\prod_{p\in \Z}\mathsf{S}^{(p)}$, and as
discussed before, we can assume that $ \mathsf{S}^L\subset \mathsf{S}$.

As discussed in Section \ref{se:Quiverxi}, we have $ \CB(L,N) = \bigotimes_p \CB(L^{(p)}, N^{(p)}) $.  This leads to an isomorphism
$$
\wtAlg^L(\mathsf S^L) = \bigotimes_{p \in \Z} \wtAlg^{L^{(p)}}(\mathsf{S}^{(p)})
$$

Combining this isomorphism with those from Corollary \ref{co:wKLR} and Lemma \ref{le:fTpfTZ} we obtain,

\begin{equation} \label{eq:FSLfKLR}
	\wtAlg^L(\mathsf{S}^L) \cong \bigotimes_{p\in
		\Z}\wtAlg^{L^{(p)}}(\mathsf{S}^{(p)}) \cong \bigotimes_{p\in
		\Z}\fKLR^{\varphi}_{\mathsf{S}^{(p)}} \cong {}^\Z \fKLR^{\varphi}_{\mathsf{S}^{(*)}}.
\end{equation}

Recall that in \eqref{eq:I-def}, we defined a $ \wtAlg(\mathsf{S})\operatorname{-}\wtAlg^L(\mathsf S^L)$ bimodule
$\wtBim(\mathsf{S}^L,\mathsf{S})$.  Also recall that in Section \ref{sec:induct-restr}, we defined a $\fKLR^{\varphi}_{\mathsf{S}}\operatorname{-}   {}^\Z \fKLR^{\varphi}_{\mathsf{S}^{(*)}}$-bimodule $ \rif $.  By completing with respect to the grading, we obtain a $\widehat{\fKLR}^{\varphi}_{\mathsf{S}}\operatorname{-}   {}^\Z \widehat{\fKLR}^{\varphi}_{\mathsf{S}^{(*)}}$-bimodule $ \widehat{\rif} $.  Similarly, we have bimodules $\wtBim(\mathsf{S}, \mathsf{S}^L)$ and $ \widehat{\corif}$ for these algebras in the opposite orders.

\begin{Lemma}\label{lem:bim-iso}
	Under the isomorphisms (\ref{eq:FSfKLR}) and (\ref{eq:FSLfKLR}), the bimodule
	$\wtBim(\mathsf{S}^L,\mathsf{S})$ corresponds
	to the bimodule
	$\widehat{\rif}$, and $\wtBim(\mathsf{S},\mathsf{S}^L)$ to $\widehat{\corif}$.
\end{Lemma}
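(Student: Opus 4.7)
The plan is to verify the isomorphism idempotent-by-idempotent, matching the natural-transformation description of $\wtBim(\mathsf{S}^L, \mathsf{S})$ with the diagrammatic description of $\widehat{\rif}$. First, using Lemma \ref{lem:real-reduction}, I would reduce to the case where $\varphi$ is integral and $\mathscr{S}$ is a single orbit of integral points, since both $\widehat{\rif}$ and the restriction functor $\res$ respect the orbit decomposition of $\ft$. For fixed $\gamma' \in \bar{\mathsf{S}}$ and $\nu \in \bar{\mathsf{S}^L}$ with image $\gamma \in \bar{\mathsf{S}}$, the summand $e(\gamma')\wtBim(\mathsf{S}^L, \mathsf{S})e(\nu)=\Hom(\Wei^L_\nu \circ \res, \Wei_{\gamma'})$ is, by our $\xi$-negativity assumption on $\mathsf{S}^L$ and Theorem \ref{th:res-weight}, isomorphic to $\Hom(\Wei_\gamma, \Wei_{\gamma'})$, which Theorem \ref{thm:wKLR} identifies with $e(\gamma')\widehat{\fKLR}^\varphi e(\gamma)$.

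For the comparison with $\widehat{\rif}$, I would use the concrete description given after Definition \ref{def:flavour-res-ind}: for $H \gg 0$ the shift $(p,x) \mapsto Hp + x$ identifies a Morita-equivalent subalgebra of ${}^\Z\widehat{\fKLR}^\varphi_{\mathsf{S}^{(*)}}$ with a subalgebra of $\widehat{\fKLR}^\varphi_{\mathsf{S}}$ via an algebra homomorphism $\wp$, and $\widehat{\rif}$ is the induced bimodule. Concretely, this sends the idempotent for $\nu \in \mathsf{S}^L$ to $e(\nu^{+H})$, where $\nu^{+H}$ is obtained from $\nu$ by adding $Hp$ to each coordinate in the factor $\C^{v_i^{(p)}}$. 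For $H$ sufficiently large, our choice of $\mathsf{S}^L$ as $\xi$-negative translates precisely into the fact that in $\nu^{+H}$ all strands with different $\Z$-grading components are in the order dictated by $p$, with room for the required crossings. This gives a canonical isomorphism $e(\gamma')\widehat{\rif}\, e(\nu) \cong e(\gamma')\widehat{\fKLR}^\varphi\, e(\nu^{+H})$ of left $\widehat{\fKLR}^\varphi_{\mathsf{S}}$-modules.

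The final step is to match $e(\gamma')\widehat{\fKLR}^\varphi e(\nu^{+H})$ with $e(\gamma')\widehat{\fKLR}^\varphi e(\gamma)$ compatibly with the right $\wtAlg^L(\mathsf{S}^L)$-action. Via the proof of Theorem \ref{thm:wKLR}, both spaces arise as completed equivariant homologies of partial flag versions of the Steinberg variety, and the identification follows from Theorem \ref{thm:saturation}: the $G$-saturation map $\widehat{H}^L({}_\nu\pStein^L_{\nu'}) \to \widehat{H}^G({}_\gamma\pStein_{\gamma'})$ matches, under these equivalences, the natural map $\wtBim(\mathsf{S}^L, \mathsf{S}) \to \wtAlg(\mathsf{S})$. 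The compatibility of the right action with that on $\widehat{\rif}$ (which factors via Lemma \ref{le:fTpfTZ}) is immediate, since tensor products of Coulomb branch algebras for the Levi correspond to horizontal composition of diagrams. The main obstacle is bookkeeping: one must check that the twist by $\kappa_\nu$ in equation (\ref{eq:rb-def}) matches the diagrammatic renormalization produced by pulling strands through the large shift $Hp\xi$, which accumulates the polynomial factors coming from the bigon relations (\ref{ghost-bigon2}--\ref{ghost-bigon2a}) and the monopole relations (\ref{eq: monopole relations}); this reduces to an explicit comparison of the products (\ref{eq:Phi0}) and (\ref{eq: def of kappa}) carried out in the proof of Lemma \ref{lem:EL-kappa}. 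The statement for $\wtBim(\mathsf{S}, \mathsf{S}^L) \cong \widehat{\corif}$ follows by the same argument applied with top and bottom interchanged, or by taking adjoints.
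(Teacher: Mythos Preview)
Your proposal is essentially correct and follows the same overall strategy as the paper: identify each summand $\Hom(\Wei^L_\nu\circ\res,\Wei_{\gamma'})$ with $\Hom(\Wei_\gamma,\Wei_{\gamma'})$ via $\xi$-negativity and Theorem~\ref{th:res-weight}, pass to the Steinberg/KLRW side via Theorem~\ref{thm:wKLR}, and use Theorem~\ref{thm:saturation} to check bimodule compatibility of the right $\wtAlg^L(\mathsf{S}^L)$-action.

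There are two implementation differences worth noting. First, you model $\widehat{\rif}$ via the shift homomorphism $\wp$ and the auxiliary element $\nu^{+H}$, whereas the paper uses the more direct description: take a diagram in $e(\gamma')\widehat{\fKLR}^\varphi e(\gamma)$ and simply relabel the bottom with the $\Z\times\C$-flavoured sequence obtained by concatenating the sequences for the $\nu^{(p)}$, giving $e(\gamma')\widehat{\rif}\,e^\Z(\nu)\cong e(\gamma')\widehat{\fKLR}^\varphi e(\gamma)$ by projection of longitudes. Since $\mathsf{S}^L$ is already chosen $\xi$-negative (which was achieved by a shift), your extra shift by $Hp$ is redundant and slightly muddies the bookkeeping. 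Second, for the bimodule check the paper does not redo the $\kappa_\nu$ calculation; that is entirely absorbed into Theorem~\ref{thm:saturation} via the twisted isomorphism $\rb'$. What remains, and what the paper actually verifies, is that the KLRW isomorphism \eqref{eq:f-X} commutes with the $G$-saturation map \eqref{eq:G-sat}, checked on generators: dots map to Chern classes of tautological bundles and single crossings to saturations of classes from a minimal Levi, so transitivity of saturation gives the compatibility. Your description of the obstacle as ``matching $\kappa_\nu$ with accumulated bigon/monopole factors'' is not wrong, but it points back to the proof of Theorem~\ref{thm:saturation} rather than to the new content needed here.
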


\begin{proof}
	By \eqref{eq:I-def}, the bimodule  $\wtBim(\mathsf{S}^L, \mathsf{S})$ is
	given by the sum of Homs between weight
	functors 
	\[\wtBim(\mathsf{S}^L, \mathsf{S})\cong
	\bigoplus_{\substack{\gamma'\in \bar{\mathsf{S}} \\ \nu\in \bar{
				\mathsf{S}^L}}}\Hom(\Wei_{\nu}^L\circ \res,\Wei_{\gamma'}).\]
	For each $\nu\in \mathsf{S}^L, \gamma'\in \mathsf{S}$, let $\gamma$ be
	the image of $\nu$ in $\tilde{\ft}/W$.  Using the isomorphism $\rb'$ defined in \eqref{eq:rb-def}, we can
	define an isomorphism $ \Hom(\Wei_{\nu}^L\circ
	\res,\Wei_{\gamma'})\cong \Hom(\Wei_{\gamma},\Wei_{\gamma'})$.

	Theorem \ref{thm:saturation} shows that the induced isomorphism of
	vector spaces
	\[\Hom(\Wei_{\nu}^L\circ
	\res,\Wei_{\gamma'})\cong 	\widehat{H}^{G}(
	{}_\gamma\pStein_{{\gamma'}}) \qquad
	\wtBim(\mathsf{S}^L,\mathsf{S})\cong \bigoplus_{\substack{\gamma'\in \mathsf{S}\\\nu\in
			\mathsf{S}^L}}	\widehat{H}^{G}(
	{}_\gamma\pStein_{{\gamma'}}) \]
	is an isomorphism of bimodules where the left actions are intertwined
	by the functor $\mathsf{E}_G$, and the right action by $\mathsf{E}_L$
	and the saturation map (\ref{eq:G-sat}).

	By Theorem \ref{thm:wKLR}, we also have an isomorphism
	$\Hom(\Wei_{\gamma},\Wei_{\gamma'}) \cong e'(\gamma') \cdot  \widehat{\fKLR}^{\varphi} \cdot
	e'(\gamma) $.  At the start of the proof, we chose a preimage $\nu$ for
	$\gamma$; since $\mathsf{S}^L=\prod_{p\in \Z}\mathsf{S}^{(p)}$, we can
	let $\nu^{(p)}\in \mathsf{S}^{(p)}$ be the projection of $\nu$.  By Lemma \ref{lem:sequence-from-flavours}, there exists a flavoured sequence with $ \nu^{(p)} $ as a the set of longitudes.  Concatenating these flavoured sequences together gives us a $ \Z \times \C$ flavoured sequence which matches the sequence for $ e(\gamma) $ after projecting to the second factor. For each diagram in $e'(\gamma') \cdot  \widehat{\fKLR}^{\varphi} \cdot
	e'(\gamma)$, we can consider the same diagram with this sequence placed at the bottom and obtain an element of $\widehat{\rif}$.

	By definition, the resulting diagram is an element of $e(\gamma')\cdot
	\widehat{\rif}\cdot e^\Z(\nu) $,
	where $e^{\Z}(\nu)$ is the corresponding idempotent in ${}^{\Z}
	\widehat{\fKLR}^{\varphi} $.  In fact, this operation on diagrams defines an
	isomorphism of vector spaces
	\[e(\gamma')\cdot
	\widehat{\rif}\cdot e^\Z(\nu)\cong e(\gamma') \cdot
	\widehat{\fKLR}^{\varphi} \cdot
	e(\gamma)\]
	where the map is simply projecting the longitudes in $\Z\times \C$ at
	the bottom of the diagram to their second component, and thus an isomorphism of vector spaces $\wtAlg(\mathsf{S}^L,\mathsf{S})\cong \widehat{\rif}$.

	To show that this linear isomorphism is furthermore a bimodule map, we need only confirm that the
	isomorphism \eqref{eq:f-X} used in the proof of Theorem \ref{thm:wKLR}
	is compatible with the saturation map (\ref{eq:G-sat}) i.e. the
	commutativity of the
	diagram
	\begin{equation*}
		\begin{tikzcd}[sep=huge]
			\widehat{H}^{{L}}( {}_\la\Stein_{{\la'}}^L) \arrow{r} [above]{\text{(\ref{eq:G-sat})}} \arrow{d}[left]{\eqref{eq:f-X}} &
			\widehat{H}^{G}(  {}_\la\Stein_{{\la'}}) \arrow{d}{\eqref{eq:f-X}}\\
			e^{\Z}(\nu') \cdot  {}^{\Z}\widehat{\fKLR}^{\varphi} \cdot
			e^{\Z}(\nu)
			\arrow{r}{}   &   e(\gamma') \cdot  \widehat{\fKLR}^{\varphi} \cdot
			e(\gamma)
		\end{tikzcd}
	\end{equation*}
	where the lower horizontal arrow is simply forgetting the
	$\Z$-component of the longitude, and $\la,\la'$ are lifts of $\nu,\nu'$ to $\ft_L+\varphi\cong \ft+\varphi$.
	
	In order to see this, we need to check that the images of the minimal
	diagrams, single crossings and single dots are related by the map
	(\ref{eq:G-sat}).  This is clear from the definition of
	\eqref{eq:f-X}, which ultimately depends on the map given in
	\cite[Thm. 4.5]{WebwKLR}. The inverse of \eqref{eq:f-X} sends each dot
	to the Chern class of a tautological bundle, and each single crossing to the
	saturation of a homology class in $ {}_{\zeta}\Stein_{{\zeta}}^{L'}$ for a Levi
	$L'\subset L$
	with one simple root and $\zeta \in \ft_\Z /W_L $ arbitrary.  By the transitivity of saturation, this
	crossing gives classes in ${}_{\zeta}\Stein_{{\zeta}}^L$ and
	${}_{\zeta}\Stein_{{\zeta}}$ which are compatible under saturation.
\end{proof}

From the discussion above,  we can restate Theorem
\ref{thm:Splus} using the functors of Definition
\ref{def:flavour-res-ind} as:
\begin{Theorem}\label{th:KLR-restriction}
	The equivalence of Theorem \ref{thm:wKLR} intertwines the functors
	$\ind_\xi$ and $\res_{\xi}$ with the functors of induction and
	restriction for flavoured KLRW algebras:
	\[\tikz[->,thick]{
		\matrix[row sep=12mm,column sep=25mm,ampersand replacement=\&]{
			\node (d) {$\fKLR^{\varphi}_{\mathscr{S}}\wgmod$}; \& \node (e)
			{$ \bigotimes_{p\in
					\Z}\fKLR^{\varphi}_{\mathscr{S}^{(p)}}\wgmod$}; \\
			\node (a) {$\CB(G,N)\dash\GTc_\mathscr{S}$}; \& \node (b)
			{$\CB(L,N_0^\xi)\dash\GTc_{\mathscr{S}^L}$}; \\
		};
		\draw (a) -- (b) node[below,midway]{$\res_\xi$};
		\draw (a) -- (d) node[left,midway]{$\Wei$} ;
		\draw (b) -- (e) node[right,midway]{$\Wei^L$};
		\draw (d) -- (e) node[above,midway]{$\operatorname{Res}$};
	}\]
\end{Theorem}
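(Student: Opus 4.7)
The strategy is to observe that everything has already been done: this is essentially a direct reformulation of Theorem \ref{thm:Splus} using the algebraic identifications established in Theorem \ref{thm:wKLR} and the key bimodule calculation of Lemma \ref{lem:bim-iso}.

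First, the plan is to apply Theorem \ref{thm:Splus}, which rewrites the geometric restriction functor $\res_\xi$ on Gelfand-Tsetlin categories as the tensor product functor $\wtBim(\mathsf{S},\mathsf{S}^L)\otimes_{\wtAlg(\mathsf{S})}(-)$, after passing through the weight-functor equivalences $\Wei$ and $\Wei^L$. This reduces the problem to an assertion purely about bimodules over the algebras $\wtAlg(\mathsf{S})$ and $\wtAlg^L(\mathsf{S}^L)$.

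Next, I would translate these algebras into flavoured KLRW algebras. On the ``big'' side, Theorem \ref{thm:wKLR} (via the isomorphism \eqref{eq:FSfKLR}) identifies $\wtAlg(\mathsf{S})$ with the completed flavoured KLRW algebra $\widehat{\fKLR}^{\varphi}_{\mathsf{S}}$; on the Levi side, the chain \eqref{eq:FSLfKLR} identifies $\wtAlg^L(\mathsf{S}^L)$ with $\bigotimes_{p\in\Z}\widehat{\fKLR}^{\varphi}_{\mathsf{S}^{(p)}}\cong {}^{\Z}\widehat{\fKLR}^{\varphi}_{\mathsf{S}^{(*)}}$. Since $\mathsf{S}$ and each $\mathsf{S}^{(p)}$ are finite complete sets for their respective orbits, the functors $\Wei$ and $\Wei^L$ realize equivalences of the Gelfand-Tsetlin categories with the categories of modules over these completed algebras that are continuous in the discrete topology, which is precisely the condition of being weakly gradable and locally finite-dimensional on the flavoured KLRW side.

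With this set-up, the core content is Lemma \ref{lem:bim-iso}, which asserts that under the above algebra isomorphisms the bimodule $\wtBim(\mathsf{S},\mathsf{S}^L)$ corresponds to $\widehat{\corif}$. Combining this with the definition $\Res=\corif\otimes_{\fKLR^{\varphi}}(-)$ from Definition \ref{def:flavour-res-ind}, the tensor product functor appearing in Theorem \ref{thm:Splus} is transported to the functor $\Res$ on flavoured KLRW modules, producing the desired commutative square. The only minor point requiring care is the passage between $\corif$ and its completion $\widehat{\corif}$: this is harmless because $\corif$ decomposes as a direct sum of finite-dimensional pieces $e^{\Z}(\nu)\corif e(\gamma')$ indexed by pairs of flavoured sequences, so tensoring by $\corif$ and by $\widehat{\corif}$ induces the same functor on the subcategory of locally finite-dimensional weakly gradable modules.

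The main obstacle has in fact already been overcome, namely Lemma \ref{lem:bim-iso}: the delicate point was matching the bimodule structures on the Coulomb branch side (built from the intricate $\rb'$ morphisms and the saturation map \eqref{eq:G-sat}) with the purely diagrammatic bimodule $\corif$ on the KLRW side. Once that bimodule identification is in hand, Theorem \ref{th:KLR-restriction} follows formally.
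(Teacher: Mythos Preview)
Your proposal is correct and follows exactly the paper's approach: the paper itself introduces Theorem \ref{th:KLR-restriction} with the sentence ``From the discussion above, we can restate Theorem \ref{thm:Splus} using the functors of Definition \ref{def:flavour-res-ind},'' so the whole content is indeed Theorem \ref{thm:Splus} combined with the algebra identifications \eqref{eq:FSfKLR}, \eqref{eq:FSLfKLR} and the bimodule identification of Lemma \ref{lem:bim-iso}, precisely as you outline. One small inaccuracy: the pieces $e^{\Z}(\nu)\,\corif\, e(\gamma')$ are not finite-dimensional (one can add arbitrarily many dots); the correct reason the completion is harmless is that these pieces are graded with finite-dimensional graded components, so on locally finite-dimensional weakly gradable modules the actions of $\corif$ and $\widehat{\corif}$ coincide.
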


\subsection{Category $\cO$}\label{sec:catO}

Now, we consider how these results apply to category $\cO$, as
described in Section \ref{sec:category-co}.  

Let us temporarily return to the general context of Section \ref{sec:functors}, with our usual notation $ G, N, \xi, L $.  We also choose a character $\chi\colon G\to \Cx$, which defines a category $ \Oof{\CB(G,N)}$.  The standard choice for a quiver gauge theory, following Nakajima, is given by the
product of the determinant characters on $GL(v_i)$.

The character $\chi$ can be restricted to $L$, and thus also defines a
category $\Oof{\CB(L,N^{\xi}_0)}$ of modules over $\CB(L,N^{\xi}_0)$.
Unfortunately, the interaction of category $\cO$ and restriction
functors is quite complicated.  
\begin{Lemma}\label{lem:chi-xi}Consider a module $M \in \Oof{\CB(G,N)}$.
	\begin{enumerate}
		\item If $\chi(\xi)\geq 0$, then $\res_{\xi}(M)=0$.
		\item If $\chi(\xi)<0$, then $\res_{\xi}(M)\notin \Oof{\CB(L,N^{\xi}_0)}$ unless
		$\res_{\xi}(M)=0$.  However,
		$$\Supp \res_{\xi}(M) =    \Supp(M)+\Z\xi.$$
	\end{enumerate}
\end{Lemma}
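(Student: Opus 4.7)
The plan is to track how the $\chi$-grading interacts with the restriction functor. Recall from Definition~\ref{def:res} that $\res_\xi$ factors as a Morita equivalence, a restriction along $\CB(L,N) \hookrightarrow \CB^P(G,N)$, and a base change by $\CB(L,N^\xi_0)$; the first two steps preserve $\chi$-eigenvalues, and the third amounts to inverting $r_\xi$ via the isomorphism $\CB(L,N^\xi_0) \cong \CB(L,N)[r_\xi^{-1}]$ of Theorem~\ref{thm:rel}(3). Combined with the quantum moment map relation $[\chi, r_\xi] = \chi(\xi)\, r_\xi$, both parts of the lemma reduce to understanding how this localization distorts the $\chi$-spectrum of $M$.

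For part (1) with $\chi(\xi)>0$, the moment map relation shows that $r_\xi$ maps the generalized $\chi$-eigenspace with eigenvalue $\lambda$ into the one with eigenvalue $\lambda+\chi(\xi)$. Since $M$ lies in $\Oof{\CB(G,N)}$, the real parts of its $\chi$-eigenvalues are bounded above, so repeated application of $r_\xi$ eventually carries any vector past this bound, yielding $r_\xi^k v = 0$ for $k \gg 0$. Thus $r_\xi$ acts locally nilpotently and the localization in the last step of $\res_\xi$ annihilates $M$, giving $\res_\xi(M) = 0$. The borderline case $\chi(\xi) = 0$ is more delicate because $r_\xi$ then preserves $\chi$-eigenspaces; I expect to handle it either by perturbing $\chi$ slightly within the cone of Hamiltonian characters so that $\chi(\xi)$ becomes strictly positive and appealing to continuity, or by a direct argument using finite generation of $M$ together with the monopole relations of Section~\ref{section: abelian theories and monopoles} to rule out an invertible $r_\xi$-action on any $\chi$-eigenspace.

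For part (2), with $\chi(\xi)<0$, the support identity follows from Theorem~\ref{th:res-weight} together with Lemma~\ref{lem:weight-direct-limit}: for any $\xi$-negative $\nu \in \ft/W_L$ with image $\gamma \in \ft/W$ one has $\Wei_\nu^L(\res_\xi(M)) \cong \Wei_\gamma(M)$, while the invertibility of $r_\xi$ in $\CB(L,N^\xi_0)$ forces $\Supp(\res_\xi(M))$ to be $\Z\xi$-stable, yielding the claimed equality. If $\res_\xi(M) \neq 0$, pick any $\gamma \in \Supp(M)$; then $\gamma - k\xi$ lies in $\Supp(\res_\xi(M))$ for all $k \in \Z$, and $\chi(\gamma - k\xi) = \chi(\gamma) - k\chi(\xi) \to +\infty$ as $k \to \infty$ since $\chi(\xi) < 0$. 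Hence $\Re\chi$ is unbounded above on $\Supp(\res_\xi(M))$, so $\res_\xi(M) \notin \Oof{\CB(L,N^\xi_0)}$. The main obstacle I anticipate is the boundary case $\chi(\xi) = 0$ in part (1), where neither the local-nilpotency argument of (1) nor the unboundedness argument of (2) directly applies.
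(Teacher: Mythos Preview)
Your arguments for the strict inequality $\chi(\xi)>0$ in part~(1) and for part~(2) are correct and essentially match the paper's proof (the paper phrases the $\chi(\xi)>0$ case via weight spaces and Theorem~\ref{th:res-weight} rather than local nilpotency of $r_\xi$, but the content is identical).

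The genuine gap is the borderline case $\chi(\xi)=0$. Your perturbation proposal does not work: the functor $\res_\xi$ and the module $\res_\xi(M)$ are independent of $\chi$, so there is nothing to take a limit of. To use the $\chi'(\xi)>0$ argument you would need $M\in\Oof{\CB(G,N)}$ for the perturbed character $\chi'$, and membership in category $\cO$ is not stable under perturbation of $\chi$. Your second proposal is on the right track but misses the key observation: when $\chi(\xi)=0$, the operator $r_\xi$ preserves each generalized $\chi$-eigenspace of $M^P$, but it still shifts Gelfand--Tsetlin weight spaces, sending $\Wei^{L,N}_{\nu}(M^P)$ into $\Wei^{L,N}_{\nu-\xi}(M^P)$. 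These GT weight spaces for $\nu,\nu-\xi,\nu-2\xi,\dots$ are distinct direct summands of a single $\chi$-eigenspace, which is finite-dimensional since $M^P\in\Oof{\CB^P(G,N)}$. Hence only finitely many of them are nonzero, so the directed system~\eqref{eq:dir-system} is eventually zero and Lemma~\ref{lem:weight-direct-limit} gives $\Wei^{L,N_0^\xi}_\nu(\res_\xi(M))=0$ for every $\nu$. This is exactly the paper's argument; equivalently, it shows $r_\xi$ is still locally nilpotent on $M^P$ even when $\chi(\xi)=0$, so your localization framework applies without change.
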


Here we use the support of a Gelfand-Tsetlin module from Definition \ref{def:GT}.
Of course, the same result holds if we reverse all signs.
\begin{proof}
	Fix any weight $ \nu \in \tilde{\ft}/W_L $.  For all sufficiently large $ k \in \Z_{\geq 0} $ we have 
	$$ \Wei_\nu(\res_\xi(M)) \cong \Wei_{\nu - k \xi}(\res_\xi(M)) \cong  \Wei_{\nu - k \xi}(M)$$
	where the first isomorphism comes from the fact that $ r_\xi^k : \Wei_{\nu - k \xi} \cong \Wei_{\nu} $ is an isomorphism of weight functors for $ \CB(L, N^\xi_0) $ and the second isomorphism comes from the fact that $ \nu - k \xi $ is $ \xi $-negative and so Theorem \ref{th:res-weight} applies.
	
	If $ \chi(\xi) > 0 $, then for sufficiently large $ k $, $ \Wei_{\nu -k\xi}(M) = 0 $ (since $ M $ is in category $ \cO$) and thus $ \Wei_\nu(\res_\xi(M)) = 0 $.  Since $ \nu $ is arbitrary, this forces $ \res_\xi(M) = 0 $.

	Assume now that $ \chi(\xi) = 0$.  Since $ M $ is in category $ \cO $, $ M^P $ (see section \ref{sec:res-ind}) is in category $\cO $ for $ \CB(L,N) $.  Then consider the system (\ref{eq:dir-system})
	\begin{equation*} 
		\cdots \overset{r_{\xi}}\longrightarrow \Wei_{\nu+\xi}^{L,N}(M^P) \overset{r_{\xi}}\longrightarrow \Wei_{\nu}^{L,N}(M^P) \overset{r_{\xi}}\longrightarrow  \Wei_{\nu-\xi}^{L,N}(M^P) \overset{r_{\xi}}\longrightarrow\cdots
	\end{equation*}
	Since $\chi(\xi) = 0 $, each space in this system lies in the same $ \chi$-eigenspace.  Since $M^P $ is in category $ \cO$, the $ \chi$-eigenspaces are finite-dimensional.  Thus only finitely many spaces in this system are non-zero.  Hence by Lemma \ref{lem:weight-direct-limit}, we conclude that $ \Wei_\nu(\res_\xi(M)) = 0 $.

	If $ \chi(\xi) < 0 $, then the above observation implies that $ \Supp(\res_{\xi}(M)) = \Supp(M)+\Z\xi$.  In particular this means that the eigenspaces of $ \xi$ cannot be bounded below.
\end{proof}

Now, we return to the quiver case, where we can make some more precise statements.  
Recall that
$\fKLR^{\varphi}_{\mathscr{S}}$ has a steadied quotient
$\sfKLR^{\varphi}_{\mathscr{S}}$, defined in Definition
\ref{def:sfKLR}.  Applying Proposition \ref{prop:O-F} in the quiver
case, we find that:
\begin{Theorem}\label{thm:O-steady}
	Fix a flavour $ \varphi $ and an orbit $ \mathscr S $.  An module $M \in \CB_\varphi(\Bv,\Bw)\dash\GTc_\mathscr{S}$ lies in category $\cO$ if and only if the
	corresponding $\fKLR^{\varphi}_{\mathscr{S}}$ module factors through the steadied quotient
	$\sfKLR^{\varphi}_{\mathscr{S}}$.   Thus
	$\Oof{\CB_{\varphi}(\Bv,\Bw)}_{\mathscr{S}}\cong
	\sfKLR^{\varphi}_{\mathscr{S}}\mmod$.
\end{Theorem}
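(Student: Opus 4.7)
The plan is to combine Proposition \ref{prop:O-F} with Corollary \ref{co:wKLR} and then identify the resulting ideal on the KLRW side with the steadied ideal. Fix a finite complete set $\mathsf{S} \subset \mathscr{S}$. By Proposition \ref{prop:O-F}, $\Oof{\CB_\varphi(\Bv,\Bw)}_\mathscr{S} \cong \wtAlg(\mathsf{S})/\mathscr{I}\mmod$, where $\mathscr{I}$ is generated by $\operatorname{id}_{\Wei_\gamma} = e'(\gamma)$ for $\gamma \in \bar{\mathsf{S}}$ whose equivalence class is unbounded. Under the isomorphism of Theorem \ref{thm:wKLR}, $\wtAlg(\mathsf{S}) \cong \fKLR^\varphi_\mathsf{S}$, each primitive idempotent $e'(\gamma)$ and the full idempotent $e(\gamma)$ generate the same two-sided ideal, since $e(\gamma)$ decomposes into a sum of idempotents all conjugate to $e'(\gamma)$ via the nilHecke-like matrix structure inside $e(\gamma)\fKLR^\varphi e(\gamma)$. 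Since $\fKLR^\varphi_\mathsf{S}$ is Morita equivalent to $\fKLR^\varphi_\mathscr{S}$, the theorem reduces to the combinatorial claim that an equivalence class $[\gamma] \subset \bar{\mathscr{S}}$ is unbounded with respect to $\chi$ if and only if it contains the image of an unsteady flavoured sequence.

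For the quiver gauge theory $\chi$ is the product of determinants, so $\Re(\chi(\gamma)) = \sum_i \sum_{a \in \gamma_i} \Re(a) = \sum_{k \in \Sc} \Re(a_k)$ in the notation of flavoured sequences. Equivalence of flavoured sequences preserves the multiset $\Bi$ together with the ghost/corporeal incidence order and the relative order of real longitudes within each label, while allowing integer shifts of longitudes inside the orbit $\mathscr{S}$. For the forward direction, if the last $k$ corporeals together with all their ghosts form a rightmost block containing no red strand, then translating those $k$ corporeal longitudes (and hence their ghosts) to the right by any positive integer $N$ produces an equivalent flavoured sequence with $\Re(\chi)$ increased by $kN$; hence the class is unbounded.

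The main obstacle is the converse: given an unbounded equivalence class, produce an unsteady representative. The argument should proceed by selecting a subset $T \subseteq \Sc$ whose corporeal longitudes can be sent to $+\infty$ along the equivalence class, then using Definition \ref{def:Flavor} to show that every ghost of a strand in $T$ must shift in tandem (otherwise a ghost/corporeal incidence would be passed, violating condition (ii) on the permutation $\sigma$), no red strand can be left behind by the shift (its longitude is fixed, so it cannot be passed by arbitrarily large shifts), and no ghost of a strand outside $T$ can lie among the shifted block. The conjunction of these conditions says that in an equivalent representative with $N \gg 0$, the block formed by $T$ and all its ghosts sits at the far right with no interleaving red strand or external ghost, i.e.\ the sequence is unsteady. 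The technical heart of the proof will be the careful tracking of how large-integer shifts in the $\widehat{W}$-orbit interact with the fine axioms of flavoured sequences, in particular the requirement that a ghost precede a corporeal of equal real longitude, and verifying that such a shift is indeed realized by an element of $\widehat{W}$ acting only on the coordinates indexed by $T$.
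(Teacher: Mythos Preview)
Your approach is correct and is essentially what the cited argument does; the paper itself offers no details here, simply writing ``The proof is the same as that of \cite[Thm.~5.21]{KTWWYO}.'' Your reconstruction via Proposition~\ref{prop:O-F} and Theorem~\ref{thm:wKLR}, reducing to the combinatorial equivalence ``unbounded class $\Leftrightarrow$ contains an unsteady representative,'' is the right one.

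One point worth tightening: the definition of \emph{unbounded} in Section~\ref{sec:category-co} has two clauses---either $\Re(\chi)$ has no maximum on the class, or it attains its maximum on an infinite set. Your converse argument (``select $T$ whose longitudes can be sent to $+\infty$'') is written as if only the first clause occurs. In the second clause, the class contains a lattice ray in a direction $d$ with $\chi(d)=0$, so $d$ has both positive and negative components; taking $T$ to be the support of $d_+$, the representatives $\gamma + Nd$ for $N \gg 0$ have the $T$-longitudes going to $+\infty$ while the others stay bounded or go to $-\infty$, and your ghost/red separation argument then applies verbatim to show $\gamma + Nd$ is unsteady. So the same $T$-extraction works, but you should say so explicitly rather than leaving the second clause implicit.

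A second minor point: you invoke ``equivalence of flavoured sequences'' in the converse, but the relevant relation is $\Wei_\gamma \cong \Wei_{\gamma'}$, which is a priori coarser. This does not break the argument---once you find an unsteady $\gamma'$ in the $\Wei$-class of $\gamma$, the isomorphism $e'(\gamma) \cong e'(\gamma')$ places $e'(\gamma)$ (and hence $e(\gamma)$, via the nilHecke matrix structure you already noted) in the steadied ideal---but the distinction should be kept straight in the write-up.
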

\begin{proof}
	The proof is the same as that of \cite[Thm 5.21]{KTWWYO}.
\end{proof}
Let $ H $ be a real number.  We say that $ \gamma \in \ft/W $ is $H$-\textbf{bounded above} if for all $ i \in I $ and $ z \in \gamma_i$, we have $ \Re (z) \le H $.    The following result can be viewed as a specialization of  Proposition \ref{prop:GTinO} to the quiver situation:
\begin{Lemma}\label{lem:big-gamma}
	Fix a flavour $ \varphi $ and an orbit $ \mathscr S $.  For $H$ sufficiently large, each $ M \in \CB_\varphi(\Bv,\Bw)\dash\GTc_\mathscr{S}$ satisfies:  $$ M \in \Oof{\CB_{\varphi}(\Bv,\Bw)}_{\mathscr{S}} \quad \text{ if and only if } \quad \text{ every $ \gamma \in \Supp M $ is $H$-bounded above} $$
\end{Lemma}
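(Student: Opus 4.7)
The plan is to deduce the lemma from Theorem \ref{thm:O-steady}, which identifies $\Oof{\CB_\varphi(\Bv,\Bw)}_\mathscr{S}$ with modules over the steadied quotient $\sfKLR^\varphi_\mathscr{S}$. First I would observe that, using the matrix-algebra structure of the nilHecke subalgebra generated by crossings of strands with identical label and longitude, $e(\gamma)\Wei(M) = 0$ iff $\Wei_\gamma(M) = e'(\gamma)\Wei(M) = 0$, so the condition that $\Wei(M)$ factors through the steadied quotient is equivalent to $\Supp M$ containing no unsteady flavoured sequence. Thus the lemma reduces to showing that, for $H$ sufficiently large, $\Supp M$ contains no unsteady $\gamma$ if and only if every $\gamma \in \Supp M$ is $H$-bounded above. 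A crucial auxiliary observation I will rely on is that $\Supp M$ is a union of equivalence classes of flavoured sequences: straight-line diagrams in $\fKLR^\varphi$ between equivalent $\gamma \sim \gamma'$ furnish isomorphisms $\Wei_\gamma \cong \Wei_{\gamma'}$, so $\gamma \in \Supp M$ forces $\gamma' \in \Supp M$.

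The forward direction will rest on a gap lemma, proved by pigeonhole. I set $R := \max_{e \in \edge(\Gamma^\Bw)} \Re(\varphi_e)$, $M_0 := \max_e |\Re(\varphi_e)|$, and $V := \sum_i v_i$, and take $H > R + (2V+3)M_0$. The claim will be that if $\gamma$ has an entry of real part exceeding $H$, then $\gamma$ is unsteady. The at most $V$ entries of $\gamma$ with real part above $R+M_0$ span an interval of length $> 2VM_0$, so pigeonhole produces two consecutive such entries (or a lowest such entry and the threshold $R+M_0$) separated by a gap of size $> 2M_0$. Cutting at the midpoint $T$ of this gap, the set $A$ consisting of all corporeals whose longitude has real part above the gap together with all of their ghosts will be a proper suffix of $\Sall$: the $2M_0$ gap prevents ghosts of small corporeals from rising above $T$ and keeps ghosts of large corporeals strictly above $T$, while $T > R$ excludes the red strands. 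Thus $A$ witnesses the unsteadiness of $\gamma$, and contrapositively every steady $\gamma$ is $H$-bounded above, yielding the forward direction.

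For the converse, I will argue by contradiction: if some $\gamma \in \Supp M$ were unsteady, with unsteady tail $A = C \sqcup \operatorname{ghosts}(C)$, then for each integer $k \geq 0$ the sequence $\gamma_k$ obtained by translating the longitudes of the corporeals in $C$ by $k$ lies in the same $\widehat{W}$-orbit and has the same order type, since the shifted corporeals and their ghosts collectively move further right and remain a suffix of $\Sall$. Hence $\gamma_k \sim \gamma$, and by the union-of-classes observation $\gamma_k \in \Supp M$ for all $k \geq 0$; yet the entries of $\gamma_k$ corresponding to $C$ have real parts tending to $+\infty$, contradicting $H$-boundedness. Therefore $\Supp M$ contains no unsteady sequence and $M \in \cO$.

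The main obstacle will be the careful bookkeeping in the gap argument, namely ensuring that the suffix $A$ contains exactly the right set of ghosts and that both $A$ and its complement are nonempty. The degenerate case $\Bw = 0$, in which $\sfKLR^\varphi_\mathscr{S} = 0$ by Lemma \ref{lem:disconnected vertex} and so $\Oof{\CB_\varphi(\Bv,\Bw)}_\mathscr{S}$ is trivial, must be handled separately; there the biconditional will be verified directly.
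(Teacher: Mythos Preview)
Your proof is correct, and the forward direction (showing that membership in $\cO$ forces $H$-boundedness via a pigeonhole gap argument) is essentially the same as the paper's, with slightly more careful bookkeeping of constants.

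The backward direction, however, takes a genuinely different route. The paper does not stay within the flavoured KLRW framework here: it invokes Proposition~\ref{prop:GTinO} directly, observing that if every $\gamma\in\Supp M$ is $H$-bounded above then each equivalence class meeting $\Supp M$ has $\Re(\chi(\gamma))$ bounded above and attains its maximum on a finite set, hence is bounded in the sense of Section~\ref{sec:category-co}. Your argument instead stays entirely inside Theorem~\ref{thm:O-steady}: you show that any unsteady $\gamma\in\Supp M$ can be translated along its unsteady tail to produce $\gamma_k\in\Supp M$ with arbitrarily large entries. This is a clean, self-contained argument that avoids reaching back to the general Coulomb-branch machinery; the paper's route is shorter because Proposition~\ref{prop:GTinO} has already done the work.

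One small point of care: your assertion that $\gamma_k\sim\gamma$ in the formal sense of the equivalence defined after Definition~\ref{def:Flavor} can fail, since condition~(3) there compares \emph{strict} inequalities of real parts, and a tie between a corporeal in $C$ and one outside $C$ becomes strict after the shift. This does not damage your argument, because what you actually use is that the crossing-free straight-line diagram from $e(\gamma)$ to $e(\gamma_k)$ (which exists since the order on $\Sall$ is unchanged and the longitude shifts are integral) is invertible; that gives $\Wei_\gamma\cong\Wei_{\gamma_k}$ directly. It would be worth phrasing the step that way rather than citing the formal equivalence relation.
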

\begin{proof} 
	Assume that every $\gamma \in \Supp M $ is $H$-bounded above. Let $n = \sum v_i$.  
	Let $ C \subset \mathscr S $ be an equivalence class, such that $ \Wei_\gamma(M) \ne 0 $ for some $ \gamma \in C $.  Since all elements of $ C $ are $ H $-bounded above, the function $ \gamma \mapsto \Re(\chi(\gamma)) $ is bounded above on $ \mathscr S $ by $ H (n-1) $.  Moreover in $ C $, there are only finitely many $ \gamma $ such that $ \Re(\chi(\gamma)) $ is larger than any given real number.  Thus $ C $ is $-$bounded in the sense of Section \ref{sec:category-co} and so by Proposition \ref{prop:GTinO}, we have $M\in \Oof{\CB_{\varphi}(\Bv,\Bw)}_{\mathscr{S}}$.

	Now assume that $M$ is in $\Oof{\CB_{\varphi}(\Bv,\Bw)}_{\mathscr{S}}$.  Let $ \gamma \in \Supp M $. Let $H'=\max(|\Re(\varphi_e)|)$.  We claim that $ \gamma $ is $H=2H'n$-bounded above.
	
	Let $ (\Bi, \Ba, < ) $ be a flavoured sequence corresponding to $
	\gamma $ by Lemma \ref{lem:sequence-from-flavours}.  Suppose that $
	\Re(a_k) > 2H'n $ (if not we are done).  Then the intervals $ [a_k -H', a_k +H'] $ (for $ k = 1, \dots, n$ ) cannot cover $[0,2H'n] $, so there must be a real number $H''\in [0,2H'n]$ such that
	there is no $\Re(a_k)$ in the interval $[H''-H',H''+H']$.  This
	means that all the strands with real longitude $>H''+H'$ have all
	ghosts with real longitude $>H''$, and all strands  with real longitude $<H''-H'$ have all ghosts with real longitude $<H''$. 
	
	Then consider $ \{ g \in \Sall : \Re(a_g) > H'' \}$.  This set is
	non-empty and consists exactly of the last $ k $ elements of $ \Sall $ for
	some $ k $ and all their ghosts.  Thus we see that $ (\Bi,\Ba, <) $ is unsteady and hence $ \Wei_\gamma(M) = 0 $ by Theorem \ref{thm:O-steady}.
\end{proof}

Recall the notation $ \xi $ and associated $ \Bv^{(p)}$ as defined in Section \ref{se:Quiverxi} and used in the previous section.
From Lemma \ref{lem:chi-xi}, it is possible to show that if $ \Bv^{(p)} \ne 0 $ for some $ p> 0 $, then $ \res_\xi(M) = 0 $ for all $M $ in category $\cO$. Thus we only consider those $ \xi $ for which $ \Bv^{(p)} = 0 $ for $ p > 0 $.  In fact, we will focus on $ \xi $ such that only $ \Bv^{(0)}$ and $\Bv^{(-1)} $ are non-zero.

\subsection{Divided power functors}
Assume that $\Gamma$ has no edge loops, and let us turn to rephrasing the divided power functors of
Definition \ref{def:divided-power} in terms of
Coulomb branches.  Fix $i\in \vertex$ and $ k \le v_i $.  Consider a coweight $\xi\colon \Cx \to \prod_j
GL(v_j)$ which is the trivial extension of the coweight $t\mapsto
\operatorname{diag}(t,\dots, t, 1,\dots, 1)\in GL(v_i)$ with $k$ diagonal
entries equal to $t$ and $v_i-k$ entries 1.  For this choice of $ \xi$,
$\Bv^{(-1)}$ is $k$ times in the unit vector on $i$, and
$\Bv^{(0)}=\Bv-\Bv^{(-1)} = \Bv - k e_i$.

Restriction (Definition \ref{def:res}) defines a functor
\begin{equation} 
	\res_\xi \colon \CB(\Bv, \Bw)\mmod
	\to\CB(\Bv - k e_i, \Bw)\otimes  \CB(GL_k,0)\mmod.\label{eq:dp-restriction}
\end{equation}
By definition, $\CB(GL_k,0)$
is the equivariant homology of the affine Grassmannian of $GL_k$,
which is isomorphic to the quantum Toda algebra by \cite[Theorem
3]{BezFink}.  

\subsubsection{Non-integral decomposition} \label{se:nonint}
Let us specialize to the case $ k = 1$ momentarily.  The Hamiltonian reduction of $ \CB(\Cx,0) = D(\Cx) $ by $ r_\xi$ is $\C $ (equivalently, we can use Theorem \ref{th:Reduction}) and so we get a Hamiltonian reduction functor
$$
\CB(\Bv - e_i, \Bw)\otimes  \CB(\Cx,0)\mmod \rightarrow \CB(\Bv - e_i, \Bw)\mmod \quad M \mapsto M / (r_\xi - 1) M
$$
as discussed in Section \ref{sec:Hamiltonian reduction}.

We define 
$$
\res_i\colon \CB(\Bv, \Bw)\mmod
\to\CB(\Bv -  e_i, \Bw)\mmod$$ to be the composition of (\ref{eq:dp-restriction}) with this Hamiltonian reduction functor.  By Lemma \ref{le:preserveGT} and Proposition \ref{prop: qhr and gt}, the functor $ \res_i $ takes Gelfand-Tsetlin modules to Gelfand-Tsetlin modules.  

We apply the paragraph below Proposition \ref{prop: qhr and gt} (with the natural choice of $ a = \xi$ after identifying $ \ft = \ft^* $ using the trace form) and so we can decompose the Hamiltonian reduction over $ c \in \Cx $.  This gives us a decomposition of functors $ \res_i = \sum_{ c \in \Cx} \res_{i,c}$.  Moreover, we have $ \res_{i,c}(M) = \Wei^a_{\log c}(\res_\xi(M)) $ where $ \Wei^a_{\log c} $ denotes the weight space for $ \CB(\Cx, 0) $.

\subsubsection{Integral case}
Now, we return to general $ k $ and further examine the case of integral Gelfand-Tsetlin modules.

Let $\res^{(k)}_i\colon \CB(\Bv,\Bw)\dash\GTc_\Z
\to \CB(\Bv - k e_i,\Bw)\dash\GTc_\Z$ be the functor $\res_\xi$ of \eqref{eq:dp-restriction}, followed by passing to
the generalized weight space $\Wei_{0}$
for $ \CB(GL_k,0)$.  (If we specialize $ k = 1$, then we see that $ \res_i^{(1)} = \res_{i,1} $.)

\begin{Lemma}\label{lem:res-O}
	The functor $\res^{(k)}_i$ restricts to a functor $\Oof{\CB(\Bv,\Bw)}_{\Z} \to \Oof{\CB(\Bv-ke_i,\Bw)}_{\Z}$.
\end{Lemma}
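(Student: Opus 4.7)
My plan is to verify the criterion of Lemma \ref{lem:big-gamma}: that for some real number $H$, every $\nu \in \Supp(\res^{(k)}_i(M))$ is $H$-bounded above. The same $H$ as for $M$ itself will suffice.

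First I record the structural setup. By construction $\res^{(k)}_i(M)$ is the generalized $0$-weight space of $\res_\xi(M)$ with respect to the Gelfand-Tsetlin subalgebra of the second tensor factor of $\CB(L, N_0^\xi) \cong \CB(\Bv-ke_i,\Bw) \otimes \CB(GL_k,0)$. Since these two factors commute, $\res^{(k)}_i(M)$ is naturally a $\CB(\Bv-ke_i,\Bw)$-module, and for any $\nu \in \ft/W$ corresponding to the first factor,
\[
\Wei_\nu\bigl(\res^{(k)}_i(M)\bigr) \;=\; \Wei_{(\nu,\mathbf{0})}\bigl(\res_\xi(M)\bigr),
\]
where $(\nu,\mathbf{0})$ has $k$ copies of $0$ in the second factor. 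The $Z$-semisimplicity and the decomposition into weight spaces are inherited from $\res_\xi(M)$, which is Gelfand-Tsetlin by Lemma \ref{le:preserveGT}.

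The main calculation is the following. Fix $\nu \in \Supp(\res^{(k)}_i(M))$. Because $r_\xi$ acts invertibly on $\CB(L, N_0^\xi)$, Lemma \ref{le:xineg1} yields an isomorphism
\[
\Wei_{(\nu,\mathbf{0})}\bigl(\res_\xi(M)\bigr) \;\cong\; \Wei_{(\nu,-k'\mathbf{1})}\bigl(\res_\xi(M)\bigr)
\]
for every $k'\in\Z_{\ge 0}$, since $\xi$ acts trivially on the first factor and shifts the second. For $k' \gg 0$ the weight $(\nu,-k'\mathbf{1})$ becomes $\xi$-negative in the sense of Definition \ref{def:xineg}, so by Theorem \ref{th:res-weight} the right-hand side equals $\Wei_{\gamma'}(M)$, where $\gamma' \in \ft/W$ is obtained from $\nu$ by adjoining $k$ copies of $-k'$ to the multiset $\nu_i$. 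Since this is nonzero, $\gamma' \in \Supp(M)$, so every entry of every $\gamma'_j$ has real part $\le H$; in particular $\nu$ is $H$-bounded above.

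The only remaining point, which I view as the main obstacle, is to confirm that $\res^{(k)}_i(M)$ is finitely generated as a $\CB(\Bv-ke_i,\Bw)$-module, so that it genuinely lies in $\Oof{\CB(\Bv-ke_i,\Bw)}_\Z$. I plan to handle this through the KLR dictionary: by Theorems \ref{thm:wKLR}, \ref{thm:O-steady} and \ref{th:KLR-restriction}, $M$ corresponds to a module in $\sfKLR^\varphi_\Bv\wgmod$, and $\res_\xi(M)$ corresponds under Lemma \ref{le:ResfTO} to a module over $\fKLR^\varphi_{ke_i} \otimes \sfKLR^\varphi_{\Bv-ke_i}$ lying in the steadied quotient in the second factor. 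Taking the $\Wei_\mathbf{0}$-summand in the first factor picks out the image under a fixed idempotent, producing a module over $\sfKLR^\varphi_{\Bv-ke_i}$ which is locally finite-dimensional at each idempotent; together with the boundedness of $\Supp(\res^{(k)}_i(M))$ proven above, this gives finite generation (through only finitely many equivalence classes of idempotents). An application of Lemma \ref{lem:big-gamma} in the reverse direction then places $\res^{(k)}_i(M)$ in $\Oof{\CB(\Bv-ke_i,\Bw)}_\Z$, completing the argument.
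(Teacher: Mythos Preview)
Your approach is essentially the same as the paper's: both verify the support criterion of Lemma \ref{lem:big-gamma}, showing that any $\nu$ in $\Supp(\res^{(k)}_i(M))$ is $H$-bounded above because it arises from a weight of $M$ by adjoining $k$ copies of some integer to the $i$th multiset. The only difference is packaging: the paper cites Lemma \ref{lem:chi-xi}(2) (i.e.\ $\Supp(\res_\xi(M))=\Supp(M)+\Z\xi$) directly, whereas you re-derive that equality by hand via $r_\xi$-invertibility and Theorem \ref{th:res-weight}---but that is precisely how Lemma \ref{lem:chi-xi}(2) itself is proven, so there is no substantive divergence.

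You are in fact more careful than the paper on one point: you flag finite generation of $\res^{(k)}_i(M)$ over $\CB(\Bv-ke_i,\Bw)$ as a separate obligation and discharge it through the KLRW equivalence and the finiteness of idempotent classes in the steadied quotient. The paper's proof simply applies Lemma \ref{lem:big-gamma} in both directions without isolating this step, implicitly relying on the same KLRW description (Theorem \ref{thm:O-steady}) to ensure the result is a genuine Gelfand-Tsetlin module. Your treatment makes this explicit, which is a small improvement in rigor rather than a different argument.
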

\begin{proof} 
	Let $ M \in \Oof{\CB(\Bv,\Bw)}_{\Z}$.  By Lemma \ref{lem:big-gamma}, there exists $ N $ such that every $ \gamma $ in $ \Supp(M) $ is $ N$-bounded above.
	
	We claim that every $ \gamma$ in $ \Supp( \res^{(k)}_i(M)) $ is also $N$-bounded above.  By definition \[ \Wei_\gamma(\res^{(k)}_i(M)) = \Wei_{\gamma \cup \{0, \dots, 0\}}(\res_\xi(M)) .\]  Since $ \Supp(\res_\xi(M)) = \Supp(M) + \Z\xi $, we see that $ \gamma \cup \{m, \dots, m\} \in \Supp(M) $ for some $ m \in \Z $.  In particular this means that $ \gamma \cup \{m, \dots, m\} $ is $ N$-bounded above, so for all $j $ and $ x \in \gamma_j$, we have $ \Re(x) \le N $.  Thus $ \gamma $ is also $N$-bounded above.  (Here $ \gamma \cup \{m, \dots, m\} $ means that we add $k$ copies of $m $ to $\gamma_i$.)
\end{proof}  
The same proof shows that our functors $ \res_{i,c} $ from the previous section all preserve category $\cO$ as well, so this is not a special property of the integral case.

In order to compute the left adjoint of $\res^{(k)}_i\colon  \Oof{\CB(\Bv,\Bw)}_{\Z}\to \Oof{\CB(\Bv - k e_i,\Bw)}_{\Z}$
it is helpful to
think of it as composition of three functors:
\begin{enumerate}
	\item The inclusion from category $\cO$ into all GT modules; the
	left adjoint to this is the functor that takes the largest quotient
	of a GT module lying in category $\cO$.
	\item The functor $\res_\xi$, with its usual left adjoint $\ind_\xi$.
	\item The functor of passing to a weight space for the action of the
	Gelfand-Tsetlin subalgebra in $\CB(GL_k,0)$. This does not have a left
	adjoint in the category of Gelfand-Tsetlin modules, but it does have one in
	the category of pro-Gelfand-Tsetlin modules: outer tensoring a
	$\CB(\Bv - k e_i,\Bw)$-module with the projective $P_0$ representing the
	functor $\Wei_{0}$.
\end{enumerate}

While $\ind_\xi(M\boxtimes P_0)$ is only a pro-Gelfand-Tsetlin module,
its maximal quotient in category $\cO$ is honestly Gelfand-Tsetlin,
and thus defines a left adjoint functor to $\res_i^{(k)}$, which we
denote $\ind_i^{(k)}$.  

\subsubsection{Compatibility with flavoured KLRW algebra functors}
Now assume that $ \varphi$ is an integral flavour.

\begin{Theorem}\label{thm:divpowers}
	The equivalence of Theorem \ref{thm:wKLR} intertwines the divided
	power functor $\EuScript{E}_i^{(k)}$ with the functor
	$\res^{(k)}_i$ defined above, and thus its left adjoint
	$\EuScript{F}_i^{(k)}$ with $\res^{(k)}_i$:  \[\tikz[->,thick]{
		\matrix[row sep=12mm,column sep=25mm,ampersand replacement=\&]{
			\node (d) {${ \sfKLR}^{\varphi}_{\Bv}\mmod$}; \& \node (e)
			{$  { \sfKLR}^{\varphi}_{\Bv - k e_i}\mmod$}; \\
			\node (a) {$\Oof{\CB_\varphi(\Bv,\Bw)}_{\Z}$}; \& \node (b)
			{$\Oof{\CB_\varphi(\Bv-n\mathbf{e}_i,\Bw)}_{\Z}$}; \\
		};
		\draw (a) -- (b) node[below,midway]{$\res^{(k)}_i$};
		\draw (a) -- (d) node[left,midway]{$\Wei$} ;
		\draw (b) -- (e) node[right,midway]{$\Wei$};
		\draw (d) -- (e) node[above,midway]{$\EuScript{E}_i^{(k)}$};
	}\]
	\[\tikz[->,thick]{
		\matrix[row sep=12mm,column sep=25mm,ampersand replacement=\&]{
			\node (d) {${ \sfKLR}^{\varphi}_{\Bv}\mmod$}; \& \node (e)
			{$  { \sfKLR}^{\varphi}_{\Bv - k e_i}\mmod$}; \\
			\node (a) {$\Oof{\CB_\varphi(\Bv,\Bw)}_{\Z}$}; \& \node (b)
			{$\Oof{\CB_\varphi(\Bv-n\mathbf{e}_i,\Bw)}_{\Z}$}; \\
		};
		\draw (b) -- (a) node[below,midway]{$\ind^{(k)}_i$};
		\draw (a) -- (d) node[left,midway]{$\Wei$} ;
		\draw (b) -- (e) node[right,midway]{$\Wei$};
		\draw (e) -- (d) node[above,midway]{$\EuScript{F}_i^{(k)}$};
	}\]
	In particular, these functors define a categorical $\fg_\Gamma$-action which sends the weight $\mu =\sum w_i\varpi_i-v_i\alpha_i$ to the category $\Oof{\CB_\varphi(\Bv,\Bw)}_{\Z}$.
\end{Theorem}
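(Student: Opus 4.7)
The plan is to reduce the statement to Theorem \ref{th:KLR-restriction} together with a careful analysis of one ``extra'' factor coming from the nilHecke algebra. For the chosen cocharacter $\xi$, Proposition \ref{prop:relate} decomposes $\CB(L, N_0^\xi)$ as $\CB(\Bv - k\mathbf{e}_i, \Bw) \otimes \CB(GL_k, 0)$, and Lemma \ref{le:ResfTO} gives the corresponding tensor decomposition on the steadied flavoured KLRW side. Theorem \ref{th:KLR-restriction} already intertwines $\res_\xi$ with the tensor-product restriction functor, so what remains is to match the further operations: taking the $\Wei_0$ weight space on the Coulomb branch side, against the ``apply Morita equivalence $\operatorname{NH}_k \cong M_{k!}(\Sym_k)$ and forget $\Sym_k$'' functor on the KLRW side.

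For this matching, I would exploit the hypothesis that $\Gamma$ has no edge loops, which forces $\fKLR^{\varphi}_{k\mathbf{e}_i}$ to consist of $k$ corporeal strands labeled $i$ with no ghosts or red strands; hence it is Morita equivalent to $\operatorname{NH}_k$. With a suitable choice of complete set $\mathsf{S}^{(-1)}$ (shifting as in Section \ref{se:FunctorQuiver} so that the representative for weight $0 \in \Z^k/S_k$ is the idempotent $e(k\mathbf{e}_i, (-H, \ldots, -H), <)$), the equivalence of Theorem \ref{thm:wKLR} identifies the weight functor $\Wei_0$ on $\CB(GL_k, 0)\dash\GTc$ with the functor $M \mapsto e'(0)M$ on $\operatorname{NH}_k$-modules. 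Since the Morita equivalence $\operatorname{NH}_k \cong M_{k!}(\Sym_k)$ is realized precisely by the symmetrizing idempotent $e'(0)$, and ``forgetting $\Sym_k$'' retains exactly the underlying $\sfKLR^{\varphi}_{\Bv - k\mathbf{e}_i}$-module structure, this gives the required identification. Putting the two steps together produces the first commutative diagram.

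The second diagram follows by uniqueness of adjoints, provided both functors preserve the appropriate categories: that $\res_i^{(k)}$ preserves category $\cO$ is Lemma \ref{lem:res-O}, and the corresponding preservation on the KLRW side is built into the construction of $\EuScript{E}_i^{(k)}$ on the steadied quotient via Theorem \ref{thm:O-steady}. The categorical $\fg_\Gamma$-action on the KLRW side is Proposition \ref{prop:flavoured-cat-action}, so transporting along the commuting squares yields the asserted categorical action on $\bigoplus_\Bv \Oof{\CB_\varphi(\Bv,\Bw)}_\Z$.

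The principal difficulty will be the matching step: rigorously identifying $\Wei_0$ for $\CB(GL_k, 0)$ with $e'(0)$ on $\operatorname{NH}_k$-modules. The subtlety is that $\Wei_0$ is a \emph{generalized} eigenspace for the Gelfand-Tsetlin subalgebra $\HBG{GL_k \times \Cx}$, and one has to check that this matches, through the abelianization and saturation computations of Section \ref{sec:geometric-viewpoint}, with the symmetrizing idempotent applied to $k$ strands of equal longitude. This verification is most cleanly done by making the geometric description underlying Theorem \ref{thm:wKLR} fully explicit in the $(GL_k, 0)$ case, where $R_{GL_k, 0} = \Gr_{GL_k}$ and the Steinberg-type varieties ${}_0\pStein_0$ degenerate to products of flag varieties, so that the convolution algebra computation is directly comparable to the nilHecke calculation.
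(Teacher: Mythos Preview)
Your proposal is correct and follows essentially the same route as the paper: apply Theorem \ref{th:KLR-restriction} to obtain the tensor-product commutative square, identify $\fKLR^\varphi_{k e_i}$ with $\operatorname{NH}_k$, and then match the final step of taking $\Wei_0$ for $\CB(GL_k,0)$ with applying the primitive idempotent $e'(0)$ in $\operatorname{NH}_k$. You overestimate the difficulty of that matching step, though: the identity $\Wei_\gamma(M) = e'(\gamma)\,\Wei(M)$ is precisely the content of Corollary \ref{co:wKLR} (applied to $\CB(GL_k,0)$ at $\gamma = 0$), so the paper dispatches it in one sentence by citing that corollary rather than unpacking any geometry of ${}_0\pStein_0$.
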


\begin{proof}
	From Theorem \ref{th:KLR-restriction}, we have the commutative diagram
	\begin{equation}
		\begin{tikzcd} \label{eq:inProof}
			\fKLR^\varphi_\Bv\wgmod \ar[r]  & \fKLR^\varphi_{\Bv - k e_i} \otimes \fKLR^\varphi_{k e_i} \wgmod   \\
			\CB_\varphi(\Bv, \Bw)\dash\GTc_\Z \ar[u] \ar[r] & (\CB_\varphi(\Bv - k e_i, \Bw) \otimes \CB_\varphi(k e_i, 0))\dash\GTc_\Z \ar[u]
		\end{tikzcd}
	\end{equation}
	From Section \ref{sec:categorical-actions}, we see that $ \fKLR^\varphi_{k e_i} $ is the nilHecke algebra $ NH_k$.  Also, $ \EuScript{E}_i^{(k)} $ is defined by following the upper horizontal arrow in (\ref{eq:inProof}) and then applying an idempotent in $ NH_k$.
	
	On the other hand, $ \res_i^{(k)} $ is defined by following the lower horizontal line in (\ref{eq:inProof}) and then taking $ \Wei_0$ for $\CB_\varphi(k e_i, 0)) = \CB(GL_k, 0) $.   So then the result follows from Corollary \ref{co:wKLR}.
\end{proof}

\begin{Remark}
	Since the functor $\EuScript{F}_i^{(k)}$ is also isomorphic to the
	right adjoint of $\EuScript{E}_i^{(k)}$ (but not canonically so), we
	also have that $\ind^{(k)}_i$ is isomorphic to the right adjoint of
	$\res_i^{(k)}$, which we can construct as tensoring with the injective
	(infinitely generated)
	GT module $P_0^*$ corepresenting $\Wei_0$ over $\CB(GL_k,0)$,
	applying $\coind_{\xi}$, and then passing to the largest submodule in
	$\cO$.
\end{Remark}

Recall the idempotent $e_{\iH} \in\fKLR^\varphi_{\Bv}$ from Proposition \ref{prop: cyclotomic KLR from fKLR}.  From Lemma \ref{lem:GKdim}, we have that:
\begin{Proposition}\label{prop:maxGK}
	Let $M\in \Oof{\CB(\Bv,\Bw)}_{\Z}$.  We have that $e_{\iH}\Wei(M)=0$ if and only if the Gelfand-Kirillov dimension of $M$ is strictly less than $\frac{1}{2}\dim M_C(\Bv,\Bw)=\sum_{i\in \vertex} v_i$.
\end{Proposition}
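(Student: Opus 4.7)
The plan is to use Lemma \ref{lem:GKdim} to translate the statement about Gelfand-Kirillov dimension into one about the Zariski closure of $\Supp(M)$ in $\ft/W$, which has dimension $n=\sum_iv_i$. The key point is that $e_\iH$ picks out weight spaces at ``spread-out'' integer points $\gamma(\Bi,-\iH)$, and I claim these spread-out points live in clans whose associated polyhedra have full-dimensional recession cones; such clans are exactly those that contribute top-dimensional pieces to the Zariski closure of the support.

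By Lemma \ref{lem:GKdim}, $\operatorname{GKdim}(M)$ equals the complex dimension of the Zariski closure of $\Supp(M)\subset\ft/W$. By \cite[Cor.~4.16]{WebGT}, $\Supp(M)$ is a union of finitely many clans, each being the $W$-orbit of a lattice intersection $(\la_0+\ft_\Z)\cap P_C$ for some open convex polyhedron $P_C\subset\ft_\R$. A direct calculation shows that the complex dimension of the Zariski closure in $\C^n$ of such a lattice intersection equals the real dimension of the recession cone of $P_C$. So $\operatorname{GKdim}(M)=n$ if and only if some clan in $\Supp(M)$ has polyhedron with full-dimensional recession cone.

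For the forward direction (contrapositively), suppose $e_\iH\Wei(M)\neq 0$. Then by Corollary \ref{co:wKLR}, some spread-out weight $\gamma(\Bi,-\iH)$ lies in $\Supp(M)$. For $\iH\gg 0$, the associated flavoured-sequence inequalities are strict with margins of order $\iH$; in particular, translating all corporeal longitudes further to the left by arbitrary independent amounts preserves all orderings, so the recession cone of the clan's polyhedron is $n$-dimensional. Hence $\operatorname{GKdim}(M)=n$.

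For the reverse direction, suppose $\operatorname{GKdim}(M)=n$. Then some clan $C\subset\Supp(M)$ has $P_C$ with full-dimensional recession cone. The $\cO$-condition forces this recession cone to lie in the negative half-space $\{v:\chi(v)\leq 0\}$. I would then argue that any full-dimensional cone in this half-space that is cut out by the ``difference-type'' inequalities arising from flavoured-sequence orderings (combined with the steadiness constraint of Theorem \ref{thm:O-steady}) must contain a direction whose coordinates are strictly decreasing along some permutation $\Bi$ of the corporeal labels. Scaling such a direction by $\iH\gg 0$ and choosing an appropriate lattice representative within $P_C$ yields a spread-out weight $\gamma(\Bi,-\iH)\in C\subset\Supp(M)$, so $e_\iH\Wei(M)\neq 0$.

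The main obstacle is the reverse direction: combinatorially, a full-dimensional cone in the $\cO$-negative half-space need not a priori contain a coordinate-strictly-decreasing direction (consider cones forced to include positive coordinates by red-strand constraints). Ruling out such pathologies requires using the specific form of the clan polyhedra together with the steadiness condition; the expected mechanism is that the ``pathological'' clans either fail steadiness (and so support no modules in $\cO$) or have recession cone of strictly smaller dimension once the $\cO$-constraint is imposed.
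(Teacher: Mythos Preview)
Your forward direction is correct in substance, though ``arbitrary independent amounts'' overstates it: the recession cone of the clan containing $\gamma(\Bi,-\iH)$ is a Weyl-chamber-like region in the negative orthant, full-dimensional but not the whole orthant. For the reverse direction, the paper runs a direct contrapositive rather than hunting for a spread-out point inside a full-dimensional clan: if $e_\iH\Wei(M)=0$, then every $\gamma\in\Supp(M)$ must have a pair of coordinates $y,z$ with $|y-z|<\iH$, because a $\gamma$ with all coordinates pairwise $\ge\iH$ apart has $e'(\gamma)$ isomorphic to some summand $e'(\Bi,-\iH)$ of $e_\iH$. Hence $\Supp(M)$ sits in the finite union of hyperplanes $\{y-z=c\}$, $|c|<\iH$, and Lemma~\ref{lem:GKdim} finishes.

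The obstacle you flag---clans whose recession cone might be forced to include positive coordinates by red-strand constraints---is exactly the delicate point, and here is how to dispatch it within your framework. If a clan $C\subset\Supp(M)$ has full-dimensional recession cone $R$, then since $M\in\cO$, Lemma~\ref{lem:big-gamma} bounds every coordinate of $C$ above, forcing $R$ into the closed negative orthant; interior points of $R$ then have every $v_k<0$. If some corporeal $k$ sat to the right of any relevant red in the flavoured sequence of $C$, the homogenized constraint $v_k\ge 0$ would pin $v_k=0$, contradicting full-dimensionality. So every corporeal lies to the left of every relevant red. Now push any $\gamma\in C$ along an interior direction $v\in R$ with distinct entries: you remain in $C$, the coordinates spread out arbitrarily, and their order is that of the $v_k$. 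This is precisely the flavoured-sequence pattern of $e(\Bi,-\iH)$ for the $\Bi$ determined by that order, so $C$ is the clan of some $\gamma(\Bi,-\iH)$ and $e_\iH\Wei(M)\ne 0$. Thus the pathologies you worried about never occur in full dimension: a corporeal trapped to the right of (or between) relevant reds forces an equality in the recession cone and drops its dimension.
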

\begin{proof}
	If $e_{\iH}\Wei(M)\neq 0$, then there exists $ \Bi $ such that $e(\Bi,
	\iH) \Wei(M) \ne 0 $.  By the definition of $ e(\Bi, \iH)$, this means
	that if we construct $ \gamma^\iH \in \ft/W$ where $ \gamma^\iH_i = \{
	k\iH : i_k = i \} $, then $ \Wei_{\gamma^\iH}(M) \ne 0 $.  Now choose
	any integers $ a_1, \dots, a_n $ such that $  a_k \le a_{k+1} + \iH$
	and then define $\gamma $ such that $ \gamma_i = \{ a_k : i_k = i \}
	$. We have an isomorphism of functors  $ \Wei_{\gamma^\iH}(M) \cong
	\Wei_\gamma(M)$ since the corresponding idempotents in the flavoured
	KLRW algebra are equivalent, and thus $ \Wei_\gamma(M) \ne 0 $.  As the set of such $ \gamma $ is Zariski dense in $ \ft/W$, we see that $M$ has Gelfand-Kirillov
	dimension $\sum_{i\in \vertex} v_i$ by Lemma \ref{lem:GKdim}. 
	
	If $e_{\iH}\Wei(M)=0$, then any  $ \gamma $ in the support of $ M$ must
	contain a pair $ y, z $ such that the difference $|y-z|$ is an
	integer with $|y-z|<\iH$.  If there is no such pair, then $e'(\gamma)$ is isomorphic to an idempotent $e'(\gamma')$ satisfying
	$e'(\gamma')=e_{\iH} e'(\gamma')$.  The locus where $|y-z|\in
	\{-\iH+1,-\iH+2,\dots, \iH-1\}$ holds is the union of a finite
	number of hyperplanes.  Thus, the support of $M$  is not Zariski dense. Thus
	$\operatorname{GKdim}(M)< \sum_{i\in \vertex} v_i$ by Lemma
	\ref{lem:GKdim} again.
\end{proof}

Recall that  $\cO_{\operatorname{top}}(\Bv,\Bw)$ is the quotient of $\Oof{\CB(\Bv,\Bw)}_{\Z}$ by the subcategory of objects with GK dimension  $<\frac{1}{2}\dim M_C(\Bv,\Bw)$.  Combining Proposition \ref{prop: cyclotomic KLR from fKLR}, Corollary \ref{co:wKLR} and Proposition \ref{prop:maxGK} we find that:
\begin{Corollary}\label{co:irrep}
	The functor $e_{\iH}\Wei$ induces an equivalence $\cO_{\operatorname{top}}(\Bv,\Bw)\cong R^{\Bw}_{\Bv}\mmod$, and thus an isomorphism of $\mathfrak{g}_{\Gamma}$-modules $\bigoplus_{\Bv}K_{\C}(\cO_{\operatorname{top}}(\Bv,\Bw))\cong V(\lambda).$
\end{Corollary}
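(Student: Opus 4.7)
The plan is to identify $\cO_{\operatorname{top}}(\Bv,\Bw)$ with $R^\Bw_\Bv\mmod$ by building a chain of equivalences, and then transport the categorical $\fg_\Gamma$-action across these equivalences to invoke the known isomorphism $\bigoplus_\Bv K_\C(R^\Bw_\Bv\mmod)\cong V(\lambda)$ from \cite[Th.~B]{Webmerged}.

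First, I would combine Corollary \ref{co:wKLR} (equivalence between $\CB_\varphi(\Bv,\Bw)\dash\GTc_\Z$ and $\fKLR^\varphi_\Bv\wgmod$) with Theorem \ref{thm:O-steady} (which identifies category $\cO$ on the Coulomb branch side with modules over the steadied quotient) to obtain an equivalence
\[
\Wei \colon \Oof{\CB_\varphi(\Bv,\Bw)}_\Z \xrightarrow{\sim} \sfKLR^\varphi_\Bv\wgmod.
\]
By Proposition \ref{prop: cyclotomic KLR from fKLR}, the idempotent $e_\iH$ realizes the cyclotomic KLR algebra $R^\Bw_\Bv$ as the corner $e_\iH \sfKLR^\varphi_\Bv e_\iH$, so $M\mapsto e_\iH M$ is the quotient functor with kernel the Serre subcategory of modules killed by $e_\iH$.

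Next, I would combine this with Proposition \ref{prop:maxGK}, which shows that a module $M\in \Oof{\CB_\varphi(\Bv,\Bw)}_\Z$ satisfies $e_\iH \Wei(M)=0$ if and only if its Gelfand--Kirillov dimension is strictly less than $\tfrac12\dim M_C(\Bv,\Bw)=\sum_i v_i$. Therefore the Serre subcategory killed by $e_\iH$ matches precisely the subcategory by which we quotient to form $\cO_{\operatorname{top}}(\Bv,\Bw)$. Passing to the Serre quotient on both sides yields the claimed equivalence $\cO_{\operatorname{top}}(\Bv,\Bw)\cong R^\Bw_\Bv\mmod$.

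For the second statement, I would transport the categorical $\fg_\Gamma$-action of Theorem \ref{thm:divpowers} across the equivalences above. That theorem says that the functors $\res_i^{(k)},\ind_i^{(k)}$ on $\bigoplus_\Bv \Oof{\CB_\varphi(\Bv,\Bw)}_\Z$ correspond under $\Wei$ to the flavoured KLRW divided power functors $\EuScript{E}_i^{(k)},\EuScript{F}_i^{(k)}$ on $\bigoplus_\Bv \sfKLR^\varphi_\Bv\mmod$. Since the divided power functors of flavoured KLRW algebras descend through the cyclotomic quotient $e_\iH \cdot (-)$ to the standard categorical action on $\bigoplus_\Bv R^\Bw_\Bv\mmod$ (they are defined in Section \ref{sec:categorical-actions} in parallel with the weighted case, and $e_\iH$ implements the cyclotomic projection), the equivalence $\cO_{\operatorname{top}}(\Bv,\Bw)\cong R^\Bw_\Bv\mmod$ is $\fg_\Gamma$-equivariant at the level of Grothendieck groups. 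Invoking \cite[Th.~B]{Webmerged}, which identifies $\bigoplus_\Bv K_\C(R^\Bw_\Bv\mmod)$ with the irreducible highest weight module $V(\lambda)$ for $\lambda=\sum_i w_i\varpi_i$, completes the argument.

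The main technical point requiring care is verifying that the divided power functors $\res_i^{(k)}$ descend well to the quotient $\cO_{\operatorname{top}}$ and intertwine with the corresponding functors on $R^\Bw_\Bv\mmod$; this amounts to checking that objects of GK dimension $< \sum_i v_i$ are preserved by $\res_i^{(k)}$ (up to the appropriate shift in $\Bv$), which follows from the compatibility of the associated graded with weight decompositions, together with the fact that the geometric identification of flavoured KLRW with $e_\iH(\sfKLR^\varphi)e_\iH$ in Proposition \ref{prop: cyclotomic KLR from fKLR} is compatible with the induction/restriction bimodules of Section \ref{sec:induct-restr}.
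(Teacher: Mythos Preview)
Your proposal is correct and follows essentially the same approach as the paper, which simply states that the result follows by combining Proposition~\ref{prop: cyclotomic KLR from fKLR}, Corollary~\ref{co:wKLR}, and Proposition~\ref{prop:maxGK}. You have spelled out the steps more carefully—in particular invoking Theorem~\ref{thm:O-steady} to pass to the steadied quotient and Theorem~\ref{thm:divpowers} to transport the categorical action—and your final paragraph about the functors descending to $\cO_{\operatorname{top}}$ is a reasonable caution, though the paper treats this as automatic from the equivariance of the quotient functor $e_\iH(-)$ discussed after Proposition~\ref{prop: cyclotomic KLR from fKLR}.
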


\subsubsection{The non-integral case} \label{rem:non-int}
Let us return to the setting non-integral GT modules, but specialize $k = 1$, as in Section \ref{se:nonint}. 
There, we defined a decomposition $ \res_i = \sum_{c \in \Cx} \res_{i,c} $ which we will now examine from the flavoured KLRW perspective. 

We have a functor $\sfKLR^{\varphi}_n\mmod \to 
\sfKLR^{\varphi}_{n-1}\otimes \fKLR_{1}\mmod$, where the subscript
simply denotes the number of corporeal strands. The single strand in the second
factor has a longitude $z$.  Each non-zero summand that arises in the decomposition of this functor on a given block
$\sfKLR^{\varphi}_{\mathscr{S}}\mmod$ is given by a vertex $ (i,[z])$  of $\GammaInt$, where $\GammaInt $ is defined using $ \mathscr S $ in Definition \ref{def:GammaS}.  Thus we obtain a functors $ \EuScript{E}_{i,[z]} : \sfKLR^{\varphi}_n\mmod \to \sfKLR^{\varphi}_{n-1}\mmod $, which match the above functors $ \res_{i,c} $ (where $ c = \exp(2\pi i z) $), as in Theorem \ref{thm:divpowers}.  
This shows that if $\operatorname{supp} M$ lies in an orbit $\mathscr{S}$ and $\res_{i,c}(M)$ is non-zero, then $(i,[\log c])\in \tilde{\Gamma}$. 

Consider the graph $\Gamma\times \C/\Z $ with the adjacency of
Definition \ref{def:GammaS}. For simplicity, we assume that this
graph has no edge loops, even if $\Gamma$ has an edge loop. Consider the graph
$\GammaO\subset \Gamma\times \C/\Z $ is given by the union of $\GammaInt$ for
all orbits $\mathscr{S}$ that appear in the support of modules in
$\cO$ for our fixed $\varphi$ and all different values of
$\Bv$. 

Using Remark \ref{rem:non-int-action} and repeating the proof of
Corollary \ref{co:irrep} shows:
\begin{Proposition}\label{prop:non-integral irrep}
	The functors $ \res_{i,c} $ give a categorical action of
	$\mathfrak{g}_{\GammaO}$ on $ \bigoplus_\Bv \Oof{\CB(\Bv,\Bw)}$.
	The corresponding Grothendieck groups $\bigoplus_\Bv K_{\C}(\cO_{\operatorname{top}}(\Bv, \Bw))$ form an irreducible representation of $\mathfrak{g}_{\GammaO}$. 
\end{Proposition}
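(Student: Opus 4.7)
The plan is to reduce the statement to the integral case that has already been handled, using the covering trick of Lemma \ref{lem:real-reduction}. First I would decompose $\bigoplus_{\Bv} \Oof{\CB(\Bv,\Bw)}$ into blocks according to the $\widehat W$-orbit $\mathscr S$ containing the support. Theorem \ref{thm:O-steady} identifies each such block with $\sfKLR^\varphi_{\mathscr S}\mmod$, and Lemma \ref{lem:real-reduction}(2) then identifies this in turn with a steadied flavoured KLRW module category for the quiver $\GammaInt$ with integral flavour $\varphi'$ and dimension vectors $\vInt, \wInt$. By construction, $\GammaInt$ sits inside $\GammaO$ (since the vertices appearing in $\Supp M$ for some $M \in \cO$ by definition make up $\GammaO$), and by the hypothesis in Remark \ref{rem:non-int}, $\GammaO$ has no edge loops.

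Next I would verify that under the equivalence above, the functors $\res_{i,c}$ intertwine with the functors $\EuScript E_{i,[z]}$ of Remark \ref{rem:non-int}. The argument is exactly analogous to Theorem \ref{thm:divpowers}: start from Theorem \ref{th:KLR-restriction} (which handles the restriction functor $\res_\xi$ in the non-integral case, applied to the coweight making $\Bv^{(-1)}$ a single unit vector at $i$), then decompose the Hamiltonian reduction by $r_\xi$ according to the monodromy decomposition of Section \ref{sec:Hamiltonian reduction}. Each monodromy component $c \in \Cx$ corresponds, via $\log$ and the covering $\GammaInt \to \Gamma$, to the vertex $(i,[\log c])$ of $\GammaInt$, and the corresponding summand of $\res_\xi$ matches the restriction-induced functor $\EuScript E_{i,[\log c]}$ on ${}^\Z \sfKLR^{\varphi'}$ from Lemma \ref{le:ResfTO}. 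Having established the intertwining, Proposition \ref{prop:flavoured-cat-action} applied to $\GammaO$ (which has no edge loops by assumption) produces the desired categorical $\mathfrak g_{\GammaO}$-action on $\bigoplus_{\Bv} \Oof{\CB(\Bv,\Bw)}$; its left adjoints $\ind_{i,c}$ are constructed exactly as in Section \ref{sec:catO} by the standard three-step procedure (inclusion into pro-GT, $\ind_\xi$, projection onto a generalized eigenspace of $\xi$ with prescribed monodromy).

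For the second assertion, I would apply Proposition \ref{prop:maxGK} (which, since $\gr \CB(\Bv,\Bw)$ and $\dim M_C(\Bv,\Bw)$ are unchanged by the reduction of Lemma \ref{lem:real-reduction}, transfers directly to the $\GammaO$ picture) to see that $\cO_{\operatorname{top}}(\Bv,\Bw)$ corresponds under these equivalences to the quotient of $\sfKLR^{\varphi'}_{\vInt}(\GammaO^{\wInt})\mmod$ by modules killed by the idempotent $e_\iH$ of Proposition \ref{prop: cyclotomic KLR from fKLR}. That proposition in turn identifies this quotient with modules over the cyclotomic KLR algebra $R^{\wInt}_{\vInt}$ for $\GammaO$. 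Then \cite[Th.~B]{Webmerged} states that $\bigoplus_{\vInt} K_{\C}(R^{\wInt}_{\vInt}\mmod) \cong V(\tilde\lambda)$ as $\mathfrak g_{\GammaO}$-modules, where $\tilde \lambda = \sum \tilde w_{i,[z]} \varpi_{(i,[z])}$, and this identification is equivariant for the Chevalley generators since by construction the functors $\EuScript E_{i,[z]}, \EuScript F_{i,[z]}$ categorify them. Irreducibility of the Grothendieck group then follows immediately.

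The main obstacle I anticipate is bookkeeping rather than a conceptual hurdle: one needs to verify carefully that as $\Bv$ varies, the different orbits $\mathscr S$ appearing in supports of modules in $\cO$ patch together consistently into the single quiver $\GammaO$ (a priori the $\Bv$-dependent quivers $\GammaInt$ must all sit inside one fixed $\GammaO$), and that the categorical action matches on the overlaps. This is handled by the observation that the functors $\res_{i,c}$ only depend on the vertex $(i,[\log c])$ of $\Gamma \times \C/\Z$ and not on the particular block, combined with Lemma \ref{lem:chi-xi} and Lemma \ref{lem:disconnected vertex} to rule out contributions from vertices not lying in $\GammaO$.
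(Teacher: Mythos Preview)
Your proposal is correct and follows essentially the same route as the paper: the paper's entire proof is the one-line remark ``Using Remark \ref{rem:non-int-action} and repeating the proof of Corollary \ref{co:irrep} shows,'' which is exactly what you have unpacked---reduce to the integral case over $\GammaInt\subset\GammaO$ via Lemma \ref{lem:real-reduction}, transport $\res_{i,c}$ to $\EuScript{E}_{i,[\log c]}$ as in Theorem \ref{thm:divpowers}, invoke Proposition \ref{prop:flavoured-cat-action} for the categorical action, and then combine Proposition \ref{prop:maxGK} with Proposition \ref{prop: cyclotomic KLR from fKLR} and \cite[Th.~B]{Webmerged} for irreducibility. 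Your careful discussion of how the block-dependent quivers $\GammaInt$ assemble into the single $\GammaO$ is a point the paper leaves implicit, so your version is in fact more complete.
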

Let $U$ be the full subgraph of $\Gamma\times \C/\Z $ containing
all those vertices connected to $ (i,[\varphi]) $ for $\varphi$ the weight of an edge connecting  the vertex $i$ to the Crawley-Boevey vertex.  Note that if $\varphi$ is integral, then $U=\Gamma\times \{[0]\}$. 

\begin{Lemma}
	We have an equality $U=\GammaO$ and the functor $\res_{i,c}$ is not zero if and only if  $(i,[\log c])\in \GammaO$. 
\end{Lemma}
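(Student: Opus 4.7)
The plan is to prove the set equality $U = \GammaO$ by mutual inclusion and deduce the functor characterization as a consequence, using Lemma~\ref{lem:disconnected vertex}, Corollary~\ref{cor:weight-nonzero}, Theorem~\ref{thm:O-steady}, and Proposition~\ref{prop:non-integral irrep}.

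For $\GammaO \subseteq U$: if $(j,[w]) \in \GammaO$, then $(j,[w]) \in \GammaInt(\mathscr{S})$ for some orbit $\mathscr{S}$ supporting a nonzero module in $\cO$, so by Theorem~\ref{thm:O-steady} the algebra $\sfKLR^{\varphi}_{\mathscr{S}}$ is nonzero. The contrapositive of Lemma~\ref{lem:disconnected vertex} then produces a vertex $(i',[\varphi_{e'}])$ (coming from a new edge $e'$) in the same $\GammaInt(\mathscr{S})$-component as $(j,[w])$; since $\GammaInt(\mathscr{S})$ is a full subgraph of $\Gamma \times \C/\Z$, this places $(j,[w])$ in $U$.

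For $U \subseteq \GammaO$: given $(j,[w]) \in U$, choose a simple path $(k_0,[z_0]) = (i,[\varphi_e]), \dots, (k_n,[z_n]) = (j,[w])$ in $\Gamma \times \C/\Z$ with $e$ a new edge, and build the orbit $\mathscr{S}$ with $v_k = \#\{m : k_m = k\}$ whose position-$k$ coordinates realize the cosets $\{[z_m] : k_m = k\}$. Then $\GammaInt(\mathscr{S})$ has vertex set $\{(k_m,[z_m])\}$ and contains every path edge. By Corollary~\ref{cor:weight-nonzero} with Theorem~\ref{thm:O-steady}, it suffices to show that the weight $\mu = \lambda - \sum_{m=0}^{n} \alpha_{(k_m,[z_m])}$ appears in the $\mathfrak{g}_{\GammaInt(\mathscr{S})}$-module $V(\lambda)$, where $\lambda = \sum \tilde{w}_{(k,[z])} \varpi_{(k,[z])}$.

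The main technical step is this weight verification. I will produce an element of the crystal $B(\lambda)$ of weight $\mu$ by setting $b_{-1} := v_\lambda$ and $b_m := \tilde{f}_{(k_m,[z_m])}(b_{m-1})$. From the identity $\varphi_i(b) = \epsilon_i(b) + \alpha_i^{\vee}(\mathrm{wt}(b))$ and the relation $-\alpha_{(k_m,[z_m])}^{\vee}(\alpha_{(k_l,[z_l])}) = \#\{\text{edges between } (k_m,[z_m]) \text{ and } (k_l,[z_l]) \text{ in } \GammaInt(\mathscr{S})\}$ for $l \neq m$, one obtains $\varphi_{(k_m,[z_m])}(b_{m-1}) \geq 1$, the positive contribution being supplied by $\tilde{w}_{(i,[\varphi_e])} \geq 1$ at $m = 0$ and by the path edge between $(k_{m-1},[z_{m-1}])$ and $(k_m,[z_m])$ for $m \geq 1$; possible chord edges only strengthen the bound. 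Hence each $b_m$ is nonzero, so $b_n$ realizes $\mu$ and $(j,[w]) \in \GammaO$. Finally, the functor statement follows: the ``only if'' direction was recorded before the lemma, while for ``if'', with $(i,[\log c]) \in \GammaO = U$ the vertex is connected in $\Gamma \times \C/\Z$ to some $(i',[\varphi_{e'}])$ having $\tilde{w}_{i',[\varphi_{e'}]} > 0$, and the path construction above shows $\GammaO$ is in fact a full subgraph of $\Gamma \times \C/\Z$; so the Chevalley generator of $\mathfrak{g}_{\GammaO}$ at $(i,[\log c])$ acts nontrivially on the irreducible module $\bigoplus_{\Bv} K_{\C}(\cO_{\operatorname{top}}(\Bv,\Bw))$ of Proposition~\ref{prop:non-integral irrep}, forcing $\res_{i,c}$ to be nonzero.
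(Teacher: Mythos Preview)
Your proof is correct, and for the inclusion $U \subseteq \GammaO$ it takes a genuinely different route from the paper. The paper argues via Weyl-group conjugacy: asserting that the simple roots in a connected component of $\GammaO$ are all $W$-conjugate, it conjugates $-\alpha_{(i,[\log c])}$ to a simple root $\alpha_{(k,[y])}$ with $\tilde{w}_{(k,[y])} > 0$, and then $\mu = w\lambda$ is a weight of $V(\lambda)$ with $\alpha_{(i,[\log c])}^{\vee}(\mu) < 0$, giving the required nonzero weight space. Your path-and-crystal argument instead builds $\mu$ explicitly by walking along a simple path from a Crawley-Boevey--adjacent vertex and applying Kashiwara operators, the inductive bound $\varphi_{(k_m,[z_m])}(b_{m-1}) \geq 1$ coming from the single path edge at each step. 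Your approach is more constructive and in fact more robust: the $W$-transitivity on simple roots that the paper invokes can fail when $\GammaO$ has multiple edges between a pair of vertices (for instance if $\Gamma$ has two edges $i \to j$ with congruent flavours, giving a rank-two subdiagram of affine $A_1$ type), whereas your estimate uses only the existence of one edge and is insensitive to multiplicities. For the ``if'' direction of the functor statement, the paper reads it off directly from the weight $\mu$ just produced (any vector of that weight has nonzero $E_{(i,[\log c])}$-image), while you route through Proposition~\ref{prop:non-integral irrep}; your aside that $\GammaO$ is a full subgraph is unnecessary here, since the path you constructed already lies inside a single $\GammaInt(\mathscr{S}) \subseteq \GammaO$, so $(i,[\log c])$ is connected within $\GammaO$ to a vertex with $\tilde{w} > 0$, and your own crystal computation then shows the Chevalley generator acts nontrivially.
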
 
This implies that if we take $ M \in \CB(\Bv, \Bw)\dash\GTc_\Z $ an integral Gelfand-Tsetlin module, then $ \res_i(M) = \res_{i,1}(M) $, as $ \res_{i,c}(M) = 0 $ for $ c \ne 1 $. 
\begin{proof}
	To show the inclusion $U\subset \GammaO$, it's enough to show that any $(i,[\log c])\in U$ occurs as a longitude in a non-zero idempotent in $\sfKLR^{\varphi}_n$ for some $n$.  As before, we consider a highest weight $\la$ such that $\al_{j,[z]}^{\vee}(\la)=\tilde{w}_{j,[z]}$.  By Corollary \ref{cor:weight-nonzero}, it's enough to show that the irreducible representation $V(\la)$  has a nontrivial weight space for a weight $\mu$ with $\al_{i,[\log c]}^{\vee}(\mu)< 0.$  Since $\al_{i,[\log c]}$ is conjugate to all other simple roots in its component under the action of the Weyl group, we can choose $w$ in the Weyl group of $\GammaO$ such that  $-w\al_{i,[\log c]}=\al_{k,[y]}$ is a simple root with $\al_{k,[y]}^{\vee}(\la)> 0$.   This implies $\mu=w\la$ is a non-zero weight as desired, so the equality $\la=\mu+\sum_{(j,[z])\in \GammaO}\tilde{v}_{j,[z]}\al_{j,[z]}$ gives the desired dimension vector. 
	
	Now we will show that if $(i,[\log c])\in U$, then $\res_{i,c}\neq 0$. Note that our argument also shows that any vector of weight $\mu$ in $V(\la)$ has non-zero image under the action of the Chevalley generator $E_{i,[\log c]}$ since it has negative weight for the corresponding root $\mathfrak{sl}_2$. Thus, any module $M$ with non-zero image in $\cO_{\operatorname{top}}$ supported on the corresponding orbit $\mathscr{S}$ satisfies $\res_{i,c}(M)\neq 0$.

	On the other hand, by Lemma \ref{lem:disconnected vertex}, if $\sfKLR^{\varphi}(\mathscr{S})\neq 0$ for some orbit $ \mathscr{S}$, then every component of the corresponding graph $\tilde{\Gamma}$ has vertex $(j,[z])$ with non-zero $\tilde{w}_{j,[z]}$.  Thus, $\tilde{\Gamma}\subset U$.  Ranging over all $\mathscr{S}$, this shows that $\GammaO\subset U. $  We noted above that if $\res_{i,c}(M)$ is non-zero, then $(i,[\log c])\in \tilde{\Gamma}$.  This completes the proof.
\end{proof}

From a more topological perspective, the Crawley-Boevey graph $\GammaO^{\tilde{\Bw}}$
can be built as follows:
\begin{enumerate}
	\item Take the
	minimal cover of the Crawley-Boevey graph $\Gamma^{\Bw}$ that
	trivializes the flavour $\varphi$ as an element of
	$H^1(\Gamma^{\Bw},\C/\Z)$.
	\item Delete all but one pre-image of the Crawley-Boevey vertex.  This
	will be the Crawley-Boevey vertex of  $\GammaO^{\tilde{\Bw}}$.
	\item Delete all components of the pre-image  of $\Gamma$ not
	connected to the CB vertex.
\end{enumerate} 

\subsection{Cherednik algebras}
\label{sec:cherednik-algebras}

Throughout this section, we assume that $\Gamma$ is the Jordan quiver,
with dimension vector $v=n, w=\ell>0$.  
As in the introduction, we let $\CB(n,\ell)=\CB(G,N)$
where $G=GL(n)$ and
$N=\Hom(\C^n,\C^n)\oplus \Hom(\C^n, \C^\ell)$.  The case
where $\ell=0$ is somewhat different and requires slightly different
methods.  

Our aim in this subsection is to prove Theorem \ref{th:Cherednik1}.
In order to do this, let us unpack the results of the previous
sections in this case.  Our flavour consists of a $\ell$-tuple
$\varphi_1,\dots, \varphi_\ell$ corresponding to torus of
$GL(\C^{\ell})$, and a single scalar $\mathbf{t}\in \C$, which
corresponds to the $\C^\times$ action scaling the loop in $\Gamma$.

As discussed in Section \ref{sec:intro-cherednik}, Kodera and Nakajima have defined an isomorphism of algebras $\operatorname{KN}\colon\mathsf{H}({n,\ell})\to  \CB(n,\ell)$. This isomorphism matches the usual Euler grading on $\mathsf{H}({n,\ell})$ to the grading on $\CB(n,\ell)$ induced by the isomorphism $\pi_1(GL_n)\cong \Z$, so it induces an equivalence  $\mathsf{KN}\colon \Oof{\mathsf{H}({n,\ell})}\to \Oof{\CB(n,\ell)}$.  This isomorphism relates the flavour parameters to the usual parameters of the Cherednik
algebra by formulas given in \cite[Th. 1.1]{KoNa} (where $\varphi_r$ is
denoted $z_r$). We let $\mathsf{H}_{\varphi}({n,\ell})$ be the spherical Cherednik algebra specialized at these parameters.  Since we will want to compare with the results of \cite{WebRou}, we note that in that paper, the parameter $\mathbf{t}$ is denoted by $k$, and the parameters $\varphi_r$ by $s_r$.

Consider the graph with vertices $ \C/\Z $ with an edge from $ [z] \to [z + \ft] $, for all $ [z] \in \C/\Z$.  This is a union of $e$-cycles if $\mathbf{t}=m/e$ is
rational, and a union of $A_{\infty}$-components if $\mathbf{t}\not\in
\mathbb{Q}.$  For simplicity, we'll avoid the case where $\mathbf t
\in \Z$, so this graph has no edge loops.  

Given a  $\widehat{W}$ orbit $ \mathscr S \subset \C^n $, we defined  a graph $\GammaInt$ in Definition \ref{def:GammaS}.  From the definition, we see that $ \GammaInt $ is always a subgraph of this $ \C/\Z $-graph.

As discussed in Section \ref{se:nonint} above, we have a decomposition of the induction and
restriction functors $\ind_{\bullet,c},\res_{\bullet,c}$ corresponding to the single vertex of
the quiver (defined using the Hamiltonian reduction approach).  On category $\cO $, this gives a $ \fg_U $ action (by Proposition \ref{prop:non-integral irrep}) where $ c $ ranges over the vertices of $U=\GammaO$.  The analogous  decomposition of the
Bezrukavnikov-Etingof functors into $i$-induction/restriction functors $\res_{\mathsf{BE},i}$
is given by Shan \cite[Def. 4.2]{ShanCrystal}.

By Theorem \ref{thm:divpowers}, we have that
$\Oof{\CB_\varphi(n,\ell)}\cong { \sfKLR}^{\varphi}_{n}\mmod$ where $n$ gives the number
of corporeal strands.  By Lemma \ref{lem:real-reduction}, we have
that ${ \sfKLR}^{\varphi}_{n}$ is isomorphic to a flavoured KLRW
algebra over $\GammaInt$, which we can think of a flavoured KLRW
algebra for $U$ by the inclusion of $\GammaInt$ as a subgraph.  Finally, we find that by Theorem
\ref{thm:flavour-weight}, we have a quotient functor
$\Wei_!\colon {T}^{\vartheta}_{n}\mmod \to \Oof{\CB_\varphi(n,\ell)}$ from the category of modules over a weighted KLRW algebra ${T}^{\vartheta}_{n}$ for $U$, where the subscript $n$ indicates that we have a total of $ n $ corporeal strands (possibly of different labels).  

Let $\mathsf{\tilde{H}}_{\varphi}({n,\ell})$ be the full Cherednik algebra for
$G(\ell, 1,n)$. 
In \cite{Webalt}, the second-named author defined  an equivalence of categories $\mathsf{W}\colon \Oof{\mathsf{\tilde{H}}_{\varphi}({n,\ell})}\to \tilde{T}^{\vartheta}_{n}\mmod$ using a similar method to  Theorem \ref{thm:wKLR}.   
\begin{Lemma}\label{lem:KN-compat} We have an isomorphism of functors
	$\Wei_!(-) \cong \mathsf{KN}(e \mathsf{W}^{-1}(-))$
\end{Lemma}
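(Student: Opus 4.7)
The plan is to realize both sides of the claimed isomorphism as quotient functors from $T^\vartheta_n\mmod$ which factor through the common intermediate category $\sfKLR^\varphi_n\mmod$. The left-hand side $\Wei_!$ is manifestly of this form: by Proposition \ref{prop:steady-weighted} it is the quotient $T^\vartheta_n\mmod \twoheadrightarrow \sfKLR^\varphi_n\mmod$ induced by the steadied bimodule $\FW$, post-composed with the equivalence $\sfKLR^\varphi_n\mmod \cong \Oof{\CB_\varphi(n,\ell)}$ coming from Corollary \ref{co:wKLR} together with Theorem \ref{thm:O-steady}. For the right-hand side, the task is to show that the composite $e\cdot \mathsf{W}^{-1}$ realizes the same quotient from $\tilde T^\vartheta_n\mmod$, after which $\mathsf{KN}$ provides the required identification of $\Oof{\mathsf{H}_\varphi(n,\ell)}$ with $\Oof{\CB_\varphi(n,\ell)}$.

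First I would unpack the construction of $\mathsf{W}$ from \cite{Webalt}: that equivalence is built by the same diagrammatic strategy as Theorem \ref{thm:wKLR}, and realizes Dunkl--Opdam weight spaces of $\tilde H_\varphi(n,\ell)$-modules as Hom spaces in a weighted KLRW category. Under this identification, the symmetrizing idempotent $e$ acts by merging corporeal strands with equal label and longitude, so that tensoring with $e$ precisely implements the composite quotient $\tilde T^\vartheta_n\mmod \twoheadrightarrow T^\vartheta_n\mmod \twoheadrightarrow \sfKLR^\varphi_n\mmod$ (the first arrow being inflation from the steadied quotient, the second being Theorem \ref{thm:flavour-weight}); the passage from $\Oof{\tilde H_\varphi(n,\ell)}$ to $\Oof{\mathsf{H}_\varphi(n,\ell)}$ is compatible with this because of \cite[\S 3.5.2]{GoLo}. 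Granting this, it remains to verify that the resulting equivalence $\Oof{\mathsf{H}_\varphi(n,\ell)}\xrightarrow{\sim}\sfKLR^\varphi_n\mmod$ produced by $e\cdot\mathsf{W}^{-1}$ matches, as a functor into $\sfKLR^\varphi_n\mmod$, the composition $\Wei\circ\mathsf{KN}$. Both equivalences intertwine the commutative Gelfand--Tsetlin / Dunkl--Opdam subalgebra with the action by the idempotents $e(\gamma)$ on Hom spaces, and both are constructed by realizing weight spaces as (completions of) Borel--Moore homology groups of the Steinberg-type varieties $\pStein$.

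The main obstacle is this final comparison. As flagged in the introduction, $\mathsf{KN}$ is only one among several isomorphisms $\mathsf{H}_\varphi(n,\ell)\cong\CB_\varphi(n,\ell)$ compatible with the Gelfand--Tsetlin structure, and the isomorphism implicit in $\mathsf{W}$ together with $\Wei$ need not agree with $\mathsf{KN}$ on the nose. However, both categorical actions factor through natural transformations of weight functors, so by the uniqueness clause of Proposition \ref{thm:GTc-F} applied to the principal Galois order $\CB_\varphi(n,\ell)$, any two such factorizations differ by an automorphism of $\fKLR^\varphi_n$ fixing the Gelfand--Tsetlin subalgebra pointwise. The free choices entering the construction of $\mathsf{W}$ in \cite{Webalt} are sufficient to absorb this automorphism, yielding the desired natural isomorphism $\Wei_!(-)\cong\mathsf{KN}(e\mathsf{W}^{-1}(-))$.
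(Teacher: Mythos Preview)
Your approach has several genuine gaps, and misses the simple idea that makes the paper's proof work.

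First, the claim that the symmetrizing idempotent $e$ ``acts by merging corporeal strands with equal label and longitude'' and thereby implements the quotient $T^\vartheta_n\mmod\to\sfKLR^\varphi_n\mmod$ is not supported. The idempotent $e$ is the symmetrizer for the finite group $G(\ell,1,n)$; there is no direct diagrammatic description of it in these terms, and the identification of the aspherical kernel with the kernel of $\Wei_!$ is precisely what the paper establishes \emph{using} this lemma in the proof of Theorem~\ref{th:Cherednik1}. So this step is circular.

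Second, the final step is handwaving in a way that cannot be repaired. You acknowledge that your two equivalences $\Oof{\mathsf{H}_\varphi(n,\ell)}\to\sfKLR^\varphi_n\mmod$ could differ by an automorphism of $\fKLR^\varphi_n$, and then assert that ``the free choices entering the construction of $\mathsf{W}$'' absorb it. But $\mathsf{W}$ is a \emph{specific} equivalence from \cite{Webalt}; the functor with genuine free choices is the other equivalence $\mathfrak{W}$ from \cite{WebRou}, and the paper is careful to distinguish them. The whole point of this lemma is to pin down $\mathsf{KN}$ against $\mathsf{W}$ precisely, not up to some unspecified automorphism.

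The paper's argument is completely different and much shorter: each of $\mathsf{KN}$, $\Wei$, and $\mathsf{W}$ is uniquely determined by how it matches the faithful polynomial representations of the three algebras (the Dunkl--Opdam polynomial representation of $\mathsf{\tilde H}(n,\ell)$, the GKLO representation of $\CB(n,\ell)$, and the polynomial representation of $\tilde T^\vartheta_n$). Since these three representations are compatible by construction, the composites agree. You never invoke anything specific about $\mathsf{KN}$, which is why your argument cannot conclude anything about this particular isomorphism.
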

\begin{proof}
	To see this, we note that that the three functors involved are uniquely characterized by matching three polynomial representations:
	\begin{enumerate}
		\item the polynomial representation of $\mathsf{\tilde{H}}({n,\ell})$ defined in \cite[(2.17-22)]{Webalt}, extending the representation of the spherical part given in \cite[Thm 1.5]{KoNa}.  
		\item the GKLO representation of $\CB(n,\ell)$.
		\item the polynomial representation of $\tilde{T}^{\vartheta}_{n}$, defined in \cite[Prop. 2.7]{WebwKLR}.
	\end{enumerate}
	The Kodera-Nakajima isomorphism can be defined as the unique one matching (1) and (2), the functor $\Wei$ is uniquely defined by matching (2) and (3) and from the definition 
	\cite[Lemma 3.12]{Webalt}, we see that the functor $\mathsf{W}$ is uniquely characterized by matching (1) and (3).  This shows the desired compatibility. \end{proof}

However, at the moment, we do not know how to prove the compatibility
of our induction and restriction
functors with Bezrukavnikov and Etingof's under the equivalence of
categories induced by the Kodera-Nakajima isomorphism.  Instead, we have to use a potentially different equivalence of categories, which was explicitly constructed with this property in mind.

In \cite[Th. 4.8]{WebRou}, the second-named author constructed a
potentially different equivalence
$\mathfrak{W}\colon \Oof{\mathsf{\tilde{H}}_{\varphi}({n,\ell})}\to
{T}^{\vartheta}_{n}\mmod$ using the method of uniqueness of 1-faithful
covers. Since this equivalence is constructed using
\cite[Th. 2.3]{WebRou},  it is partly characterized by the fact that
it intertwines the Bezrukavnikov-Etingof induction and
restriction functors with the induction and restriction functors
for weighted KLRW algebras induced by the inclusion $\tilde{T}^{\vartheta}_{n-1}\hookrightarrow \tilde{T}^{\vartheta}_{n}$.  Note, this equivalence depends on a
choice of isomorphism between the Hecke algebra of $G(\ell,1,n)$
(which appears here as endomorphisms of the KZ functor) and the
cyclotomic KLR algebra inside ${T}^{\vartheta}_{n}$ used in the
application of \cite[Th. 2.3]{WebRou}, so it is not unique.  We expect that if this isomorphism is chosen correctly, then it will induce an isomorphism of functors $\mathfrak{W}\cong \mathsf{W}$, but there are a number of variations on this isomorphism possible, and it is a difficult calculation to check if any of them is right.

\begin{proof}[Proof of Theorem \ref{th:Cherednik1}]
	From the discussion above, we have functors:
	\[\tikz[->,thick]{
		\matrix[row sep=12mm,column sep=25mm,ampersand replacement=\&]{
			\node (a) {$\Oof{\CB_{\varphi}(n,\ell)}$}; \&  \node (d) {$ \tilde{T}^{\vartheta}_{n}\mmod$};\& \node (g)
			{$  \Oof{\mathsf{\tilde{H}}_{\varphi}({n,\ell})}$};\\
			\node (b)
			{$\Oof{\CB_{\varphi}({n-1},\ell)}$}; \& \node (e)
			{$ \tilde{T}^{\vartheta}_{n-1}\mmod$}; \& \node (f)
			{$  \Oof{\mathsf{\tilde{H}}_{\varphi}({n-1,\ell})}$};\\
		};
		\draw (a) -- (b) node[left,midway]{$\res_{\bullet}$};
		\draw (d) -- (a) node[above,midway]{$\Wei_!$} ;
		\draw (e) -- (b) node[below,midway]{$\Wei_!$};
		\draw (d) -- (e) node[left,midway]{$\Res$};
		\draw (g) -- (f) node[left,midway]{$\res_{\mathsf{BE}}$};
		\draw (g) -- (d) node[above,midway]{$\mathfrak{W}$};
		\draw (f) -- (e) node[below,midway]{$\mathfrak{W}$};
	}\]
	with both squares commuting.  Thus, to complete the proof we need only
	show that the induced quotient functor $\Wei_!\circ
	\mathfrak{W}\colon \Oof{\mathsf{\tilde{H}}_{\varphi}({n,\ell})}\to \Oof{\CB_{\varphi}(n,\ell)}$ kills precisely the aspherical modules, i.e. those killed by the symmetrizing idempotent $e$. 
	
	We have a natural labelling of simple modules by multipartitions:
	\begin{itemize}
		\item in the category $\Oof{\mathsf{\tilde{H}}_{\varphi}({n,\ell})}$, we have standard modules $\Delta(\mu)$ induced from simple representations of $G(\ell, 1, n)$, and every simple $L(\mu)$ is a quotient of a unique such module.
		\item in the category $\tilde{T}^{\vartheta}_{n}\mmod$, this labeling is induced by the cellular structure of \cite[Th. 4.11]{WebRou}.
	\end{itemize}
	
	The equivalence $\mathfrak{W}$ intertwines these labelings by construction.  On the other hand, we calculate directly in \cite[Lemma 3.17]{Webalt} how  the Dunkl-Opdam subalgebra acts on the highest weight space of $ L(\mu)$;   this shows that  $\mathsf{W}$ preserves these labelings as well and hence $\mathfrak{W}(L(\mu))\cong\mathsf{W}(L(\mu))$ for all $\mu$.
	
	Thus, by Lemma \ref{lem:KN-compat}, we have
	that \[\Wei_!\big(\mathfrak W(L(\mu))\big)\cong \mathsf{KN}(eL(\mu)),\] so we have that
	$\Wei_! (\mathfrak{W}(L(\mu)))=0$ if and
	only if $eL(\mu)=0$.  This shows that $\Wei_!\circ \mathfrak{W} $
	factors through the usual quotient functor to
	$\Oof{\mathsf{{H}}_{\varphi}({n,\ell})}$, and induces an
	equivalence of that category to $\Oof{\CB_{\varphi}(n,\ell)}$
	intertwining the restriction and induction functors, completing the proof.
\end{proof}

\bibliography{./monbib}
\bibliographystyle{amsalpha}

\end{document}